\documentclass[11pt]{amsart}

\newif\ifdebug
\debugfalse

\newif \iffig
\figfalse

\newif \iftable

\usepackage{sfilip_package_settings}
\usepackage{sfilip_abbreviations}
\usepackage{sfilip_thm_style_long}

\newcommand{\PP}{\mathbb{P}}

\newcommand{\Flag}{\operatorname{Flag}}
\newcommand{\R}{\mathbb{R}}
\newcommand{\N}{\mathbb{N}}

\newcommand{\Id}{\operatorname{Id}}

\newcommand{\cross}{\times}

\newcommand{\Leb}{\operatorname{Leb}}
\newcommand{\tnorm}[1]{{\left\vert\kern-0.25ex\left\vert\kern-0.25ex\left\vert #1 
    \right\vert\kern-0.25ex\right\vert\kern-0.25ex\right\vert}}
\newcommand{\st}{\;\: : \;\:}         

\usepackage{xcolor}

\title[AC of invariant measures for random diffeomorphisms]{Absolute continuity of invariant measures for random diffeomorphisms}

\hypersetup{
	pdftitle={Absolute continuity of invariant measures for random diffeomorphisms},	
	pdfauthor={Aaron Brown, Alex Eskin, Simion Filip, Federico Rodriguez Hertz},	
	pdfsubject={math},		
	pdfkeywords={MSC},	
	pdfnewwindow=true,		
	linkcolor=darkblue,		
	citecolor=darkred,		
	filecolor=darkblue,		
	urlcolor=darkblue,		
	pdfborder={0 0 0},
	breaklinks=true
}

\thanks{Revised \textsc{\today} }

\date{October 2026}

\author[A.\ Brown]{
	Aaron Brown
}
\address{
	\parbox{0.5\textwidth}{
		Department of Mathematics\\
		Northwestern University\\}
}
\email{{awb@math.northwestern.edu}}

\author[A.\ Eskin]{
	Alex Eskin
}
\address{
	\parbox{0.5\textwidth}{
		Department of Mathematics\\
		University of Chicago\\
		5734 S University Ave\\
		Chicago, IL 60637\\}
}
\email{{eskin@math.uchicago.edu}}

\author[S.\ Filip]{
	Simion Filip
}
\address{
	\parbox{0.5\textwidth}{
		Department of Mathematics\\
		University of Chicago\\
		5734 S University Ave\\
		Chicago, IL 60637\\}
}
\email{{sfilip@math.uchicago.edu}}

\author[F.\ Rodriguez Hertz]{
	Federico Rodriguez Hertz
}
\address{
	\parbox{0.5\textwidth}{
		Department of Mathematics\\
		Pennsylvania State University\\
		328 McAllister Building\\
		State College, PA 16802\\}
}
\email{{fjr11@psu.edu}}
\begin{document}

\begin{abstract}
	We develop criteria showing that invariant measures for random walks generated by diffeomorphisms of manifolds are in the Lebesgue class.
	Our assumptions are first, that there is a uniform expansion gap in the Lyapunov spectrum of the random walk, and second, that the conditionals of the invariant measure along the appropriate (partial) stable manifolds are in the Lebesgue class.
\end{abstract}

%
\maketitle
%
\noindent\hrulefill
\tableofcontents
\nointerlineskip
\noindent \hrulefill
\ifdebug
   \listoffixmes
\fi


\section{Introduction}
	\label{sec:introduction}
In this work, we consider a not necessarily compact manifold $M$ with an action of a collection of diffeomorphisms in $\Diff(M)$.
Let $\mu$ be a probability measure on $\Diff(M)$ with compact support (see \autoref{sssec:random_dynamics_and_words}) and assume $\nu$ is a measure on the manifold $M$ which is invariant under every diffeomorphism in the support of $\mu$.
Our main results are a number of criteria which guarantee that $\nu$ is absolutely continuous with respect to a volume form on $M$.

There are two inputs to our main result, which is \autoref{theorem:Absolute-continuity-and-ergodicity} below.
The first is an assumption which ensures a uniform expansion gap in position $k$ in the Lyapunov spectrum of the random walk on $M$ generated by $\mu$.
This is used to show that partial stable manifolds of dimension $k$ of the random walk are smooth and have uniform geometry, at every point.
The second assumption is that the conditionals of $\nu$ along the corresponding $k$-dimensional partial stable foliation are of Lebesgue class.
This last assumption can, in some situations, be obtained using the main results of \cite{BEFRH25}.

We note that our results apply when the Lyapunov spectrum has zero exponents, and also when the manifold $M$ is not compact, but we assume the existence of a Margulis function to control the escape to infinity.

\paragraph{Uniform expansion gaps.}
Before proceeding to a more detailed description of our results, we give a brief overview of the possible assumptions on expansion gaps which lead to uniform geometry of the partial stable manifolds.

In brief, we define first for $m\leq \dim M$ the function $\alpha_m(f,L)$ where $f\in \Diff(M)$ and $L\subset TM(q)$ is an $m$-dimensional tangent plane, which measures the change in volume along $L$ under $Df$ (see \autoref{eq:def:sigma:f:L}).
Uniform expansion in dimension $m$ is the assumption that there exists $N\in \bN$ and $C>0$ such that for all $q\in M$ and all $L\subset TM(q)$ of dimension $m$, the expected value of $\alpha_m(\bullet,L)$ is at least $C$ after $N$ steps.

Similarly, one can define
\begin{align*}
	\alpha_{k}'(f,L,L')&:=\alpha_{k}(f,L)-\alpha_{k+1}(f,L')\\
	\alpha_{k}''(f,L'',L,L')&:=2\alpha_{k}(f,L)-\alpha_{k-1}(f,L'')-\alpha_{k+1}(f,L')
\end{align*}
for nested subspaces $L''\subset L\subset L'\subset TM(q)$ of dimensions $k-1,k$, and $k+1$ respectively.

We will assume, see \autoref{ssec:dynamical:assumptions}, that the expected value of $\alpha_{k}'$ is uniformly negative after $N$ steps.
Additionally, there will be two possible cases, (FS) and (PS), depending on whether we will consider the full stable manifold or a partial stable manifold of dimension $d-k$.
In case (PS), we will assume that the expected value of $\alpha_{k}''$ is uniformly negative after $N$ steps.
In case (FS), we will assume that the expected value of $\alpha_{k-1}'$ can be made arbitrarily small after sufficiently many steps.

\paragraph{(Partial) stable manifolds}
Under the assumptions outlined above, we show in \autoref{theorem:Good-stables-everywhere} that stable manifolds of dimension $d-k$ exist at \emph{every point} of $M$, and that they are smooth and have uniform geometry (for a large set of random trajectories) in \autoref{theorem:good:words}.

We note that the uniform geometry of stable manifolds allows us to show ergodicity of volume measure when $\mu$ is supported on volume-preserving diffeomorphisms, see \autoref{theorem:Absolute-continuity-and-ergodicity}, using an argument originally appearing in Zhang \cite{Zhang2019_On-stable-transitivity-of-finitely-generated-groups-of-volume-preserving}.

\paragraph{Relation to other works}
Closely related results, under related assumptions, were obtained by B\'enard and Zhang \cite{BenardZhang2026_Rigidity-and-equidistribution-of-random-walks-by-diffeomorphisms-near} and by DeWitt, Dolgopyat, and Zhang \cite{DeWittDolgopyatZhang2026_Ergodicity-of-coexpanding-on-average-random-dynamical-systems}.
Like our work, a crucial point in these results is to allow zero Lyapunov exponents.
A substantial difference and source of technicalities is that our work is concerned with noncompact manifolds and involves the use of a Margulis function, whereas \cite{BenardZhang2026_Rigidity-and-equidistribution-of-random-walks-by-diffeomorphisms-near,DeWittDolgopyatZhang2026_Ergodicity-of-coexpanding-on-average-random-dynamical-systems} treat only compact manifolds.

Specifically, \cite[Thm.~1]{DeWittDolgopyatZhang2026_Ergodicity-of-coexpanding-on-average-random-dynamical-systems} establish ergodicity of volume measure under a random walk which has a codimension $1$ hyperbolicity gap, which is a stronger assumption than our uniform expansion gap assumption.
This result follows by applying our \autoref{theorem:Absolute-continuity-and-ergodicity} to an ergodic component of volume.
We note that the ergodicity part of our \autoref{theorem:Absolute-continuity-and-ergodicity} allows one to extend \cite[Cor.~2]{DolgopyatKrikorian2007_On-simultaneous-linearization-of-diffeomorphisms-of-the-sphere} to odd dimensions, as in \cite[Thm.~1.6]{DeWittDolgopyatZhang2026_Ergodicity-of-coexpanding-on-average-random-dynamical-systems}.

The work \cite{BenardZhang2026_Rigidity-and-equidistribution-of-random-walks-by-diffeomorphisms-near} has stronger assumptions and stronger conclusions compared to ours.
On the one hand, the assumptions on expansion and gaps are stronger, but at the same time their results are effective and also yield equidistribution of spheres in the random walk.

\paragraph{Applications.}
We will use the criteria in this paper, combined with \cite{BEFRH25}, to obtain a measure and orbit closure rigidity result for random walks on character varieties of surface groups, with target a compact Lie group, and with action given by algebro-geometric subgroups of the mapping class group.

\paragraph{Acknowledgements.}
This material is based upon work supported by the National Science Foundation under Grant No.
DMS-2305394 and DMS-2604568 (SF),
DMS-2400191 (AB), 
DMS-2453688 (FRH), 
and by the Simons Foundation (AE).

\paragraph{AI use acknowledgement}
The ideas and arguments in this paper are due to the authors and go back to winter of 2026.
AI was only used to assist with the drafting of some routine parts of the text.



					\section{Assumptions and results}
					\label{sec:Assumptions-and-results}

In this section, we give the precise assumptions for our main results.
Readers interested in compact manifolds only can skip \autoref{ssec:assumptions_on_the_total_space}.
In \autoref{ssec:Uniform-Expansion-with-gaps} we define our functions $\alpha_{k}^{\bullet}(f,\bullet)$ and view them as cocycles on appropriate flag bundles over $M$.
We then list the necessary assumptions in \autoref{ssec:dynamical:assumptions} and state our main result in \autoref{ssec:Main-Theorem}.

\subsection{Assumptions on the total space}
\label{ssec:assumptions_on_the_total_space}

In this subsection we record the standing assumptions on the ambient manifold
and on the random dynamics.

Let $M$ be a smooth, possibly non-compact, connected
manifold equipped with a background
Riemannian metric.  All tangent-space norms and distances below are taken with
respect to this metric unless otherwise specified.  We write $TM(q)$ for the
tangent space at $q$, and $B(q;r)$ for the radius-$r$ ball in $TM(q)$.  The
Riemannian distance on $M$ is denoted $d^M$.

Instead of assuming that $M$ is compact, which is insuffient for our
intended applications, we postulate a more flexible setup with a
systole function and an admissible Margulis function. In this
subsection, we make the
precise definitions.

\subsubsection{Norms on jet bundles}
\label{sssec:norms_on_jet_bundles}
Denote by \index{$J_kM$}$J_kM$ the bundle of $k$-jets of functions on $M$ and
its dual \index{$J_k^\dual M$}$J_k^\dual M$.  For example, $J_0M$ is the
trivial $1$-dimensional bundle, and
$T^\dual M = \ker\left(J_1 M \onto J_0 M\right)$.

The Riemannian metric on $M$ yields:
\begin{enumerate}
\item Norms on the $k$-jets $J_kM$, and induced norms on the duals
$J_k^\dual M$.  The norms are compatible, i.e. the embedding
$J_k^\dual M\into J_{k+1}^\dual M$ is an isometry.
\item The norm on $TM$ gives a distance function on $M$, by minimization along
paths.
\end{enumerate}
We will denote these ambient norms by
\index{$\norm$@$\tnorm{\bullet}$}$\tnorm{\bullet}$, and if necessary indicate
with a subscript the order of the jets.

\subsubsection{Admissible charts with bounds on jets}
\label{sssec:charts_with_bounds_on_jets}
We assume that there exists an ambient radius function
\index{$r_a$}$r_a\colon M\to (0,1]$ such that for every $q\in M$ there is a
smooth chart, which we will call an admissible exponential chart,
\begin{displaymath}
\index{$e$@$\exp_q$}\exp_q\colon B(q;r_a(q))\to M,
\end{displaymath}
where $B(q;r_a(q))$ is the $\tnorm{\bullet}$-ball in $TM(q)$.  We require
$\exp_q$ to be a diffeomorphism onto its image and to be normalized by
\begin{equation}
\label{eq:chart:normalization}
\exp_q(0)=q, \qquad D_0\exp_q=\Id_{TM(q)}.
\end{equation}
The term ``exponential'' is only terminology; these charts need not be the
geodesic exponential maps of the Riemannian metric. 

We assume the following compatibility between these charts and the background
splittings of higher order jets.  For every $k\geq 0$ there exists a function
\index{$C_k(q)$}$C_k\colon M\to [1,\infty)$ such that, for any
$v\in B(q;r_a(q))$ and $q':=\exp_q(v)$, the induced map between higher order
jets
\begin{displaymath}
J_k(TM(q))(v) \xleftarrow{J_k\exp_q} J_kM(q')
\end{displaymath}
has norm bounded by $C_k(q)$.  The jet bundles over a vector space equipped
with a norm are naturally split and have natural metrics induced from the
metric of the vector space.

The functions $r_a(q)$ and $C_k(q)$ will be controlled by the systole function
in \autoref{sssec:systole_function} below.

\begin{remark}[On distances and norms]
\label{rmk:on_distances_and_norms}
If the chosen charts are the Riemannian exponential charts, then the
Levi-Civita connection provides the above splittings.

For every $n$, there exists a constant $c_n$ such that for every Finsler norm
$\norm{-}_F$ on an $n$-dimensional vector space, there exists a
positive-definite inner product $\norm{-}_J$ provided by the John ellipsoid of
the unit ball of the Finsler metric such that
$\norm{-}_F\leq \norm{-}_J\leq c_n\norm{-}_F$.  Thus, on tangent spaces, we can
freely pass from a Finsler metric to a positive-definite quadratic form, at the
cost of dimensional constants.

\end{remark}

\subsubsection{Random dynamics and words}
\label{sssec:random_dynamics_and_words}

In this paper, all measures are understood to be completed
when necessary, and we retain the same notation for their completions.
For joint integrals, we use the completion of the corresponding product
measure.

Let $\mu$ be a probability measure on $\Diff_\infty(M)$ with compact
support in the $C^\infty$ compact-open topology, 
and let \index{$S$@$\cS_\mu$}$\cS_\mu:=\operatorname{supp}\mu$.  We also write
\begin{displaymath}
\index{$S$@$\cS_\mu^\pm$}\cS_\mu^\pm:=\cS_\mu\cup\{s^{-1}\st s\in\cS_\mu\}.
\end{displaymath}
If $w=(s_0,\ldots,s_{n-1})$ is a word in $\cS_\mu^\pm$, we write
\index{$|w|$}$|w|=n$ and
\begin{displaymath}
\index{$f_w$}f_w:=s_{n-1}\circ\cdots\circ s_0.
\end{displaymath}
The empty word has length $0$ and represents the identity map.  Words in
$\cS_\mu$ without inverses are precisely the maps in the support of the
convolution powers of $\mu$, with multiplicity ignored.

The assumptions below are stated for one letter in $\cS_\mu^\pm$.  Since
$\cS_\mu$ is compact, the same type of estimates hold for words of any fixed
length, after changing the constants.  In particular, replacing $\mu$ by a
convolution power $\mu^{(N)}$ preserves the standing assumptions, with constants
allowed to depend on $N$.

\subsubsection{Systole function for random dynamics}
\label{sssec:systole_function}
\label{sssec:random:systole_function}
We assume there exists a function
\begin{displaymath}
\index{$\phi_a(q)$}\phi_a\colon M\to [2,\infty),
\end{displaymath}
called the ambient \emph{systole function}, such that $\log\phi_a$ is uniformly
continuous for the ambient Riemannian distance.  In examples $\phi_a$ is often
an inverse systole, such as the height on the upper half-plane or a reciprocal
of the distance to a singular set; throughout the paper we keep the shorter
terminology ``systole function''.  In particular, $\phi_a$ is continuous and
therefore bounded on compact sets.

We require $\phi_a$ to control both the local geometry and the compact-support
random dynamics in the following sense.
\begin{description}
\item[Bounded growth of the systole.] There exists a constant $A_\phi\geq 1$
such that, for every $s\in\cS_\mu^\pm$ and every $q\in M$,
\begin{displaymath}
\phi_a(s(q))\leq A_\phi\,\phi_a(q).
\end{displaymath}
Consequently, for every word $w$ in $\cS_\mu^\pm$,
\begin{equation}
\label{eq:systole:growth}
\phi_a(f_w(q))\leq A_\phi^{|w|}\phi_a(q).
\end{equation}

\item[Bounds on chart radii and chart transitions.] There is a constant
$e_0>0$, and for every $k\geq 0$ there is a constant $e_k>0$, such that, for
every $q\in M$,
\begin{displaymath}
r_a(q)\geq \phi_a(q)^{-e_0}, \qquad C_k(q)\leq \phi_a(q)^{e_k}.
\end{displaymath}

\item[Bounds on generator dynamics.] There exists a constant $e>0$ such that,
for the dynamical radius
\begin{displaymath}
\index{$r_d(q)$}r_d(q):=\phi_a(q)^{-e}r_a(q),
\end{displaymath}
and for every $s\in\cS_\mu^\pm$, the inclusion
\begin{displaymath}
s\bigl(\exp_q(B(q;r_d(q)))\bigr)
\subset \exp_{s(q)}\bigl(B(s(q);r_a(s(q)))\bigr)
\end{displaymath}
holds.  We denote the resulting coordinate expression by
\begin{equation}
\label{eqn:big_G_t_definition}
\index{$G_{s,q}$}G_{s,q}:=\exp_{s(q)}^{-1}\circ s\circ\exp_q .
\end{equation}
For every $k\geq 0$ there is a constant $a_k>0$ such that
\begin{equation}
\label{eq:norm:Gsq:bound}
\norm{G_{s,q}}_{C^k\left(B(q;r_d(q))\right)}\leq \phi_a(q)^{a_k}
\end{equation}
for all $s\in\cS_\mu^\pm$ and all $q\in M$.
\end{description}

\begin{remark}[Fixed-length words]
\label{rmk:systole_fixed_length_words}
The preceding one-letter assumptions imply analogous estimates for words of any
prescribed length $L$.  More precisely, after replacing the constants by new
constants depending on $L$, the same statements hold with $s$ replaced by $f_w$
for all words $w$ with $|w|\leq L$.  We will use this fixed-length stability
without changing notation.
\end{remark}

\subsubsection{Some special cases}
\label{sssec:some_special_cases}
If $M$ is compact and $\cS_\mu$ is finite, then we can take $\phi_a\equiv 10$,
or any other fixed constant larger than $1$.  The chart radii, chart-transition
constants, and all $C^k$-bounds for the finitely many maps in $\cS_\mu^\pm$ are
then uniform.  The non-compact hypotheses above are a way of retaining
polynomial control of these constants in terms of $\phi_a(q)$.

\subsubsection{An elementary bound}
\label{sssec:an_elementary_bound}
We record, for future use, the following elementary consequence of compact
support and smoothness.  For every compact set $K\subset M$ and every
$L\in\N$, there exist $\ve_0>0$ and $C=C(K,L,\ve_0)$ such that, whenever
$q\in K$, $d^M(q,q')\leq \ve_0$, and $w$ is a word in $\cS_\mu^\pm$ of length
at most $L$, one has
\begin{equation}
\label{eqn:elementary_compactness_2_point_estimate}
d^M(f_w(q),f_w(q'))\leq C\cdot d^M(q,q').
\end{equation}
Indeed, choose a relatively compact neighborhood $K'\supset K$, and then choose
$\ve_0>0$ so that $q\in K$ and $d^M(q,q')\leq \ve_0$ imply $q'\in K'$.
The words of length at most $L$ form a compact family, and therefore
have uniformly bounded first derivative on
$K'$.


\subsubsection{Margulis functions}
\label{sec:subsec:Margulis:functions}

The systole function controls local geometry, but it does not by itself imply
that typical random trajectories spend most of their time in regions of
controlled geometry.  The following  Margulis function
condition is the recurrence
hypothesis that replaces compactness in the non-compact setting.

For a non-negative measurable function $u$ on $M$, write
\begin{displaymath}
\index{$P_\mu$}P_\mu u(q):=\int_{\Diff_\infty(M)}u(f(q))\,d\mu(f)
\end{displaymath}
for the Markov operator associated to the random walk.

\begin{definition}[Admissible Margulis function]
\label{def:random:margulis:function}
A continuous proper function $h\colon M\to [1,\infty)$ is called an admissible
Margulis function for $(M,\mu,\phi_a)$ if there exist constants $0<c<1$,
$b<\infty$, $\kappa>0$, and $\delta>0$ such that the following hold.  Here
proper means that the sublevel sets $\{q\in M\st h(q)\leq R\}$ are compact for
all $R<\infty$.
\begin{itemize}
\item[\rm (a)] For all $q\in M$,
\begin{equation}
\label{eq:def:random:margulis}
P_\mu h(q)=\int_{\Diff_\infty(M)} h(f(q))\,d\mu(f)\leq c\,h(q)+b.
\end{equation}
\item[\rm (b)] For all $q\in M$,
\begin{equation}
\label{eq:def:random:margulis:systole-domination}
h(q)\geq \kappa\,\phi_a(q)^\delta,
\end{equation}
where $\phi_a$ is the systole function from \autoref{sssec:systole_function}.
\end{itemize}
\end{definition}

\subsection{Uniform Expansion with gaps}
\label{ssec:Uniform-Expansion-with-gaps}

For $x \in M$ let \index{$TM(x)$}$TM(x)$ denote the tangent space to $M$.
Let \index{$G$@$\Gr_m(x)$}$\Gr_m(x)$ denote the Grassmannian of
$m$-dimensional subspaces of $TM(x)$. The background Riemannian metric induces,
for each $x \in M$, a norm
\index{$\| \cdot \|_x$}$\| \cdot \|_x$ on the exterior power $\bigwedge^m(TM(x))$. We then have a cocycle \index{$\alpha_m$}$\alpha_m \colon \Diff_\infty(M) \times \Gr_m(TM) \to \mathbb{R}$, given by
\begin{equation}
\label{eq:def:sigma:f:L}
	\alpha_m(f,L) = \log \frac{\| Df(l_1 \wedge \dots l_m) \|_{f(x)}}{\|l_1 \wedge \dots l_m \|_x}
\end{equation}
where $\{l_1, \dots, l_m\}$ is any basis for $L$. Then $\alpha_m(f,L)$ measures the change in the volume of the subspace $L$ under the diffeomorphism $f$. 


\begin{definition}[Uniform expansion]
\label{def:uniformly_expanding}
The measure $\mu$ is \emph{uniformly expanding in dimension $m$} if 
there exist $C_m > 0$ and $N_m \in
\bN$ such that for all $x \in M$ and all $L \in \Gr_m(x)$,
\begin{equation}
\label{eq:def:uniform:expansion}
	\int_{\Diff_\infty(M)} \alpha_m(f,L) \, d\mu^{(N_m)}(f) > C_m > 0
\end{equation}
where \index{$\mu^{(N_m)}$}$\mu^{(N_m)}$ denotes the $N_m$-fold convolution of $\mu$ with itself.
\end{definition}

\begin{definition}[Uniform expansion gaps]
	\label{def:uniform_expansion_gaps}
	The measure $\mu$ has a \emph{uniform expansion gap in dimension $m$} if 
	there exist $C_m > 0, \delta_m\in\{\pm 1\}$, and  $N_m \in
	\bN$ such that for all $x \in M$ and for all $L_1\subset L_0$ with $L_i \in \Gr_{m-i}(x)$, we have
\begin{displaymath}
	\delta_m\cdot {\int_{\Diff_\infty(M)} \big[\alpha_{m}(f,L_0)-\alpha_{m-1}(f,L_1)\big] \, d\mu^{(N_m)}(f) } > C_m > 0
\end{displaymath}
	where $\mu^{(N_m)}$ denotes the $N_m$-fold convolution of $\mu$ with itself.
\end{definition}

\begin{remark}[On uniform expansion and gaps]
	\label{rmk:on_uniform_expansion_and_gaps}
	Suppose that $\nu$ is an ergodic $\mu$-stationary measure with Lyapunov spectrum $\lambda_1\geq \dots\geq \lambda_{\dim M}$ on $TM$.
	If $\mu$ is uniformly expanding in dimension $m$, then it follows from the same arguments as in the proof of \cite[Thm.~C]{Chung2020_Stationary-measures-and-orbit-closures-of-uniformly-expanding-random}, that the Lyapunov exponents satisfy $\lambda_1+\dots +\lambda_m>C_m/N_m$.
	Analogously, if $\mu$ has uniform expansion gap in dimension $m$ then $\delta_m \lambda_m>C_m/N_m$ where $\delta_m\in \{\pm 1\}$.
	
	Note also that in case $M$ is compact, both \autoref{def:uniformly_expanding} and \autoref{def:uniform_expansion_gaps} are properties that are open upon a $C^1$-perturbation.
\end{remark}

\subsubsection{Flag spaces and cocycles.}
\label{sssec:flag:spaces:cocycles}

To encode first- and second-difference information in the
exterior-power cocycles, we pass from Grassmannians to partial flag
spaces and define the derived cocycles $\alpha_k'$ and  $\alpha_k''$. 

For a vector space $V$, let us define the space of partial flags
\begin{displaymath}
\Flag_{k}'(V) = \{ (L_k,L_{k+1}) \mid  L_k
\subset L_{k+1} \subset V, \dim L_k = k, \dim L_{k+1}=k+1 \}
\end{displaymath}
Similarly, let
\begin{multline*}
\Flag_{k}''(V) = \{ (L_{k-1},L_k,L_{k+1}) \mid L_{k-1} \subset L_k
\subset L_{k+1} \subset V, \\
\dim L_{k-1} = k-1, \dim L_k = k, \dim  L_{k+1} = k+1\}
\end{multline*}
We also define  $\alpha_k': \Diff_\infty(M) \times \Flag_k'(TM) \to \R$ by
\begin{displaymath}
\alpha_k'(f,L_k,L_{k+1}) = \alpha_k(f,L_k) - \alpha_{k+1}(f,L_{k+1})
\end{displaymath}
and $\alpha_k'': \Diff_\infty(M) \times \Flag_k''(TM) \to \R$ by
\begin{displaymath}
\alpha_{k}''(f,L_{k-1},L_k,L_{k+1}) = 2 \alpha_k(f,L_k)-
\alpha_{k-1}(f,L_{k-1}) - \alpha_{k+1}(f,L_{k+1}).
\end{displaymath}

\subsection{Dynamical Assumptions}
\label{ssec:dynamical:assumptions}
Let $d$ be the dimension of $M$. 
Fix $k\in\{1,\dots,d-1\}$. In the rest of the paper, we make the following assumptions:

\begin{itemize}
\item
The random walk is given by the Bernoulli measure, and we consider the
natural filtration generated by the letters.
\item
$M$ satisfies  all the assumptions of 
\S\ref{ssec:assumptions_on_the_total_space}, and  there exists an
admissible Margulis function $h: M \to [1,\infty)$. 
\item 
We assume that there exists a uniform gap in dimension $k+1$
with $\delta_{k+1} < 0$. In other words we assume that there exist $C
> 0$ and $N \in \bN$ such that for all $x \in M$ and all
$F \in \Flag'_k(T_x  M)$,
\begin{equation}
\label{eq:uniform:gap:negative}
  \int_{\Diff_\infty(M)} \alpha_k'(f,F)  \,  d\mu^{(N)}(f)  > C > 0
\end{equation}
In particular, in view of Remark~\ref{rmk:on_uniform_expansion_and_gaps},
if $\nu$ is an ergodic $\mu$-stationary measure with Lyapunov spectrum
$\lambda_1\geq \dots\geq \lambda_d$ on $TM$, then
$\lambda_{k+1} < 0$. 
\item We also need to assume one of the following:
  \begin{itemize}
    \item[{\rm (FS)}] (Full Stable Case). For every $\epsilon > 0$ 
      there exists $N \in \bN$ such that
      for all $x \in M$ and all $F \in \Flag'_{k-1}(T_x M)$,
\begin{equation}
\label{eq:uniform:gap:positive}
\int_{\Diff_\infty(M)} \alpha_{k-1}'(f,F)  \,  d\mu^{(N)}(f)  \le \epsilon.
\end{equation}

     \item[{\rm (PS)}] (Partial Stable Case).
	   There exist $C > 0$ and
       $N \in \bN$ such that for all $x \in M$ and all $F \in
       \Flag''_k(T_x M)$,
\begin{equation}
\label{eq:uniform:gap:second:derivative}
\int_{\Diff_\infty(M)} \alpha_k''(f,F)  \,  d\mu^{(N)}(f) >C > 0.
\end{equation}
\end{itemize}
\end{itemize}
We will see that in the (FS) case, the dimension of the stable manifold is $d-k$, while in the (PS) case the dimension of the stable manifold is strictly larger than $d-k$ and in this case we will only work with the partial stable of dimension $d-k$.

\begin{remark}
\leavevmode
\begin{enumerate}
\item In some cases, zero exponents may be forced for algebraic
  reasons. For example if the algebraic hull is an orthogonal group,
  we may have $\alpha_{k-1}'$ identically zero, which trivializes
  (\ref{eq:uniform:gap:positive}). Also, in the symplectic case,
  (\ref{eq:uniform:gap:positive}) may follow from
  (\ref{eq:uniform:gap:negative}) because of a symmetry.
	
\item Note that if (\ref{eq:uniform:gap:negative}) holds for $N$, it
  holds for $mN$, where $m \in \bN$, with $C$ replaced by
  $mC$. Similar statements hold for (\ref{eq:uniform:gap:positive})
  and (\ref{eq:uniform:gap:second:derivative}). Therefore we may
  assume that all three statements hold with the same $N$.

\item Suppose we are in the case (FS).
Let $C$, $N_-$ be the constants from
(\ref{eq:uniform:gap:negative}). Choose $\epsilon < C/(2N_-)$ in
(\ref{eq:uniform:gap:positive}), and let $N_+$ be the constant $N$
from (\ref{eq:uniform:gap:positive}). Let  $N$ be a common multiple
of $N_+$ and $N_-$. 

Note that $\alpha_k'' = \alpha_k' - \alpha_{k-1}'$.  Then, in view of
(\ref{eq:uniform:gap:negative}) and (\ref{eq:uniform:gap:positive}),
\begin{multline*}
  \int \alpha_k''(f,F)\, d\mu^{(N)}(f) = \\
  =
  \int \alpha_k'(f,F)\, d\mu^{(N)}(f)
  -
  \int \alpha_{k-1}'(f,F)\, d\mu^{(N)}(f) 
   >\frac{N}{N_-}C-\frac{N}{N_+}\epsilon
\geq\frac{NC}{2N_-}>0.
\end{multline*}
Therefore, we may assume that (\ref{eq:uniform:gap:second:derivative})
also holds in case (FS).
		
We also pass to a common multiple to make sure that in both the (FS)
and (PS) cases, (\ref{eq:uniform:gap:negative}) and
(\ref{eq:uniform:gap:second:derivative}) hold for the same $N$.
\end{enumerate}
\end{remark}

\subsubsection{Replacing $\mu$ with $\mu^{(N)}$.}
We now replace $\mu$ with $\mu^{(N)}$, so that we may assume that (both
in the (FS) and the (PS) cases),  
(\ref{eq:uniform:gap:negative}) and 
(\ref{eq:uniform:gap:second:derivative}) 
hold with $N=1$.

\subsubsection{Extra dynamical assumptions}
			\label{ssec:extra:dynamical:assumptions}
We now state some additional dynamical assumptions which imply a
non-concentration result for the (tangent spaces to) the stable manifolds. 

We now assume the following:
\begin{itemize}
\item We are in case (FS).
\item We have uniform expansion in dimension $d-1$. More precisely, 
there exist $C > 0$ and $N \in
\bN$ such that for all $x \in M$ and all $L \in \Gr_{d-1}(x)$,
\begin{equation}
\label{eq:codim:one:uniform:expansion}
	\int_{\Diff_\infty(M)} \alpha_{d-1}(f,L) \, d\mu^{(N)}(f) > C > 0
\end{equation}
\item For every $\epsilon > 0$ there exists  $N \in
\bN$ such that for all $x \in M$ and all $L \in \Gr_{d}(x)$,
\begin{equation}
\label{eq:codim:zero:uniform:expansion}
	\int_{\Diff_\infty(M)} \alpha_{d}(f,L) \, d\mu^{(N)}(f) \le \epsilon.
\end{equation}
(Note that since $\dim M = d$, $\Gr_d(x)$ is a singleton). 
This condition is automatic if all $f$ in the support of $\mu$
preserve the Riemannian volume form on $M$. 
\end{itemize}

As in the previous subsection, we may assume, for any fixed $\epsilon
> 0$, (after replacing $\mu$ by
an appropriate convolution with itself) that
(\ref{eq:codim:one:uniform:expansion}) and
(\ref{eq:codim:zero:uniform:expansion}) hold with $N=1$.


\subsection{Main Theorem}
\label{ssec:Main-Theorem}

\begin{theorem}[Absolute continuity and ergodicity]
\label{theorem:Absolute-continuity-and-ergodicity}
Suppose that the random walk satisfies the assumptions in
\autoref{ssec:dynamical:assumptions}. We further assume that either 
the ``extra dynamical assumptions'' of
\S\ref{ssec:extra:dynamical:assumptions} are satisfied, or that the
conclusion of \autoref{theorem:non:concentration} holds.

Suppose that $\nu$ is a $\mu$-stationary 
probability measure on $M$ which is invariant
under every $f$ in the support of $\mu$. We also assume that 
the conditional measures along (partial) $(d-k)$-stable manifolds are
in the Lebesgue class. Then
\begin{itemize}
\item[{\rm (i)}] $\nu$ is absolutely continuous with a smooth density with
  respect to the Riemannian volume  measure on $M$.
\item[{\rm (ii)}] $\nu$ is fully supported on $M$ and ergodic.
\item[{\rm (iii)}]  If $\mu$ is supported on volume-preserving
diffeomorphisms, then $\nu$ agrees with the normalized
volume measure on $M$, and in particular the volume measure is ergodic
and finite. 
\end{itemize}
\end{theorem}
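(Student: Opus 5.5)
The plan is to combine three ingredients: the existence and uniform geometry of $(d-k)$-dimensional (partial) stable manifolds at every point (\autoref{theorem:Good-stables-everywhere} and \autoref{theorem:good:words}); the non-concentration of their tangent planes (\autoref{theorem:non:concentration}, or the extra assumptions of \S\ref{ssec:extra:dynamical:assumptions} which imply it); and the hypothesis that $\nu$ has Lebesgue-class conditionals along these leaves. Since $\nu$ is invariant under every $f\in\cS_\mu$, and not merely stationary, we may look at $\nu$ simultaneously along the stable leaves $W^s_\omega(x)$ for many different words $\omega$ through the same point $x$. The idea is that leaves of dimension $d-k$ whose tangent planes do not concentrate near any proper configuration together span $TM(x)$. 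Then a local ``saturation'' or Hopf-type argument upgrades Lebesgue-class conditionals along each family to absolute continuity of $\nu$ itself.

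\textbf{Step 1: absolute continuity.} Fix a point $x$ in the support of $\nu$ with $h(x)$ bounded. By \autoref{theorem:good:words}, for a set of words of $\mu^{\bN}$-measure close to $1$ the stable leaves through points near $x$ are uniformly $C^r$ embedded discs of size bounded below. They vary continuously in $y$ (uniform geometry), and so form a local $C^1$-foliation-like family $\cF_\omega$ with absolutely continuous holonomies. The last point is the standard Pesin/Anosov absolute continuity argument, which becomes easy here because of the uniform geometry.

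By non-concentration, choose words $\omega_1,\dots,\omega_m$ whose tangent planes $E_i=T_x W^s_{\omega_i}(x)$ satisfy $E_1+\dots+E_m=TM(x)$, with quantitative transversality. The composite map
\begin{displaymath}
(t_1,\dots,t_m)\mapsto \Phi(t_1,\dots,t_m),
\end{displaymath}
which slides successively along $\cF_{\omega_1},\dots,\cF_{\omega_m}$, is then a submersion near $0$. Disintegrating $\nu$ along $\cF_{\omega_1}$ gives Lebesgue-class conditionals, hence $\nu$ restricted to a neighbourhood is equivalent to a measure of the form $\int \Leb_{\cF_{\omega_1}(y)}\,d\bar\nu(y)$. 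Applying the same argument to $\cF_{\omega_2}$, transverse to the first family, and using that each disintegration has Lebesgue-class leaves, we inductively obtain that $\nu$ is Lebesgue-class along the saturated families. After $m$ steps $\nu$ is absolutely continuous near $x$.

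\textbf{Step 2: smoothness of the density.} Smoothness then follows by a bootstrap. The density $\rho$ is invariant up to Jacobians, $\rho\circ f\cdot \operatorname{Jac}f=\rho$. Along stable leaves, the Jacobian cocycle converges, so $\rho$ restricted to a leaf equals an explicit smooth expression, namely a product of leafwise Jacobians along the forward orbit. A function that is smooth along several transverse families of uniformly smooth foliations spanning $TM$ is smooth, by Journé's lemma or its multi-foliation variant.

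\textbf{Step 3: ergodicity and support.} For ergodicity I would follow Zhang's Hopf argument \cite{Zhang2019_On-stable-transitivity-of-finitely-generated-groups-of-volume-preserving}. A $\mu$-invariant function $\psi\in L^1(\nu)$ is a.e. constant along stable leaves, since forward Birkhoff averages of continuous functions agree on a stable leaf. Because stable manifolds exist at every point with uniform size on compact sets, the local accessibility from Step 1 shows $\psi$ is locally a.e. constant near every point of $\operatorname{supp}\nu$. Full support follows from connectedness of $M$. The set $\operatorname{supp}\nu$ is closed. It is also open, because at every boundary point the stable leaves and the map $\Phi$ sweep out a full neighbourhood, and the smooth positive density propagates there: by Step 2 the density cannot vanish on an open subset of a saturated neighbourhood. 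Local constancy together with connectedness then gives ergodicity.

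\textbf{Step 4: volume-preserving case.} For (iii), volume $\Leb$ restricted to any compact set has Lebesgue conditionals along the leaves. The Margulis function $h$ ensures recurrence, and applying (i) and (ii) to ergodic components shows that each component is absolutely continuous with full support. Two such components are then equivalent, hence equal, so volume is a finite multiple of $\nu$.

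\textbf{Main obstacle.} I expect the hardest part to be Step 1: making the multi-family saturation rigorous when the leaves are defined only for good words, in a non-uniform and noncompact setting. One must control that the conditional measures for different families are compatible on a common positive-measure set. I would first try a Fubini argument on the product $\Omega^m\times M$, using a measurable choice of words for which \autoref{theorem:good:words} holds uniformly.
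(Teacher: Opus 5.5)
\textbf{Genuine gap in Step~1.} This is more than the technical obstacle you flag at the end: the objects Step~1 relies on are not available.

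For a fixed infinite word $\omega$, Theorem~\ref{theorem:Good-stables-everywhere} (after Fubini) gives leaves $\mathcal W_{d-k}(y,\omega)$ at $\nu$-a.e.\ $y$, but the size and geometry constants depend on $(y,\omega)$. Theorem~\ref{theorem:good:words} controls a set $G(w,y)$ of good continuations that depends on the base point $y$. The only uniform information about $T_y\mathcal W_{d-k}(y,\omega)$ at nearby points comes through a finite prefix, via \eqref{eq:good:words:tangent:control}. Nothing gives continuity of $y\mapsto\mathcal W_{d-k}(y,\omega)$ on a $\nu$-positive set, let alone absolute continuity of holonomies. For partial stable laminations in a non-compact, non-uniform setting with zero exponents, that would be a substantial theorem in its own right, not a citation.

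Even granting it, your induction ``$\nu$ is Lebesgue-class along the saturated families'' has no meaning once $m\geq 3$, because sums of tangent distributions of different laminations are not integrable. You would need to show that the composite slide $\Phi$ carries iterated leafwise Lebesgue measure to an absolutely continuous measure. That again rests on the missing holonomy regularity.

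The paper's Fourier route is designed to avoid all of this. For a fixed frequency direction $\xi$, non-concentration is used only to pick, from a finite family $\mathcal I_0$, a prefix $w_0$ such that $\ker_{d-k}(Df_{w_0}(y))$ is uniformly transverse to $\xi^\perp$ for $y$ near $x_0$. By \eqref{eq:good:words:tangent:control}, every good leaf through every such $y$ inherits this, giving \eqref{eq:uniform:transversality}; no spanning family is ever needed. A tree of words with partitions of unity makes Theorem~\ref{theorem:good:words} apply at every point, with bad probability $e^{-\beta\sqrt n}$. On good plaques the Ledrappier--Young density $J_{\tilde w}(x,y)$ allows leafwise integration by parts. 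The leafwise derivatives of the tree partition grow only like $(1+n)^b$, and choosing $n\approx(\log|\xi|)^2$ gives rapid decay of the Fourier transform of $\rho\nu$. Only Fubini over pairs $(x,\tilde w)$ enters; no holonomies and no continuity in the base point for a fixed word are needed.

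\textbf{Step~2 also fails as written.} The ``product of leafwise Jacobians'' is the Ledrappier--Young density of the \emph{conditional measure} of $\nu$ on a leaf, not the restriction of the ambient density $\rho$. The two differ by the leafwise conditional density of Riemannian volume along the lamination. Invariance alone only gives $\rho(y)/\rho(x)=\lim_n \frac{\rho(f^n_\omega y)}{\rho(f^n_\omega x)}\cdot\frac{\operatorname{Jac}f^n_\omega(y)}{\operatorname{Jac}f^n_\omega(x)}$, and the first factor is uncontrolled because $\rho$ is only in $L^1$. Journé's lemma moreover needs continuous foliations of an open set and a continuous function uniformly smooth along every leaf. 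You have measurable laminations on positive-measure sets and an a.e.-defined density. In the paper, smoothness comes for free from the Fourier decay.

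\textbf{Parts (ii) and (iii).} Ergodicity follows from Proposition~\ref{prop:Ergodicity-and-absolute-continuity-on-stables}: every ergodic component satisfies the hypotheses, hence has a smooth density and full support, and two such measures cannot be mutually singular. Full support comes from an exterior ball $D$ touching $\operatorname{supp}\nu$: \eqref{eq:uniform:transversality} forces good plaques through a $\nu$-positive set to enter $D$, and Lebesgue-class conditionals then give $\nu(D)>0$.

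For (iii), your Step~4 breaks down if $\operatorname{vol}(M)=\infty$. Finiteness is part of the claim, and (i)--(ii) apply only to probability measures. It also assumes without proof that volume has Lebesgue conditionals on the leaves. Instead, observe that the smooth density of $\nu$ is invariant under every $f\in\operatorname{supp}\mu$. It is therefore constant by ergodicity, continuity and full support, which also makes the volume finite.
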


\begin{remark}[Further strengthenings]
	\label{rmk:Further-strenthenings}
	\leavevmode
	\begin{enumerate}
		\item 
		We show in \autoref{theorem:Good-stables-everywhere} that under these assumptions, stable manifolds exist at \emph{every} point of $M$.
		\autoref{theorem:good:words} shows that stable manifolds have controlled geometry for good futures.
		\item 
Given the  assumptions on $\nu$ of
\autoref{theorem:Absolute-continuity-and-ergodicity} (namely
probability, ergodicity, invariance under every $f$ in the support of $\mu$  and
the conditional measure hypothesis), the same conclusions as in \autoref{theorem:Absolute-continuity-and-ergodicity} hold under weaker assumptions, namely the validity of \autoref{theorem:Good-stables-everywhere}, \autoref{theorem:good:words} and \autoref{theorem:non:concentration}.
		This is proved in \autoref{theorem:ac:conclusion}.
\end{enumerate}
\end{remark}

\noindent The next technical result allows us to assume first that $\nu$ in the main \autoref{theorem:Absolute-continuity-and-ergodicity} is ergodic, and then observe that the conclusions of \autoref{theorem:Absolute-continuity-and-ergodicity} imply its uniqueness.

\begin{proposition}[Ergodicity and absolute continuity on stables]
	\label{prop:Ergodicity-and-absolute-continuity-on-stables}
	Suppose $\nu$ is a probability measure on $M$, invariant under every $f\in \supp \mu$.
	Suppose furthermore that $\nu$ has absolutely continuous conditional measures along (partial) $(d-k)$-stable manifolds.
	
	Then a.e. ergodic component of $\nu$ is also invariant under every $f\in \supp \mu$ and has absolutely continuous conditional measures along (partial) $(d-k)$-stable manifolds.	
\end{proposition}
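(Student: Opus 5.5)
The plan is to treat $\nu$ as a measure invariant under the countable (or at least separable) semigroup action of $\supp\mu$ and to use the ergodic decomposition with respect to the $\sigma$-algebra $\mathcal I$ of sets that are invariant (mod $\nu$) under every $f\in\supp\mu$. Since $\supp\mu$ is compact in $\Diff_\infty(M)$, it is separable. We therefore fix a countable dense subset $\{f_i\}$. Invariance of a set or measure under every $f_i$ then implies invariance under every $f\in\supp\mu$, by continuity of $f\mapsto f_*\eta$ in the weak-$*$ topology.

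First, I will take a Borel model of the ergodic decomposition $\nu=\int \nu_\xi\,d\nu(\xi)$, given by the conditional measures of $\nu$ on $\mathcal I$ (realized by a countably generated sub-$\sigma$-algebra). The standard argument shows that for $\nu$-a.e.\ $\xi$ the measure $\nu_\xi$ is $f_i$-invariant for all $i$: indeed $(f_i)_*\nu_\xi$ is also a disintegration of $\nu$ over $\mathcal I$, because $\mathcal I$ is $f_i$-invariant. Uniqueness of disintegration then gives equality a.e., and by density the invariance extends to all $f\in\supp\mu$. Ergodicity of a.e.\ $\nu_\xi$ is the usual consequence of $\mathcal I$ being countably generated mod $\nu$.

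Second, I will handle the conditionals along stable manifolds. The key point is that the partition into (partial) $(d-k)$-stable manifolds is defined dynamically, and every $\mathcal I$-measurable function is essentially constant along stable leaves. This is a Hopf-type argument. For a continuous compactly supported $\phi$, the forward averages $\frac1n\sum_{j<n} P_\mu^j\phi$ (or the random Birkhoff averages along a typical word) converge $\nu$-a.e.\ and in $L^1$ to $\mathbb E[\phi\mid\mathcal I]$ by the random ergodic theorem. The stable manifolds through $x$ and $y$ are contracted along $\mu^{\mathbb N}$-typical words, by the construction of \autoref{theorem:Good-stables-everywhere} and \autoref{theorem:good:words}, so the averages agree at $x$ and $y$. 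Taking a countable dense family of $\phi$, we find that $\mathcal I$ is, mod $\nu$, coarser than the $\sigma$-algebra of measurable unions of stable leaves. Now choose a measurable partition $\mathcal P$ subordinate to the stable lamination. Its elements are open pieces of leaves, so $\mathcal P$ refines $\mathcal I$, and the conditionals satisfy $(\nu_\xi)^{\mathcal P}_x=\nu^{\mathcal P}_x$ for $\nu_\xi$-a.e.\ $x$ and $\nu$-a.e.\ $\xi$. This holds because conditioning first on $\mathcal I$ and then on $\mathcal P$ is the same as conditioning on $\mathcal P$ alone. Hence the conditionals of $\nu_\xi$ are those of $\nu$, which are in the Lebesgue class.

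The main obstacle is the second step. First, stable manifolds here depend on the future word $\omega$, so the stable partition lives on $\Omega\times M$ rather than on $M$. I plan to work with the skew product $\mu^{\mathbb N}\times\nu$, which is invariant because $\nu$ is invariant under every $f$, and to interpret ``conditionals along stables'' fiberwise in $\omega$. Second, I need to justify that $\mathcal I$ is coarser than the stable $\sigma$-algebra. The Birkhoff limit for the skew product has to be shown to be independent of $\omega$ mod $\nu$ when $\phi$ depends only on $M$, and constant along $W^s_\omega(x)$. Constancy along the leaf follows from uniform continuity of $\phi$ and the contraction along the leaf. If this route gets tangled, the fallback is a purely measure-theoretic argument. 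Since $\nu_\xi\ll\nu$ restricted to the $\mathcal I$-atom and $\mathcal I$-atoms are unions of leaves, the density $d\nu_\xi/d\nu$ is a function that is constant on leaves. Multiplying the Lebesgue-class conditionals of $\nu$ by such a function keeps them in the Lebesgue class.
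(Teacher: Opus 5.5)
Your main route is the paper's argument: pass to the skew product $\mu^{\otimes\mathbb N}\otimes\nu$, use the Birkhoff (Hopf-type) argument to show that the invariant $\sigma$-algebra, which is $\Omega\times\mathcal I$ mod $0$, is saturated by the fiberwise stable equivalence relation, and conclude that the ergodic decomposition commutes with disintegration along stables. You spell out more of the details than the paper does, namely the identification of the skew-product invariant sets with $\Omega\times\mathcal I$ and the transitivity of conditional measures. Drop the fallback, though: as soon as the ergodic decomposition has a non-atomic part, the corresponding components $\nu_\xi$ are singular to $\nu$ (their $\mathcal I$-atoms are $\nu$-null), so there is no density $d\nu_\xi/d\nu$ to work with.
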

\begin{proof}
	On the skew-product space $\Omega\times M$ consider the $\sigma$-algebra of sets that are saturated in the first coordinate, and invariant under the dynamics $(\omega,x)\mapsto (\sigma(\omega),f_{\omega_0}(x))$.
	This $\sigma$-algebra (rather, one which is countably-generated and $\mu^{\otimes \bN}\otimes \nu$-equivalent to it) determines the ergodic decomposition of $\nu$.
	By the Birkhoff ergodic theorem, this $\sigma$-algebra is also saturated by the equivalence relation $(\omega,x)\sim (\omega,y)$ if $y$ is in the (partial) $(d-k)$-stable manifold of $x$ for $\omega$.
	
	Therefore, taking ergodic decompositions commutes with disintegration of measures along (partial) $(d-k)$-stable manifolds, and the result follows.
\end{proof}




\section{Uniform geometry of stable manifolds}
\label{sec:Uniform-geometry-of-stable-manifolds}

\subsection{Singular value decomposition}
\label{sssec:Singular-value-decomposition}

Suppose that $V_1,V_2$ are $d$-dimensional Euclidean vector spaces, and
$A\colon V_1\to V_2$ is a linear isomorphism. Write
\[
\sigma_1(A)\ge \cdots \ge \sigma_d(A)>0
\]
for the singular values of $A$, listed with multiplicity.

Equivalently, the positive self-adjoint operators $A^\dagger A$ on $V_1$
and $AA^\dagger$ on $V_2$ have the same eigenvalues
$\sigma_1(A)^2,\dots,\sigma_d(A)^2$, and there are orthogonal
decompositions
\[
V_1=\bigoplus_\sigma V_1^\sigma,
\qquad
V_2=\bigoplus_\sigma V_2^\sigma,
\]
where $\sigma$ runs over the distinct singular values of $A$, such that
\[
A(V_1^\sigma)=V_2^\sigma
\qquad\text{and}\qquad
\|Av\|=\sigma\,\|v\|
\quad\text{for all }v\in V_1^\sigma.
\]

For $1\le k\le d-1$, let
\[
\Delta_k(A):=\frac{\sigma_k(A)}{\sigma_{k+1}(A)}.
\]

\begin{definition}[$k$-approximate image and kernel]
\label{def:def-k-approximate-kernel-and-image}
Assume that $\sigma_k(A)>\sigma_{k+1}(A)$. Then the \emph{$k$-approximate
image} of $A$ is
\[
\im_k A:=\bigoplus_{\sigma\ge \sigma_k(A)} V_2^\sigma,
\]
and the \emph{$(d-k)$-approximate kernel} of $A$ is
\[
\ker_{d-k} A:=\bigoplus_{\sigma\le \sigma_{k+1}(A)} V_1^\sigma.
\]
Equivalently, $\im_k A$ is the sum of the singular spaces corresponding to
the $k$ largest singular values, and $\ker_{d-k}A$ is the sum of the
singular spaces corresponding to the $d-k$ smallest singular values.
\end{definition}

\begin{remark}
If $\sigma_k(A)=\sigma_{k+1}(A)$, then these subspaces are not canonically
defined in general. In this paper we will only use $\im_k A$ and
$\ker_{d-k}A$ in situations where there is a $k$-gap.
\end{remark}

We use the notation  $\dist_{\Gr}$ to denote distance in the Grassmanian. 


	\subsection{Main statements on the geometry of stable manifolds}
		\label{ssec:Main-statement-on-the-geometry-of-stable-manifolds}

We assume that the dynamical assumptions in
\S\ref{ssec:dynamical:assumptions} are satisfied. (We do not use the
extra dynamical assumptions of \S\ref{ssec:extra:dynamical:assumptions} in
this section).   

\subsubsection{Words and Bowen balls}
	\label{sssec:Words-and-Bowen-balls}

Let $w=(i_0,i_1, \dots,  )$ be a finite or infinite word in the random
walk alphabet. Let $|w|$ denote the length of $w$. Then, for $n \le
|w|$, we write $w^{(n)}$ to denote the truncation $(i_0,i_1, \dots,
i_{n-1})$ and 
$f_w^n$ to denote  $f_{i_{n-1}}  \circ \dots  \circ f_{i_0}$. We use
the convention that $f^0_w$ is the identity map. 
We also use the notation
\begin{displaymath}
BB(w,x,\epsilon) = \{ y \in M \,\mid\,  \text{for all $0 \le i \le |w|$, $d(f_w^{i}(y), f_w^{i}(x)) < \epsilon$}\}.
\end{displaymath}
We refer to the sets $BB(w,x,\epsilon)$ as Bowen balls.  
Concatenation of words is written in composition order: $uv$ denotes the word obtained by first following $v$ and then following $u$.

Our main goals are the following two theorems.

\begin{theorem}[Good stables everywhere]
\label{theorem:Good-stables-everywhere}
There exists $\lambda > 0$ and  $R_{\rm init} < \infty$ depending only
on the constants in the dynamical assumptions of
\S\ref{sssec:random_dynamics_and_words} and
\S\ref{ssec:dynamical:assumptions} such that the following holds.

For any $x \in M$ there exists a subset $\Omega_\infty(x) \subset \Omega$ with
$\mu^{\otimes \mathbb N}(\Omega_\infty(x))=1$, such that for all $\tilde{w} \in
\Omega_\infty(x)$ there exists a $C^\infty$ embedded submanifold
\[
\mathcal W_{d-k}(x,\tilde{w})\subset M
\]
of dimension $d-k$, containing $x$ such that for every
$y\in\mathcal W_{d-k}(x,\tilde{w})$,
\[
\limsup_{n\to\infty}
\frac1n
\log d\bigl(f^n_{\tilde{w}}(x),f^n_{\tilde{w}}(y)\bigr)
\le -\lambda/2.
\]
Moreover,
\begin{displaymath}
\lim_{n \to \infty} \Delta_k(Df_{\tilde{w}}^{n}(x)) =  + \infty,
\end{displaymath}
\begin{displaymath}
\lim_{n  \to \infty} \dist_{\Gr}\!\bigl(
\ker_{d-k}(Df_{\tilde{w}}^{n}(x)),
T_x \mathcal W_{d-k}(x,\tilde w)
\bigr) = 0.
\end{displaymath}
and $h(f_{\tilde{w}}^{n}(x)) <  R_{\rm init}$ for infinitely many $n$.  
\end{theorem}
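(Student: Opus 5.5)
We will prove \autoref{theorem:Good-stables-everywhere} by combining a supermartingale argument on the flag bundle with a Pesin-type graph transform carried out in the admissible charts of \autoref{sssec:charts_with_bounds_on_jets}. The first step is to obtain almost sure linear growth of the $k$-th singular gap along the random walk started at a \emph{fixed} point $x$. For this we use the hypothesis \eqref{eq:uniform:gap:negative} with $N=1$. For any flag $(L_k,L_{k+1})$ at $x$, the process
\[
S_n := \sum_{j<n}\alpha_k'\bigl(f_{i_j},Df^j_{\tilde w}(L_k,L_{k+1})\bigr)
\]
has increments with conditional mean $>C$. Its increments are bounded by $O(\log\phi_a(f^j_{\tilde w}x))$ because of \eqref{eq:norm:Gsq:bound}. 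The Margulis function $h$ dominates $\phi_a^\delta$, so \eqref{eq:def:random:margulis} gives uniform exponential moments of $\log\phi_a$ along the walk. A martingale strong law, in Azuma form with truncation at the $h$-sublevel sets, then yields $\liminf S_n/n\ge C$ almost surely.

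The conclusion must hold for \emph{all} flags simultaneously, not one flag at a time. To get this we work with the quantity
\[
\log\Delta_k(Df^n_{\tilde w}(x)) = \log\sigma_k-\log\sigma_{k+1},
\]
which we express as $\max$ over $k$-planes and $\min$ over $(k+1)$-planes of the volume cocycles. To pass from a single flag to all flags, we apply the flag estimate to the flag that attains the relevant extremum. The standard route is a Lyapunov-function argument: we show that $u(F)=\|\,\cdot\,\|^{-\eta}$-type functions of the $k$- and $(k+1)$-volumes are contracted by $P_\mu$ on the flag bundle. We would first try the function $\Delta_k(A)^{-\eta}$ itself, following the method of \cite{Chung2020_Stationary-measures-and-orbit-closures-of-uniformly-expanding-random}, and aim for $\lambda:=C/2$. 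In the same step, (PS) via \eqref{eq:uniform:gap:second:derivative} gives that the $(k+1)$-st contraction dominates the $k$-th. This domination makes the approximate kernels $\ker_{d-k}(Df^n_{\tilde w}(x))$ Cauchy in $\Gr_{d-k}$ at exponential rate, by the usual estimate
\[
\dist_{\Gr}\le C\,\Delta_k^{-1}
\]
for consecutive compositions. We denote the limit by $E(x,\tilde w)$. Recurrence, namely $h(f^n_{\tilde w}x)<R_{\rm init}$ infinitely often, follows from \eqref{eq:def:random:margulis} by a standard Borel--Cantelli/supermartingale argument. Here $R_{\rm init}$ is chosen from $b/(1-c)$.

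The second step constructs $\mathcal W_{d-k}(x,\tilde w)$. We set up the graph transform backward along the trajectory: $\mathcal W$ is obtained as the limit of $(f^n_{\tilde w})^{-1}$ applied to $(d-k)$-discs in charts at $f^n_{\tilde w}x$ that are transverse to the image of $E$. Equivalently, we take the Hadamard--Perron characterization: $\mathcal W$ is the set of $y$ whose orbit stays in the chart sequence with growth $\le e^{-\lambda n/2}$. The nonuniformity produced by large $\phi_a$ is handled by tempered radii $r_n=\phi_a(f^nx)^{-e'}e^{-\epsilon n}$. These radii are admissible because $\log\phi_a(f^n_{\tilde w}x)=o(n)$ almost surely, again by the Margulis function. $C^\infty$ smoothness follows from the jet bounds on $G_{s,q}$ and the fibre contraction theorem, using that the relevant rate is $\sigma_{k+1}/\sigma_k$, i.e.\ $C^r$ spectral gaps for the partial stable. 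In case (PS) this is exactly where the second-derivative gap is needed. Tangency $T_x\mathcal W=E$ then gives the Grassmannian convergence statement.

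The main obstacle is the first step's uniformity over flags when exponents may vanish (case (FS)). There $\sigma_{k-1}/\sigma_k$ need not grow, so the $k$-plane realizing $\sigma_k$ is not stable. We expect to resolve this by using \eqref{eq:uniform:gap:second:derivative}, which the earlier remark established also holds in case (FS). This gives $\alpha_k''>0$ on average, which controls $\sigma_k^2/(\sigma_{k-1}\sigma_{k+1})$, and therefore the convexity needed for the Lyapunov function on the partial flag variety, independently of the top exponents.
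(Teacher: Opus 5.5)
Your two steps do not connect, and the missing link is the heart of the proof. A graph transform along a single non-uniform orbit (or the Hadamard--Perron characterization) needs an invariant splitting $T_{x_m}M=\mathcal U(m)\oplus\mathcal S(m)$ at \emph{every} time $m$. It must have one-step contraction on $\mathcal S(m)$ and one-step domination in adapted norms, with the distortion of these norms and the angles $\angle(\mathcal U(m),\mathcal S(m))$ degrading at most like $e^{\epsilon m}$. This is \autoref{def:uniformly:tempered}, and together with the systole bound it is the only orbit-dependent input to \autoref{theorem:stable:manifold:theorem}.

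Your first step only controls products started at time $0$: $\Delta_k(Df^n_{\tilde w}(x))\to\infty$, $\sigma_{k+1}\to0$, and $\ker_{d-k}(Df^n_{\tilde w}(x))\to E$. It gives nothing uniform about the product $A(m,q)$ of the $q$ letters after time $m$ on the moving plane $\mathcal S(m)=Df^m_{\tilde w}(x)E$. Time-zero information only yields $\|A(m,q)|_{\mathcal S(m)}\|\le\|A(0,m+q)|_E\|/\mathfrak m(A(0,m)|_E)$. This is useless, because the denominator is essentially $\sigma_d(A(0,m))$. Nor does it control the angle between $\mathcal S(m)$ and the fast directions. Since $x$ is arbitrary (not $\nu$-typical) and exponents may vanish, Oseledets regularity is unavailable. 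The drift hypotheses also cannot be applied to $\mathcal S(m)$ directly, because it depends on the entire future. Tempered chart radii from $\log\phi_a=o(n)$ handle only the nonlinear side of this.

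The paper spends \S\ref{sec:good:words} on exactly this point (\autoref{prop:good:block:events}(a)):
\begin{itemize}
\item Blocks are selected with controlled incoming angle, cushion and contraction (\autoref{prop:parameterized:one:block:selection}, via \autoref{lemma:GM-step} and \autoref{lemma:kernel:stability:continued:gap}).
\item These are concatenated into admissible tails with block lengths at most $\epsilon t_j$ that are $C$-regular in the Goldsheid--Margulis sense (\autoref{lemma:almost:sure:admissible:tail}, \autoref{prop:hyperbolicity:admissible}).
\item The block splitting is interpolated to a one-step uniformly tempered splitting (\autoref{prop:admissible:to:uniformly:tempered}).
\end{itemize}
Only then does \autoref{lemma:uniformly:tempered:kernel:convergence} identify $T_x\mathcal W_{d-k}(x,\tilde w)$ with $\lim\ker_{d-k}$.

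Your first step also has errors. Applying the single-flag estimate ``to the flag that attains the relevant extremum'' is not legitimate. That flag is determined by $Df^n_{\tilde w}(x)$, hence by future letters, so the conditional drift bound does not apply to it. What works, and makes the Lyapunov-function detour unnecessary, is averaging over the $O(d)$-invariant measure on flags. The averaged increments still have conditional mean at least the drift constant for every history. By \autoref{lemma:avg-gamma-sigmak}, the averaged cocycle equals $\log\Delta_k$ (for $\alpha_k''$) or $-\log\sigma_{k+1}$ (for $\alpha_k'$) up to $O_{d,k}(1)$. An Azuma-type bound with height-controlled increments (\autoref{prop:azuma:margulis}) then finishes.

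You also have the roles of the hypotheses reversed. \eqref{eq:uniform:gap:negative} yields the contraction $\sigma_{k+1}\to0$, not growth of the gap. \eqref{eq:uniform:gap:second:derivative} controls $\sigma_k/\sigma_{k+1}$, not $\sigma_k^2/(\sigma_{k-1}\sigma_{k+1})$. So the obstacle you single out in case (FS) is not one: $\im_k$ and $\ker_{d-k}$ depend only on $\Delta_k$, and whether $\sigma_{k-1}/\sigma_k$ grows is irrelevant. The real difficulty is the time-uniform regularity described above.
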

\noindent We call $\cW_{d-k}(x,\tilde{w})$ the local $k$-stable manifold associated to $x$ and $\tilde{w}$.

\begin{theorem}[Good words]
\label{theorem:good:words}
There exist an integer $N$, numbers $\epsilon > 0$, $\lambda > 0$, $\beta>0$ and
$C_{\rm init} > 0$, $R_{\rm init} > 0$ and $\epsilon_{\rm init}  > 0$,
a function $\rho_{\rm init}: \mathbb{N} \to \bR_+$ 
and a finite collection $J$ of words in the random
walk alphabet each of length between $N/2$ and $N$ and for any finite
word $w$ in the random
walk alphabet and any $x \in M$ a preferred word $w_{next} =
w_{next}(x,w) \in J$,  all depending only on the constants of
\S\ref{sssec:random_dynamics_and_words} and 
\S\ref{ssec:dynamical:assumptions}
such that the following holds.

For every
$y\in M$ with
$h(y) \le R_{\rm init}$
and every finite word $u$ in the random walk alphabet, if
$v:=w_{\rm next}(y,u)$
and
$d(f_v^{|v|}(y),z) < \epsilon$,
then
\begin{equation}
\label{eq:good:words:uniform:return}
h(z)<R_{\rm init}.
\end{equation}

Suppose $x \in M$ and 
$w_0$ is any finite word in the random walk, 
and suppose $n \in \mathbb{N}$. Assume
\begin{equation}
\label{eq:initial:x:return:height}
h\left(f_{w_0}^{|w_0|}(x)\right)  \leq R_{\rm init},
\end{equation}
and
\begin{equation}
\label{eq:initial:x:return:gap}
\Delta_k(Df_{w_0}^{|w_0|}(x)) >  e^{\epsilon_{\rm  init}}.
\end{equation}
Suppose
$w = w_n \dots w_1 w_0$
is such that for each $0 \le i < n$ there exists
\begin{displaymath}
x_i \in BB(w_i \dots w_0,x,\epsilon) \cap B(x,\rho_{\rm init}(|w_0|))
\end{displaymath}
with
$y_i = f^{|w_i \dots w_0|}_{w_i \dots w_0}(x_i)$
satisfying
$h(y_i) \le R_{\rm  init}$ and
$w_{i+1} = w_{next}(y_i, w_i)$.
(So in particular, for $1 \le i \le n$,
$w_i\in J$).

Let $\Omega(w)$ be the set of all infinite words
extending $w$, and let $\mathbb{P}_w=\mu^{\otimes \mathbb N}$ denote
the Bernoulli measure on the continuations after the fixed prefix $w$,
viewed as a probability measure on $\Omega(w)$.

Then there exists $n_\ast=n_\ast(w_0) \ge 1$ depending on 
$w_0$ (but not on $x$ as long as (\ref{eq:initial:x:return:height})
and (\ref{eq:initial:x:return:gap}) hold), such that for $n \ge
n_\ast$ there exists a decomposition
\[
\Omega(w)=G(w,x)\sqcup B(w,x)
\]
such that
\begin{equation}
\label{eq:good:words:bad:probability}
\mathbb{P}_w(B(w,x))\le e^{-\beta \sqrt{n}}.
\end{equation}
Moreover, $G(w,x) \subset \Omega_\infty(x)$ (so for every
$\tilde w\in G(w,x)$, the $k$-stable manifold $\cW_{d-k}(x,\tilde{w})$
exists).  Also, for $\tilde{w} \in G(w,x)$, $\cW_{d-k}(x,\tilde{w})$
satisfies the following additional estimates:

The size of $\mathcal W_{d-k}(x,\tilde{w})$ is bounded from
below by a constant depending only on $w_0$ and the constants in the
dynamical assumptions of \S\ref{sssec:random_dynamics_and_words} and
\S\ref{ssec:dynamical:assumptions}. 

For every $r\ge1$, its $C^r$-geometry is
bounded above by a constant depending only on $w_0$, $r$, the
constants in the dynamical assumptions \S\ref{sssec:random_dynamics_and_words} and
\S\ref{ssec:dynamical:assumptions}. 

For $\rho > 0$, let $B^{\cW}(x,\tilde{w},\rho) = \{ y \in  \cW_{d-k}(x,\tilde{w}) \st
d_\cW(x,y) < \rho \}$. Then, there exist
$\rho_{\rm st} >0$ and $C_{\rm st} \ge 1$, both depending only on $w_0$, the
constants in the dynamical assumptions \S\ref{sssec:random_dynamics_and_words} and
\S\ref{ssec:dynamical:assumptions}, 
such that for every $m \ge 0$ and every
$y \in B^{\cW}(x,\tilde{w},\rho_{\rm st})$,
\begin{equation}
\label{eq:good:words:stable:contraction}
\left\|D_y f^{m}_{\tilde{w}}\big|_{T_y\mathcal W_{d-k}(x,\tilde{w})}\right\|
\leq C_{\rm st}e^{-\lambda m/2}.
\end{equation}
Also, for $y \in B^{\cW}(x,\tilde{w},\rho_{\rm st})$, 
\begin{equation}
\label{eq:good:words:jacobian:ratio}
J_{\tilde w}(x,y) := \lim_{m\to\infty}
\frac{ \Jac\!\left(f^m_{\tilde w}|_{\mathcal W_{d-k}(x,\tilde
w)}\right)(y) }{ \Jac\!\left(f^m_{\tilde w}|_{\mathcal
W_{d-k}(x,\tilde w)}\right)(x) }
\end{equation}
exists and satisfies
\begin{equation}
\label{eq:good:words:jacobian:bounds}
C_{\rm Jac}^{-1}\le J_{\tilde w}(x,y)\le
C_{\rm Jac},
\end{equation}
where $C_{\rm Jac}$ depends only on $w_0$,
the constants in the dynamical assumptions \S\ref{sssec:random_dynamics_and_words} and
\S\ref{ssec:dynamical:assumptions}. 

The function $y \to J_{\tilde{w}}(x,y)$ is $C^\infty$ on $B^{\cW}(x,\tilde{w},\rho_{\rm
  st})$. For every $r \ge 1$ there exists $C_{{\rm Jac},r} < \infty$,
depending only on $r$,
$w_0$ and the constants in the dynamical assumptions \S\ref{sssec:random_dynamics_and_words} and
\S\ref{ssec:dynamical:assumptions} such that
\begin{equation}
\label{eq:good:words:jacobian:derivatives}
\|D^r_{\cW,y} J_{\tilde{w}}(x,y)\| \le C_{{\rm Jac},r} \qquad\text{for
  every $y \in B^{\cW}(x,\tilde{w},\rho_{\rm st})$}.
\end{equation}

The tangent space of this stable manifold is controlled by the
slow space of the original word $w_0$:
\begin{multline}
\label{eq:good:words:tangent:control}
\dist_{\Gr}\!\bigl( \ker_{d-k}(Df_{w_0}^{|w_0|}(x)), T_x
\mathcal W_{d-k}(x,\tilde w) \bigr) \\  \le C_{\rm
init}\min\left(1,\Delta_k(Df_{w_0}^{|w_0|}(x))^{-1}\right).
\end{multline}
\end{theorem}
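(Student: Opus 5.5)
The plan is to run a drift-and-stopping-time argument for the gap cocycle, coupled with the Margulis function, and then build the stable manifold by a graph transform along good futures.

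\emph{Step 1: choice of $J$ and $w_{\rm next}$.} Since $\mu=\mu^{(N)}$ satisfies \eqref{eq:uniform:gap:negative} and \eqref{eq:uniform:gap:second:derivative} with $N=1$, compactness of $\cS_\mu$ lets us choose, for each point $y$ with $h(y)\le R_{\rm init}$ and each flag determined by the current approximate kernel, a word from a finite collection $J$ (lengths in $[N/2,N]$). The word is chosen so that it increases $\log\Delta_k$ by a definite amount along the relevant flag and does not destroy the height bound. Uniform continuity of $\log\phi_a$, properness of $h$, and \eqref{eqn:elementary_compactness_2_point_estimate} then give \eqref{eq:good:words:uniform:return} for all $z$ that are $\epsilon$-close to $f_v^{|v|}(y)$. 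Here $R_{\rm init}$ is chosen large using the Margulis inequality \eqref{eq:def:random:margulis} iterated along words in $J$.

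\emph{Step 2: random continuation after the prefix $w$.} For $\tilde w\in\Omega(w)$, I would consider two quantities along the future: the process $\log\Delta_k(Df^m_{\tilde w}(x))$ and the process $\log h(f^m_{\tilde w}(x))$. The $\alpha_k'$ drift makes the first a submartingale with increments bounded in terms of $\phi_a$, hence polynomially in $h$. The $\alpha_k''$ condition controls how the approximate kernel moves: the change of $\ker_{d-k}$ at step $m$ is $\lesssim\Delta_k^{-1}$, and the second-difference cocycle makes this decay exponentially on average. Taking a Lyapunov-type function of the form $\Delta_k^{-\eta}\cdot h^{\eta'}$ with small exponents, I would show it is a supermartingale up to a constant. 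Combined with Azuma/large-deviation bounds for martingales with stretched-exponential control coming from $h$, this yields a bad set $B(w,x)$ of probability $\le e^{-\beta\sqrt n}$. The $\sqrt n$ loss comes from truncating the unbounded increments at height $e^{\sqrt n}$. Off this bad set, $\Delta_k(Df^m)\ge e^{\lambda m}$ for all $m$, and the returns $h<R_{\rm init}$ occur with positive frequency. The prefix $w$ guarantees an initial gap through \eqref{eq:initial:x:return:gap} and an initial height through \eqref{eq:initial:x:return:height}, so $n_\ast(w_0)$ absorbs constants depending on $w_0$.

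\emph{Step 3: stable manifolds and estimates.} On good futures I would run the standard Pesin/graph-transform construction in admissible charts. The charts at return times have uniform size, and the nonlinearity is bounded by $\phi_a^{a_k}$, which is tempered along good futures because $h$ returns often and $\log h$ grows sublinearly. This produces $\cW_{d-k}$ as the limit of the $\ker_{d-k}(Df^m)$-directed leaves. Uniform $C^r$ bounds follow from the fiber-contraction theorem, using the gap $\Delta_k$ and the $\alpha_k''$ domination, which controls higher jets. The contraction estimate \eqref{eq:good:words:stable:contraction} follows by comparing with the linearization at $x$. The Jacobian ratio \eqref{eq:good:words:jacobian:ratio} converges because $\log\Jac$ is Lipschitz along leaves and distances contract exponentially, so the telescoping sum converges geometrically. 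Its derivatives are bounded similarly, giving \eqref{eq:good:words:jacobian:bounds} and \eqref{eq:good:words:jacobian:derivatives}. Finally, \eqref{eq:good:words:tangent:control} follows because successive approximate kernels differ by $\lesssim\Delta_k^{-1}$ at each stage, and these errors sum geometrically starting from $\Delta_k(Df_{w_0}^{|w_0|}(x))^{-1}$. The Bowen-ball hypothesis on the $x_i$ transports the height control from $y_i$ back to the orbit of $x$.

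\emph{Main obstacle.} I expect Step 2 to be the main obstacle. The difficulty is combining the gap drift with noncompactness: the increments of $\log\Delta_k$ are unbounded, with size $\sim\log\phi_a$. I would first try a product Lyapunov function $\Delta_k^{-\eta}h^{\eta'}$ with $\eta'\ll\eta$ and use the domination $h\ge\kappa\phi_a^\delta$ to show that its one-step expectation contracts away from a compact set.
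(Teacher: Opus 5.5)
The overall shape of your plan matches the paper: a finite word set obtained by compactness, drift estimates coupled with the Margulis function, and a graph transform on good futures. There is, however, a genuine gap between your Step~2 and Step~3.

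\textbf{The gap.} Your good event only controls products started at time $0$: the gap of $Df^m_{\tilde w}(x)$ grows and $h$ returns. The graph transform you want to run (\autoref{theorem:stable:manifold:theorem}, or any Pesin-type argument) needs more. It needs a uniformly tempered splitting: an equivariant $\mathcal U(i)\oplus\mathcal S(i)$ at \emph{every} time $i$, with one-step contraction and domination in adapted norms that are only $e^{\epsilon i}$-distorted. In other words, you need hyperbolicity of the future as seen from every intermediate time, with tempered angles. Gap growth measured from time $0$ does not give this. With $\mathcal S(i)=A(0,i)\mathcal S(0)$ you can only bound $\|A(i,m)|_{\mathcal S(i)}\|$ by $\|A(0,i+m)|_{\mathcal S(0)}\|/\mathfrak m(A(0,i)|_{\mathcal S(0)})$, which is useless once the exponents inside the slow (or fast) block spread. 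Since $x$ is arbitrary rather than $\nu$-typical, and zero exponents are allowed, there is no Oseledets regularity to fall back on.

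The theorem also demands that the size, the $C^r$ bounds, $C_{\rm st}$ and $C_{\rm Jac}$ depend only on $w_0$, uniformly in $x$, $n$ and $\tilde w\in G(w,x)$. ``$\log h$ grows sublinearly'' and Borel--Cantelli-type tempering only give random constants. The good event must also contain a quantitative bound $\phi_a(x_i)\le Ce^{\sqrt i}$ with uniform $C$. Finally, truncating increments at height $e^{\sqrt n}$ does not explain why the continuation after $w$ fails \emph{less} often when $n$ is larger. That decay has to come from the $n$ good prefix blocks.

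\textbf{How the paper fills it.} The paper uses a block construction. A block $Q$ following a block with fast image $E=\im_k$ is good if three conditions hold: $\angle(E,\ker_{d-k}Q)>e^{-\eta N}$ with $\eta$ small, $\Delta_k(Q)\,\angle(E,\ker_{d-k}Q)>e^{\gamma N}$, and $-\log\sigma_{k+1}(Q)>\lambda_{\rm sel}N$. Chains of such blocks are regular, and this yields fast and slow spaces with domination and contraction from every block start. The construction then proceeds as follows.
\begin{itemize}
\item The words $w_{\rm next}$ come from a Lebesgue number of a finite open cover of the compact set $\{(y,E):h(y)\le R_{\rm init}\}$. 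This makes the prefix $w_1,\dots,w_n$ deterministically good along the true orbit of $x$.
\item The random tail starts at time greater than $nN/2$. Its block lengths are a small fraction of the current time, so angle losses are tempered and constants are uniform, and the total failure probability is $e^{-c\sqrt{nN}}$. This is the source of $e^{-\beta\sqrt n}$.
\item A terminal buffer made of the last $\rho n$ prefix blocks glues the two regimes, and norms are interpolated inside blocks.
\end{itemize}

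Your Step~1 criterion is too weak for this. An increase of the relative gap only bounds $\angle(E,\ker_{d-k}Q)$ below by roughly $\sigma_{k+1}(Q)/\|Q\|$. That is exponentially small at a rate comparable to $K_{\rm blk}$. It would destroy the contraction-after-paying-the-outgoing-angle condition $-\log\sigma_{k+1}(B_j)+\log\theta_{j+1}\ge\lambda_{\rm ctr}\ell_j$. This is why the paper uses a two-phase selection: a short segment producing a transverse gap, then kernel stability via the Goldsheid--Margulis step and a weighted height sum.

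\textbf{Smaller points.}
\begin{itemize}
\item $\log\Delta_k$ is driven by $\alpha_k''$ (up to $O(1)$ after averaging over flags), whereas $\alpha_k'$ drives $-\log\sigma_{k+1}$.
\item Your product $\Delta_k^{-\eta}h^{\eta'}$ with $\eta'\ll\eta$ does not contract where $h$ is large. The increment $\psi$ only satisfies $|\psi|\le B(1+\log h)$, so $e^{-\eta\psi}$ can be of size $h^{\eta B}$, while $h^{\eta'}$ gains only a factor $c^{\eta'}$. In any case, $PV\le cV+b$ would not force $\Delta_k\to\infty$.
\item The leafwise $C^r$ bounds come from one-step domination plus contraction in adapted norms, not from $\alpha_k''$.
\end{itemize}
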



\subsection{Uniformly tempered splittings and stable manifolds}
\label{ssec:uniformly:tempered:splittings:stable:manifolds}

We next recall the analytic input which will be used to pass from the
linear estimates obtained later to actual stable manifolds.

\begin{definition}[Uniformly tempered splitting]
\label{def:uniformly:tempered}
Let
\begin{displaymath}
A_i:V_i\to V_{i+1},
\qquad i\ge0,
\end{displaymath}
be a sequence of linear isomorphisms between $d$-dimensional Euclidean
spaces. We say that $A_i$ admits a $(C,\lambda,\epsilon)$-uniformly
tempered splitting in dimension $k$ if there exist splittings
\begin{displaymath}
V_i=\mathcal U(i)\oplus \mathcal S(i),
\qquad
\dim\mathcal U(i)=k,
\qquad
\dim\mathcal S(i)=d-k,
\end{displaymath}
and norms
$\|\cdot\|'_i$
on $V_i$ such that the following hold.

First, the splitting is equivariant:
\begin{displaymath}
A_i\mathcal U(i)=\mathcal U(i+1),
\qquad
A_i\mathcal S(i)=\mathcal S(i+1)
\qquad
\text{for every }i\ge0.
\end{displaymath}

Second, the adapted norms come from inner products and
are subtempered with respect to the background
Euclidean norms:
\begin{equation}
\label{eq:uniformly:tempered:norms}
C^{-1}e^{-\epsilon i}\|v\|
\le
\|v\|'_i
\le
Ce^{\epsilon i}\|v\|
\qquad
\text{for every }v\in V_i.
\end{equation}

Third, the splitting is orthogonal in the adapted inner products:
\begin{equation}
\label{eq:uniformly:tempered:angle}
(\|u+s\|'_i)^2 = (\|u\|'_i)^2 + (\|s\|'_i)^2
\end{equation}
for every $u\in\mathcal U(i)$ and $s\in\mathcal S(i)$.

Fourth, the stable directions are uniformly contracted:
\begin{equation}
\label{eq:uniformly:tempered:stable:contraction}
\|A_i s\|'_{i+1}
\le
e^{-\lambda}\|s\|'_i
\qquad
\text{for every }s\in\mathcal S(i).
\end{equation}

Finally, the one-step domination holds:
\begin{equation}
\label{eq:uniformly:tempered:one:step:domination}
\|A_i s\|'_{i+1}
\le
e^{-\lambda}\|A_i u\|'_{i+1}
\end{equation}
for every $i\ge0$, every $s\in\mathcal S(i)$ with $\|s\|'_i=1$, and every
$u\in\mathcal U(i)$ with $\|u\|'_i=1$.
\end{definition}

\begin{lemma}[Kernel convergence for uniformly tempered splittings]
\label{lemma:uniformly:tempered:kernel:convergence}
Let
\begin{displaymath}
A_i:V_i\to V_{i+1},
\qquad i\ge0,
\end{displaymath}
be a sequence of linear isomorphisms between $d$-dimensional Euclidean spaces.
Suppose that $A_i$ admits a $(C,\lambda,\epsilon)$-uniformly tempered
splitting in dimension $k$ in the sense of Definition~\ref{def:uniformly:tempered}.
Assume
$0<\epsilon<\frac{\lambda}{4}$.
For $n$ sufficiently large, write
\begin{displaymath}
A(n):=A_{n-1}\cdots A_0:V_0\to V_n,
\qquad
S(n):=\ker_{d-k}A(n)\subset V_0.
\end{displaymath}
Then there are constants $C'>0$ and $\lambda'>0$, depending only on
$C,\lambda,\epsilon,d,k$, such that
\begin{displaymath}
\dist_{\Gr}\bigl(S(n),\mathcal S(0)\bigr)
\le
C'e^{-\lambda'n}
\qquad
\text{for every sufficiently large }n.
\end{displaymath}
In particular,
\begin{displaymath}
S(n)\to \mathcal S(0)
\qquad
\text{exponentially fast.}
\end{displaymath}
In addition,
$\Delta_k(A(n))\longrightarrow+\infty$.
\end{lemma}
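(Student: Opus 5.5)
We work throughout in the adapted norms and only convert to the background Euclidean norms at the end; each conversion costs a factor $Ce^{\epsilon n}$ or $Ce^{\epsilon \cdot 0}$. Write $\mathcal U(n),\mathcal S(n)$ for the equivariant splitting and $A(n)=A_{n-1}\cdots A_0$.

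The first step is to iterate the one-step estimates. For unit $s\in\mathcal S(0)$ and unit $u\in\mathcal U(0)$ in $\|\cdot\|'_0$, equivariance together with \eqref{eq:uniformly:tempered:stable:contraction} and \eqref{eq:uniformly:tempered:one:step:domination} gives
\begin{equation*}
\|A(n)s\|'_n\le e^{-\lambda n}, \qquad \|A(n)s\|'_n\le e^{-\lambda n}\|A(n)u\|'_n .
\end{equation*}
Domination iterates because at each step we can renormalize $A(j)s$ and $A(j)u$ to unit vectors in $\mathcal S(j),\mathcal U(j)$ and apply \eqref{eq:uniformly:tempered:one:step:domination} again. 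Hence, in the adapted norms, with $m_{\mathcal U}:=\min_{u\in\mathcal U(0),\|u\|_0'=1}\|A(n)u\|'_n$ and $M_{\mathcal S}:=\|A(n)|_{\mathcal S(0)}\|'$, we get $M_{\mathcal S}\le e^{-\lambda n}m_{\mathcal U}$.

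The second step exploits orthogonality. Since the splittings at times $0$ and $n$ are orthogonal for $\|\cdot\|'$ by \eqref{eq:uniformly:tempered:angle}, $A(n)$ is block diagonal with respect to $\mathcal U\oplus\mathcal S$ in these inner products. Its adapted singular values are therefore the union of those of the two blocks. The $k$ largest are those of $\mathcal U$, the adapted $(d-k)$-approximate kernel is exactly $\mathcal S(0)$, and the adapted gap satisfies $\Delta'_k\ge e^{\lambda n}$.

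The third step compares the adapted and Euclidean pictures. By \eqref{eq:uniformly:tempered:norms}, the identity maps $(V_0,\|\cdot\|)\to(V_0,\|\cdot\|'_0)$ and $(V_n,\|\cdot\|'_n)\to(V_n,\|\cdot\|)$ have norms at most $C$ and $Ce^{\epsilon n}$, with the same bounds for their inverses. By the minimax characterization $\sigma_j=\max_{\dim W=j}\min_{w\in W}\|Aw\|/\|w\|$, each Euclidean singular value differs from the adapted one by a factor at most $C^2e^{\epsilon n}$. Consequently $\Delta_k(A(n))\ge C^{-4}e^{(\lambda-2\epsilon)n}\to\infty$, which is the last claim.

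The main obstacle is locating the Euclidean kernel $S(n)$. Take $v\in S(n)$ with $\|v\|=1$ and decompose $v=u+s$. Then $\|A(n)v\|\le\sigma_{k+1}(A(n))\le C^2e^{\epsilon n}M_{\mathcal S}$. On the other hand, $\|A(n)v\|\ge C^{-1}e^{-\epsilon n}\|A(n)u\|'_n\ge C^{-1}e^{-\epsilon n}m_{\mathcal U}\|u\|'_0$, where we used orthogonality at time $n$, i.e.\ $\|A(n)v\|_n'\ge\|A(n)u\|_n'$. Combining the two bounds gives
\begin{equation*}
\|u\|'_0\le C^3e^{2\epsilon n}\,\frac{M_{\mathcal S}}{m_{\mathcal U}}\le C^3e^{-(\lambda-2\epsilon)n},
\end{equation*}
and hence $\|u\|\le C^4e^{-(\lambda-2\epsilon)n}$.

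It remains to convert this into a Grassmannian estimate. The projection to $\mathcal U(0)$ along $\mathcal S(0)$ has Euclidean norm at most $C^2$, since it is orthogonal in $\|\cdot\|'_0$. So every unit vector of $S(n)$ lies within $O(e^{-(\lambda-2\epsilon)n})$ of $\mathcal S(0)$. Since $\dim S(n)=\dim\mathcal S(0)=d-k$, this yields
\begin{equation*}
\dist_{\Gr}\bigl(S(n),\mathcal S(0)\bigr)\le C'e^{-\lambda' n}, \qquad \lambda'=\lambda-2\epsilon\ge\lambda/2>0,
\end{equation*}
using $\epsilon<\lambda/4$. The bound holds once $n$ is large enough that the gap is positive, so that $S(n)$ is defined, and that the distance bound is below $1$.
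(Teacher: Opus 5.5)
Your proof is correct and follows the paper's argument. Both iterate the one-step domination in the adapted norms, pay a factor $C^{4}e^{2\epsilon n}$ to pass to Euclidean norms, and locate $\ker_{d-k}A(n)$ by decomposing a unit vector $v=u+s$ along $\mathcal U(0)\oplus\mathcal S(0)$. The only difference is that the paper packages this last step as \autoref{lemma:slow:space:from:dominated:splitting}, applied to the Euclidean conorm/norm ratio (a version that needs no orthogonality), whereas you inline it and use adapted orthogonality at time $n$, which spares the triangle-inequality step.
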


\begin{proof}
We write $\|\cdot\|'_i$ for the adapted norms from
Definition~\ref{def:uniformly:tempered}. By the one-step domination condition,
for every $i\ge0$, every $u\in\mathcal U(i)$ with $\|u\|'_i=1$, and every
$s\in\mathcal S(i)$ with $\|s\|'_i=1$,
\begin{displaymath}
\|A_i s\|'_{i+1}
\le
e^{-\lambda}\|A_i u\|'_{i+1}.
\end{displaymath}
Iterating this estimate gives
\begin{equation}
\label{eq:uniformly:tempered:iterated:domination:adapted}
\frac{
\mathfrak m'\bigl(A(n)|_{\mathcal U(0)}\bigr)
}{
\bigl\|A(n)|_{\mathcal S(0)}\bigr\|'
}
\ge
e^{\lambda n},
\end{equation}
where the conorm and norm are computed using $\|\cdot\|'_0$ on $V_0$ and
$\|\cdot\|'_n$ on $V_n$.

We compare the adapted norms with the background Euclidean norms. By
Definition~\ref{def:uniformly:tempered},
\begin{displaymath}
C^{-1}e^{-\epsilon i}\|v\|
\le
\|v\|'_i
\le
Ce^{\epsilon i}\|v\|
\qquad
(v\in V_i).
\end{displaymath}
Therefore
\begin{displaymath}
\mathfrak m\bigl(A(n)|_{\mathcal U(0)}\bigr)
\ge
C^{-2}e^{-\epsilon n}
\mathfrak m'\bigl(A(n)|_{\mathcal U(0)}\bigr),
\end{displaymath}
and
\begin{displaymath}
\bigl\|A(n)|_{\mathcal S(0)}\bigr\|
\le
C^2e^{\epsilon n}
\bigl\|A(n)|_{\mathcal S(0)}\bigr\|'.
\end{displaymath}
Combining these estimates with
\eqref{eq:uniformly:tempered:iterated:domination:adapted}, we obtain
\begin{equation}
\label{eq:uniformly:tempered:iterated:domination:euclidean}
\frac{
\mathfrak m\bigl(A(n)|_{\mathcal U(0)}\bigr)
}{
\bigl\|A(n)|_{\mathcal S(0)}\bigr\|
}
\ge
C^{-4}e^{(\lambda-2\epsilon)n}.
\end{equation}

The adapted transversality estimate in
Definition~\ref{def:uniformly:tempered}, together with the equivalence of
$\|\cdot\|'_0$ and the Euclidean norm, gives
\begin{displaymath}
\angle(\mathcal U(0),\mathcal S(0))\ge c_0
\end{displaymath}
for some $c_0=c_0(C,d,k)>0$.

We now apply Lemma~\ref{lemma:slow:space:from:dominated:splitting} to
\begin{displaymath}
T=A(n),\qquad U=\mathcal U(0),\qquad S=\mathcal S(0).
\end{displaymath}
Using \eqref{eq:uniformly:tempered:iterated:domination:euclidean}, this gives
\begin{displaymath}
\dist_{\Gr}\bigl(\ker_{d-k}A(n),\mathcal S(0)\bigr)
\le
C'e^{-(\lambda-2\epsilon)n}.
\end{displaymath}
Since $\epsilon<\lambda/4$, the exponent $\lambda-2\epsilon$ is positive.
Thus the convergence is exponential, for instance with
\begin{displaymath}
\lambda':=\lambda-2\epsilon.
\end{displaymath}
The final assertion follows from
(\ref{eq:uniformly:tempered:iterated:domination:euclidean}), 
since by the min-max principle,
\begin{displaymath}
\sigma_k(A(n)) \ge \mathfrak{m}(A(n)|_{\mathcal U(0)}), \qquad
\sigma_{k+1}(A(n)) \le \|A(n)|_{\mathcal S(0)}\|. 
\end{displaymath}
\end{proof}

\begin{theorem}[Stable manifold theorem]
	\label{theorem:stable:manifold:theorem}
	Suppose $M$ is a Riemannian manifold satisfying \autoref{ssec:assumptions_on_the_total_space}, with ambient systole function $\phi_a$ as in \autoref{sssec:systole_function}.  Let $(f_i)_{i\geq0}\subset\Diff^{\infty}(M)$ be a sequence of diffeomorphisms such that all $f_i$ and $f_i^{-1}$ satisfy the bounds in \autoref{sssec:systole_function} with the same constants.
	
	Fix $x\in M$, set $x_0=x$, and define
	$x_{i+1}=f_i(x_i).$
	Assume that there exist $C,\lambda,\epsilon>0$ with the following properties.
	
	We have control of the systole:
	\begin{displaymath}
		\phi_a(x_i)
		\le
		C e^{\sqrt{i}}
		\qquad
		\text{for every }i\ge0.
	\end{displaymath}
	Furthermore, the derivative cocycle
	\begin{displaymath}
		Df_i(x_i):T_{x_i}M\to T_{x_{i+1}}M
	\end{displaymath}
	admits a $(C,\lambda,\epsilon)$-uniformly tempered splitting in dimension
	$k$. Suppose also that $\epsilon$ is sufficiently small depending on
	$\lambda$ and the constants in
	\S\ref{ssec:assumptions_on_the_total_space} and 
	\S\ref{sssec:systole_function}.
	
	Then there exists a $C^\infty$ embedded submanifold
	\begin{displaymath}
		\mathcal W_{d-k}(x)\subset M
	\end{displaymath}
	of dimension $d-k$, containing $x$, tangent to the stable space
	$\mathcal S(0)\subset T_xM$, such that for every
	$y\in\mathcal W_{d-k}(x)$,
	\begin{displaymath}
		\limsup_{n\to\infty}
		\frac1n
		\log d\bigl((f_{n-1}\cdots f_0)(x),(f_{n-1}\cdots f_0)(y)\bigr)
		\le
		-\lambda/2.
	\end{displaymath}
	Moreover, the size of $\mathcal W_{d-k}(x)$ is bounded from below by a
	constant depending only on $C,\lambda,\epsilon,M$.
	For every $r\ge1$, its $C^r$-geometry is bounded above by a
	constant $C_{\rm geom}$ depending only on $r,C,\lambda,\epsilon,M$.
	
	For $y\in \mathcal W_{d-k}(x)$, let
	$d_{\mathcal W}(x,y)$ denote intrinsic distance on $\mathcal
	W_{d-k}(x)$, and let $B^{\cW}(x,r)$ denote $\{y \st d_\cW(x,y) < r\}$. 
	There exist a radius $\rho_{\rm st}>0$ and, for every $r\ge1$,
	a constant $C_{{\rm st},r}\ge1$, depending only on
	$C,\lambda,\epsilon$ (and on $r$ for $C_{{\rm st},r}$) and the
	standing geometric and dynamical data in
	\S\ref{ssec:assumptions_on_the_total_space} and
	\S\ref{sssec:systole_function}, such that, for every $m\geq0$ and
	every $y\in B^{\cW}(x,\rho_{\rm st})$,
	\begin{equation}
		\label{eq:stable:all:derivatives}
		\left\|\nabla^r\!\left((f_{m-1}\circ\cdots\circ f_0)
		\big|_{\mathcal W_{d-k}(x)}\right)(y)\right\|
		\le C_{{\rm st},r}e^{-\lambda m/2}.
	\end{equation}
	Here the empty composition is the identity, and covariant derivatives
	use the induced metric on $\mathcal W_{d-k}(x)$ and the ambient metric
	on $M$; at order one this is the differential restricted to the stable
	tangent space.
	Finally, there exists  a constant $C_{\rm Jac}<\infty$, depending
	only on $C,\lambda,\epsilon,M$, such that for $y \in
	B^{\cW}(x,\rho_{\rm st})$ the limit
	\begin{displaymath}
		J(x,y):=
		\lim_{n\to\infty}
		\frac{
			\Jac\!\left((f_{n-1}\cdots f_0)|_{\mathcal W_{d-k}(x)}\right)(y)
		}{
			\Jac\!\left((f_{n-1}\cdots f_0)|_{\mathcal W_{d-k}(x)}\right)(x)
		}
	\end{displaymath}
	exists and satisfies
	\begin{displaymath}
		C_{\rm Jac}^{-1}\le J(x,y)\le C_{\rm Jac}.
	\end{displaymath}
	The function $y \to J(x,y)$ is $C^\infty$ on $B^{\cW}(x,\rho_{\rm
		st})$. For every $r \ge 1$ there exists $C_{{\rm Jac},r} < \infty$,
	depending only on $r$,
	$(C,\lambda,\epsilon)$ and  the standing geometric
	and dynamical data in \S\ref{ssec:assumptions_on_the_total_space} and
	\S\ref{sssec:systole_function}, such that
	\begin{displaymath}
		\|D^r_{\cW,y} J(x,y)\| \le C_{{\rm Jac},r}
	\end{displaymath}
	for all $y \in B^{\cW}(x,\rho_{\rm st})$. 
\end{theorem}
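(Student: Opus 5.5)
The plan is the Hadamard--Perron graph transform in Lyapunov-adapted charts, with a slow growth of the systole absorbed by the exponential rates.

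\textbf{Step 1 (Lyapunov charts).} Start from the exponential charts $\exp_{x_i}$ and compose them with the linear maps that take the adapted inner product $\|\cdot\|'_i$ to the standard one. The splitting $\mathcal U(i)\oplus\mathcal S(i)$ is then orthogonal. In these coordinates the dynamics is
\[
F_i = L_i\circ G_{f_i,x_i}\circ L_i'^{-1}=A_i+R_i ,
\]
where $A_i$ is block diagonal, contracts $\mathcal S$ by $e^{-\lambda}$, and is one-step dominated.

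By \eqref{eq:uniformly:tempered:norms}, \eqref{eq:norm:Gsq:bound} and the systole bound $\phi_a(x_i)\le Ce^{\sqrt i}$, the remainder satisfies $R_i(0)=0$ and $DR_i(0)=0$, with
\[
\|R_i\|_{C^r}\le C_r e^{c_r(\epsilon i+\sqrt i)}
\]
on a ball of radius $r_i\ge C^{-1}e^{-c(\epsilon i+\sqrt i)}$. Since $\sqrt i=o(i)$, the loss $e^{\sqrt i}$ is subexponential. So, taking $\epsilon$ small compared with $\lambda$, all these losses are eventually dominated by $e^{\lambda i/4}$. A standard Lipschitz cutoff then extends each $F_i$ to a global map with $\mathrm{Lip}(F_i-A_i)\le\delta$ at scale $r_i$.

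\textbf{Step 2 (existence).} Construct $\mathcal W$ as a fixed point of the backward graph transform acting on sequences of graphs $\{(\psi_i(s),s)\}$, where $\psi_i\colon\mathcal S(i)\to\mathcal U(i)$ has Lipschitz constant at most $1$. Equivalently, take the limit of the preimages $F^{-n}$ of the horizontal disks at step $n$. Domination \eqref{eq:uniformly:tempered:one:step:domination} makes this transform a contraction on such sequences, weighted by $e^{-\lambda i/2}$-shrinking radii, which yields a unique invariant family.

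Points on the graph satisfy $|F^n(y)|'\le e^{-(\lambda-O(\delta))n}|y|'$. Converting back through \eqref{eq:uniformly:tempered:norms} costs a factor $e^{\epsilon n}$, which gives the $\limsup\le-\lambda/2$ bound on $d(f^n(x),f^n(y))$. Tangency to $\mathcal S(0)$ holds because $\psi_0(0)=0$ and $D\psi_0(0)=0$, by uniqueness applied to the linearization. The size bound is $r_0$, which depends only on $C,\lambda,\epsilon,M$.

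\textbf{Step 3 ($C^r$ regularity and \eqref{eq:stable:all:derivatives}).} Use the $C^r$ section theorem, i.e.\ the fiber contraction on jets. Higher derivatives of $\psi_i$ are governed by the domination ratio raised to the power $r$: the spectral gap for $r$-jets is $\sigma_{k+1}^{\,r}/\sigma_k$-type, and it has a uniform margin because stable contraction comes together with domination. Bound the derivatives inductively, noting that the subexponential losses only shrink the margin slightly.

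\textbf{Step 4 (derivative bounds along $\mathcal W$).} For the restricted derivatives, the chain rule gives Fa\`a di Bruno sums. Each sum is bounded by $e^{-\lambda m}$ times polynomial factors in the charts, which are absorbed to give $C_{{\rm st},r}e^{-\lambda m/2}$.

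\textbf{Step 5 (Jacobian).} Write
\[
\log J(x,y)=\sum_n\bigl(\log\Jac(y_n)-\log\Jac(x_n)\bigr).
\]
Each term is bounded by a Lipschitz constant of order $e^{c\sqrt n+c\epsilon n}$ times $d(x_n,y_n)\le e^{-\lambda n/2}$. The series therefore converges uniformly, which gives existence of the limit and two-sided bounds. The same argument applied to the $r$-th derivatives, combined with Step 3, gives smoothness and the bounds $C_{{\rm Jac},r}$.

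\textbf{Main obstacle.} I expect Step 3 to be the hardest part. The $C^r$ bounds must be uniform, depending only on the constants, even though the charts degrade like $e^{\sqrt i+\epsilon i}$. The order-$r$ fiber contraction requires $\epsilon\ll\lambda/r$, so for each $r$ the allowed $\epsilon$ shrinks. My first attempt would be to prove each $C^r$ bound separately, using a fixed $\epsilon$ small compared with $\lambda$ and then shrinking $\rho_{\rm st}$ with $r$. If that fails, I would instead use the bunching coming from orthogonal domination in adapted norms, and accept constants depending on $r$.
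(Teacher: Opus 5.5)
Your overall route is the same in spirit as the paper's, which follows Pugh--Shub--Wilkinson: a backward graph transform in adapted charts, with the systole growth $e^{\sqrt i}$ absorbed by exponential rates. But there is a genuine gap exactly in Step~3, and your diagnosis of it points the wrong way.

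\textbf{The gap in Step~3.} The theorem asks for one $\epsilon$ depending only on $\lambda$ and the standing data, one plaque size and one $\rho_{\rm st}$, and $C^r$ bounds for every $r$. Your fallbacks cannot give this. Requiring $\epsilon\ll\lambda/r$ yields only finite smoothness for a fixed $\epsilon$. Shrinking $\rho_{\rm st}$ with $r$ does not touch the actual obstruction, which is not the size of the initial disk. It is the growth in the \emph{time index} of the $C^r$ norms of the nonlinearity in unrescaled Lyapunov charts, roughly $\|I_{i+1}^{-1}\|\,\|D^rG_i\|\,\|I_i\|^r\lesssim e^{(r+1)\epsilon i}\phi_a(x_i)^{a_r}$, made worse by a cutoff at scale $r_i$.

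\textbf{The missing idea.} Rescale the chart at time $i$ by a radius that shrinks faster than the tempering rate, $\rho_i=\rho_0e^{-8\epsilon i}$, and work with remainders normalized by the inverse linear part, $R_i=\widehat L_i^{-1}\widehat F_i-\mathrm{Id}$. (Equivalently, you could do an explicit order-by-order bookkeeping of growth rates, which you have not done.) Then
\[
\|D^rR_i\|\le K_r\rho_0^{r-1}\exp\bigl(-(7r-9)\epsilon i+(a_r+a_1)\sqrt i\bigr),
\]
which is bounded uniformly in $i$ for every $r\ge2$ via $\sup_{t\ge0}e^{A\sqrt t-\eta t}\le e^{A^2/(4\eta)}$. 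A single small $\rho_0$ then makes the $C^2$ norm at most $\delta$.

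In the resulting uniform Euclidean problem, the coefficient of the top-order term in the $r$-times differentiated graph equation is $\|D_i^{-1}\|\|B_i\|^r\le\kappa\alpha^{r-1}$, which \emph{improves} with $r$. In logarithmic terms your $\sigma_{k+1}^r/\sigma_k$ margin is at least $\lambda r$, growing linearly in $r$ just as the chart losses do. So one $\delta$, hence one $\epsilon<\lambda/64$, works for all orders at once.

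The same rescaling gives uniform $C^j$ bounds on the plaque parametrizations $\Sigma_m$, since $\rho_m\|I_m\|\le C\rho_0e^{-7\epsilon m}$. This is what your Step~4 and the higher-derivative part of Step~5 actually need. Without it, your ``polynomial factors in the charts'' are really factors $e^{c_r\epsilon m}$, and you have not shown they are beaten uniformly in $r$.

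\textbf{A secondary gap in Steps~1--2.} An absolute bound $\mathrm{Lip}(F_i-A_i)\le\delta$ does not make the backward graph transform
\[
u\mapsto D_i^{-1}\bigl[w_{i+1}(B_is+R_i^s(s,u))-R_i^u(s,u)\bigr]
\]
a contraction. The tempered splitting controls only the product $\|D_i^{-1}\|\|B_i\|$; $\|D_i^{-1}\|$ itself is only bounded by about $C^2e^{2\epsilon(i+1)}\phi_a(x_{i+1})^{a_1}$. You need $\|D_i^{-1}\|\,\mathrm{Lip}(R_i)$ small. Normalizing the remainder by the inverse of $D_0G_i$ provides exactly this, at a cost $\|(D_0G_i)^{-1}\|\le\phi_a(x_{i+1})^{a_1}$ that the rescaling absorbs. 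After that, no separate bound on $\|D_i\|$ or $\|D_i^{-1}\|$ is required.
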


\begin{proof}
	(Reduction of the geometric setup to \autoref{theorem:euclidean:stable:manifold:theorem}).
	Under the hypotheses of
	\autoref{theorem:stable:manifold:theorem}, with
	$\phi_a(x_i)\le Ce^{\sqrt{i}}$ and
	$0<\epsilon<\lambda/64$, there are rescaled coordinate maps to
	which \autoref{theorem:euclidean:stable:manifold:theorem} applies, with
	\[
	p=d-k,\qquad
	\alpha=e^{-7\lambda/8},\qquad
	\kappa=e^{-\lambda},\qquad
	\theta=e^{-3\lambda/4}.
	\]

	The elementary estimate
	\[
	\sup_{t\ge0}e^{A\sqrt{t}-\eta t}
	\le e^{A^2/(4\eta)}\qquad(A\ge0,\ \eta>0)
	\]
	will absorb every fixed-order systole bound into an exponential
	decay factor. We retain the given adapted norms, with their
	fixed tempering exponent $\epsilon$.
	Write $G_i=\exp_{x_{i+1}}^{-1}f_i\exp_{x_i}$ and $A_i=D_0G_i$.
	Choose splitting-preserving isometries
	\[
	I_i:\mathbb R^{d-k}\oplus\mathbb R^k
	\longrightarrow(T_{x_i}M,\|\cdot\|'_i),\qquad
	\|I_i\|,\|I_i^{-1}\|\le Ce^{\epsilon i}.
	\]
	Fix $\ell=1/10$ in \autoref{theorem:euclidean:stable:manifold:theorem}, and let $\delta$ be its smallness
	constant for the parameters above. Put
	\[
	\rho_i=\rho_0e^{-8\epsilon i},\qquad
	\widehat F_i(z)=
	\rho_{i+1}^{-1}I_{i+1}^{-1}G_i(I_i\rho_i z),
	\qquad \widehat L_i=D_0\widehat F_i.
	\]
	The stable and complementary blocks $B_i,D_i$ of
	$\widehat L_i$ satisfy
	\[
	\|B_i\|\le e^{-\lambda+8\epsilon}<\alpha,\qquad
	\|D_i^{-1}\|\|B_i\|\le\kappa.
	\]
	We next choose a single $\rho_0>0$ so that all the remaining
	hypotheses of \autoref{theorem:euclidean:stable:manifold:theorem} hold. First,
	\[
	r_d(x_i)\ge C^{-(e+e_0)}e^{-(e+e_0)\sqrt{i}},\qquad
	\rho_i\|I_i\|\le C\rho_0e^{-7\epsilon i}.
	\]
	The elementary estimate therefore lets us put
	$I_i\rho_i\overline Q$ strictly inside the dynamical chart
	domains for every $i$. Thus $\widehat F_i$ is defined on a
	neighborhood of $\overline Q$. Its normalized remainder is
	\[
	R_i(z)=\widehat L_i^{-1}\widehat F_i(z)-z
	=\rho_i^{-1}I_i^{-1}A_i^{-1}G_i(I_i\rho_i z)-z.
	\]
	The inverse-generator bound gives
	$\|A_i^{-1}\|\le\phi_a(x_{i+1})^{a_1}$. Consequently, for $r\ge2$,
	\[
	\begin{aligned}
		\|D^rR_i\|
		&\le \rho_i^{r-1}\|I_i^{-1}\|\|A_i^{-1}\|
		\|D^rG_i\|\|I_i\|^r\\
		&\le K_r\rho_0^{r-1}
		\exp\bigl(-(7r-9)\epsilon i+(a_r+a_1)\sqrt{i}\bigr).
	\end{aligned}
	\]
	Here $\sqrt{i+1}\le\sqrt{i}+1$ absorbs the shift in the
	inverse-generator estimate into $K_r$. Since $7r-9>0$ for
	every $r\ge2$, these derivatives are uniformly bounded in $i$
	for each $r$, without changing $\epsilon$ or $\rho_0$.
	Since $R_i(0)=D_0R_i=0$, decreasing $\rho_0$ also ensures
	$\sup_i\|R_i\|_{C^2(Q)}\le\delta$. Only low-order bounds are
	used for this choice; higher orders determine the constants $M_r$. 
	No restriction on the growth of the standing exponents $a_r$ with $r$ is needed.
	We also decrease $\rho_0$ so that the differentials of
	$\exp_{x_i}$ are uniformly comparable to isometries on these
	domains. Indeed, their variation from the identity, after
	parallel transport along radial paths, is bounded by
	$K\phi_a(x_i)^{e_2}\rho_i\|I_i\|$, using the second-jet chart
	bound. The same subexponential estimate makes this uniformly
	small.
	Apply \autoref{theorem:euclidean:stable:manifold:theorem} and denote its invariant graphs by $w_i$.
	Define
	\[
	\Sigma_i(s)=\exp_{x_i}(I_i\rho_i(s,w_i(s))),\qquad
	\mathcal W_{d-k}(x)=\Sigma_0(B_{d-k}(1/2)).
	\]
	These plaques are embedded, tangent at their centers to
	$\mathcal S(i)$, and satisfy
	$f_i\Sigma_i=\Sigma_{i+1}\beta_i$.
	At time zero the chart bounds, $\phi_a(x_0)\le C$, and the graph
	bounds give a controlled positive size and all the asserted
	$C^r$ geometry bounds. Choose a controlled intrinsic radius
	$\rho_{\rm st}$ so that its ball is contained in
	$\Sigma_0(B_{d-k}(1/4))$.
	Write $f^{(0)}=\Id$ and $f^{(m)}=f_{m-1}\circ\cdots\circ f_0$.
	\autoref{theorem:euclidean:stable:manifold:theorem} gives $|S_m(s)|\le\theta^m|s|$ and
	$\|D^jS_m\|\le K_j\theta^m$ for every $j\ge1$.
	The chart bounds and $\rho_m\|I_m\|\le C\rho_0e^{-7\epsilon m}$
	give $\|\nabla^j\Sigma_m\|\le K_j$: every term is bounded by
	$K_j e^{-7\epsilon m+A_j\sqrt m}$ for some fixed $A_j$.
	Thus $d(x_m,\Sigma_m(S_m(s)))\le K\theta^m|s|$.
	Applying the chain rule to
	$f^{(m)}\circ\Sigma_0=\Sigma_m\circ S_m$, and using the controlled
	geometry of $\Sigma_0$, proves \eqref{eq:stable:all:derivatives},
	since $\theta^m\le e^{-\lambda m/2}$.
	It remains to check the volume-density hypotheses in \autoref{theorem:euclidean:stable:manifold:theorem}.
	Let $v_i(s)$ be the Riemannian volume density of $\Sigma_i$.
	The chart comparison and differentiation of the volume density give
	\[
	\|D\log v_i\|\le K\left(
	\|I_i\|\|I_i^{-1}\|\|D^2w_i\|
	+\phi_a(x_i)^{e_2}\rho_i\|I_i\|^2\|I_i^{-1}\|
	\right).
	\]
	The graph term is at most $Ke^{2\epsilon i}$. The chart term has
	the uniform bound
	\[
	K\rho_0\exp(-5\epsilon i+e_2\sqrt{i})\le K.
	\]
	Thus
	\[
	\|D\log v_i\|\le K e^{2\epsilon i},\qquad
	e^{2\epsilon}\theta<1.
	\]
	At time zero all $C^r$ norms of $\log(v_0/v_0(0))$ are controlled
	by the chart and graph bounds. \autoref{theorem:euclidean:stable:manifold:theorem} applies with
	$H=K$ and $\tau=2\epsilon$.
	Its quantity $\mathcal J_m(s)$ is exactly the Riemannian
	Jacobian of $f^{(m)}|_{\mathcal W}$ at $\Sigma_0(s)$, because
	$f^{(m)}\Sigma_0=\Sigma_m S_m$. Hence
	\[
	J(x,\Sigma_0(s))=J_v(s).
	\]
	The bounds for $J_v$, together with the controlled time-zero
	geometry, give the required intrinsic derivative bounds, as
	well as uniform upper and lower bounds. This completes the
	reduction.
\end{proof}


\subsection{The abstract good-event formulation}
\label{ssec:abstract:good:event}

We now isolate the linear-probabilistic statement which will be proved in the
rest of this section. The purpose of the formulation is to separate the
finite-dimensional linear estimates from the final compactness argument which
produces a finite collection of words.

We work with random linear isomorphisms
\begin{displaymath}
A_i:V_i\to V_{i+1},\qquad i\geq 0,
\end{displaymath}
between $d$-dimensional Euclidean spaces, adapted to a filtration
$\{\mathcal F_i\}_{i\geq 0}$, where $\mathcal F_0$ is trivial.  Thus
$A_i$ is $\mathcal F_{i+1}$-measurable, and $\mathcal F_i$ is the information
available before applying $A_i$.

We assume that the process is generated by a measurable transition rule assigning a probability distribution for the next step to every finite history, including histories of probability zero. Throughout, conditional probabilities and expectations are computed using this rule and its iterates, with the tower property holding pointwise.

For integers $t\ge 0$ and $\ell\geq1$, set $A(t,0):=\Id_{V_t}$ and write
\begin{displaymath}
A(t,\ell):=A_{t+\ell-1}\cdots A_t \qquad A(n) := A(0,n).
\end{displaymath}
In the non-compact setting a uniform bound on
$\|A_i\|+\|A_i^{-1}\|$ is replaced by a height process.  We assume that
$H_i\geq 1$ is $\mathcal F_i$-measurable and that there are constants
$0<c<1$ and $b<\infty$ such that
\begin{equation}
\label{eq:abstract:margulis:height:drift}
\mathbb E(H_{i+1}\mid \mathcal F_i)\leq cH_i+b
\end{equation}
for all $i\geq 0$.  We also assume that there is a constant
$B_{\rm lin}<\infty$ such that
\begin{equation}
\label{eq:height:controlled:operator:norm}
\log\left(\|A_i\|+\|A_i^{-1}\|\right)
\leq
B_{\rm lin}\bigl(1+\log H_i\bigr)
\end{equation}
for all $i\geq 1$.

To record the systole information needed for
\autoref{theorem:stable:manifold:theorem}, introduce an
$\mathcal F_i$-measurable process $\Phi_i\geq2$ and fixed constants
$\kappa_\Phi,\delta_\Phi>0$ and $A_\Phi\geq1$ such that, for every
history and every $i\geq1$,
\begin{equation}
\label{eq:abstract:systole:height}
H_i\geq\kappa_\Phi\Phi_i^{\delta_\Phi},
\end{equation}
\begin{equation}
\label{eq:abstract:systole:growth}
A_\Phi^{-1}\Phi_i\leq\Phi_{i+1}\leq A_\Phi\Phi_i.
\end{equation}

After increasing $B_{\rm lin}$ by a constant depending only on $d$ and $k$,
\eqref{eq:height:controlled:operator:norm} implies
\begin{equation}
\label{eq:height:controlled:alpha:bounds}
\left|\alpha_k'(A_i,F')\right|
+
\left|\alpha_k''(A_i,F'')\right|
\leq
B_{\rm lin}\bigl(1+\log H_i\bigr)
\end{equation}
for all $F'\in\Flag_k'(V_i)$ and all $F''\in\Flag_k''(V_i)$.

We assume the following conditional drift estimates.  There is a constant
$c_0>0$ such that, for every $i\geq 1$, for every
$\mathcal F_i$-measurable random flag
$F_i''\in\Flag_k''(V_i)$,
\begin{equation}
\label{eq:height:controlled:alphapp:drift}
\mathbb E\left(\alpha_k''(A_i,F_i'')\mid\mathcal F_i\right)\geq c_0,
\end{equation}
and for every $\mathcal F_i$-measurable random flag
$F_i'\in\Flag_k'(V_i)$,
\begin{equation}
\label{eq:height:controlled:alphap:drift}
\mathbb E\left(\alpha_k'(A_i,F_i')\mid\mathcal F_i\right)\geq c_0.
\end{equation}

We require (\ref{eq:abstract:margulis:height:drift}), (\ref{eq:height:controlled:alphapp:drift}), and (\ref{eq:height:controlled:alphap:drift}) to hold after every fixed finite history and for every relevant flag. All subsequent conditional estimates likewise hold for every history satisfying their stated hypotheses, with conditional probabilities and expectations interpreted as above.

In the compact case one may take $H_i\equiv 1$.

\begin{definition}
\label{def:height:controlled}
A finite sequence
\begin{displaymath}
0=s_0<s_1<s_2<\cdots<s_m
\end{displaymath}
is $(R,K)$-\emph{height-controlled} if $H_{s_1} \le R$, for $2 \le
i \le m$, 
\begin{displaymath}
H_{s_i}<R
\end{displaymath}
and, for every $1\leq i < m$,
\begin{displaymath}
\max_{0\leq r\leq s_{i+1}-s_i}
\log\max\left\{
\|A(s_i,r)\|,
\|A(s_i,r)^{-1}\|
\right\}
< K(s_{i+1}-s_i).
\end{displaymath}
\end{definition}

\begin{definition}
\label{def:adapted:to:O}
Let
\begin{displaymath}
\mathcal O\subset \Gr_k(\bR^d) \times GL(d,\bR)
\end{displaymath}
be invariant under orthogonal changes of coordinates:
for every $U,V\in O(d)$,
\begin{displaymath}
(E,Q)\in\mathcal O
\quad\Longleftrightarrow\quad
\left(UE,VQU^{-1}\right)\in\mathcal O.
\end{displaymath}
Given
\begin{displaymath}
0=s_0<s_1<s_2<\cdots<s_m,
\end{displaymath}
put
\begin{displaymath}
B_i:=A(s_i,s_{i+1}-s_i),
\qquad
0\leq i<m.
\end{displaymath}
We say that the sequence is \emph{adapted to $\mathcal O$} if
\begin{displaymath}
(\im_k(B_{i-1}),B_i)\in\mathcal O   \qquad  \text{(and
  $\im_k(B_{i-1})$ is defined)}
\end{displaymath}
for every $1\leq i<m$.
\end{definition}

\begin{proposition}[Good block events]
\label{prop:good:block:events}
There exist constants
\begin{displaymath}
\lambda>0,
\qquad
R_{\rm init}<\infty, \qquad \epsilon_{\rm init} > 0
\end{displaymath}
depending only on the constants of
\S\ref{ssec:abstract:good:event}, with the following properties.

\begin{enumerate}
\item[{\rm (a)}]
For every $\epsilon>0$, let $\mathcal T_\infty(\epsilon)$ be the set
of paths $\omega$ such that there exists $C=C(\omega,\epsilon)$ so
that the following hold:
\begin{itemize}
\item  The cocycle admits a $(C,\lambda,\epsilon)$-uniformly tempered splitting in
dimension $k$.
\item $\Phi_i \le C e^{\sqrt{i}}$ for every $i \ge 1$.
\item $H_i<R_{\rm init}$ for infinitely many $i$.
\end{itemize}
Then $\bP(\cT_\infty(\epsilon)) = 1$.
\item[{\rm (b)}]
Suppose
\begin{displaymath}
H_1 \le R_{\rm init}, \text{ and } \Delta_k(A_0) > e^{\epsilon_{\rm init}}.
\end{displaymath}
For every $\epsilon>0$, there exist an integer $N\geq1$, constants
\begin{displaymath}
K_{\rm  blk}<\infty,
\qquad
p_0>0,
\qquad
\beta>0,
\qquad
C_{\rm init}<\infty,
\end{displaymath}
and an invariant under orthogonal changes of coordinates open set
\begin{displaymath}
\mathcal O
\subset
\Gr_k(\bR^d)\times GL(d,\bR)
\end{displaymath}
with the property
\begin{equation}
\label{eq:good:block:events:output:gap}
(E,Q)\in\mathcal O\Rightarrow
\Delta_k(Q)>e^{\epsilon_{\rm init}}, 
\end{equation}
all depending only on $\epsilon$, $R_{\rm init}$, and the constants of
\S\ref{ssec:abstract:good:event}, but not on the fixed initial prefix
$A_0$, such that the following holds:

Let $m\geq1$, and let
\begin{displaymath}
0=s_0<s_1=1<s_2<\cdots<s_m
\end{displaymath}
be integer-valued stopping times satisfying
\begin{displaymath}
\frac N2<s_{i+1}-s_i\leq N
\end{displaymath}
for every $1\leq i<m$.  Define
\begin{displaymath}
\mathcal E^{(m)}(s_0,\ldots,s_m)
\end{displaymath}
to be the event that the sequence
$s_0,\ldots,s_m$
is $(R_{\rm init},K_{\rm  blk})$-height-controlled and adapted to $\mathcal
O$. Then, on $\mathcal E^{(m)}(s_0,\ldots,s_m)$,
\begin{equation}
\label{eq:good:block:events:extension}
\mathbb P\left(
\bigcup_{\substack{\ell\in\N\\ N/2<\ell\leq N}}
\mathcal E^{(m+1)}
(s_0,\ldots,s_m,s_m+\ell)
\,\middle|\,
\mathcal F_{s_m}
\right)
\geq p_0.
\end{equation}
Moreover, for every sufficiently large $m$, there is an event
\begin{displaymath}
\mathcal T^{(m)}(s_0,\ldots,s_m)
\end{displaymath}
such that, on
$\mathcal E^{(m)}(s_0,\ldots,s_m)$,
\begin{equation}
\label{eq:good:block:events:tempered:extension}
\mathbb P\left(
\mathcal T^{(m)}(s_0,\ldots,s_m)
\,\middle|\,
\mathcal F_{s_m}
\right)
\geq
1-e^{-\beta\sqrt m}.
\end{equation}
On
$\mathcal E^{(m)}(s_0,\ldots,s_m) \cap \mathcal T^{(m)}(s_0,\ldots,s_m)$,
the full cocycle admits a
$(C_{\rm temp},\lambda,\epsilon)$-uniformly tempered splitting in dimension
$k$, where $C_{\rm temp}<\infty$ may depend only on the fixed initial prefix
$A_0$, $\epsilon$, $R_{\rm init}$ and the constants of
\S\ref{ssec:abstract:good:event}. In  addition, on $\mathcal
E^{(m)}(s_0,\ldots,s_m) \cap \mathcal T^{(m)}(s_0,\ldots,s_m)$, one
has $H_j < R_{\rm init}$ for infinitely many $j$, and
\begin{displaymath}
\Phi_i \le C_{\rm temp} e^{\sqrt{i}}, \qquad i \ge 1.  
\end{displaymath}
In  particular, $\mathcal
E^{(m)}(s_0,\ldots,s_m) \cap \mathcal T^{(m)}(s_0,\ldots,s_m) \subset
\cT_\infty(\epsilon)$. 

The corresponding stable space at time $0$ satisfies
\begin{displaymath}
\dist_{\Gr}\bigl(
\mathcal S(0),\ker_{d-k}(A_0)
\bigr)
\leq
C_{\rm init}
\min\left\{
1,\Delta_k(A_0)^{-1}
\right\}.
\end{displaymath}
\end{enumerate}
\end{proposition}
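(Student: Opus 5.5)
The plan is to build the uniformly tempered splitting by a blockwise induction on the times $s_i$, using the two drift conditions in complementary roles. The second-difference drift \eqref{eq:height:controlled:alphapp:drift} drives the gap $\log\Delta_k$ of the partial products upward; the first-difference drift \eqref{eq:height:controlled:alphap:drift} makes the $(d-k)$-dimensional slow directions contract. The height drift \eqref{eq:abstract:margulis:height:drift} supplies recurrence to $\{H<R_{\rm init}\}$ and the subexponential bound $\Phi_i\le Ce^{\sqrt i}$.

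\textbf{Step 1 (gap growth as a submartingale).} Consider the random process $g_n:=\log\Delta_k(A(n))$. It equals, up to bounded error, $\alpha_k'$ evaluated along the flag formed by the $k$- and $(k+1)$-dimensional images of $A(n)$. Applying \eqref{eq:height:controlled:alphapp:drift} to the $\mathcal F_n$-measurable flag $(\im_{k-1},\im_k,\im_{k+1})(A(n))$, and using the usual comparison $\alpha_k(A_nA(n),V)\approx\alpha_k(A_n,\im_kA(n))+\alpha_k(A(n),V)$, I expect to get $\mathbb E(g_{n+1}-g_n\mid\mathcal F_n)\ge c_0$ whenever $g_n$ exceeds a fixed threshold, say $\epsilon_{\rm init}$. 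The increments are bounded by $B_{\rm lin}(1+\log H_n)$. Combining the Margulis drift for $H$ with an exponential-moment (Azuma-type) estimate truncated on the recurrence times then gives $g_n\ge c_0n/2$ outside an event of probability at most $e^{-\beta\sqrt n}$.

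The $\sqrt n$ in this bound comes from the tails of $\log H$. The drift \eqref{eq:abstract:margulis:height:drift} gives $\mathbb P(H_i>e^{t})\lesssim e^{-t}$, so $\max_{i\le n}\log H_i\le\sqrt n$ holds off an event of probability $\approx ne^{-\sqrt n}$. This same estimate, together with \eqref{eq:abstract:systole:height}, yields $\Phi_i\le Ce^{\sqrt i}$ eventually, by Borel–Cantelli.

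\textbf{Step 2 (blocks and the open set $\mathcal O$).} I fix $N$ large, so that over a block of length $\ell\in(N/2,N]$ started at height $<R_{\rm init}$, with probability bounded below, three things hold: the block matrix $B$ has $\Delta_k(B)>e^{\epsilon_{\rm init}}$, its norms are bounded by $e^{K_{\rm blk}\ell}$, and the block ends at height $<R_{\rm init}$. The last condition needs $R_{\rm init}$ large, via Markov's inequality on $cH+b$. I define $\mathcal O$ as the set of $(E,Q)$ with $\Delta_k(Q)>e^{\epsilon_{\rm init}}$ and $\angle(E,\ker_{d-k}Q)>\theta_0$. This set is open and $O(d)$-invariant.

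The extension estimate \eqref{eq:good:block:events:extension} follows from the transversality step, which I expect to be the main obstacle. The drift must be used to show that, for the fixed $\mathcal F_{s_m}$-measurable $k$-plane $E=\im_k(B_{m-1})$, the next block does not put its approximate kernel close to $E$ with high probability. I would try to get this by applying \eqref{eq:height:controlled:alphap:drift} and \eqref{eq:height:controlled:alphapp:drift} along the flag built from $E$, which shows that $E$ itself is expanded at the $k$-level on average. Since $E$ grows at least as fast as the top $k$ singular values, it must be uniformly transverse to $\ker_{d-k}$ with positive probability. A compactness argument over $(E,H\le R_{\rm init})$ then gives a uniform $p_0$.

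\textbf{Step 3 (tempered splitting and conclusion).} On $\mathcal E^{(m)}$, the adapted block products satisfy, on the good event $\mathcal T^{(m)}$ of Step 1, exponential gap growth together with angle control at each block. The spaces $\ker_{d-k}A(n)$ then converge exponentially, by a Cauchy argument using Lemma~\ref{lemma:slow:space:from:dominated:splitting}, to a space $\mathcal S(0)$, and the images $A(n)\im_k$ define $\mathcal U(i)$. I would define adapted norms by Lyapunov-type sums $\|v\|_i'^2=\sum_j e^{2\lambda j}\|A(i,j)v\|^2$ on $\mathcal S(i)$ and the analogous backward sums on $\mathcal U(i)$, declaring the two orthogonal. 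This gives contraction, one-step domination and tempering, with the $e^{\epsilon i}$ loss absorbing the $\sqrt i$ height fluctuations.

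The tangent bound $\dist(\mathcal S(0),\ker_{d-k}A_0)\le C_{\rm init}\Delta_k(A_0)^{-1}$ is the standard perturbation estimate for approximate kernels, because $A(n)=A(1,n-1)A_0$ with the later factor transverse. For part (a), the events $\mathcal E^{(m)}$ are restarted at each return to height $\le R_{\rm init}$ where the gap condition holds; by \eqref{eq:good:block:events:extension} and Borel–Cantelli, almost every path eventually enters some $\mathcal E\cap\mathcal T$ along a tail. Finite prefixes only change the constant $C(\omega,\epsilon)$.
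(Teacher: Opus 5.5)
\textbf{Verdict: there is a genuine gap.} You get several peripheral pieces right: the subexponential bound on $\Phi_i$ from the height drift and Borel--Cantelli, convergence of $\ker_{d-k}A(n)$ once the gap grows exponentially, and restarting at return times for (a). But the central difficulty is not resolved, and the mechanism you propose for it does not work. That difficulty is transversality between the fast image of the past, $E=\im_k(B_{m-1})$, and the slow kernel of the next block.

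\textbf{The transversality mechanism is circular.} The drift hypotheses only control the differences $\alpha_k'$ and $\alpha_k''$ across the levels $k-1,k,k+1$. They say nothing directly about how $\alpha_k(B,E)$ compares with $\log\sigma_1\cdots\sigma_k(B)$. The statement that $E$ grows as fast as the top $k$ singular values is equivalent to the transversality you want, so you are assuming the conclusion.

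The missing idea is the relative gap $\Delta_k(B,E)=\mathfrak m(B|_E)/\|\bar B_E\|$. Averaging $\alpha_k''$ over flags $L_{k-1}\subset E\subset L_{k+1}$ gives $\log\Delta_k(B,E)$ up to $C_{d,k}$ (\autoref{lemma:quantitative:bad:position}). This average is an exact additive cocycle along $A(t,q)$ with conditional drift at least $c_0$. So the Azuma--Margulis estimate gives linear growth of $\Delta_k(A(t,q),E)$ for the fixed $\mathcal F_t$-measurable plane $E$.

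\autoref{lemma:relative:gap:to:angle} turns this only into $\angle(E,\ker_{d-k}A(t,p))\ge e^{-Ap}$. The Goldsheid--Margulis step (\autoref{lemma:GM-step}), together with the weighted height sum, then prevents further decay while the relative gap keeps growing. The outcome is an angle of size $e^{-\eta\ell}$ with $\eta$ small, not a fixed $\theta_0$. The uniform $p_0=\tfrac12$ comes from a high-probability estimate, not from compactness. Compactness is unavailable in any case: in the abstract setting the conditional law depends on the whole history, so there is no compact state space.

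\textbf{Your $\mathcal O$ is too weak.} Conditioning on $\mathcal F_{s_m}$ freezes the prefix. So everything you need about $A_1,\dots,A_{s_m-1}$ must follow deterministically from $\mathcal E^{(m)}$: both the dependence of $C_{\rm temp}$ on $A_0$ alone and the estimate on $\mathcal S(0)$. With only $\Delta_k(Q)>e^{\epsilon_{\rm init}}$ and $\angle>\theta_0$, adapted blocks can cancel. For example, with $d=2$, $k=1$, take $R_\phi\,\mathrm{diag}(a,1)$ with moderate $a>e^{\epsilon_{\rm init}}$ and $\pi/2-\phi$ slightly above a small $\theta_0$. It has complex eigenvalues, so its powers have bounded gap. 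Also, $\mathcal S(1)$ need not be close to $\ker_{d-k}B_1$. The paper's $\mathcal O$ adds the cushion $\Delta_k(Q)\angle(E,\ker_{d-k}Q)>e^{\gamma N}$ and the contraction $-\log\sigma_{k+1}(Q)>\lambda_{\rm sel}N$. These make adapted prefixes $(C_\eta,e^{-\eta N})$-uniformly regular with $C_\eta$ huge.

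\textbf{Fixed-length blocks cannot give the infinite extension.} The bound $1-e^{-\beta\sqrt m}$ for an infinite good future cannot come from blocks of length about $N$, whose failure probabilities are bounded below. The paper instead does two things. It turns the last $\lfloor\rho n\rfloor$ prefix blocks into a terminal buffer (\autoref{lemma:create:terminal:buffer}). It then continues with blocks of length about $\kappa u_q$, proportional to the current time $u_q$. The failure probabilities $e^{-b\sqrt{\kappa u_q}}$ then sum to $e^{-\beta\sqrt m}$. The price is block lengths up to $\epsilon_{\rm adm}t_j$, which is where the $\epsilon$ in the tempering comes from.

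Your Lyapunov-sum norms would need contraction and domination estimates from every starting time, with only $e^{\epsilon i}$ loss. These again require the past-image/future-kernel angles you have not controlled. Exponential growth of $\Delta_k(A(n))$ alone gives $\mathcal S(0)$, but not a tempered angle between $A(0,i)\mathcal U(0)$ and $\mathcal S(i)$.

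\textbf{A fixable error in Step 1.} On the top singular flag, $\alpha_k'$ equals $-\log\sigma_{k+1}$, not $\log\Delta_k$. Evaluating $\alpha_k''$ at the image flag only bounds the increment of $\log\sigma_1\cdots\sigma_k$ from below. The $k\pm1$ terms need upper bounds, and these fail without gaps at $k\pm1$. Averaging over the invariant measure on $\Flag_k''(\mathbb R^d)$, as in \autoref{lemma:avg-gamma-sigmak}, repairs this.
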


\noindent The proof of \autoref{prop:good:block:events} will take the entirety of \autoref{sec:good:words}.
We assume it for now and use it to derive \autoref{theorem:Good-stables-everywhere} and \autoref{theorem:good:words} in \autoref{ssec:Proof-of-theorems-on-stable-manifolds}.


\subsection{Proof of the main statements  on stable manifolds}
\label{ssec:Proof-of-theorems-on-stable-manifolds}

\subsubsection{Proof of \autoref{theorem:Good-stables-everywhere}}
	\label{sssec:Proof-of-stables-everywhere}
Let $\lambda>0$ be the constant from Proposition~\ref{prop:good:block:events}.
Choose
\begin{displaymath}
0<\epsilon_{\rm temp}<\frac{\lambda}{4}
\end{displaymath}
small enough so that
\autoref{theorem:stable:manifold:theorem} applies.

Fix $x\in M$, and let $\tilde w\in\Omega$ be an infinite word.
Write $x_i=f_{\tilde w}^i(x)$,  
\begin{displaymath}
A_i
:=
D f_{\tilde w_i}\bigl(x_i):
T_{x_i}M
\to
T_{x_{i+1}}M .
\end{displaymath}
Set $H_i=h(x_i)$ and $\Phi_i=\phi_a(x_i)$.
Use the background Riemannian norms and let
$\mathcal F_i=\sigma(\tilde w_0,\ldots,\tilde w_{i-1})$, with
$\mathcal F_0$ trivial.

Then $H_i,\Phi_i$ are $\mathcal F_i$-measurable
and $A_i$ is $\mathcal F_{i+1}$-measurable. After every fixed history,
the next letter has law $\mu$, giving the required transition rule.

We now verify the conditions of \S\ref{ssec:abstract:good:event}. 
The height drift \eqref{eq:abstract:margulis:height:drift} follows from
\eqref{eq:def:random:margulis}. The operator bound
\eqref{eq:height:controlled:operator:norm} follows from
(\ref{eq:chart:normalization}) and (\ref{eq:norm:Gsq:bound}),
applied to $f_{\tilde w_i}$ at $x_i$ and its inverse at $x_{i+1}$,
together with \eqref{eq:systole:growth} and
\eqref{eq:def:random:margulis:systole-domination}.
The absolute cocycle bound \eqref{eq:height:controlled:alpha:bounds}
then follows as in the abstract setup. Finally,
\eqref{eq:height:controlled:alphapp:drift} and
\eqref{eq:height:controlled:alphap:drift} follow respectively from
\eqref{eq:uniform:gap:second:derivative} and
\eqref{eq:uniform:gap:negative}, which already hold with $N=1$.
These estimates hold for every fixed history and every relevant flag.

The abstract bounds \eqref{eq:abstract:systole:height} and
\eqref{eq:abstract:systole:growth} follow respectively from
\eqref{eq:def:random:margulis:systole-domination} and
\eqref{eq:systole:growth}, the latter applied to a generator and its inverse.
Thus, we can apply Proposition~\ref{prop:good:block:events}.
For the exponent $\epsilon_{\rm temp}$ fixed above, define
\[
\Omega_\infty(x):=\mathcal T_\infty(\epsilon_{\rm temp}),
\]
where the event is evaluated for this geometric process, with time
indexed by the individual letters of $\tilde w$.
Part~{\rm (a)} gives $\mu^{\otimes\mathbb N}(\Omega_\infty(x))=1$.
For every $\tilde w\in\Omega_\infty(x)$, its defining properties give
the uniformly tempered splitting, the systole bound for $i\geq1$,
and infinitely many returns below $R_{\rm init}$. Enlarging the common
prefactor $C$ to include $\phi_a(x)$ gives the systole bound also at
$i=0$.

Let
\begin{displaymath}
T_xM=\mathcal U(x,\tilde w)\oplus \mathcal S(x,\tilde w)
\end{displaymath}
be the splitting at time $0$. We apply
Theorem~\ref{theorem:stable:manifold:theorem} to the non-autonomous sequence
\begin{displaymath}
f_i=f_{\tilde w_i},
\qquad
x_i=f_{\tilde w}^i(x).
\end{displaymath}
The derivative cocycle has a
$(C,\lambda,\epsilon_{\rm temp})$-uniformly tempered splitting, and
$\phi_a(x_i)\leq Ce^{\sqrt{i}}$ for every $i\geq0$.
These are precisely the two orbit-dependent hypotheses of
\autoref{theorem:stable:manifold:theorem}. Hence
Theorem~\ref{theorem:stable:manifold:theorem} gives a $C^\infty$ embedded
local stable manifold
\begin{displaymath}
\mathcal W_{d-k}(x,\tilde w)\subset M
\end{displaymath}
of dimension $d-k$, containing $x$, tangent to
$\mathcal S(x,\tilde w)$ at $x$, such that for every
$y\in\mathcal W_{d-k}(x,\tilde w)$,
\begin{displaymath}
\limsup_{n\to\infty}
\frac1n
\log d\bigl(f_{\tilde w}^n(x),f_{\tilde w}^n(y)\bigr)
\le
-\lambda/2.
\end{displaymath}
The remaining assertions in
Theorem~\ref{theorem:Good-stables-everywhere} follow from
Lemma~\ref{lemma:uniformly:tempered:kernel:convergence}. 
\hfill \qed

\begin{lemma}
\label{lemma:expanding:initial:map}
Let $A_i:V_i\to V_{i+1}$ admit a
$(C,\lambda,\epsilon)$-uniformly tempered splitting in dimension $k$.
Factor $A_0=B_{L-1}\cdots B_0$, where $L\geq1$, through
$d$-dimensional Euclidean intermediate spaces, with
$\|B_\ell\|,\|B_\ell^{-1}\|\leq K$ for $0\leq\ell<L$ and $K\geq1$.
Then the expanded cocycle
\[
B_0,\ldots,B_{L-1},A_1,A_2,\ldots
\]
admits a $(C',\lambda,\epsilon)$-uniformly tempered splitting,
preserving both subspaces at every original time, where one may take
$C'=\max\{2C^2e^{\epsilon+\lambda L}K^L, eCK^L\}$.
Moreover, if nonnegative scalars $\Phi_i\leq Ce^{\sqrt{i}}$
for $i\geq1$ are expanded to a sequence satisfying
\[
\widehat\Phi_{L+i-1}=\Phi_i\quad(i\geq1),\qquad
\widehat\Phi_\ell\leq K\widehat\Phi_{\ell+1}\quad(0\leq\ell<L),
\]
then $\widehat\Phi_\ell\leq C'e^{\sqrt{\ell}}$ for all $\ell\geq0$.
\end{lemma}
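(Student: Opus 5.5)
The plan is to build the new splitting and adapted norms on the intermediate spaces explicitly, by pulling back the time-$1$ data, and leaving the data at original times unchanged. Write $W_0=V_0$, $W_\ell$ for the intermediate spaces ($1\le\ell<L$), $W_L=V_1$, and $P_\ell:=B_{\ell-1}\cdots B_0\colon V_0\to W_\ell$. For $1\le\ell\le L-1$ set
\[
\mathcal U'(\ell):=P_\ell\,\mathcal U(0),\qquad \mathcal S'(\ell):=P_\ell\,\mathcal S(0).
\]
At $\ell=0$ and $\ell=L$ we keep $\mathcal U(0),\mathcal S(0)$ and $\mathcal U(1),\mathcal S(1)$. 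Equivariance then holds automatically, because $A_0\,\mathcal U(0)=\mathcal U(1)$.

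For the norms on $W_\ell$ we interpolate the contraction geometrically. For $u\in\mathcal U(0)$ and $s\in\mathcal S(0)$ define
\[
\|P_\ell(u+s)\|'_\ell{}^2 := \|P_\ell u\|_{\ast,\ell}^2+\|P_\ell s\|_{\ast,\ell}^2 .
\]
Here $\|P_\ell s\|_{\ast,\ell}:=e^{-\lambda\ell/L}\cdot c_\ell\|s\|'_0$, with $c_\ell$ chosen between $\|s\|'_0$ and $\|A_0s\|'_1$ and interpolated log-linearly so that each step contracts by at least $e^{-\lambda}$. The same pattern, with a factor $e^{+\lambda}$ relative to the stable norm, is used on the unstable block so that one-step domination holds.

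A cleaner choice, which I would try first, is to rescale the stable norms as $\|P_\ell s\|'_\ell:=e^{-\lambda\ell}\|s\|'_0$ for $\ell<L$, and the unstable norms as $\|P_\ell u\|'_\ell:=\|u\|'_0$. Each intermediate step then contracts the stable part by exactly $e^{-\lambda}$ and keeps unstable vectors at norm $1$, which gives both stable contraction and domination. The last step $B_{L-1}$ lands in $V_1$ with its original norm. There, stable contraction requires $\|A_0s\|'_1\le e^{-\lambda}\,e^{-\lambda(L-1)}\|s\|'_0$, which need not hold.

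To fix this, I would treat the original times differently: keep the original norms at $V_0$ and $V_1$, and choose the intermediate stable norms $\|P_\ell s\|'_\ell:=\|A_0 s\|'_1\, e^{\lambda(L-\ell)}$. The step from $\ell=0$ is then the only problematic one. It is handled by noting that the conditions along the expanded sequence only need to hold up to the stated constants, and where they cannot be arranged by norms alone one falls back on splitting the total contraction $e^{-\lambda}$ of $A_0$ among the $L$ steps.

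I expect this to be the main obstacle. Definition~\ref{def:uniformly:tempered} requires a contraction of $e^{-\lambda}$ at every single step, while $A_0$ only provides $e^{-\lambda}$ in total, so the $L$ intermediate steps cannot each contract by $e^{-\lambda}$ with fixed norms at times $0$ and $1$. The factor $e^{\lambda L}$ in $C'$ indicates the intended resolution: the norm at time $0$ is also replaced. Concretely, set $\|s\|''_0:=e^{\lambda(L-1)}\|s\|'_0$ on the stable block, and interpolate $\|P_\ell s\|'_\ell:=e^{-\lambda\ell}\|s\|''_0$. At time $L$ this is at least $\|A_0s\|'_1$, with the required step contraction coming from $\|A_0s\|'_1\le e^{-\lambda}\|s\|'_0$. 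On the unstable block, the norm at time $\ell$ is chosen to be at least $e^{\lambda}$ times the stable norm after each step, using $\|A_0u\|'_1\ge e^{\lambda}\|A_0s\|'_1$ from one-step domination. Orthogonality holds by construction.

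For temperedness, compare $\|\cdot\|'_\ell$ with the Euclidean norm on $W_\ell$ using $\|P_\ell\|,\|P_\ell^{-1}\|\le K^\ell$ and the comparison $C^{\pm1}e^{\pm\epsilon}$ at times $0$ and $1$. This loses at most a factor $2C^2e^{\epsilon+\lambda L}K^L$, where the $2$ comes from the angle between the blocks. After time $L$ the index shift by $L-1$ changes $e^{\epsilon i}$ to $e^{\epsilon(i+L-1)}$, which only helps on one side; the other side is absorbed into $C'$, giving the term $eCK^L$.

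The systole statement is direct. For $\ell\ge L$, $\widehat\Phi_\ell=\Phi_{\ell-L+1}\le Ce^{\sqrt{\ell}}$. For $\ell<L$, iterating $\widehat\Phi_\ell\le K\widehat\Phi_{\ell+1}$ gives $\widehat\Phi_\ell\le K^{L-\ell}\Phi_1\le CK^Le\le C'$.
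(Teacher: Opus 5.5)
Your final construction is correct and is essentially the paper's argument. You transport the splitting along the prefix, carry the unstable norms isometrically, and weight the stable norms by $e^{\lambda(L-1-\ell)}$ so each inserted step contracts by exactly $e^{-\lambda}$. This is paid for by inflating the stable norm at time $0$, which is the source of the $e^{\lambda L}$ in $C'$, and you use the original time-$0$ contraction and domination only at the last step $B_{L-1}$. The paper instead pulls back the time-$1$ norm by $B_{L-1}\cdots B_\ell$ with stable weight $e^{\lambda(L-\ell)}$, so it never needs the estimates for $A_0$. Your abandoned middle attempt is wrong as stated, since in Definition~\ref{def:uniformly:tempered} the per-step contraction and domination carry no constant, but your final construction does not rely on it.
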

\begin{proof}
Retain the adapted norms at $V_i$ for $i\geq1$.
For $\ell<L$, put $P_\ell=B_{L-1}\cdots B_\ell$, pull back
$\mathcal U(1)$ and $\mathcal S(1)$ by $P_\ell$, and for $u,s$
in these respective subspaces set
\[
\bigl(\|u+s\|_\ell^*\bigr)^2
=\bigl(\|P_\ell u\|'_1\bigr)^2
+e^{2\lambda(L-\ell)}\bigl(\|P_\ell s\|'_1\bigr)^2.
\]
Along the inserted prefix, the maps on $\mathcal U$ are isometries,
those on $\mathcal S$ contract by exactly $e^{-\lambda}$, and
transversality has constant $1$. The bounds
$\|P_\ell^{\pm1}\|\leq K^L$, together with
\eqref{eq:uniformly:tempered:norms} and
\eqref{eq:uniformly:tempered:angle} at time $1$, give comparison
with the background norms with constant at most $C'$.
For $\ell\geq L$, the old time is $i=\ell-L+1\leq\ell$, so the
original estimates imply the required ones.
Since $P_0=A_0$, both original subspaces at time $0$ are preserved.
Finally, for $\ell<L$,
$\widehat\Phi_\ell\leq K^{L-\ell}\Phi_1\leq K^LCe^\epsilon\leq C'$;
the scalar bound at later times follows from the same index shift.
\end{proof}

\subsubsection{Proof of \autoref{theorem:good:words}}
\label{sssec:Proof-of-Good-Words}
Let $\lambda$, $R_{\rm init}$, $\epsilon_{\rm init}$
be the constants from \autoref{prop:good:block:events}.
Apply \autoref{prop:good:block:events}{\rm (b)} with the exponent
$\epsilon_{\rm temp}$ fixed in
\S\ref{sssec:Proof-of-stables-everywhere}, to obtain  
$N$, $K_{\rm blk}$, $\beta$, $C_{\rm init}$,
and an orthogonally invariant open set
$\mathcal O\subset \Gr_k(\bR^d)\times GL(d,\bR)$.
Put
\begin{displaymath}
K_{\rm init}:=\{q\in M\st h(q)\leq R_{\rm init}\},
\end{displaymath}
and let
\begin{displaymath}
\mathfrak G_{\rm init}
:=
\{(y,E)\st y\in K_{\rm init},\ E\in\Gr_k(T_yM)\}.
\end{displaymath}
This is compact.

For a word $v$ with
\begin{displaymath}
\frac N2<|v|\leq N,
\end{displaymath}
let $\mathscr U_v\subset\mathfrak G_{\rm init}$ be the set of pairs
$(y,E)$ such that the following hold.  First,
\begin{equation}
\label{eq:good:words:block:return}
h(f_v^{|v|}(y))<R_{\rm init}.
\end{equation}
Second, the block $D_yf_v^{|v|}$ is $K_{\rm blk}$-tame.  Third,
$(E,D_y f_v^{|v|}) \in \cO$. 
The sets $\mathscr U_v$ are open.

The sets $\mathscr U_v$, with $N/2<|v|\leq N$, cover
$\mathfrak G_{\rm init}$.  Indeed, fix
\begin{displaymath}
(y,E)\in\mathfrak G_{\rm init}.
\end{displaymath}
Choose a linear isomorphism $P:\bR^d\to T_yM$ with
\begin{displaymath}
\im_k(P)=E,
\qquad
\Delta_k(P)>e^{\epsilon_{\rm init}}.
\end{displaymath}
Set $H_1=h(y)$ and
$H_0=\max\{1,(h(y)-b)/c\}$. 
Regard $P$ as the initial prefix $A_0$.
Let $y_j=f_\omega^j(y)$. For $i\geq1$, put
\[
H_i=h(y_{i-1}),\qquad \Phi_i=\phi_a(y_{i-1}),
\]
and let $A_i$ be the derivative of the next letter at $y_{i-1}$.
At abstract time $i\geq1$, the filtration reveals the first $i-1$
random letters, and $\mathcal F_0$ is trivial.
The verification in \S\ref{sssec:Proof-of-stables-everywhere} applies
for every $i\geq1$. At time $0$,
\eqref{eq:abstract:margulis:height:drift} follows from
$H_1\leq cH_0+b$, by the chosen value of $H_0$.
The operator and systole bounds start at $i=1$, so they impose no
additional restriction on the artificial prefix $P$. We can thus apply
Proposition~\ref{prop:good:block:events}(b). In particular, the $m=1$  case of 
\eqref{eq:good:block:events:extension} produces a word $v$ of length
$N/2<|v|\leq N$ with $h(f_v^{|v|}(y))<R_{\rm init}$, $D_yf_v^{|v|}$ is
$K_{\rm blk}$-tame and  $(E,D_yf_v^{|v|})\in\mathcal O$. Then, 
$(y,E)\in\mathscr U_v$. 

Choose a finite subcover
\begin{displaymath}
\mathfrak G_{\rm init}\subset\bigcup_{v\in J}\mathscr U_v.
\end{displaymath}
Thus $J$ is a finite collection of words, each satisfying
\begin{displaymath}
\frac N2<|v|\leq N.
\end{displaymath}
Fix a metric on $\mathfrak G_{\rm init}$, and let
\begin{displaymath}
\lambda_{\rm Leb}>0
\end{displaymath}
be a Lebesgue number for this cover.  Thus, for every
$\xi\in\mathfrak G_{\rm init}$, there exists $v\in J$ such that
\begin{displaymath}
B(\xi,\lambda_{\rm Leb})\subset\mathscr U_v.
\end{displaymath}
Let
\begin{displaymath}
\mathscr U_v' = \{ \xi \in \mathscr U_v \st \dist(\xi, \mathfrak G_{\rm
  init} \setminus\mathscr U_v)\geq\lambda_{\rm Leb}\}.
\end{displaymath}
Then the $\mathscr U_v'$ are compact and $J$ is finite. Therefore, there exists
$\epsilon_0 > 0$ such that for any  $v \in  J$ and  any $(y,E) \in
\mathscr U_v'$, $h(f_v^{|v|}(y)) < R_{\rm init} - \epsilon_0$. 

For a point $y\in M$ and a finite word $u$, write
\begin{displaymath}
x_u(y):=f_u^{-|u|}(y).
\end{displaymath}
Whenever
\begin{displaymath}
h(y) \le R_{\rm init}
\end{displaymath}
and
\begin{displaymath}
\Delta_k\left(D_{x_u(y)}f_u^{|u|}\right)>e^{\epsilon_{\rm init}/2},
\end{displaymath}
define
\begin{displaymath}
\Xi(y,u):=
\left(
y,\,
\im_k\left(D_{x_u(y)}f_u^{|u|}\right)
\right)
\in\mathfrak G_{\rm init}.
\end{displaymath}

For notational reasons we introduce $\Xi^+$ as the restriction of
$\Xi$ to the smaller domain,
\[
\Delta_k\!\left(D_{x_u(y)}f_u^{|u|}\right)>e^{\epsilon_{\rm init}}.
\]
Thus $\Xi^+(y,u)=\Xi(y,u)$ wherever $\Xi^+$ is defined.

For each $L\geq1$, choose $\rho_{\rm init}(L)>0$ sufficiently small so that
the following initial-prefix robustness holds.  If $|u|=L$, 
$\Xi^+(f_u^L(x),u)$ is defined, and
\begin{displaymath}
x'\in B(x,\rho_{\rm init}(L)) \qquad \text{with
}h(f_u^L(x')) \le R_{\rm init}, 
\end{displaymath}
then $\Xi(f_u^L(x'),u)$ is defined and
\begin{displaymath}
d\left(
\Xi(f_u^L(x'),u),
\Xi(f_u^L(x),u)
\right)
<
\frac{\lambda_{\rm Leb}}{2}.
\end{displaymath}
This is possible by compact support of $\mu$, the finite-time compactness of
words of length $L$ whose endpoint lies in $K_{\rm init}$, and the uniform
continuity controlled by the 
``the fixed-length derivative bounds of
Remark~\ref{rmk:systole_fixed_length_words}''.
We now choose
\begin{displaymath}
0 < \epsilon <\min_{1 \le L \le N} \rho_{\rm init}(L). 
\end{displaymath}
We also decrease $\epsilon$ so that for  any $z$ with $h(z) < R_{\rm
  init} - \epsilon_0$ and  any $z'$  with $d(z,z') < \epsilon$, $h(z')
< R_{\rm init}$. Then, for every $v\in J$ and
$(y,E)\in\mathscr U_v'$, $d(f_v^{|v|}(y),z)<\epsilon$ implies
$h(z)<R_{\rm init}$.

We now define $w_{\rm next}$.  If $\Xi(y,u)$ is defined, choose
\begin{displaymath}
w_{\rm next}(y,u)\in J
\end{displaymath}
so that
\begin{displaymath}
B\bigl(\Xi(y,u),\lambda_{\rm Leb}\bigr)
\subset
\mathscr U_{w_{\rm next}(y,u)}.
\end{displaymath}
If $\Xi(y,u)$ is not defined but 
$y\in K_{\rm init}$ choose any
$E\in\Gr_k(T_y M)$, select $v\in J$ with
$B((y,E),\lambda_{\rm Leb})\subset\mathscr U_v$ and set
$w_{\rm next}(y,u):=v$.
(This gives
\eqref{eq:good:words:uniform:return} for every incoming word.) 
If $y \not\in  K_{\rm init}$, define $w_{\rm next}(y,u)$ arbitrarily with
values in $J$.

Now let $w_0$, $x$, $n$, and
\begin{displaymath}
w=w_n\cdots w_1w_0
\end{displaymath}
satisfy the hypotheses of the theorem.  Consider the derivative cocycle along
the orbit of $x$, with initial block
\begin{displaymath}
A_0:=D_xf_{w_0}^{|w_0|}.
\end{displaymath}
Set $H_1=h(f_{w_0}^{|w_0|}(x))$ and choose the auxiliary
$H_0=\max\{1,(H_1-b)/c\}$. Set
\begin{displaymath}
s_0:=0,\qquad s_1:=1,
\end{displaymath}
and, for $1\leq j\leq n$, set
\begin{displaymath}
s_{j+1}:=1+\sum_{r=1}^j |w_r|.
\end{displaymath}

Put $L=|w_0|$ and $q_\ell=f_{\tilde w}^{\ell}(x)$. For $i\geq1$, set
\[
H_i=h(q_{L+i-1}),\qquad \Phi_i=\phi_a(q_{L+i-1}).
\]
The initial word $w_0$ is fixed; at abstract time $i\geq1$, reveal
the first $i-1$ letters after $w_0$, and take $\mathcal F_0$ trivial.
All the verifications in \S\ref{sssec:Proof-of-stables-everywhere}
apply after this index shift for $i\geq1$, with the same constants.
At time $0$, the chosen $H_0$ gives $H_1\leq cH_0+b$, verifying
\eqref{eq:abstract:margulis:height:drift}.
The bounds \eqref{eq:height:controlled:operator:norm},
\eqref{eq:abstract:systole:height}, and
\eqref{eq:abstract:systole:growth} are required only for $i\geq1$.
Thus, we can apply Proposition~\ref{prop:good:block:events}(b). 

We claim that
\begin{displaymath}
\mathcal E^{(n+1)}(s_0,\ldots,s_{n+1})
\end{displaymath}
holds for this derivative cocycle.  The length conditions follow from
\begin{displaymath}
\frac N2<|w_j|\leq N
\qquad
(1\leq j\leq n).
\end{displaymath}
The initial height and gap conditions are exactly the assumptions
\begin{displaymath}
h(f_{w_0}^{|w_0|}(x)) \le R_{\rm init},
\qquad
\Delta_k(D_xf_{w_0}^{|w_0|})>e^{\epsilon_{\rm init}}.
\end{displaymath}

For $0\leq i\leq n$, put
\begin{displaymath}
z_i:=f_{w_i\cdots w_0}^{|w_i\cdots w_0|}(x).
\end{displaymath}
We prove by induction that, for every $0\leq i\leq n$, the actual incoming datum
$\Xi^+(z_i,w_i)$ is defined and, for $1\leq i\leq n$,
\begin{displaymath}
\Xi^+(z_{i-1},w_{i-1})\in\mathscr U_{w_i}.
\end{displaymath}
For $i=0$, this follows from the initial height and gap assumptions.

Assume $1\leq i\leq n$ and $\Xi^+(z_{i-1},w_{i-1})$ is defined.  By hypothesis there exists
\begin{displaymath}
x_{i-1}\in BB(w_{i-1}\cdots w_0,x,\epsilon)\cap
B(x,\rho_{\rm init}(|w_0|))
\end{displaymath}
such that, with
\begin{displaymath}
y_{i-1}:=f_{w_{i-1}\cdots w_0}^{|w_{i-1}\cdots w_0|}(x_{i-1}),
\end{displaymath}
one has
\begin{displaymath}
h(y_{i-1}) \le R_{\rm init},
\qquad
w_i=w_{\rm next}(y_{i-1},w_{i-1}).
\end{displaymath}

If $i=1$, the definition of $\rho_{\rm init}(|w_0|)$ gives
\begin{displaymath}
d\bigl(\Xi(y_0,w_0),\Xi^+(z_0,w_0)\bigr)
<
\frac{\lambda_{\rm Leb}}{2}.
\end{displaymath}
If $i\geq2$, then $w_{i-1}\in J$, and the choice of $\epsilon$ gives
\begin{displaymath}
d\bigl(\Xi(y_{i-1},w_{i-1}),\Xi^+(z_{i-1},w_{i-1})\bigr)
<
\frac{\lambda_{\rm Leb}}{2}.
\end{displaymath}
In either case, since
\begin{displaymath}
w_i=w_{\rm next}(y_{i-1},w_{i-1})
\end{displaymath}
and
\begin{displaymath}
B\bigl(\Xi(y_{i-1},w_{i-1}),\lambda_{\rm Leb}\bigr)
\subset
\mathscr U_{w_i},
\end{displaymath}
we get
\begin{displaymath}
\Xi(z_{i-1},w_{i-1}) = \Xi^+(z_{i-1},w_{i-1})\in\mathscr U_{w_i}.
\end{displaymath}
Since $x_{w_i}(z_i)=z_{i-1}$,
\eqref{eq:good:words:block:return} and
\eqref{eq:good:block:events:output:gap} give
\[
h(z_i)<R_{\rm init},
\qquad
\Delta_k\!\left(D_{x_{w_i}(z_i)}f_{w_i}^{|w_i|}\right)
>e^{\epsilon_{\rm init}}.
\]
Thus $\Xi^+(z_i,w_i)$ is defined, proving the induction
step, and hence the claim.

By \autoref{prop:good:block:events}{\rm (b)}, for all sufficiently large
$n$ there is an event
\begin{displaymath}
\mathcal T^{(n+1)}(s_0,\ldots,s_{n+1})
\end{displaymath}
such that
\begin{displaymath}
\mathbb P\left(
\mathcal T^{(n+1)}(s_0,\ldots,s_{n+1})
\,\middle|\,
\mathcal F_{s_{n+1}}
\right)
\geq
1-e^{-\beta\sqrt{n+1}}.
\end{displaymath}
On this event the full derivative cocycle admits a uniformly tempered
splitting in dimension $k$, and its stable space $\mathcal S(0)$ satisfies
\begin{displaymath}
\dist_{\Gr}\bigl(
\mathcal S(0),\ker_{d-k}(A_0)
\bigr)
\leq
C_{\rm init}
\min\left\{
1,\Delta_k(A_0)^{-1}
\right\}.
\end{displaymath}

Restricting to the cylinder $\Omega(w)$, define
\begin{displaymath}
G(w,x):=
\Omega(w)\cap
\mathcal T^{(n+1)}(s_0,\ldots,s_{n+1}),
\qquad
B(w,x):=\Omega(w)\setminus G(w,x).
\end{displaymath}
After decreasing $\beta$ if necessary,
\begin{displaymath}
\mathbb P_w(B(w,x))\leq e^{-\beta\sqrt n}.
\end{displaymath}

For every $\tilde w\in G(w,x)$, the event $\mathcal E^{(n+1)}$ also
holds. Since $H_1\leq R_{\rm init}$,
\eqref{eq:def:random:margulis:systole-domination} and
\eqref{eq:systole:growth} give
\[
\phi_a(q_\ell)\leq A_\phi^{L-\ell}
(R_{\rm init}/\kappa)^{1/\delta}\qquad(0\leq\ell\leq L).
\]
By \eqref{eq:chart:normalization} and the $k=1$ case of
\eqref{eq:norm:Gsq:bound}, the prefix derivatives and their inverses
are uniformly bounded in terms of $L$ and the fixed data.
Apply Lemma~\ref{lemma:expanding:initial:map} with
$C=C_{\rm temp}$, with $B_0,\ldots,B_{L-1}$ the one-letter
derivatives along $w_0$, and with
$\widehat\Phi_\ell=\phi_a(q_\ell)$.
Choose $K\geq A_\phi$ also bounding those derivatives and their
inverses; \eqref{eq:systole:growth} for inverse letters verifies the
prefix scalar bound. We obtain a
$(C_*,\lambda,\epsilon_{\rm temp})$-uniformly tempered splitting and
$\phi_a(q_\ell)\leq C_*e^{\sqrt{\ell}}$ for all $\ell\geq0$
in the original indexing, with the same stable space $\mathcal S(0)$.
Together with the infinitely many returns supplied by
\autoref{prop:good:block:events}{\rm (b)},
these are the defining conditions for $\Omega_\infty(x)$ in
\S\ref{sssec:Proof-of-stables-everywhere}. Hence
$G(w,x)\subseteq\Omega_\infty(x)$ pointwise, and we may apply
\autoref{theorem:stable:manifold:theorem} at $x$, with
parameters $(C_*,\lambda,\epsilon_{\rm temp})$.

Thus the stable manifold
\begin{displaymath}
\mathcal W_{d-k}(x,\tilde w)
\end{displaymath}
exists, with
\begin{displaymath}
T_x\mathcal W_{d-k}(x,\tilde w)=\mathcal S(0).
\end{displaymath}
In the proof of \autoref{prop:admissible:to:uniformly:tempered},
\eqref{eq:admissible:to:ut:norm:tempering} for $i\geq t_1=1$
is uniform, and the extension to time $0$ uses only
$A_0^{\pm1}$ and $e^\lambda$.
The prefix bounds above therefore make $C_{\rm temp}$ uniform in
$x$, $n$ and the good continuation; its enlargement for the
$\Phi_i$ bound is uniform as well.
The abstract expansion lemma now gives $C_*$ depending only on
$L=|w_0|$ and the fixed data.
The size, $C^r$-geometry and stable-Jacobian distortion bounds follow
directly from
\autoref{theorem:stable:manifold:theorem}, whose output constants depend
only on $(C_*,\lambda,\epsilon_{\rm temp})$, with the standing data
fixed (and on $r$ for $C^r$ geometry).

Finally, since
\begin{displaymath}
A_0=D_xf_{w_0}^{|w_0|},
\end{displaymath}
the stable-space estimate above gives
\begin{displaymath}
\dist_{\Gr}\!\bigl(
\ker_{d-k}(Df_{w_0}^{|w_0|}(x)),
T_x\mathcal W_{d-k}(x,\tilde w)
\bigr)
\leq
C_{\rm init}
\min\left\{
1,\Delta_k(Df_{w_0}^{|w_0|}(x))^{-1}
\right\}.
\end{displaymath}
\hfill\qed



\section{Good words and their extensions}
\label{sec:good:words}

In this section, we prove Proposition~\ref{prop:good:block:events}.
Throughout this subsection we use the standing assumptions from
\autoref{ssec:abstract:good:event}.

\subsection{Singular values and averages over the Grassmanian}
\label{ssec:singular:average}

\begin{lemma}
\label{lemma:avg:alpha:j}
Suppose $1 \le j  < d$, and let  
$\rho_j$ denote  the $O(d)$-invariant probability measure on
$\Gr_j(\R^d)$. There exists a constant $C_{d,j} > 0$ such  that
for every $A \in GL(d,\R)$, 
\begin{multline*}
\log( \sigma_1(A) \dots \sigma_j(A) ) - C_{d,j} \le
\int_{\Gr_j(\R^d)} \alpha_j(A,E) \, d\rho_j(E) \\
\le \log( \sigma_1(A)\dots\sigma_j(A) )
\end{multline*}
For $j=0,d$, the estimate holds taking $\alpha_0 = C_{d,j} =  0$
and the empty product equal to $1$. 
\end{lemma}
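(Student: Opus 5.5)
The upper bound is pointwise and the lower bound comes from integrating $\log$ of a Gram determinant against the invariant measure. For every $j$-plane $E$ with orthonormal basis $e_1,\dots,e_j$, we have
\[
\alpha_j(A,E)=\log\|\wedge^jA(e_1\wedge\dots\wedge e_j)\|\le\log\|\wedge^jA\|=\log(\sigma_1(A)\cdots\sigma_j(A)),
\]
since the operator norm of $\wedge^jA$ is the product of the top $j$ singular values. Integrating over $\rho_j$ gives the upper bound.

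For the lower bound, write $A=U\Sigma V$ by the singular value decomposition, with $U,V\in O(d)$ and $\Sigma=\mathrm{diag}(\sigma_1,\dots,\sigma_d)$. Then $\alpha_j(A,E)=\alpha_j(\Sigma,VE)$, because $U$ preserves norms on $\wedge^j$. Since $\rho_j$ is $O(d)$-invariant, we may assume $A=\Sigma$. Let $I$ range over the $j$-subsets of $\{1,\dots,d\}$, and let $\{e_I\}$ be the standard orthonormal basis of $\wedge^j\R^d$. For a unit decomposable vector $\xi=\sum_I\xi_Ie_I$ representing $E$, we get
\[
\|\wedge^j\Sigma\,\xi\|^2=\sum_I\Big(\prod_{i\in I}\sigma_i\Big)^2\xi_I^2\ge(\sigma_1\cdots\sigma_j)^2\,\xi_{I_0}^2,\qquad I_0=\{1,\dots,j\}.
\]
Hence
\[
\alpha_j(\Sigma,E)\ge\log(\sigma_1\cdots\sigma_j)+\log|\xi_{I_0}(E)|.
\]
The quantity $|\xi_{I_0}(E)|$ equals $|\det(\pi|_E)|$, where $\pi$ is the orthogonal projection onto $E_0=\mathrm{span}(e_1,\dots,e_j)$. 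Equivalently, it is the product of the cosines of the principal angles between $E$ and $E_0$. It does not depend on $A$.

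It then suffices to set
\[
C_{d,j}:=-\int_{\Gr_j(\R^d)}\log|\det(\pi|_E)|\,d\rho_j(E)
\]
and to show this is finite. Finiteness is the only real obstacle. To prove it, realize $\rho_j$ as the law of $E=\mathrm{span}(g_1,\dots,g_j)$, where the $g_i$ are i.i.d.\ standard Gaussian vectors in $\R^d$, and let $G$ be the $d\times j$ matrix with these columns. Then
\[
|\det(\pi|_E)|=\frac{|\det G_{\rm top}|}{\sqrt{\det(G^TG)}},
\]
where $G_{\rm top}$ is the top $j\times j$ block of $G$. So we need $\mathbb E|\log|\det G_{\rm top}||<\infty$ and $\mathbb E|\log\det G^TG|<\infty$.

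Both follow from Gram--Schmidt. The determinant is a product of independent distances from Gaussian vectors to subspaces, and each distance is a $\chi$-distributed variable with $\ge1$ degrees of freedom. Each $\log\chi_m$ is integrable, since the density of $\chi_m$ near $0$ is $\sim r^{m-1}$, so $\log r$ is integrable there, and the Gaussian tail controls large $r$. Hence $C_{d,j}<\infty$, and since $|\det(\pi|_E)|\le1$, also $C_{d,j}\ge0$.

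The case $j=d$ is $\alpha_d=\log|\det A|=\log(\sigma_1\cdots\sigma_d)$ exactly, and $j=0$ is trivial.
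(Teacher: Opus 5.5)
Your proof is correct and follows the same argument as the paper. Both reduce to diagonal $A$ by orthogonal invariance of $\rho_j$, sandwich $\alpha_j(A,E)$ between $\log(\sigma_1\cdots\sigma_j)+\log|\langle \xi_E, e_1\wedge\dots\wedge e_j\rangle|$ and $\log(\sigma_1\cdots\sigma_j)$, and integrate. The only difference is that you prove the integrability of $\log|\det(\pi|_E)|$ (via the Gaussian model and independent $\chi$-distributed Gram--Schmidt factors), which the paper simply cites as the logarithmic integrability of Pl\"ucker coordinates.
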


\begin{proof}
By orthogonal invariance of $\rho_j$, the integrals are unchanged if $A$ is
replaced by $UAV$ with $U,V\in O(d)$. Thus, by the singular value
decomposition, it is enough to consider the case
\begin{displaymath}
A=\diag(\sigma_1(A),\dots,\sigma_d(A)).
\end{displaymath}
Let  $e_1, \dots, e_d$ be  the standard basis  for $\R^d$, and let
$v_j = e_1 \wedge \dots \wedge e_j$. Then, for any $E \in \Gr_j(\R^d)$
\begin{displaymath}
\log( \sigma_1(A)\dots\sigma_j(A) ) - f(E)  \le \alpha_j(A,E)
\le \log( \sigma_1(A)\dots\sigma_j(A) ).
\end{displaymath}
where $f(E) = -\log  |\langle v_E, v_j\rangle|$ where $v_E$ is either unit
vector in $\bigwedge^j(E)$.   
The function $f$ is integrable with respect to $\rho_j$
by the logarithmic integrability of Pluecker coordinates. Let
\[
C_{d,j}:=\int_{\Gr_j(\R^d)}f(E)\,d\rho_j(E)<\infty
\]
Integrating the preceding inequalities
proves the lemma.
\end{proof}

\begin{lemma}
\label{lemma:avg-gamma-sigmak}
Let $\rho$ denote the $O(d)$-invariant probability measure on
\begin{displaymath}
\Flag_k'(\R^d)
=
\{L_k\subset L_{k+1}\subset \R^d:\dim L_k=k,\ \dim L_{k+1}=k+1\}.
\end{displaymath}
or the $O(d)$-invariant probability measure on
\begin{displaymath}
\Flag_k''(\R^d)
=
\{L_{k-1} \subset L_k\subset L_{k+1}\subset \R^d:\dim L_{k-1}=k-1, \dim L_k=k,\ \dim L_{k+1}=k+1\}.
\end{displaymath}
Then there exists a constant $C=C(d,k)$ such that for every
$A\in GL(d,\R)$,
\begin{displaymath}
\left|
\int_{\Flag_k'(\R^d)}
\alpha_k'(A,F)\,d\rho(F)
+ \log \sigma_{k+1}(A)
\right|
\le C,
\end{displaymath}
and
\begin{displaymath}
\left|
\int_{\Flag_k''(\R^d)}
\alpha_k''(A,F)\,d\rho(F)
- \log \Delta_k(A)
\right|
\le C.
\end{displaymath}
\end{lemma}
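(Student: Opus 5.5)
The plan is to reduce both estimates to \autoref{lemma:avg:alpha:j} by pushing forward the invariant measure on flags to the Grassmannians. The projections $\Flag_k'(\R^d)\to\Gr_k(\R^d)$, $(L_k,L_{k+1})\mapsto L_k$, and $(L_k,L_{k+1})\mapsto L_{k+1}\in\Gr_{k+1}(\R^d)$ are $O(d)$-equivariant. Since $O(d)$ acts transitively on each Grassmannian, the invariant probability measure there is unique. Hence the pushforward of $\rho$ under each projection is $\rho_k$, resp.\ $\rho_{k+1}$. The same holds for the three projections from $\Flag_k''(\R^d)$ to $\Gr_{k-1}$, $\Gr_k$, $\Gr_{k+1}$.

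By definition, $\alpha_k'(A,F)=\alpha_k(A,L_k)-\alpha_{k+1}(A,L_{k+1})$. Each term depends on only one component of the flag and is integrable, by the bounds in the proof of \autoref{lemma:avg:alpha:j}, since $f(E)$ is $\rho_j$-integrable. Therefore
\[
\int_{\Flag_k'}\alpha_k'(A,F)\,d\rho(F)=\int_{\Gr_k}\alpha_k(A,E)\,d\rho_k(E)-\int_{\Gr_{k+1}}\alpha_{k+1}(A,E)\,d\rho_{k+1}(E).
\]
By \autoref{lemma:avg:alpha:j}, the first integral lies within $C_{d,k}$ of $\log(\sigma_1\cdots\sigma_k)$, and the second within $C_{d,k+1}$ of $\log(\sigma_1\cdots\sigma_{k+1})$. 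Their difference is therefore $-\log\sigma_{k+1}(A)$ up to an error of at most $C_{d,k}+C_{d,k+1}$. The endpoint case $k+1=d$ is covered by the convention in \autoref{lemma:avg:alpha:j}; there $\alpha_d(A,\cdot)=\log|\det A|$ exactly.

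For $\alpha_k''$ I would argue identically. The integral equals
\[
2\int\alpha_k\,d\rho_k-\int\alpha_{k-1}\,d\rho_{k-1}-\int\alpha_{k+1}\,d\rho_{k+1}.
\]
This is within $2C_{d,k}+C_{d,k-1}+C_{d,k+1}$ of
\[
2\log(\sigma_1\cdots\sigma_k)-\log(\sigma_1\cdots\sigma_{k-1})-\log(\sigma_1\cdots\sigma_{k+1})=\log\sigma_k-\log\sigma_{k+1}=\log\Delta_k(A).
\]
When $k=1$, the term $\alpha_0$ is interpreted as $0$ with $C_{d,0}=0$, again by the convention of \autoref{lemma:avg:alpha:j}. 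Taking $C$ to be the maximum of these two sums gives the claim.

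The only point needing care is the integrability needed to split the integral. This is supplied by the two-sided pointwise bound $\log(\sigma_1\cdots\sigma_j)-f(E)\le\alpha_j(A,E)\le\log(\sigma_1\cdots\sigma_j)$, with $f\in L^1(\rho_j)$, which was established in the previous lemma after the $O(d)$ reduction. Since this reduction uses the same invariance for $\rho$ on flag spaces, nothing further is needed. I expect no genuine obstacle.
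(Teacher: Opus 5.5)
Your proposal is correct and is the argument the paper means when it says the lemma ``follows immediately'' from \autoref{lemma:avg:alpha:j}: push the invariant flag measure forward to each Grassmannian, split the integral, and add the two-sided bounds, with the conventions for $j=0,d$ covering the endpoints. The integrability step is simpler than you make it: $\alpha_j(A,\cdot)$ is continuous on the compact Grassmannian and bounded between $\log(\sigma_{d-j+1}(A)\cdots\sigma_d(A))$ and $\log(\sigma_1(A)\cdots\sigma_j(A))$, so no appeal to $f\in L^1(\rho_j)$ is needed.
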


\begin{proof}
This follows immediately  from Lemma~\ref{lemma:avg:alpha:j}.  
\end{proof}

Recall that $\Delta_k(B,E)$ is defined in \ref{sssec:relative:gap}.  
\begin{lemma}
\label{lemma:quantitative:bad:position}
Let $B:V_1\to V_2$ be an invertible linear map  between
$d$-dimensional Euclidean  spaces, $1 \le k < d$
and let $E\subset V_1$ be a
$k$-dimensional subspace. 

Let $\rho_E^-$ be the unique $O(E)$-invariant probability measure on
$\Gr_{k-1}(E)$, and let $\rho_E^+$ be the unique probability measure on the set
\begin{displaymath}
\{L_{k+1}\in \Gr_{k+1}(V_1): E\subset L_{k+1}\}
\end{displaymath}
which is invariant under the stabilizer of $E$ in $O(V_1)$. Equivalently, under the
identification
\begin{displaymath}
\{L_{k+1}\in \Gr_{k+1}(V_1): E\subset L_{k+1}\}\simeq \PP(V_1/E),
\qquad
L_{k+1}\mapsto L_{k+1}/E,
\end{displaymath}
the measure $\rho_E^+$ is the natural $O(V_1/E)$-invariant probability measure on
$\PP(V_1/E)$.

Define
\begin{displaymath}
\overline{\alpha_k''}(B,E)
:=
\int_{L_{k-1}\subset E}\int_{L_{k+1}\supset E}
\alpha_k''\bigl(B,(L_{k-1},E,L_{k+1})\bigr)\,
d\rho_E^+(L_{k+1})\,d\rho_E^-(L_{k-1}).
\end{displaymath}
Then there exists a constant $C_{d,k}$, depending only on $d$ and $k$, such that
\begin{equation}
\label{eq:alpha:k:twoprime:vs:Delta:k:avg}
\log \Delta_k(B,E) \le \overline{\alpha_k''}(B,E)
\le
\log \Delta_k(B,E) + C_{d,k}.
\end{equation}
\end{lemma}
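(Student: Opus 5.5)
The plan is to split $\alpha_k''$ into two first differences and average each one separately with \autoref{lemma:avg:alpha:j}, applied in suitable lower-dimensional spaces. I am assuming that the relative gap of \S\ref{sssec:relative:gap} (not shown here) is
\[
\Delta_k(B,E)=\frac{\mathfrak m\bigl(B|_E\bigr)}{\|\bar B\|},
\]
where $\mathfrak m(B|_E)=\sigma_k(B|_E)$ is the conorm of $B|_E\colon E\to BE$, and $\bar B\colon V_1/E\to V_2/BE$ is the induced map, with the quotient norms coming from orthogonal complements. If the definition in the paper is normalized differently, only the bookkeeping below changes.

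\textbf{Step 1: splitting.} Write
\[
\alpha_k''(B,(L_{k-1},E,L_{k+1}))=\bigl[\alpha_k(B,E)-\alpha_{k-1}(B,L_{k-1})\bigr]-\bigl[\alpha_{k+1}(B,L_{k+1})-\alpha_k(B,E)\bigr].
\]
The first bracket depends only on $L_{k-1}$ and the second only on $L_{k+1}$. Since $\rho_E^\pm$ are probability measures, the double integral defining $\overline{\alpha_k''}(B,E)$ splits as the $\rho_E^-$-average of the first bracket minus the $\rho_E^+$-average of the second.

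\textbf{Step 2: the first bracket.} Choose orthonormal bases of $E$ and $BE$ and regard $B|_E$ as an element of $GL(k,\R)$. The quantity $\alpha_k(B,E)$ is $\log|\det B|_E|=\log(\sigma_1\cdots\sigma_k)(B|_E)$, and $\alpha_{k-1}(B,L_{k-1})=\alpha_{k-1}(B|_E,L_{k-1})$. \autoref{lemma:avg:alpha:j} with $d=k$, $j=k-1$ and the measure $\rho_E^-$ gives
\[
\int\alpha_{k-1}\,d\rho_E^-\in\bigl[\log(\sigma_1\cdots\sigma_{k-1})(B|_E)-C,\ \log(\sigma_1\cdots\sigma_{k-1})(B|_E)\bigr].
\]
Hence the averaged first bracket lies in $[\log\sigma_k(B|_E),\ \log\sigma_k(B|_E)+C]$. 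For $k=1$ this is exact, using the convention $\alpha_0=0$.

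\textbf{Step 3: the second bracket.} Take $L_{k+1}=E\oplus\R u$ with $u\perp E$ a unit vector. The base-times-height formula for wedge norms gives $\|B(e_1\wedge\dots\wedge e_k\wedge u)\|=\|Be_1\wedge\dots\wedge Be_k\|\cdot\operatorname{dist}(Bu,BE)$. Therefore
\[
\alpha_{k+1}(B,L_{k+1})-\alpha_k(B,E)=\log\|\bar B[u]\|,
\]
where $\|\bar B[u]\|$ is the quotient norm in $V_2/BE\cong(BE)^\perp$. This is $\alpha_1(\bar B,\R[u])$ for the induced map $\bar B\colon V_1/E\to V_2/BE$. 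Under the identification in the statement, $\rho_E^+$ is the $O(V_1/E)$-invariant measure on $\PP(V_1/E)$. So \autoref{lemma:avg:alpha:j} with $d-k$ in place of $d$ and $j=1$ places the average in $[\log\|\bar B\|-C,\ \log\|\bar B\|]$. This bound is vacuous if $k=d-1$, since then the average equals $\log\|\bar B\|$ exactly.

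\textbf{Step 4: combining.} Subtracting the two averages,
\[
\overline{\alpha_k''}(B,E)\in\Bigl[\log\frac{\sigma_k(B|_E)}{\|\bar B\|},\ \log\frac{\sigma_k(B|_E)}{\|\bar B\|}+2C\Bigr],
\]
which is \eqref{eq:alpha:k:twoprime:vs:Delta:k:avg} with $C_{d,k}=2\max_{j}C_{\cdot,j}$. The one-sided lower bound is sharp because each application of \autoref{lemma:avg:alpha:j} has a sharp upper bound, and the signs line up.

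\textbf{Main obstacle.} The only delicate point is Step 3: the identification $\alpha_{k+1}-\alpha_k=\log\|\bar B[u]\|$ together with the compatibility of $\rho_E^+$ with the invariant measure on $\PP(V_1/E)$ for the quotient Euclidean structure. This is exactly what lets \autoref{lemma:avg:alpha:j} apply verbatim, with constants depending only on $(d-k,1)$ and $(k,k-1)$, hence only on $d$ and $k$.
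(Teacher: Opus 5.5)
Your proposal is correct and is essentially the paper's proof. The paper writes $\overline{\alpha_k''}(B,E)=2\alpha_k(B,E)-I_-(B,E)-I_+(B,E)$ and applies Lemma~\ref{lemma:avg:alpha:j} to $B|_E$ in degree $k-1$. It then uses the identity $\alpha_{k+1}(B,L_{k+1})=\alpha_k(B,E)+\alpha_1(\bar B_E,L_{k+1}/E)$ together with Lemma~\ref{lemma:avg:alpha:j} in degree $1$ for the quotient map. Your grouping into two first differences is only a rearrangement of this computation, and your assumed definition of $\Delta_k(B,E)$ matches the paper's.
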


\begin{proof}
Write
\begin{displaymath}
\alpha_k''(B,(L_{k-1},E,L_{k+1}))
=
2\alpha_k(B,E)-\alpha_{k-1}(B,L_{k-1})-\alpha_{k+1}(B,L_{k+1}).
\end{displaymath}
Averaging over $L_{k-1}\subset E$ and $L_{k+1}\supset E$, we obtain
\begin{equation}
\label{eq:avg-second-split-quant}
\overline{\alpha_k''}(B,E)
=
2\alpha_k(B,E)-I_-(B,E)-I_+(B,E),
\end{equation}
where
\begin{displaymath}
I_-(B,E):=
\int_{L_{k-1}\subset E}\alpha_{k-1}(B,L_{k-1})\,d\rho_E^-(L_{k-1}),
\end{displaymath}
and
\begin{displaymath}
I_+(B,E):=
\int_{L_{k+1}\supset E}\alpha_{k+1}(B,L_{k+1})\,d\rho_E^+(L_{k+1}).
\end{displaymath}
Applying Lemma~\ref{lemma:avg:alpha:j} to $B|_E$ in degree $k-1$ we get
\begin{displaymath}
\alpha_k(B,E)-\log \frak{m}(B|_E) -C_{d,k}^{(1)} \le I_-(B,E)\le \alpha_k(B,E)-\log \frak{m}(B|_E).
\end{displaymath}
Also, if $\bar B_E:V_1/E\to V_2/BE$ is the induced quotient map, then every
$(k+1)$-plane $L_{k+1}\supset E$ corresponds to a line $\ell=L_{k+1}/E\subset V_1/E$,
and
\begin{displaymath}
\alpha_{k+1}(B,L_{k+1})
=
\alpha_k(B,E)+\alpha_1(\bar B_E,\ell).
\end{displaymath}
Applying Lemma~\ref{lemma:avg:alpha:j} to $\bar B_E$ in degree
$1$, we obtain
\begin{displaymath}
\alpha_k(B,E) + \log \|\bar B_E\|-C_{d,k}^{(2)} \le I_+(B,E)\le \alpha_k(B,E) + \log \|\bar B_E\|.
\end{displaymath}
Substituting these two bounds into \eqref{eq:avg-second-split-quant}, we get
(\ref{eq:alpha:k:twoprime:vs:Delta:k:avg}). 
\end{proof}

\subsection{Tail estimates}
\label{ssec:tail:estimates}

\begin{proposition}[Tail gap growth from a prefix]
\label{prop:tail:gap:from:prefix}
Fix $R<\infty$ sufficiently large.  There are constants
$C<\infty$, $c_1>0$, and $C_{d,k}<\infty$, depending on $R$ and the standing
constants, such that the following holds.

Let $t\geq 1$, assume $H_t\leq R$, and let
\begin{displaymath}
Q:U\to V_t
\end{displaymath}
be an $\mathcal F_t$-measurable linear isomorphism from a
$d$-dimensional Euclidean space $U$ to $V_t$. Let $E$ be a $\cF_t$
measurable $k$-plane in $U$. Then, for every $L>0$,
\begin{equation}
\label{eq:tail:gap:from:prefix:probability}
\mathbb P\left(
\exists q\geq 0 \st
\log\Delta_k(A(t,q)Q,E)-\log\Delta_k(Q,E)
<
\frac{c_0}{2}q-L-C_{d,k}
\,\middle|\,
\mathcal F_t
\right)
\leq
Ce^{-c_1\sqrt L}.
\end{equation}
The same estimate holds conditionally after a stopping time $t$, on the event
$H_t\leq R$.
\end{proposition}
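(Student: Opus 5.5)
The plan is to run the additive flag cocycle $\alpha_k''$ along the orbit of $E$. We average it over flags adapted to $E$, which is what Lemma~\ref{lemma:quantitative:bad:position} relates to $\log\Delta_k(\cdot,E)$. We then prove a uniform-in-time large deviation estimate for the resulting process, using the conditional drift \eqref{eq:height:controlled:alphapp:drift} and the Margulis height drift \eqref{eq:abstract:margulis:height:drift} to control the unbounded increments.

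\emph{Step 1 (reduction to an additive cocycle).} By the proof of Lemma~\ref{lemma:quantitative:bad:position}, $\log\Delta_k(B,E)=\log\mathfrak m(B|_E)-\log\|\bar B_E\|$. Fix $F=(L_{k-1},E,L_{k+1})\in\Flag_k''(U)$ with $L_{k-1}\subset E\subset L_{k+1}$. Put $F_t:=QF$ and $F_{i+1}:=A_iF_i$ for $i\ge t$; each $F_i$ is $\mathcal F_i$-measurable. The cocycle identity gives
\[
\alpha_k''(A(t,q)Q,F)=\alpha_k''(Q,F)+S_q(F),\qquad S_q(F):=\sum_{i=t}^{t+q-1}\alpha_k''(A_i,F_i).
\]
We average over $F$ with respect to $\rho_E^-\otimes\rho_E^+$ and apply \eqref{eq:alpha:k:twoprime:vs:Delta:k:avg} twice, once to $A(t,q)Q$ and once to $Q$. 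This yields
\[
\log\Delta_k(A(t,q)Q,E)-\log\Delta_k(Q,E)\ \ge\ \overline S_q-C_{d,k},\qquad \overline S_q:=\int S_q(F)\,d\rho_E^-\,d\rho_E^+(F).
\]
The constant $C_{d,k}$ is lost only once, which is why it appears in \eqref{eq:tail:gap:from:prefix:probability}. It therefore suffices to show
\[
\mathbb P\bigl(\exists q\ge0:\ \overline S_q<\tfrac{c_0}{2}q-L\mid\mathcal F_t\bigr)\le Ce^{-c_1\sqrt L}.
\]

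\emph{Step 2 (martingale decomposition for each $F$).} Write $X_i(F)=\alpha_k''(A_i,F_i)$ and $D_i(F)=\mathbb E(X_i\mid\mathcal F_i)\ge c_0$. Then $S_q(F)\ge c_0q+M_q(F)$, where $M_q(F)$ is a martingale. By \eqref{eq:height:controlled:alpha:bounds} its increments are bounded by $2B_{\rm lin}(1+\log H_i)$. Averaging over $F$ preserves all of this, so $\overline S_q\ge c_0q+\overline M_q$, where $\overline M$ is again a martingale with increments bounded by $2B_{\rm lin}(1+\log H_i)$. The problem is now to bound $\mathbb P(\exists q:\ \overline M_q<-\tfrac{c_0}{2}q-L)$.

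\emph{Step 3 (height control, the main obstacle).} The difficulty is that the increments are unbounded and the horizon is infinite.

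First, iterating \eqref{eq:abstract:margulis:height:drift} from $H_t\le R$ gives $\mathbb E(H_i\mid\mathcal F_t)\le c^{i-t}R+b/(1-c)$. By Markov's inequality, $\mathbb P(\log H_i>s\mid\mathcal F_t)\le C_Re^{-s}$.

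Second, I would introduce the bad event $\mathcal B=\{\exists i\ge t:\ 1+\log H_i>\eta\sqrt{L+(i-t)}\}$. A union bound over $i$, using the tail above, gives $\mathbb P(\mathcal B)\le\sum_j C_Re^{-\eta\sqrt{L+j}}\lesssim e^{-c\sqrt L}$. This is the source of the $\sqrt L$.

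Third, off $\mathcal B$, I would truncate the increments at the level $2B_{\rm lin}\eta\sqrt{L+j}$. The truncated increments are no longer exactly centered. I would handle this by using the stopping time at which $\mathcal B$ first occurs, which keeps the stopped process a martingale.

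Fourth, I would apply Azuma--Hoeffding at each fixed $q$ with deviation $\tfrac{c_0}{2}q+L$. The variance proxy is $\lesssim\eta^2\sum_{j<q}(L+j)\lesssim\eta^2q(L+q)$, so the bound is
\[
\exp\!\Bigl(-c\,\frac{(q+L)^2}{\eta^2\,q\,(L+q)}\Bigr)=\exp\!\bigl(-c(q+L)/(\eta^2q)\bigr).
\]
For $q\le L$ this is $\le e^{-cL/\eta^2}$, which is fine. For $q\ge L$ it is only $e^{-c/\eta^2}$, which does not decay in $q$, so the truncation level must be refined. For $q\ge L$ I would instead truncate at $\eta\sqrt{q}$-type levels using a dyadic decomposition in $q$: on blocks $q\in[2^m,2^{m+1})$, truncate at $\eta 2^{m/2}$. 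Azuma then gives $\exp(-c\,2^m/\eta^2)$ per block, and summing over $m$ with $2^m\ge L$ gives $e^{-cL}$. The truncation failure on each block costs $\lesssim 2^m e^{-\eta 2^{m/2}}$, and summing these is also $\lesssim e^{-c\sqrt L}$.

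Combining the three contributions yields the estimate with $c_1>0$.

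\emph{Step 4 (stopping times).} For a stopping time $t$, the same argument applies verbatim. Every estimate above is conditional and holds after each fixed history, and the standing assumptions are stated after every history, so nothing changes on the event $H_t\le R$.
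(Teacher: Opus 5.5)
Your Steps 1, 2 and 4 match the paper's argument. The paper averages $\alpha_k''$ over $\rho_E^-\otimes\rho_E^+$ to get increments $\psi_i$ with $\mathbb E(\psi_i\mid\mathcal F_i)\ge c_0$ and $|\psi_i|\le B(1+\log H_i)$. It then returns to $\Delta_k(\cdot,E)$ via the cocycle identity and Lemma~\ref{lemma:quantitative:bad:position}, losing $C_{d,k}$ once.

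\textbf{Step 3 has a genuine gap: as written it gives no decay in $L$.}
\begin{itemize}
\item For fixed $q$, the deviation is $\tfrac{c_0}{2}q+L$ and your variance proxy is $\eta^2q(L+q)$. The Azuma exponent is then $c(1+L/q)/\eta^2$. For $q\asymp L$ this is of order $1/\eta^2$ (it equals $2c/\eta^2$ at $q=L$), not $L/\eta^2$.
\item On a dyadic block, $2^{m+1}$ increments each bounded by $\eta 2^{m/2}$ give a variance proxy of order $\eta^2 4^m$. The squared deviation is of order $4^m$, so the exponent is again of order $1/\eta^2$, not $2^m/\eta^2$.
\end{itemize}
This is structural, not a bookkeeping slip. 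The hypotheses only give the Markov tail $\mathbb P(\log H_i>T)\lesssim e^{-T}$. To make the truncation failure $\{\max_{i<n}(1+\log H_i)>T\}$ cost at most $e^{-c\sqrt n}$, you need $T\gtrsim\sqrt n$. But then the worst-case proxy $nT^2$ is as large as the squared deviation $n^2$. Optimizing $T$ against the failure probability $ne^{-T}$ yields at best $e^{-cL^{1/3}}$. So no per-step truncation scheme proves the stated $e^{-c_1\sqrt L}$. (Your stopping-time device for keeping the truncated process a martingale is fine; the problem is only the variance proxy.)

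\textbf{The missing idea is to control the aggregate $\sum_{i<N}(1+\log H_i)^2$ rather than the maximum.}
\begin{itemize}
\item This sum is typically $O(N)$. By Lemma~\ref{lemma:abstract:height:occupation} with $\beta=2$, it exceeds $D(N+L)^{3/2}$ with probability at most $Ce^{-c\sqrt{N+L}}$.
\item The proof of that lemma decomposes into excursions between returns to $\{H<R\}$. Each excursion's contribution has tail $e^{-cu^{1/3}}$, by the return-time estimate and Markov. Lemma~\ref{lemma:subweibull:summation} then sums the excursions.
\item Combine this with an Azuma bound whose variance proxy $\sum_i M_i^2$, with $M_i=2B(1+\log H_i)$, is random but predictable. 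This is Lemma~\ref{lemma:azuma:random:variance}, proved via the exponential supermartingale and optional stopping. The result is a bound $\exp\bigl(-L^2/(C(N+L)^{3/2})\bigr)$.
\item On dyadic blocks $[2^j,2^{j+1})$ with deviation $L+a2^{j-1}$, this becomes $Ce^{-c\sqrt{L+2^j}}$. Summing over $j$ gives $Ce^{-c_1\sqrt L}$.
\end{itemize}
This is exactly the paper's Azuma--Margulis estimate (\autoref{prop:azuma:margulis}). The paper's proof simply applies it to the shifted process after your Steps 1--2.
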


\begin{proof}
Let $\rho_E^+$  and  $\rho_E^-$ be as in
Lemma~\ref{lemma:quantitative:bad:position}. 
For
$i\geq t$ define
\begin{multline*}
\psi_i  := \\
  \int_{L_{k-1} \subset E}  \int_{L_{k+1} \supset E}
\alpha_k''\bigl(A_i,A(t,i-t)Q(L_{k-1},E,L_{k+1})\bigr)\,d\rho_E^+(L_{k+1})\,
d\rho_E^-(L_{k-1}).  
\end{multline*}
For each $(L_{k-1},E,L_{k+1}) \in\Flag_k''(U)$, the flag $A(t,i-t)Q(L_{k-1},E,L_{k+1})$ is
$\mathcal F_i$-measurable and belongs to $\Flag_k''(V_i)$.  Hence
\eqref{eq:height:controlled:alphapp:drift} gives
\begin{displaymath}
\mathbb E(\psi_i\mid \mathcal F_i)\geq c_0.
\end{displaymath}
By \eqref{eq:height:controlled:alpha:bounds},
\begin{displaymath}
|\psi_i|\leq B(1+\log H_i)
\end{displaymath}
for a constant $B$ depending only on $B_{\rm lin},d,k$.  Applying
\autoref{prop:azuma:margulis} to the shifted process after time $t$ gives,
outside an event of conditional probability at most $Ce^{-c_1\sqrt L}$,
\begin{displaymath}
\sum_{i=t}^{t+q-1}\psi_i
\geq
\frac{c_0}{2}q-L
\end{displaymath}
for every $q\geq 0$.

By the cocycle identity,
\begin{multline*}
\sum_{i=t}^{t+q-1}\psi_i
=
\int_{L_{k-1} \subset E}  \int_{L_{k+1} \supset E}
\alpha_k''(A(t,q)Q,(L_{k-1},E,L_{k+1}))\, d\rho_E^+(L_{k+1})\,
d\rho_E^-(L_{k-1})  \\
-
\int_{L_{k-1} \subset E}  \int_{L_{k+1} \supset E}
\alpha_k''(Q,(L_{k-1},E,L_{k+1}))\,d\rho_E^+(L_{k+1})\,
d\rho_E^-(L_{k-1}).
\end{multline*}
Lemma~\ref{lemma:quantitative:bad:position}  compares the two integrals with
$\log\Delta_k(A(t,q)Q,E)$ and $\log\Delta_k(Q,E)$, respectively, up to an
additive error depending only on $d,k$.  This proves
\eqref{eq:tail:gap:from:prefix:probability}.
\end{proof}

\begin{proposition}[Tail contraction from a prefix]
\label{prop:tail:contracting:from:prefix}
Fix $R<\infty$ sufficiently large.  There are constants
$C<\infty$, $c_1>0$, and $C_{d,k}<\infty$, depending on $R$ and the standing
constants, such that the following holds.

Let $t\geq 1$, assume $H_t\leq R$, and let
\begin{displaymath}
Q:U\to V_t
\end{displaymath}
be an $\mathcal F_t$-measurable linear isomorphism from a
$d$-dimensional Euclidean space $U$ to $V_t$.  Then, for every $L>0$,
\begin{equation}
\label{eq:tail:contracting:from:prefix:probability}
\mathbb P\left(
\exists q\geq 0 \st
-\log\sigma_{k+1}(A(t,q)Q)+\log\sigma_{k+1}(Q)
<
\frac{c_0}{2}q-L-C_{d,k}
\,\middle|\,
\mathcal F_t
\right)
\leq
Ce^{-c_1\sqrt L}.
\end{equation}
The same estimate holds conditionally after a stopping time $t$, on the event
$H_t\leq R$.
\end{proposition}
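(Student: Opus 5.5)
The plan is to follow the proof of \autoref{prop:tail:gap:from:prefix} verbatim, replacing the second-derivative cocycle $\alpha_k''$ averaged over flags through a fixed $E$ by the first-derivative cocycle $\alpha_k'$ averaged over the full $O(d)$-invariant measure $\rho$ on $\Flag_k'(U)$.

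\textbf{Step 1: define the averaged increments.} For $i\ge t$ set
\begin{displaymath}
\psi_i:=\int_{\Flag_k'(U)}\alpha_k'\bigl(A_i,A(t,i-t)Q\,F\bigr)\,d\rho(F).
\end{displaymath}
For each fixed $F\in\Flag_k'(U)$, the flag $A(t,i-t)QF\in\Flag_k'(V_i)$ is $\mathcal F_i$-measurable, because $Q$ is $\mathcal F_t$-measurable and $A(t,i-t)$ is $\mathcal F_i$-measurable. Hence \eqref{eq:height:controlled:alphap:drift} gives $\mathbb E(\alpha_k'(A_i,A(t,i-t)QF)\mid\mathcal F_i)\ge c_0$ for every $F$. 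Integrating in $F$ (Fubini, justified by the bound below) gives $\mathbb E(\psi_i\mid\mathcal F_i)\ge c_0$. By \eqref{eq:height:controlled:alpha:bounds}, $|\psi_i|\le B(1+\log H_i)$.

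\textbf{Step 2: concentration.} Apply \autoref{prop:azuma:margulis} to the shifted process $(\psi_{t+j},H_{t+j})_{j\ge0}$ started from $H_t\le R$. I expect this to be exactly the same invocation as in the gap proposition. It gives, off a conditional event of probability at most $Ce^{-c_1\sqrt L}$, the uniform-in-$q$ lower bound
\begin{displaymath}
\sum_{i=t}^{t+q-1}\psi_i\ge \frac{c_0}{2}q-L.
\end{displaymath}
The stopping-time version follows in the same way, since all estimates hold after every history.

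\textbf{Step 3: telescoping and comparison with singular values.} The cocycle identity $\alpha_k'(A_iB,F)=\alpha_k'(A_i,BF)+\alpha_k'(B,F)$ gives
\begin{displaymath}
\sum_{i=t}^{t+q-1}\psi_i=\int\alpha_k'(A(t,q)Q,F)\,d\rho(F)-\int\alpha_k'(Q,F)\,d\rho(F).
\end{displaymath}
Lemma~\ref{lemma:avg-gamma-sigmak} is stated for maps on $\mathbb R^d$. To use it for maps $U\to V_s$, I would identify $U$ and the target spaces isometrically with $\mathbb R^d$. The measure $\rho$ and the singular values do not depend on the choice of identification. The lemma then gives
\begin{displaymath}
\int\alpha_k'(A(t,q)Q,F)\,d\rho(F)=-\log\sigma_{k+1}(A(t,q)Q)+O(C)
\end{displaymath}
and the analogous statement for $Q$. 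Substituting both into Step 2 yields \eqref{eq:tail:contracting:from:prefix:probability} with $C_{d,k}=2C(d,k)$.

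\textbf{Main obstacle.} The only delicate point is checking that the hypotheses of \autoref{prop:azuma:margulis} are met exactly as in the preceding proof: conditional drift after every history, and increments bounded by $\log H_i$ with the Margulis drift \eqref{eq:abstract:margulis:height:drift}. These are the same as before, so no new difficulty should arise.
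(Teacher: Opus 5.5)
Your proposal is correct and follows the paper's proof essentially step for step. The paper uses the same averaged increments $\psi_i$ over the invariant measure on $\Flag_k'(U)$, the same drift and size bounds, the same application of \autoref{prop:azuma:margulis} to the shifted process, and the same cocycle telescoping compared with $-\log\sigma_{k+1}$ via \autoref{lemma:avg-gamma-sigmak}. Your remark about isometrically identifying $U$ and the $V_s$ with $\mathbb R^d$ is a harmless detail that the paper leaves implicit.
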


\begin{proof}
Let $\rho_U$ be the invariant probability measure on $\Flag_k'(U)$, and for
$i\geq t$ define
\begin{displaymath}
\psi_i
:=
\int_{\Flag_k'(U)}
\alpha_k'\bigl(A_i,A(t,i-t)QF\bigr)\,d\rho_U(F).
\end{displaymath}
For each $F\in\Flag_k'(U)$, the flag $A(t,i-t)QF$ is
$\mathcal F_i$-measurable and belongs to $\Flag_k'(V_i)$.  Hence
\eqref{eq:height:controlled:alphap:drift} gives
\begin{displaymath}
\mathbb E(\psi_i\mid \mathcal F_i)\geq c_0.
\end{displaymath}
By \eqref{eq:height:controlled:alpha:bounds},
\begin{displaymath}
|\psi_i|\leq B(1+\log H_i)
\end{displaymath}
for a constant $B$ depending only on $B_{\rm lin},d,k$.  Applying
\autoref{prop:azuma:margulis} to the shifted process after time $t$ gives,
outside an event of conditional probability at most $Ce^{-c_1\sqrt L}$,
\begin{displaymath}
\sum_{i=t}^{t+q-1}\psi_i
\geq
\frac{c_0}{2}q-L
\end{displaymath}
for every $q\geq 0$.

By the cocycle identity,
\begin{displaymath}
\sum_{i=t}^{t+q-1}\psi_i
=
\int_{\Flag_k'(U)}
\alpha_k'(A(t,q)Q,F)\,d\rho_U(F)
-
\int_{\Flag_k'(U)}
\alpha_k'(Q,F)\,d\rho_U(F).
\end{displaymath}
\autoref{lemma:avg-gamma-sigmak} compares the two integrals with
$-\log\sigma_{k+1}(A(t,q)Q)$ and $-\log\sigma_{k+1}(Q)$, respectively, up to
an additive error depending only on $d,k$.  This proves
\eqref{eq:tail:contracting:from:prefix:probability}.
\end{proof}

\begin{proposition}[Initial transverse gap with compact return]
\label{prop:initial:transverse:gap}
Fix $R<\infty$ sufficiently large.  There exist constants
$a>0$, $A<\infty$, $C<\infty$, $c_1>0$, and $p_0\geq 1$, depending on $R$
and the standing constants, such that the following holds.

Let $t\geq 1$, assume $H_t\leq R$, and let $E\subset V_t$ be an
$\mathcal F_t$-measurable $k$-plane.  Then, for every $p\geq p_0$, with
conditional probability at least
\begin{displaymath}
1-Ce^{-c_1\sqrt p}
\end{displaymath}
given $\mathcal F_t$, there exists an integer $p_\ast$ with
\begin{displaymath}
p\leq p_\ast\leq 2p
\end{displaymath}
such that $H_{t+p_\ast} < R$ and
\begin{equation}
\label{eq:initial:transverse:gap:return:cushion}
\Delta_k(A(t,p_\ast),E)
\geq
e^{ap_\ast},
\end{equation}
\begin{equation}
\label{eq:initial:transverse:gap:return:angle}
\angle\bigl(E,\ker_{d-k}A(t,p_\ast)\bigr)
\geq
e^{-Ap_\ast}.
\end{equation}
The same estimate holds conditionally after a stopping time $t$, on the event
$H_t\leq R$.
\end{proposition}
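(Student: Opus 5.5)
The plan is to combine three high-probability events, each holding with conditional probability at least $1-Ce^{-c_1\sqrt p}$ given $\mathcal F_t$ (on $H_t\le R$), and then take a union bound: a uniform gap growth event, a compact return event, and an operator norm event. The angle estimate \eqref{eq:initial:transverse:gap:return:angle} will then follow deterministically from the gap \eqref{eq:initial:transverse:gap:return:cushion} together with the operator norm event.

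\emph{Step 1 (gap growth).} Apply \autoref{prop:tail:gap:from:prefix} with $U=V_t$, $Q=\Id$ and the given $E$; here $\log\Delta_k(\Id,E)=0$. Take $L=c_0p/8$. Outside an event of probability $\le Ce^{-c_1'\sqrt p}$ we get, for every $q\ge0$,
\[
\log\Delta_k(A(t,q),E)\ge \tfrac{c_0}{2}q-\tfrac{c_0}{8}p-C_{d,k}.
\]
For $q\ge p\ge p_0$, with $p_0$ large enough to absorb $C_{d,k}$, this gives $\log\Delta_k(A(t,q),E)\ge \tfrac{c_0}{4}q$. So \eqref{eq:initial:transverse:gap:return:cushion} holds with $a=c_0/4$ simultaneously for all $q\in[p,2p]$.

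\emph{Step 2 (compact return and norm control).} From the drift \eqref{eq:abstract:margulis:height:drift}, choose $R$ large so that $cH+b\le c'H$ whenever $H\ge R$, for some $c'<1$. Then $\prod_j$ of the ratios gives a supermartingale along excursions above level $R$. Hence the probability that $H_{t+q}\ge R$ for all $q\in[p,2p]$ is at most $C(c')^{p}\,\mathbb E(H_{t+p}\mid\mathcal F_t)$, and $\mathbb E(H_{t+p}\mid\mathcal F_t)\le c^pR+b/(1-c)$. This is exponentially small, so some $p_\ast\in[p,2p]$ satisfies $H_{t+p_\ast}<R$. Similarly, $\sum_{i=t}^{t+2p-1}\log H_i\le A'p$ outside an exponentially small event: this follows from a Chernoff bound, since the drift gives exponential moments of $\sum\log H_i$ (e.g.\ via $\mathbb E\prod H_i^{\eta}$ for small $\eta$, or via \autoref{prop:azuma:margulis}). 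By \eqref{eq:height:controlled:operator:norm} we then obtain $\log\|A(t,q)\|+\log\|A(t,q)^{-1}\|\le A''p$ for all $q\le 2p$.

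\emph{Step 3 (angle from gap; linear algebra).} Let $B=A(t,p_\ast)$ and $\theta=\angle(E,\ker_{d-k}B)$. Since $\dim E+\dim\ker_{d-k}B=d$, I will find a unit vector $e\in E$ whose component $e_u$ orthogonal to $\ker_{d-k}B$ satisfies $|e_u|\lesssim_d \theta$. Then
\[
\mathfrak m(B|_E)\le|Be|\le \sigma_1(B)\,C_d\theta+\sigma_{k+1}(B).
\]
By min–max, the quotient map $\bar B_E$ satisfies $\|\bar B_E\|\ge\sigma_{k+1}(B)$. With $\Delta_k(B,E)=\mathfrak m(B|_E)/\|\bar B_E\|$ as in \ref{sssec:relative:gap}, this yields
\[
e^{ap_\ast}\le\Delta_k(B,E)\le 1+C_d\theta\,\sigma_1(B)/\sigma_{k+1}(B).
\]
Therefore
\[
\theta\ge (2C_d)^{-1}\sigma_{k+1}/\sigma_1\ge (2C_d)^{-1}\|B\|^{-1}\|B^{-1}\|^{-1}\ge e^{-Ap_\ast},
\]
using Step 2 and $p_\ast\ge p$.

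I expect the main obstacle to be Step 2: extracting, from only the conditional drift inequality, exponential-type tail bounds for both the non-return event and $\sum\log H_i$, with constants uniform over all histories. If a direct supermartingale argument is awkward, I would first try deducing both from \autoref{prop:azuma:margulis}, which the paper already uses for such sums. The stopping-time version of the statement follows identically by the strong Markov form of the transition rule.
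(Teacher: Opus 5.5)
Your overall route is the paper's. The gap growth comes from an Azuma--Margulis bound for the flag-averaged $\alpha_k''$; you obtain it by citing \autoref{prop:tail:gap:from:prefix} with $U=V_t$, $Q=\Id$ and $\log\Delta_k(\Id,E)=0$ instead of re-running the argument, which is legitimate. The return to $\{H<R\}$ in $[p,2p]$ comes from \autoref{lemma:abstract:margulis:return:tails} plus the drift, and the norm control of $A(t,q)$ comes from the height sum. Your Step~3 is \autoref{lemma:relative:gap:to:angle} together with \autoref{lemma:bar:B:bound}; note that $\ker_{d-k}B$ is defined since $\Delta_k(B)\geq\Delta_k(B,E)>1$. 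Steps~1 and~3, and the return part of Step~2, are fine.

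\textbf{The gap.} The step that fails as written is your justification of $\sum_{i=t}^{t+2p-1}\log H_i\leq A'p$ outside an exponentially small event. By Markov, the drift \eqref{eq:abstract:margulis:height:drift} only gives $\mathbb P(H_{i+1}>h\mid\mathcal F_i)\leq (cH_i+b)/h$. An excursion starting at height $h$ needs about $\log h/|\log c|$ steps to come down, and contributes about $(\log h)^2/(2|\log c|)$ to the sum. So the tails are only stretched-exponential.

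\textbf{A counterexample.} Let $H_{i+1}=H_i/2$ if $H_i\geq 2$, and let $H_{i+1}$ be Pareto with $\mathbb P(H_{i+1}>x\mid\mathcal F_i)=x^{-2}$ for $x\geq 1$ if $H_i<2$. This satisfies the drift with $c=1/2$, $b=2$. Yet, starting from $H_t=1$, one has $\mathbb E\exp(\lambda\sum_{i<2p}\log H_{t+i})=\infty$ for every $\lambda>0$ once $p$ is large. Moreover $\mathbb P(\sum_{i<2p}\log H_{t+i}>A'p)\geq e^{-C\sqrt p}$ with $C=C(A')$. So no Chernoff bound exists.

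\autoref{prop:azuma:margulis} does not apply directly either. The natural choice $\psi_i=K-\log H_i$ has no conditional mean bounded below by a positive constant, and that proposition is in fact proved from the very estimate you need.

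\textbf{The fix.} Use \autoref{lemma:abstract:height:occupation} with $\beta=1$, horizon $2p$ and $v=2Dp$. It handles the excursions via \autoref{lemma:subweibull:summation} and gives failure probability $Ce^{-c_1\sqrt p}$. This is exactly the rate the proposition claims, and it is what the paper uses. With that substitution your proof is complete.
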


\begin{proof}
Let $\rho_E^-$ be the invariant probability measure on $\Gr_{k-1}(E)$, and
let $\rho_E^+$ be the invariant probability measure on the set of
$(k+1)$-planes containing $E$.  Let $\rho_E:=\rho_E^-\otimes\rho_E^+$ on the
set of flags
\begin{displaymath}
L_{k-1}\subset E\subset L_{k+1}.
\end{displaymath}
For $i\geq t$, define
\begin{displaymath}
\psi_i
:=
\int
\alpha_k''\bigl(A_i,A(t,i-t)F\bigr)\,d\rho_E(F).
\end{displaymath}
For each flag $F$ in the support of $\rho_E$, the flag $A(t,i-t)F$ is
$\mathcal F_i$-measurable and belongs to $\Flag_k''(V_i)$.  Hence
\eqref{eq:height:controlled:alphapp:drift} gives
\begin{displaymath}
\mathbb E(\psi_i\mid\mathcal F_i)\geq c_0.
\end{displaymath}
By \eqref{eq:height:controlled:alpha:bounds},
\begin{displaymath}
|\psi_i|\leq B(1+\log H_i)
\end{displaymath}
for a constant $B$ depending only on $B_{\rm lin},d,k$.

Apply \autoref{prop:azuma:margulis} to the shifted process after time $t$,
with loss $c_0p/4$.  Outside an event of conditional probability at most
$Ce^{-c_1\sqrt p}$, for every $q\geq 0$,
\begin{displaymath}
\sum_{i=t}^{t+q-1}\psi_i
\geq
\frac{c_0}{2}q-\frac{c_0}{4}p.
\end{displaymath}
In particular, for every $p\leq q\leq 2p$,
\begin{equation}
\label{eq:initial:window:averaged:gap}
\overline{\alpha_k''}(A(t,q),E)
\geq
\frac{c_0}{4}q.
\end{equation}

Apply \autoref{lemma:abstract:height:occupation} with exponent $\beta=1$ to
the shifted process after time $t$, with $N=2p$ and $v=D(2p)$.  After
increasing $D$, outside an event of conditional probability at most
$Ce^{-c_1\sqrt p}$ one has
\begin{displaymath}
\sum_{i=t}^{t+2p-1}(1+\log H_i)\leq D(2p).
\end{displaymath}
Together with \eqref{eq:height:controlled:operator:norm}, this implies that,
outside the same exceptional event and for every $0\leq q\leq 2p$,
\begin{equation}
\label{eq:initial:window:product:norm}
\log\left(\|A(t,q)\|+\|A(t,q)^{-1}\|\right)
\leq
A_{\rm prod}p
\end{equation}
for some $A_{\rm prod}<\infty$.

It remains to choose a return time in the window.  By
\autoref{lemma:abstract:margulis:return:tails}, applied to the shifted process
after time $t+p$,
\begin{displaymath}
\mathbb P\left(
H_{t+q} \ge R \text{ for every } p\leq q\leq 2p
\,\middle|\,
\mathcal F_{t+p}
\right)
\leq
C H_{t+p}e^{-c_1p}.
\end{displaymath}
Taking conditional expectation with respect to $\mathcal F_t$ and using
\eqref{eq:abstract:margulis:height:drift}, together with $H_t\leq R$, gives
\begin{displaymath}
\mathbb P\left(
H_{t+q} \ge R \text{ for every } p\leq q\leq 2p
\,\middle|\,
\mathcal F_t
\right)
\leq
Ce^{-c_1p}.
\end{displaymath}
Thus, outside a further exceptional event, there exists
$p_\ast\in[p,2p]$ such that $H_{t+p_\ast} < R$.  Fix such a $p_\ast$ for
which \eqref{eq:initial:window:averaged:gap} and
\eqref{eq:initial:window:product:norm} also hold.

By  (\ref{eq:initial:window:averaged:gap}) and
Lemma~\ref{lemma:quantitative:bad:position},
\begin{displaymath}
\log  \Delta_k(A(t,p_\ast),E) +C_{d,k}  \ge  \frac{c_0 p_\ast}{4},
\end{displaymath}
hence, (after increasing $p_0$ to absorb $C_{d,k}$), 
\begin{displaymath}
\Delta_k(A(t,p_\ast),E) \ge e^{a p_\ast}
\end{displaymath}
with $a=c_0/8$.

Put $B:=A(t,p_\ast)$. Increasing $p_0$ ensures
$\Delta_k(B,E)\geq2$. By Lemma~\ref{lemma:relative:gap:to:angle}
and \eqref{eq:initial:window:product:norm},
\[
\angle(E,\ker_{d-k}B)
\geq \frac{\sigma_{k+1}(B)}{\|B\|}
\geq \frac{1}{\|B\|\|B^{-1}\|}
\geq e^{-2A_{\rm prod}p_\ast}.
\]
This proves \eqref{eq:initial:transverse:gap:return:angle}
with $A=2A_{\rm prod}$.
\end{proof}

\subsection{Regular sequences}
\label{ssec:regular:sequences}

\begin{definition}[$C$-regular sequence]
\label{def:C:regular}
Let $B_i:V_i\to V_{i+1}$, $a\le i\le b$, be a finite sequence of linear
isomorphisms between $d$-dimensional Euclidean spaces. Assume that each
$B_i$ has a strict $k$-gap. Write
\begin{displaymath}
F_i:=\im_k(B_i)\subset V_{i+1},
\qquad
K_i:=\ker_{d-k}(B_i)\subset V_i.
\end{displaymath}
Set
\begin{equation}
\label{eq:C:regular:endpoint}
\theta_a:=\frac{\pi}{2}, 
\end{equation}
\begin{displaymath}
\theta_{b+1} = \frac{\pi}{2},
\end{displaymath}
and, for $a<i\le b$, set
\begin{displaymath}
\theta_i:=\angle(F_{i-1},K_i).
\end{displaymath}

We say that $B_a,\dots,B_b$ is $C$-regular in dimension $k$ if
\begin{equation}
\label{eq:C:regular:interior}
\Delta_k(B_i)\theta_i\theta_{i+1}\ge C
\qquad
\text{for every }a\le i \le b.
\end{equation}
An infinite sequence $B_i$, $i \ge a$ is $C$-regular in dimension $k$
if (\ref{eq:C:regular:interior}) holds for  all $i \ge a$ (with the
convention that (\ref{eq:C:regular:endpoint}) holds). 
\end{definition}
Note that any infinite $C$-regular sequence truncated at $b>a$ is a finite
$C$-regular sequence.

\begin{definition}[$(C,\delta)$-uniformly regular sequence]
\label{def:C:delta:uniformly:regular}
With the notation of Definition~\ref{def:C:regular} and $0  < \delta \leq 1$
we say that $B_a,\dots,B_b$ is $(C,\delta)$-uniformly regular in dimension $k$ if
\begin{displaymath}
\theta_i\ge\delta
\qquad
\text{for every }a<i\le b,
\end{displaymath}
and
\begin{displaymath}
\Delta_k(B_i)\delta^2\ge C
\qquad
\text{for every }a\le i\le b.
\end{displaymath}
\end{definition}

If $B_a,\dots,B_b$ is $(C,\delta)$-uniformly regular, then it is
$C$-regular. Indeed, for $a\le i \le b$ we have
\begin{displaymath}
\Delta_k(B_i)\theta_i\theta_{i+1}
\ge
\Delta_k(B_i)\delta^2
\ge C.
\end{displaymath}
In the rest of this subsection we assume the following setup. Let
\begin{displaymath}
I=[a,b]\cap\mathbb Z
\qquad\text{or}\qquad
I=[a,\infty)\cap\mathbb Z,
\end{displaymath}
and let
\begin{displaymath}
\{B_i:V_i\to V_{i+1}\}_{i\in I}
\end{displaymath}
be a finite or infinite $C$-regular sequence in dimension $k$. For
$i\in I$, write
\begin{displaymath}
K_i:=\ker_{d-k}(B_i),
\qquad
F_i:=\im_k(B_i).
\end{displaymath}

\begin{proposition}[Forward and backward neighborhoods]
\label{prop:C:regular:neighborhoods}
There exist constants
\begin{displaymath}
a_0=a_0(d,k)>0,
\qquad
C_0=C_0(d,k)\geq1,
\end{displaymath}
with $a_0<1/4$, such that if $C\geq C_0$, then the following holds in the
standing setup of this subsection.

For $i=a$, define
\begin{displaymath}
\mathcal V_a^+
:=
\left\{
E\in\Gr_k(V_a):
\dist_{\Gr}(E,K_a^\perp)\leq a_0
\right\}.
\end{displaymath}
For $i\in I$, $i>a$, define
\begin{displaymath}
\mathcal V_i^+
:=
\left\{
E\in\Gr_k(V_i):
\dist_{\Gr}(E,F_{i-1})\leq a_0\theta_i
\right\}.
\end{displaymath}
For $i\in I$, define
\begin{displaymath}
\mathcal V_i^-
:=
\left\{
S\in\Gr_{d-k}(V_i):
\dist_{\Gr}(S,K_i)\leq a_0\theta_i
\right\}.
\end{displaymath}
Then:

\begin{enumerate}
\item[{\rm (i)}] If $i\in I$, $E\in\mathcal V_i^+$, and
$S\in\mathcal V_i^-$, then
$\angle(E,S)\geq \frac12\theta_i$.
\item[{\rm (ii)}] If $i,i+1\in I$, then
$B_i(\mathcal V_i^+)\subset \mathcal V_{i+1}^+$.
\item[{\rm (iii)}] If $i,i+1\in I$, then
$B_i^{-1}(\mathcal V_{i+1}^-)\subset \mathcal V_i^-$.
\item[{\rm (iv)}] In the finite case $I=[a,b]\cap\mathbb Z$, we have
$B_b^{-1}(F_b^\perp)\in\mathcal V_b^-$.
\item[{\rm (v)}] There is a constant $C_{\rm lip}=C_{\rm lip}(d,k)$ such
that, if $i,i+1\in I$ and $S,S'\in\mathcal V_{i+1}^-$, then
\begin{equation}
\label{eq:C:regular:backward:lipschitz:raw}
\dist_{\Gr}\bigl(B_i^{-1}S,B_i^{-1}S'\bigr)
\leq
\frac{C_{\rm lip}}{\Delta_k(B_i)\theta_{i+1}^2}
\dist_{\Gr}(S,S').
\end{equation}
Consequently,
\begin{equation}
\label{eq:C:regular:backward:lipschitz:scaled}
\frac{
\dist_{\Gr}\bigl(B_i^{-1}S,B_i^{-1}S'\bigr)
}{\theta_i}
\leq
\frac{C_{\rm lip}}{C}
\frac{\dist_{\Gr}(S,S')}{\theta_{i+1}}.
\end{equation}
\end{enumerate}
\end{proposition}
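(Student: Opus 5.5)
The plan is to reduce everything to a single linear-algebra fact about one map with a gap, stated in singular-value coordinates. Fix $i$ and write $B=B_i$ in orthonormal bases adapted to its singular decomposition. Then $V_i=K_i^\perp\oplus K_i$ and $V_{i+1}=F_i\oplus F_i^\perp$, with $B(K_i^\perp)=F_i$ and $B(K_i)=F_i^\perp$, and
\[
B=\begin{pmatrix}D_1&0\\0&D_2\end{pmatrix},\qquad \mathfrak m(D_1)=\sigma_k(B),\quad \|D_2\|=\sigma_{k+1}(B).
\]
A $k$-plane $E$ transverse to $K_i$ is the graph of a linear map $T\colon K_i^\perp\to K_i$, and $BE$ is then the graph of $D_2TD_1^{-1}$, so that $\|D_2TD_1^{-1}\|\le \Delta_k(B)^{-1}\|T\|$. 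Symmetrically, a $(d-k)$-plane $S$ transverse to $F_i$ is the graph of $T'\colon F_i^\perp\to F_i$, and $B^{-1}S$ is the graph of $D_1^{-1}T'D_2$, which is contracted by the same factor. I will use the standard comparison between $\dist_{\Gr}$ and graph norms: if $\|T\|\le 1$ then $\dist_{\Gr}\asymp\|T\|$, and if $\angle(E,K)\ge\theta$ then the graph norm of $E$ over $K^\perp$ is at most about $1/\theta$. All constants here depend only on $d$ and $k$.

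\textbf{Part (i).} This is a perturbation statement. Since $\angle(F_{i-1},K_i)=\theta_i$, moving $E$ by at most $a_0\theta_i$ and $S$ by at most $a_0\theta_i$ changes the angle by at most about $2a_0\theta_i$ times a dimensional constant. Choosing $a_0$ small gives $\angle(E,S)\ge\theta_i/2$. For $i=a$ I use $\theta_a=\pi/2$ and the fact that $E$ is close to $K_a^\perp$.

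\textbf{Part (ii).} Take $E\in\mathcal V_i^+$. By (i), $\angle(E,K_i)\ge\theta_i/2$, so $E$ is the graph of $T$ with $\|T\|\lesssim 1/\theta_i$. Then $BE$ is the graph of an operator of norm at most $\lesssim(\Delta_k(B_i)\theta_i)^{-1}$, so $\dist_{\Gr}(BE,F_i)\lesssim(\Delta_k(B_i)\theta_i)^{-1}$. Regularity gives $(\Delta_k(B_i)\theta_i)^{-1}\le\theta_{i+1}/C$. Once $C\ge C_0$ is large compared to $1/a_0$ and the dimensional constant, this is at most $a_0\theta_{i+1}$, which is the condition for $\mathcal V_{i+1}^+$.

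\textbf{Part (iii).} This is symmetric to (ii). Take $S\in\mathcal V_{i+1}^-$. Applying (i) at index $i+1$ with $E=F_i\in\mathcal V_{i+1}^+$, we get $\angle(S,F_i)\ge\theta_{i+1}/2$, so $S$ is the graph of $T'$ with $\|T'\|\lesssim1/\theta_{i+1}$. Then $\dist_{\Gr}(B^{-1}S,K_i)\lesssim(\Delta_k\theta_{i+1})^{-1}\le\theta_i/C\le a_0\theta_i$.

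\textbf{Part (iv).} This is immediate, since $B_b^{-1}(F_b^\perp)=K_b$ exactly.

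\textbf{Part (v).} The map $T'\mapsto D_1^{-1}T'D_2$ is linear with norm at most $\Delta_k^{-1}$. So I only need Lipschitz comparisons between $\dist_{\Gr}$ and the graph-chart distance. On the set of graphs with $\|T'\|\lesssim 1/\theta_{i+1}$, the chart distortion is bounded by $\theta_{i+1}^{-2}$ (one factor $\theta^{-1}$ from each side of the linear fractional chart). On the image side the graphs are small, so there is no distortion. This gives \eqref{eq:C:regular:backward:lipschitz:raw}. Dividing by $\theta_i$ and using $\Delta_k(B_i)\theta_i\theta_{i+1}\ge C$ gives \eqref{eq:C:regular:backward:lipschitz:scaled}.

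\textbf{Main obstacle.} I expect the main difficulty to be the chart-distortion estimate in (v), namely getting exactly $\theta_{i+1}^{-2}$ rather than a worse power. I would handle it by writing $\dist_{\Gr}$ through orthogonal projections and bounding $\|P_{\mathrm{graph}(T')}-P_{\mathrm{graph}(T'')}\|$ against $\|T'-T''\|$ with the factor $(1+\|T'\|^2)$.
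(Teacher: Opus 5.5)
Your proof is correct and has the same architecture as the paper's: (i) from the triangle inequality for angles, (ii) and (iii) from a one-step image/preimage estimate combined with $\Delta_k(B_i)\theta_i\theta_{i+1}\geq C$, (iv) from $B_b^{-1}F_b^\perp=K_b$, and (v) from a local Lipschitz bound for $B_i^{-1}$. Where you differ is in how the one-step linear algebra is obtained.

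The paper works coordinate-free. It invokes \autoref{lemma:image:of:transverse:plane} and \autoref{lemma:inverse:estimate:transverse:to:fast:image}, both derived from the dominated-splitting estimate \autoref{lemma:slow:space:from:dominated:splitting}, to get $\dist_{\Gr}(B_iE,F_i)\leq 16/(\Delta_k(B_i)\theta_i)$ and its backward analogue. For (v) it applies \autoref{lemma:inverse:local:lipschitz} with $H=K_{i+1}$ and $F=F_i$, so it writes $S,S'$ as graphs over the \emph{nearby} plane $K_{i+1}$ into $F_i$. In that chart one factor $\theta_{i+1}^{-1}$ comes from the graph-difference bound. The other comes from the lower bound $\|B_i^{-1}v\|\geq\sigma_{k+1}(B_i)^{-1}\sin\theta_{i+1}\,\|h\|$, which is needed to turn vector differences into angles.

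Your SVD-adapted charts over $F_i^\perp$ instead make $B_i^{-1}$ act exactly linearly, $T'\mapsto D_1^{-1}T'D_2$. The contraction by $\Delta_k(B_i)^{-1}$ is then literal, and both factors $\theta_{i+1}^{-1}$ arise as source-side chart distortion. The constants also come out sharper: for example $\|T\|=\cot\angle(E,K_i)\leq 2/\theta_i$ gives $2/(\Delta_k(B_i)\theta_i)$ in (ii).

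The obstacle you flag is harmless. For $y\in F_i^\perp$, the vector $(T'-T'')y\in F_i$ is the difference of $y+T'y\in S$ and $y+T''y\in S'$. By (i) its norm is at most $\sin\dist_{\Gr}(S,S')\,\|y+T'y\|/\sin(\theta_{i+1}/2)\leq \sin\dist_{\Gr}(S,S')\,\|y\|/\sin^2(\theta_{i+1}/2)$. On the image side, $\sin\dist_{\Gr}(\mathrm{graph}\,X,\mathrm{graph}\,Y)\leq\|X-Y\|$ holds with no smallness assumption.

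What the paper's route buys is reuse. The same coordinate-free lemmas also supply the restricted norm and conorm bounds in \autoref{lemma:C:regular:fast:slow:basic}(v), and the kernel estimates for products in \autoref{lemma:C:regular:product:estimates}.
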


\begin{proof}
Choose $a_0=a_0(d,k)>0$ sufficiently small, with $a_0<1/4$, so that the
Grassmannian distance and the angle function are comparable on all balls of
radius $2a_0$, and so that \autoref{lemma:inverse:local:lipschitz} applies
on such balls.  We then choose $C_0=C_0(d,k)$ sufficiently large in the
estimates below.

We first prove {\rm (i)}.  If $i=a$, then $\mathcal V_a^+$ is a small
neighborhood of $K_a^\perp$ and $\mathcal V_a^-$ is a small neighborhood of
$K_a$.  Since $\theta_a=\pi/2$, the result follows by the choice of
$a_0$. Indeed,
\begin{displaymath}
\angle(E,S)\ge\frac{\pi}{2}-a_0-a_0\frac{\pi}{2}\ge\frac{\pi}{4}=\frac{\theta_a}{2}.
\end{displaymath}

If $i>a$, then $E$ is within $a_0\theta_i$ of $F_{i-1}$ and $S$ is within
$a_0\theta_i$ of $K_i$.  Since
\begin{displaymath}
\angle(F_{i-1},K_i)=\theta_i,
\end{displaymath}
the triangle inequality on the Grassmannian, together with the choice of
$a_0$, gives
\begin{displaymath}
\angle(E,S)\geq\frac12\theta_i.
\end{displaymath}

We now prove {\rm (ii)}.  Let $i,i+1\in I$ and $E\in\mathcal V_i^+$.  By
{\rm (i)}, applied with $S=K_i$, one has
\begin{displaymath}
\angle(E,K_i)\geq\frac12\theta_i.
\end{displaymath}
Hence \autoref{lemma:image:of:transverse:plane} gives
\begin{displaymath}
\dist_{\Gr}(B_iE,F_i)
\leq
\frac{16}{\Delta_k(B_i)\theta_i}.
\end{displaymath}
Using $C$-regularity,
\begin{displaymath}
\Delta_k(B_i)\theta_i\theta_{i+1}\geq C,
\end{displaymath}
we get
\begin{displaymath}
\dist_{\Gr}(B_iE,F_i)
\leq
\frac{16}{C}\theta_{i+1}.
\end{displaymath}
Taking $C_0$ sufficiently large gives
\begin{displaymath}
B_iE\in\mathcal V_{i+1}^+.
\end{displaymath}

The proof of {\rm (iii)} is similar.  Let $S\in\mathcal V_{i+1}^-$.  By
{\rm (i)}, applied at $i+1$ with $E=F_i$, one has
\begin{displaymath}
\angle(S,F_i)\geq\frac12\theta_{i+1}.
\end{displaymath}
Thus \autoref{lemma:inverse:estimate:transverse:to:fast:image} gives
\begin{displaymath}
\dist_{\Gr}(B_i^{-1}S,K_i)
\leq
\frac{16}{\Delta_k(B_i)\theta_{i+1}}.
\end{displaymath}
Again using
\begin{displaymath}
\Delta_k(B_i)\theta_i\theta_{i+1}\geq C,
\end{displaymath}
we obtain
\begin{displaymath}
\dist_{\Gr}(B_i^{-1}S,K_i)
\leq
\frac{16}{C}\theta_i.
\end{displaymath}
Taking $C_0$ sufficiently large proves
\begin{displaymath}
B_i^{-1}S\in\mathcal V_i^-.
\end{displaymath}

For {\rm (iv)}, we have the identity
$B_b^{-1}(F_b^\perp)=K_b\in\mathcal V_b^-$, which follows from the
singular subspace definitions.

It remains to prove {\rm (v)}.  Let $i,i+1\in I$ and
$S,S'\in\mathcal V_{i+1}^-$.  Apply
\autoref{lemma:inverse:local:lipschitz} to $B_i$ with
\begin{displaymath}
H:=K_{i+1},
\qquad
F:=F_i,
\qquad
\theta:=\theta_{i+1}.
\end{displaymath}
The hypotheses hold by the definitions of $\mathcal V_{i+1}^-$ and the choice
of $a_0$.  Therefore
\begin{displaymath}
\dist_{\Gr}\bigl(B_i^{-1}S,B_i^{-1}S'\bigr)
\leq
\frac{C_{\rm lip}}{\Delta_k(B_i)\theta_{i+1}^2}
\dist_{\Gr}(S,S'),
\end{displaymath}
which proves \eqref{eq:C:regular:backward:lipschitz:raw}.  Dividing by
$\theta_i$ and using
\begin{displaymath}
\Delta_k(B_i)\theta_i\theta_{i+1}\geq C
\end{displaymath}
gives \eqref{eq:C:regular:backward:lipschitz:scaled}.
\end{proof}

\begin{lemma}[Pullback sets for regular sequences]
\label{lemma:C:regular:pullback:sets}
There exists $C_0=C_0(d,k)\geq1$ with the following property. Assume
$C \ge C_0$. Let $\mathcal V_i^-$ be the neighborhoods from
\autoref{prop:C:regular:neighborhoods}.  For $i<n$, define
\begin{displaymath}
\mathcal C_{i,n}:=
B_i^{-1}B_{i+1}^{-1}\cdots B_{n-1}^{-1}(\mathcal V_n^-).
\end{displaymath}
Then $\mathcal C_{i,n}$ is a nonempty compact subset of $\mathcal V_i^-$.
Moreover, if $i<n<n'$ then
\begin{displaymath}
\mathcal C_{i,n'}\subset \mathcal C_{i,n}.
\end{displaymath}
Finally, after increasing $C_0$ if necessary, there is a constant
$q_{\rm pb}=q_{\rm pb}(C)<1$ such that
\begin{equation}
\label{eq:C:regular:pullback:diameter}
\operatorname{diam}_{\Gr}(\mathcal C_{i,n})
\leq
2a_0\theta_i q_{\rm pb}^{n-i}.
\end{equation}

In the infinite case, for each fixed $i$, the sets $\mathcal C_{i,n}$ shrink
to a single point as $n\to\infty$.
\end{lemma}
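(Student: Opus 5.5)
The argument rests on items (iii), (iv) and (v) of \autoref{prop:C:regular:neighborhoods}, with $C_0$ taken at least as large as the constant there. I will handle nonemptiness and inclusion first, then the diameter bound, then the infinite case.

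\emph{Nonemptiness, compactness, containment.} The set $\mathcal V_n^-$ is a closed ball in the compact Grassmannian $\Gr_{d-k}(V_n)$, so it is nonempty and compact. Each $B_j^{-1}$ acts as a homeomorphism between Grassmannians, so $\mathcal C_{i,n}$ is a nonempty compact set. Applying (iii) repeatedly gives
\begin{displaymath}
B_{n-1}^{-1}(\mathcal V_n^-)\subset\mathcal V_{n-1}^-,
\end{displaymath}
and then, working down to index $i$, $\mathcal C_{i,n}\subset\mathcal V_i^-$. For the nesting with $n<n'$, write
\begin{displaymath}
\mathcal C_{i,n'}=B_i^{-1}\cdots B_{n-1}^{-1}\bigl(\mathcal C_{n,n'}\bigr).
\end{displaymath}
Since $\mathcal C_{n,n'}\subset\mathcal V_n^-$ by what was just shown, we get $\mathcal C_{i,n'}\subset\mathcal C_{i,n}$.

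\emph{Diameter bound.} Take $S,S'\in\mathcal V_n^-$ and set $S_j:=B_j^{-1}\cdots B_{n-1}^{-1}S$, and likewise $S_j'$. By (iii) both $S_j$ and $S_j'$ lie in $\mathcal V_j^-$ for every $j$, so the scaled Lipschitz estimate \eqref{eq:C:regular:backward:lipschitz:scaled} applies at every step. It gives
\begin{displaymath}
\frac{\dist_{\Gr}(S_j,S_j')}{\theta_j}\le\frac{C_{\rm lip}}{C}\,\frac{\dist_{\Gr}(S_{j+1},S_{j+1}')}{\theta_{j+1}}.
\end{displaymath}
Iterating from $j=n-1$ down to $j=i$ yields
\begin{displaymath}
\dist_{\Gr}(S_i,S_i')\le\theta_i\Bigl(\frac{C_{\rm lip}}{C}\Bigr)^{n-i}\frac{\dist_{\Gr}(S,S')}{\theta_n}.
\end{displaymath}
The diameter of $\mathcal V_n^-$ is at most $2a_0\theta_n$, so $\dist_{\Gr}(S,S')/\theta_n\le 2a_0$. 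Increasing $C_0$ so that $C_0\ge 2C_{\rm lip}$ and setting $q_{\rm pb}:=C_{\rm lip}/C<1$ then gives \eqref{eq:C:regular:pullback:diameter}.

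\emph{Infinite case and the main point to check.} For fixed $i$, the sets $\mathcal C_{i,n}$ form a decreasing family of nonempty compact sets whose diameters are at most $2a_0\theta_i q_{\rm pb}^{n-i}\to0$. Their intersection is therefore a single point. The step that needs care is the Lipschitz step: the constant in (v) is only valid when both subspaces lie in $\mathcal V_{j+1}^-$. This is exactly what (iii) supplies along the whole backward orbit. The endpoint convention $\theta_{b+1}=\pi/2$ is only relevant when $n=b+1$, in which case the same computation goes through unchanged.
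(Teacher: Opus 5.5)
Your proposal is correct and follows essentially the same argument as the paper: containment and nesting from item (iii) of \autoref{prop:C:regular:neighborhoods}, the diameter bound by iterating the scaled Lipschitz estimate \eqref{eq:C:regular:backward:lipschitz:scaled} with $q_{\rm pb}=C_{\rm lip}/C$ together with $\operatorname{diam}(\mathcal V_n^-)\le 2a_0\theta_n$ (the paper phrases this step through the rescaled metrics $\theta_j^{-1}\dist_{\Gr}$), and the infinite case from a nested family of nonempty compact sets whose diameters tend to zero. Your closing remark about $n=b+1$ is unnecessary. The sets $\mathcal V_n^-$ are defined only for $n\in I$, so that case never occurs, and items (iii) and (v) would not be available there anyway.
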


\begin{proof}
The nesting property follows immediately from
\autoref{prop:C:regular:neighborhoods}{\rm (iii)}.
Compactness is immediate.

Put
\begin{displaymath}
d_i(S,S'):=\theta_i^{-1}\dist_{\Gr}(S,S')
\end{displaymath}
on $\mathcal V_i^-$.
Let $C_{\rm lip}$ be the constant in
\autoref{prop:C:regular:neighborhoods}{\rm (v)}, and set
\begin{displaymath}
q_{\rm pb}:=\frac{C_{\rm lip}}{C}.
\end{displaymath}
Increasing $C_0$ if necessary, we may assume $q_{\rm pb}<1$.  The
scaled Lipschitz estimate
\eqref{eq:C:regular:backward:lipschitz:scaled} gives
\begin{displaymath}
\operatorname{diam}_{d_i}(\mathcal C_{i,n})
\leq
q_{\rm pb}^{n-i}\operatorname{diam}_{d_n}(\mathcal V_n^-).
\end{displaymath}
Since $\mathcal V_n^-$ has radius $a_0\theta_n$ in the Grassmannian metric,
\begin{displaymath}
\operatorname{diam}_{d_n}(\mathcal V_n^-)\leq 2a_0.
\end{displaymath}
Thus
\begin{displaymath}
\operatorname{diam}_{d_i}(\mathcal C_{i,n})
\leq
2a_0q_{\rm pb}^{n-i},
\end{displaymath}
which is equivalent to
\eqref{eq:C:regular:pullback:diameter}.

In the infinite case the sets $\mathcal C_{i,n}$ are nested nonempty compact
sets and their diameters tend to $0$.  Hence their intersection consists of a
single point.
\end{proof}

\begin{definition}[Fast and slow spaces for regular sequences]
\label{def:C:regular:fast:slow:spaces}
Assume the standing setup of this subsection.  Define the fast spaces by
\begin{displaymath}
\mathcal U_a:=K_a^\perp,
\qquad
\mathcal U_{i+1}:=B_i\mathcal U_i
\qquad
(i\in I).
\end{displaymath}

If $I=[a,b]\cap\mathbb Z$ is finite, define the slow spaces by
\begin{displaymath}
\mathcal S_{b+1}:=F_b^\perp,
\qquad
\mathcal S_i:=B_i^{-1}\mathcal S_{i+1}
\qquad
(a\leq i\leq b).
\end{displaymath}

If $I=[a,\infty)\cap\mathbb Z$ is infinite, define the slow spaces by
\begin{displaymath}
\{\mathcal S_i\}:=\bigcap_{n>i}\mathcal C_{i,n}
\qquad
(i\geq a),
\end{displaymath}
where the sets $\mathcal C_{i,n}$ are those of
\autoref{lemma:C:regular:pullback:sets}.
\end{definition}

\begin{lemma}[Basic properties of the fast and slow spaces]
\label{lemma:C:regular:fast:slow:basic}
There exists $C_0=C_0(d,k)\geq1$ with the following property.  Assume the
standing setup of this subsection, and assume $C\geq C_0$.  Let
$\mathcal U_i$ and $\mathcal S_i$ be the fast and slow spaces from
\autoref{def:C:regular:fast:slow:spaces}.  Then the following hold.
\begin{itemize}
\item[{\rm (i)}] The fast and slow spaces are equivariant:
\begin{equation}
\label{eq:C:regular:fast:slow:equivariant}
B_i\mathcal U_i=\mathcal U_{i+1}, \qquad B_i\mathcal S_i=\mathcal S_{i+1}
\end{equation}
whenever both sides are defined.
\item[{\rm (ii)}] For all $i \in I$, 
\begin{equation}
\label{eq:C:regular:fast:slow:containment}
\mathcal U_i\in\mathcal V_i^+ \qquad \mathcal S_i\in\mathcal V_i^-.
\end{equation}
\item[{\rm (iii)}] For all $i \in I$, $\mathcal U_i \oplus \mathcal S_i =  V_i$.
Furthermore, 
\begin{equation}
\label{eq:C:regular:fast:slow:angle}
\angle(\mathcal U_i,  \mathcal S_i) >  \frac{1}{2} \theta_i.   
\end{equation}
\item[{\rm  (iv)}] The slow spaces are close to the one-step slow
  singular spaces.  
For every $i$ such that $i+1\in I$,
\begin{equation}
\label{eq:C:regular:slow:subspace:near:kernel}
\dist_{\Gr}(\mathcal S_i,K_i)
\leq
\frac{16}{\Delta_k(B_i)\theta_{i+1}}.
\end{equation}
In the finite case,
\begin{equation}
\label{eq:C:regular:slow:terminal:equals:kernel}
\mathcal S_b=K_b.
\end{equation}
\item[{\rm (v)}] For every $i\in I$,
\begin{equation}
\label{eq:C:regular:fast:one:step}
\mathfrak m(B_i|_{\mathcal U_i})
\geq
\frac{1}{4} \theta_i\,\sigma_k(B_i).
\end{equation}
For every $i$ for which $i+1\in I$,
\begin{equation}
\label{eq:C:regular:slow:one:step}
\|B_i|_{\mathcal S_i}\|
\leq
4\, \theta_{i+1}^{-1}\sigma_{k+1}(B_i).
\end{equation}
In the finite case, at the terminal step one has the sharper identity
\begin{equation}
\label{eq:C:regular:slow:terminal:one:step}
\|B_b|_{\mathcal S_b}\|=\sigma_{k+1}(B_b).
\end{equation}
\end{itemize}
\end{lemma}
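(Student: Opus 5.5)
We take the items in the order (ii), (i), (iii)/(iv), (v), because each one feeds the next.

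\emph{Step (ii): containment.} This carries all the weight; the rest is bookkeeping.

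For the fast spaces we argue by forward induction on $i$. The base case is $\mathcal U_a=K_a^\perp$, which lies in $\mathcal V_a^+$ by definition, since that set is the $a_0$-ball around $K_a^\perp$. \autoref{prop:C:regular:neighborhoods}(ii) then gives $\mathcal U_{i+1}=B_i\mathcal U_i\in\mathcal V_{i+1}^+$.

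For the slow spaces the argument depends on whether $I$ is finite or infinite.
\begin{itemize}
\item In the finite case, $\mathcal S_b=B_b^{-1}(F_b^\perp)$. By the singular value decomposition this equals $K_b$, which also gives \eqref{eq:C:regular:slow:terminal:equals:kernel}. It lies in $\mathcal V_b^-$ by \autoref{prop:C:regular:neighborhoods}(iv). Backward induction using (iii) of that proposition then gives $\mathcal S_i\in\mathcal V_i^-$ for all $i$.
\item In the infinite case, $\mathcal S_i$ is the single point of $\bigcap_n\mathcal C_{i,n}\subset\mathcal V_i^-$, by \autoref{lemma:C:regular:pullback:sets}.
\end{itemize}

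\emph{Step (i): equivariance.} For the fast spaces this holds by definition, and in the finite case for the slow spaces as well. In the infinite case we note that $B_i^{-1}\mathcal C_{i+1,n}=\mathcal C_{i,n}$. Since $B_i^{-1}$ is a homeomorphism of Grassmannians and the sets are nested, it commutes with the intersection. Hence $B_i^{-1}\{\mathcal S_{i+1}\}=\{\mathcal S_i\}$.

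\emph{Steps (iii) and (iv): transversality and closeness to the kernel.} Item (iii) follows from (ii) combined with \autoref{prop:C:regular:neighborhoods}(i). That gives $\angle(\mathcal U_i,\mathcal S_i)\ge\theta_i/2>0$, so the sum is direct and fills $V_i$ by dimension count. If the strict inequality is an issue at equality, we shrink $a_0$ slightly.

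For (iv) we use $\mathcal S_i=B_i^{-1}\mathcal S_{i+1}$ with $\mathcal S_{i+1}\in\mathcal V_{i+1}^-$. By \autoref{prop:C:regular:neighborhoods}(i) with $E=F_i$, we have $\angle(\mathcal S_{i+1},F_i)\ge\theta_{i+1}/2$. \autoref{lemma:inverse:estimate:transverse:to:fast:image} then gives exactly \eqref{eq:C:regular:slow:subspace:near:kernel}, as in the proof of (iii) of that proposition.

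\emph{Step (v): one-step bounds.} These are direct linear algebra in singular coordinates for $B_i$, with $V_i=K_i^\perp\oplus K_i$.

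For the fast bound, take $u\in\mathcal U_i$ with $\|u\|=1$ and write $u=u_1+u_2$, where $u_1\in K_i^\perp$ and $u_2\in K_i$. Since $\angle(\mathcal U_i,K_i)\ge\theta_i/2$ (by (i) with $S=K_i$), we get $\|u_1\|\ge\sin(\theta_i/2)\ge\theta_i/\pi$. Then
\[
\|B_iu\|\ge\|B_iu_1\|\ge\sigma_k(B_i)\,\theta_i/\pi\ge\tfrac14\theta_i\sigma_k(B_i),
\]
where the first inequality holds because $B_i$ maps $K_i^\perp$ and $K_i$ to orthogonal subspaces.

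For the slow bound, take $s\in\mathcal S_i$ and decompose $B_is\in\mathcal S_{i+1}$ orthogonally along $F_i\oplus F_i^\perp$. The $F_i$-component of $B_is$ is $B_i$ applied to the $K_i^\perp$-part $s_1$ of $s$, so its norm is at least $\sigma_k(B_i)\|s_1\|$. The $F_i^\perp$-component has norm at most $\sigma_{k+1}(B_i)\|s\|$. Because $\angle(\mathcal S_{i+1},F_i)\ge\theta_{i+1}/2$, the $F_i^\perp$-component controls the whole vector:
\[
\|B_is\|\le\frac{\sigma_{k+1}(B_i)\|s\|}{\sin(\theta_{i+1}/2)}\le 4\,\theta_{i+1}^{-1}\sigma_{k+1}(B_i)\|s\|,
\]
using $\sin(x/2)\ge x/\pi$ for $x\le\pi$. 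At the terminal step $\mathcal S_b=K_b$, and the identity \eqref{eq:C:regular:slow:terminal:one:step} holds trivially.

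\emph{Main obstacle.} The delicate step is the slow-space norm bound in (v). One must verify that the angle-to-norm comparison between a subspace and its orthogonal projection onto $F_i^\perp$ gives the constant $4$ with the paper's angle convention. If the convention differs, the fallback is to absorb the discrepancy by enlarging $C_0$, or by taking the constant $4$ up to a dimensional factor.
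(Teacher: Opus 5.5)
Your proposal is correct and follows the paper's proof essentially step for step. Containment comes from \autoref{prop:C:regular:neighborhoods}(ii)--(iv) and \autoref{lemma:C:regular:pullback:sets}; equivariance comes from the definitions together with $\mathcal C_{i,n}=B_i^{-1}\mathcal C_{i+1,n}$; the angle bound comes from part (i) of that proposition; and (iv) comes from \autoref{lemma:inverse:estimate:transverse:to:fast:image}. The only deviation is in (v), where you compute the one-step bounds directly in the singular coordinates of $B_i$ instead of quoting \autoref{lemma:image:of:transverse:plane} and \autoref{lemma:inverse:estimate:transverse:to:fast:image}. Under the paper's angle convention, $\|P_{F_i^\perp}v\|\ge\sin\angle(\mathcal S_{i+1},F_i)\,\|v\|$ holds exactly for $v\in\mathcal S_{i+1}$, so your computation yields the constants $1/\pi$ and $\pi$, which are better than $1/4$ and $4$, and the hedge in your ``main obstacle'' paragraph is unnecessary.
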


\begin{proof}
The equivariance of the fast spaces is part of the definition.  In the finite
case, the equivariance of the slow spaces is also part of the definition.  In
the infinite case, for $n>i+1$ one has
\begin{displaymath}
B_i\mathcal C_{i,n}=\mathcal C_{i+1,n}.
\end{displaymath}
Letting $n\to\infty$ gives
\begin{displaymath}
B_i\mathcal S_i=\mathcal S_{i+1}.
\end{displaymath}
This proves \eqref{eq:C:regular:fast:slow:equivariant}.

We next prove the containments.  For the fast spaces, since
$\mathcal U_a=K_a^\perp$,
we have $\mathcal U_a\in\mathcal V_a^+$.  The general containment follows
inductively from \autoref{prop:C:regular:neighborhoods}{\rm (ii)}.

For the slow spaces in the finite case, the terminal definition gives
\begin{displaymath}
\mathcal S_b=B_b^{-1}F_b^\perp=K_b,
\end{displaymath}
so $\mathcal S_b\in\mathcal V_b^-$.  The containment
$\mathcal S_i\in\mathcal V_i^-$
for $a\leq i<b$ follows by backward induction from
\autoref{prop:C:regular:neighborhoods}{\rm (iii)}.  In the infinite case, this
containment is part of \autoref{lemma:C:regular:pullback:sets}.  This proves
\eqref{eq:C:regular:fast:slow:containment}. The equation (\ref{eq:C:regular:fast:slow:angle}) follows from
Proposition~\ref{prop:C:regular:neighborhoods}{\rm (i)}. 

We now prove \eqref{eq:C:regular:slow:terminal:equals:kernel} and
\eqref{eq:C:regular:slow:subspace:near:kernel}.  In the finite case,
\begin{displaymath}
\mathcal S_b=B_b^{-1}F_b^\perp=K_b,
\end{displaymath}
which gives \eqref{eq:C:regular:slow:terminal:equals:kernel}.
Now let $i$ be such that $i+1\in I$.  By the containment just proved,
$\mathcal S_{i+1}\in\mathcal V_{i+1}^-$.
Hence \autoref{prop:C:regular:neighborhoods}{\rm (i)} gives
$\angle(\mathcal S_{i+1},F_i)\geq \tfrac{1}{2}\theta_{i+1}$.
Since $\mathcal S_i=B_i^{-1}\mathcal S_{i+1}$,
\autoref{lemma:inverse:estimate:transverse:to:fast:image} gives
\begin{displaymath}
\dist_{\Gr}(\mathcal S_i,K_i)
\leq
\frac{16}{\Delta_k(B_i)\theta_{i+1}}.
\end{displaymath}
which is (\ref{eq:C:regular:slow:subspace:near:kernel}).

For the fast one-step estimate, the containment
\eqref{eq:C:regular:fast:slow:containment} and
\autoref{prop:C:regular:neighborhoods}{\rm (i)} give 
\begin{displaymath}
\angle(\mathcal U_i,K_i)\geq \frac{1}{2}\,\theta_i
\end{displaymath}
for every $i\in I$.  Applying Lemma~\ref{lemma:image:of:transverse:plane}
gives \eqref{eq:C:regular:fast:one:step}.

For the slow estimate, assume first that $i+1\in I$.  Since
$\mathcal S_{i+1}\in\mathcal V_{i+1}^-$,
\autoref{prop:C:regular:neighborhoods}{\rm (i)} gives
\begin{displaymath}
\angle(\mathcal S_{i+1},F_i)\geq \frac{1}{2}\,\theta_{i+1}.
\end{displaymath}
Also,
$\mathcal S_i=B_i^{-1}\mathcal S_{i+1}$.
Lemma~\ref{lemma:inverse:estimate:transverse:to:fast:image} 
therefore gives \eqref{eq:C:regular:slow:one:step}.
In the finite case, at the terminal step,
\begin{displaymath}
\mathcal S_b=B_b^{-1}F_b^\perp=K_b.
\end{displaymath}
Hence
$\|B_b|_{\mathcal S_b}\|
=
\|B_b|_{K_b}\|
=
\sigma_{k+1}(B_b)$, which is \eqref{eq:C:regular:slow:terminal:one:step}.
\end{proof}

\begin{lemma}[Product estimates for regular sequences]
\label{lemma:C:regular:product:estimates}
There exist constants
\begin{displaymath}
C_0=C_0(d,k) > 16,
\qquad
C_1=C_1(d,k)<\infty,
\end{displaymath}
with the following property.  Assume the standing setup of this subsection and
assume $C\geq C_0$.  Put
\begin{displaymath}
q(C):=\frac{16}{C}  <  1.  
\end{displaymath}
For $i \leq n$ in $I$, put
\begin{displaymath}
P_{i,n}:= B_{n}\cdots B_i.
\end{displaymath}
Then
\begin{equation}
\label{eq:C:regular:product:fast:lower}
\sigma_k(P_{i,n})
\geq
\mathfrak m(P_{i,n}|_{\mathcal U_i})
\geq
4^{-(n-i+1)}
\prod_{r=i}^{n}\theta_r\sigma_k(B_r),
\end{equation}
and (using the convention $\theta_{n+1}=\pi/2$ when $n=b$ in the finite case),
\begin{equation}
\label{eq:C:regular:product:slow:upper}
\sigma_{k+1}(P_{i,n})
\leq
\|P_{i,n}|_{\mathcal S_i}\|
\leq
4^{n-i+1}
\prod_{r=i}^{n}\theta_{r+1}^{-1}\sigma_{k+1}(B_r).
\end{equation}
Consequently,
\begin{equation}
\label{eq:C:regular:product:gap:growth}
\Delta_k(P_{i,n})
\geq
\frac{
\mathfrak m(P_{i,n}|_{\mathcal U_i})
}{
\|P_{i,n}|_{\mathcal S_i}\|
}
\geq
q(C)^{-(n-i+1)}.
\end{equation}
Moreover,
\begin{equation}
\label{eq:C:regular:product:kernel:near:slow}
\dist_{\Gr}\bigl(
\ker_{d-k}(P_{i,n}),\mathcal S_i
\bigr)
\leq
C_1q(C)^{n-i+1}.
\end{equation}
Finally, if $i+1\in I$, then
\begin{equation}
\label{eq:C:regular:product:kernel:near:kernel}
\dist_{\Gr}\bigl(
\ker_{d-k}(P_{i,n}),K_i
\bigr)
\leq
C_1q(C)^{n-i+1}
+
\frac{16}{\Delta_k(B_i)\theta_{i+1}}.
\end{equation}
\end{lemma}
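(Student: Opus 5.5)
The plan is to derive everything from the one-step estimates of \autoref{lemma:C:regular:fast:slow:basic}, together with equivariance of the splitting $\mathcal U_i\oplus\mathcal S_i$, and then convert the norm/conorm comparison into Grassmannian information via \autoref{lemma:slow:space:from:dominated:splitting}.

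\emph{Product bounds.} For \eqref{eq:C:regular:product:fast:lower}, since $B_r\mathcal U_r=\mathcal U_{r+1}$, the conorm is submultiplicative along the chain: $\mathfrak m(P_{i,n}|_{\mathcal U_i})\ge\prod_{r=i}^n\mathfrak m(B_r|_{\mathcal U_r})$. Inserting \eqref{eq:C:regular:fast:one:step} bounds each factor below by $\frac14\theta_r\sigma_k(B_r)$. The inequality $\sigma_k(P)\ge\mathfrak m(P|_{\mathcal U_i})$ is min-max, since $\dim\mathcal U_i=k$. Similarly, for \eqref{eq:C:regular:product:slow:upper}, $\|P_{i,n}|_{\mathcal S_i}\|\le\prod\|B_r|_{\mathcal S_r}\|$. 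Each factor is bounded by \eqref{eq:C:regular:slow:one:step} when $r+1\in I$. At the terminal step $r=b$ of the finite case we use \eqref{eq:C:regular:slow:terminal:one:step}, and this is compatible with the convention $\theta_{b+1}=\pi/2$ since $4\theta_{b+1}^{-1}\ge1$. Min-max again gives $\sigma_{k+1}(P)\le\|P|_{\mathcal S_i}\|$, as $\operatorname{codim}\mathcal S_i=k$.

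\emph{Gap growth.} Dividing the two product bounds gives
\[
\frac{\mathfrak m(P_{i,n}|_{\mathcal U_i})}{\|P_{i,n}|_{\mathcal S_i}\|}\ge 16^{-(n-i+1)}\prod_{r=i}^n\Delta_k(B_r)\theta_r\theta_{r+1}\ge (C/16)^{n-i+1},
\]
using $C$-regularity term by term. Note that $\theta_{b+1}=\pi/2$ is also the value used in the regularity condition, so the indices match exactly. This proves \eqref{eq:C:regular:product:gap:growth}, and $C_0>16$ makes $q(C)<1$.

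\emph{Kernel estimates.} For \eqref{eq:C:regular:product:kernel:near:slow}, I would apply \autoref{lemma:slow:space:from:dominated:splitting} to $T=P_{i,n}$, $U=\mathcal U_i$, $S=\mathcal S_i$, exactly as in the proof of \autoref{lemma:uniformly:tempered:kernel:convergence}. The transversality input is $\angle(\mathcal U_i,\mathcal S_i)>\theta_i/2$ from \eqref{eq:C:regular:fast:slow:angle}. This step is the main obstacle: the lemma's constant presumably depends on the angle, and $\theta_i$ can be small, whereas the target constant $C_1$ may depend only on $(d,k)$. My first attempt is to use a form of that lemma giving a bound like $(\text{angle})^{-1}\cdot\|P|_S\|/\mathfrak m(P|_U)$, and then absorb $\theta_i^{-1}$. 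For $i>a$ we have $\theta_i^{-1}\le\Delta_k(B_i)\theta_{i+1}/C$, which is not obviously bounded, so instead I would absorb it by sacrificing the first factor. The fast product lower bound contains the factor $\theta_i$ explicitly, and the ratio estimate in fact gives $\theta_i^{-1}\cdot(\text{ratio})^{-1}\le \theta_i^{-1}\cdot 16^{n-i+1}\prod(\Delta_k(B_r)\theta_r\theta_{r+1})^{-1}$. The $r=i$ factor contains $\theta_i$, and $\theta_i\cdot\theta_i^{-1}$ cancels, leaving $(\Delta_k(B_i)\theta_{i+1})^{-1}\le 1/(C\theta_i)$ after $C$-regularity; hence a uniform constant remains after using $\Delta_k(B_i)\ge C$ together with $\theta\le\pi/2$. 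If this bookkeeping fails, I would fall back on the alternative of comparing singular directions directly: any $v\in\ker_{d-k}P$ decomposes as $u+s$ with $|Pu|\le|Ps|+\sigma_{k+1}|v|$, forcing $|u|\lesssim(\text{ratio})^{-1}|v|/\theta_i$, and then absorb the angle as above.

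Finally, \eqref{eq:C:regular:product:kernel:near:kernel} follows from the triangle inequality in the Grassmannian, combining \eqref{eq:C:regular:product:kernel:near:slow} with \eqref{eq:C:regular:slow:subspace:near:kernel}.
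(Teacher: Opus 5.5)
\textbf{Verdict.} Your product bounds, the gap-growth estimate and the final triangle-inequality step are the paper's argument. The proposal has a genuine gap at \eqref{eq:C:regular:product:kernel:near:slow}.

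\textbf{The error.} Your absorption of $\theta_i^{-1}$ does not work. In $\prod_{r=i}^n(\Delta_k(B_r)\theta_r\theta_{r+1})^{-1}$ the factor $\theta_i$ sits in the denominator. Multiplying by your extra $\theta_i^{-1}$ therefore gives $\theta_i^{-2}$, not a cancellation. Correct bookkeeping at $n=i$ leaves $16\,\theta_i^{-1}(\Delta_k(B_i)\theta_i\theta_{i+1})^{-1}\le 16/(C\theta_i)$. Separately, regularity gives $(\Delta_k(B_i)\theta_{i+1})^{-1}\le\theta_i/C$, not $1/(C\theta_i)$.

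$C$-regularity gives no lower bound on $\theta_i$ (for $i>a$) in terms of $d,k$. A tiny $\theta_i$ is compatible with regularity if $\Delta_k(B_{i-1})$ and $\Delta_k(B_i)$ are correspondingly large. So any angle-dependent version of the domination lemma, fed by the product bounds, cannot give a constant $C_1=C_1(d,k)$. Your fallback has the same defect, because it also carries a factor $1/\theta_i$.

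\textbf{The missing idea.} No transversality is needed. \autoref{lemma:slow:space:from:dominated:splitting} is angle-free: it gives $\dist_{\Gr}(\ker_{d-k}A,S)\le 8/\Delta_k^{U,S}(A)$ for any splitting $V=U\oplus S$. The paper simply applies it to $P_{i,n}$ with $U=\mathcal U_i$, $S=\mathcal S_i$.

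Concretely, in your fallback, bound $\|s\|$ by the triangle inequality $\|s\|\le\|v\|+\|u\|$ rather than through $\angle(\mathcal U_i,\mathcal S_i)$. For a unit vector $v=u+s\in\ker_{d-k}P_{i,n}$, using $\sigma_{k+1}(P_{i,n})\le\|P_{i,n}|_{\mathcal S_i}\|$,
\[
\mathfrak m(P_{i,n}|_{\mathcal U_i})\,\|u\|\le\|P_{i,n}v\|+\|P_{i,n}s\|\le\|P_{i,n}|_{\mathcal S_i}\|\,(2+\|u\|).
\]
Write $\varrho$ for the ratio in \eqref{eq:C:regular:product:gap:growth}. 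Then $\operatorname{dist}(v,\mathcal S_i)\le\|u\|\le 2/(\varrho-1)$, hence $\sin\dist_{\Gr}(\ker_{d-k}P_{i,n},\mathcal S_i)\le 2/(\varrho-1)$. Combined with $\varrho\ge q(C)^{-(n-i+1)}$, this gives \eqref{eq:C:regular:product:kernel:near:slow} with $C_1=8$. The angle bound $\angle(\mathcal U_i,\mathcal S_i)>\theta_i/2$ is never used; only the direct-sum decomposition $V_i=\mathcal U_i\oplus\mathcal S_i$ is.
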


\begin{proof}
The first inequalities in
\eqref{eq:C:regular:product:fast:lower} and
\eqref{eq:C:regular:product:slow:upper} are the min--max principle.

Conorms are supermultiplicative and norms are submultiplicative.  Applying
\eqref{eq:C:regular:fast:one:step},
\eqref{eq:C:regular:slow:one:step}, and in the finite terminal case
\eqref{eq:C:regular:slow:terminal:one:step}, gives
\eqref{eq:C:regular:product:fast:lower} and
\eqref{eq:C:regular:product:slow:upper}.
Dividing (\ref{eq:C:regular:product:fast:lower}) by
(\ref{eq:C:regular:product:slow:upper}) and using $C$-regularity 
yields (\ref{eq:C:regular:product:gap:growth}). We now
apply
\autoref{lemma:slow:space:from:dominated:splitting}
to \(P_{i,n}:V_i\to V_{n+1}\) and
\(V_i=\mathcal U_i\oplus\mathcal S_i\) to get
\eqref{eq:C:regular:product:kernel:near:slow}.

Finally combine this with
\eqref{eq:C:regular:slow:subspace:near:kernel}
and the triangle inequality to get
\eqref{eq:C:regular:product:kernel:near:kernel}.
\end{proof}

\begin{lemma}[Creating a terminal buffer]
\label{lemma:create:terminal:buffer}
There exist constants $C_{\rm buf}=C_{\rm buf}(d,k)\ge1$ and
$M_{\rm buf}=M_{\rm buf}(d,k)<\infty$ with the following property.

Let $B_1,\dots,B_m$ be a finite $(C,\delta)$-uniformly regular sequence in
dimension $k$, with $0<\delta\le1$. Assume
\begin{equation}
\label{eq:buffer:hyp:C:large}
C\ge C_{\rm buf},
\end{equation}
\begin{equation}
\label{eq:buffer:hyp:C:dominates:delta}
\log C\ge 4M_{\rm buf}|\log\delta|,
\end{equation}
\begin{equation}
\label{eq:buffer:hyp:block:contraction}
-\log\sigma_{k+1}(B_i)\ge L_{\rm blk}
\qquad
\text{for every }1\le i\le m,
\end{equation}
and
\begin{equation}
\label{eq:buffer:hyp:L:dominates:delta}
L_{\rm blk}\ge 2M_{\rm buf}(|\log\delta|+1).
\end{equation}

Let $r$ be an integer with $1\le r<m$, set
\begin{displaymath}
p:=m-r+1,
\end{displaymath}
and define
\begin{displaymath}
\widehat B:=B_mB_{m-1}\cdots B_p.
\end{displaymath}
Then
\begin{equation}
\label{eq:buffer:concl:gap}
\log\Delta_k(\widehat B)\ge \frac12 r\log C,
\end{equation}
\begin{equation}
\label{eq:buffer:concl:angle}
\angle\bigl(\im_k(B_{p-1}),\ker_{d-k}(\widehat B)\bigr)
\ge \frac{\delta}{2}.
\end{equation}
Moreover,
\begin{equation}
\label{eq:buffer:concl:contraction}
-\log\sigma_{k+1}(\widehat B)
\ge
\frac12 rL_{\rm blk}.
\end{equation}
\end{lemma}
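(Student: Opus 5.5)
The plan is to apply the product estimates of \autoref{lemma:C:regular:product:estimates} to the tail sequence $B_p,\dots,B_m$. Because the full sequence $B_1,\dots,B_m$ is $(C,\delta)$-uniformly regular, every interior angle satisfies $\theta_i\ge\delta$. We regard $B_p,\dots,B_m$ as a finite $C'$-regular sequence in its own right. Its endpoint conventions are $\theta_p=\theta_{m+1}=\pi/2$, and the interior angles are inherited from the original sequence, so they are still at least $\delta$. Hence $\Delta_k(B_i)\theta_i\theta_{i+1}\ge\Delta_k(B_i)\delta^2\ge C$ holds on the tail as well. Since $C\ge C_{\rm buf}\ge C_0$, the fast and slow spaces $\mathcal U_p,\mathcal S_p$ of \autoref{def:C:regular:fast:slow:spaces} and all the estimates of \autoref{lemma:C:regular:fast:slow:basic} and \autoref{lemma:C:regular:product:estimates} are available for the tail.

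For \eqref{eq:buffer:concl:gap} I would not use the crude bound \eqref{eq:C:regular:product:gap:growth}, since $q(C)^{-r}=(C/16)^r$ only gives $r(\log C-\log 16)$. Instead I would divide \eqref{eq:C:regular:product:fast:lower} by \eqref{eq:C:regular:product:slow:upper} directly:
\[
\log\Delta_k(\widehat B)\ \ge\ \sum_{i=p}^m\log\Delta_k(B_i)\;-\;2r\log 4\;+\;\sum_{i=p}^{m}\bigl(\log\theta_i+\log\theta_{i+1}\bigr).
\]
The angle sum is at least $-2r|\log\delta|$, and each $\log\Delta_k(B_i)\ge\log C+2|\log\delta|$. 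This gives $\log\Delta_k(\widehat B)\ge r(\log C-\log 16)\ge\tfrac12 r\log C$ once $C_{\rm buf}\ge 256$.

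Similarly, \eqref{eq:buffer:concl:contraction} comes from \eqref{eq:C:regular:product:slow:upper}:
\[
-\log\sigma_{k+1}(\widehat B)\ \ge\ \sum_i\bigl(-\log\sigma_{k+1}(B_i)\bigr)-r\log4-r|\log\delta|\ \ge\ rL_{\rm blk}-r(\log 4+|\log\delta|).
\]
By \eqref{eq:buffer:hyp:L:dominates:delta} with $M_{\rm buf}\ge\log 4$, this is at least $\tfrac12 rL_{\rm blk}$.

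The angle bound \eqref{eq:buffer:concl:angle} is the main step. I would use \eqref{eq:C:regular:product:kernel:near:kernel} for the tail with $i=p$ and $n=m$, when $r\ge2$:
\[
\dist_{\Gr}\bigl(\ker_{d-k}\widehat B,K_p\bigr)\ \le\ C_1 q(C)^r+\frac{16}{\Delta_k(B_p)\delta}\ \le\ C_1(16/C)^r+\frac{16\delta}{C}.
\]
When $r=1$ we have $\widehat B=B_m$ and the distance is $0$.

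Uniform regularity gives $\angle(\im_k B_{p-1},K_p)=\theta_p^{\rm orig}\ge\delta$. Using the comparability of angle and Grassmannian distance at small scales, as in the proof of \autoref{prop:C:regular:neighborhoods}, the angle changes by at most a dimensional constant times the distance. It therefore suffices that $C_1(16/C)^r+16\delta/C\le c_{d,k}\,\delta/2$.

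The second term is handled by taking $C_{\rm buf}$ large. The first term is the obstacle: it is not proportional to $\delta$. This is exactly where \eqref{eq:buffer:hyp:C:dominates:delta} enters, giving $C\ge\delta^{-4M_{\rm buf}}$. Then $C_1(16/C)^r\le C_1 16/C\le C_1 16\,\delta^{4M_{\rm buf}}\le c\,\delta$ for $M_{\rm buf}\ge1$ and $C_{\rm buf}$ large.

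One also has to check that the hypotheses of \autoref{lemma:slow:space:from:dominated:splitting}, used inside \eqref{eq:C:regular:product:kernel:near:slow}, hold with constants independent of $\delta$. The angle $\angle(\mathcal U_p,\mathcal S_p)\ge\pi/4$ is guaranteed by the endpoint convention $\theta_p=\pi/2$. If that lemma's constant $C_1$ secretly depends on the transversality, this is still uniform, because the transversality is at the start of the tail.
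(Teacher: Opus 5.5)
Your proposal is correct and follows essentially the same route as the paper: apply \autoref{lemma:C:regular:product:estimates} to the tail $B_p,\dots,B_m$, treat $r=1$ separately for the angle, bound $\dist_{\Gr}(\ker_{d-k}\widehat B,K_p)$ via \eqref{eq:C:regular:product:kernel:near:kernel} with \eqref{eq:buffer:hyp:C:dominates:delta} absorbing the $C_1q(C)^r$ term, and get the contraction from \eqref{eq:C:regular:product:slow:upper} using $\theta_{p+1},\dots,\theta_m\ge\delta$. Your detour for \eqref{eq:buffer:concl:gap} is unnecessary, since \eqref{eq:C:regular:product:gap:growth} already gives the same bound $r(\log C-\log 16)$; the paper uses it directly, choosing $C_{\rm buf}$ so that $-\log q(C)\ge\tfrac12\log C$.
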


\begin{proof}
Since $B_1,\dots,B_m$ is $(C,\delta)$-uniformly regular, it is $C$-regular.
Let 
\begin{displaymath}
q(C)=\frac{16}{C}.
\end{displaymath}
We increase $C_{\rm buf}$ so that \autoref{lemma:C:regular:product:estimates}
applies and
\begin{equation}
\label{eq:buffer:proof:q:gap}
-\log q(C)\geq \frac12\log C
\end{equation}
whenever $C\geq C_{\rm buf}$.

Apply \autoref{lemma:C:regular:product:estimates} to the finite regular
sequence $B_p,\dots,B_m$.  Since
\begin{displaymath}
\widehat B=B_m\cdots B_p
\end{displaymath}
has length $r$, \eqref{eq:C:regular:product:gap:growth} gives
\begin{displaymath}
\Delta_k(\widehat B)\geq q(C)^{-r}.
\end{displaymath}
Together with \eqref{eq:buffer:proof:q:gap}, this gives
\eqref{eq:buffer:concl:gap}.

We next prove the angle estimate.  If $r=1$, then
\begin{displaymath}
\ker_{d-k}(\widehat B)=K_p,
\end{displaymath}
and hence
\begin{displaymath}
\angle\bigl(\im_k(B_{p-1}),\ker_{d-k}(\widehat B)\bigr)
=
\theta_p
\geq \delta.
\end{displaymath}
So assume $r\geq2$.  Applying
\eqref{eq:C:regular:product:kernel:near:kernel} to the finite regular
sequence $B_p,\dots,B_m$ gives
\begin{displaymath}
\dist_{\Gr}\bigl(\ker_{d-k}(\widehat B),K_p\bigr)
\leq
C_1q(C)^r+
\frac{16}{\Delta_k(B_p)\theta_{p+1}}.
\end{displaymath}
By $C$-regularity,
\begin{displaymath}
\Delta_k(B_p)\theta_p\theta_{p+1}\geq C,
\end{displaymath}
and therefore
\begin{displaymath}
\frac{16}{\Delta_k(B_p)\theta_{p+1}}
\leq
\frac{16}{C}\theta_p.
\end{displaymath}
After increasing $C_{\rm buf}$, this is at most
$a_0\theta_p/4$.

For the first term, use $\theta_p\geq\delta$.  Increasing $M_{\rm buf}$ and
then $C_{\rm buf}$ if necessary, the hypothesis
\eqref{eq:buffer:hyp:C:dominates:delta} implies
\begin{displaymath}
C_1q(C)\leq \frac{a_0}{4}\delta.
\end{displaymath}
Hence
\begin{displaymath}
C_1q(C)^r\leq \frac{a_0}{4}\theta_p.
\end{displaymath}
Thus
\begin{displaymath}
\dist_{\Gr}\bigl(\ker_{d-k}(\widehat B),K_p\bigr)
\leq
\frac{a_0}{2}\theta_p.
\end{displaymath}
By the standing smallness choice of $a_0$ in
\autoref{prop:C:regular:neighborhoods}, this implies
\begin{displaymath}
\angle\bigl(\im_k(B_{p-1}),\ker_{d-k}(\widehat B)\bigr)
\geq
\frac12\theta_p
\geq
\frac{\delta}{2}.
\end{displaymath}
This proves \eqref{eq:buffer:concl:angle}.

It remains to prove the contraction estimate.  Apply
\eqref{eq:C:regular:product:slow:upper} to the finite regular sequence
$B_p,\dots,B_m$.  In the terminal factor we use the convention
$\theta_{m+1}=\pi/2$.  Since the sequence is $(C,\delta)$-uniformly regular,
all relevant angles $\theta_{p+1},\dots,\theta_m$ are at least $\delta$.
Thus
\begin{displaymath}
\sigma_{k+1}(\widehat B)
\leq 4^r\delta^{-(r-1)}
\prod_{i=p}^{m}\sigma_{k+1}(B_i).
\end{displaymath}
Using \eqref{eq:buffer:hyp:block:contraction}, we get
\begin{displaymath}
-\log\sigma_{k+1}(\widehat B)
\geq
rL_{\rm blk}
-2 r \log 2
-(r-1)|\log\delta|.
\end{displaymath}
Increase $M_{\rm buf}$ so that
\begin{displaymath}
M_{\rm buf}\geq 2\log 2 +1.
\end{displaymath}
Then \eqref{eq:buffer:hyp:L:dominates:delta} gives
\begin{displaymath}
\frac12L_{\rm blk}
\geq
2 \log 2 +|\log\delta|.
\end{displaymath}
Therefore
\begin{displaymath}
-\log\sigma_{k+1}(\widehat B)
\geq
\frac12rL_{\rm blk},
\end{displaymath}
which proves \eqref{eq:buffer:concl:contraction}.
\end{proof}

\subsection{A parameterized selection lemma}
\label{ssec:selection:lemma}

The purpose of this subsection is to record a one-block selection statement
with an explicit length parameter. The proof has two parts: a short initial
segment produces a definite angle, and the remaining segment increases the
gap while preserving this angle. We also include a blockwise
$\sigma_{k+1}$-contraction estimate.

\begin{lemma}[Kernel stability under a continued gap]
\label{lemma:kernel:stability:continued:gap}
Assume the setup of \autoref{ssec:abstract:good:event}.  Fix
$R<\infty$ and $\Lambda>0$.  There are constants $C<\infty$ and $c_1>0$,
depending only on $R,\Lambda,B_{\rm lin},d,k$ and the height constants, such
that the following holds.

Let $t\geq 1$ and assume $H_t\leq R$.  Let $p_0\geq 1$, $\chi>0$, and
$m\geq 0$.  Let
\begin{displaymath}
T_0:V\to V_t
\end{displaymath}
be an $\mathcal F_t$-measurable linear isomorphism, let $E \subset V$
be a $\mathcal F_t$-measurable $k$-plane 
and define
\begin{displaymath}
T_q:=A_{t+q-1}\cdots A_tT_0:V\to V_{t+q},
\qquad
0\leq q\leq m.
\end{displaymath}
Let $\mathcal G_{\rm gap}$ be the event that
\begin{equation}
\label{eq:continued:gap:hypothesis}
\Delta_k(T_q,E)\geq C e^{\Lambda q+\chi p_0}
\qquad
\text{for every }0\leq q\leq m.
\end{equation}
Let $\mathcal G_{\rm ker}$ be the event that each $T_q$ has a strict
$k$-gap and that
\begin{equation}
\label{eq:kernel:stability:continued:gap}
\angle(\ker_{d-k}T_q,E)\geq\tfrac12\angle(\ker_{d-k}T_0,E)
\end{equation}
Then
\begin{equation}
\label{eq:kernel:stability:probabilistic}
\mathbb P\left(
\mathcal G_{\rm gap}\cap\mathcal G_{\rm ker}^{c}
\,\middle|\,
\mathcal F_t
\right)
\leq
C e^{-c_1\chi p_0}.
\end{equation}
The same estimate holds conditionally after a stopping time $t$, on the event
$H_t\leq R$.
\end{lemma}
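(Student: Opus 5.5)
The plan is to reduce \eqref{eq:kernel:stability:continued:gap} to a deterministic perturbation estimate for approximate kernels. That estimate will only involve the one-step norms $\|A_{t+q}\|\,\|A_{t+q}^{-1}\|$. The probabilistic input is then a union bound over $q$ using Markov's inequality and the Margulis drift \eqref{eq:abstract:margulis:height:drift}. The exponential (rather than $e^{-c\sqrt{\cdot}}$) rate in \eqref{eq:kernel:stability:probabilistic} suggests exactly this: each step is controlled by a single height $H_{t+q}$, not by a sum of heights.

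\emph{Deterministic step.} For a linear isomorphism $T$ with a $k$-gap, a $k$-plane $E$ transverse to $\ker_{d-k}T$, and a one-step map $A$, I want a bound of the form
\begin{displaymath}
\frac{\dist_{\Gr}\bigl(\ker_{d-k}(AT),\ker_{d-k}T\bigr)}{\angle(\ker_{d-k}T,E)}
\le
\frac{C_{d,k}\,\|A\|^2\|A^{-1}\|^2}{\Delta_k(T,E)} .
\end{displaymath}
I will derive it from the singular value picture, in the spirit of \autoref{lemma:inverse:estimate:transverse:to:fast:image} and \autoref{lemma:slow:space:from:dominated:splitting}. Write $AT$ on the splitting $E\oplus\ker_{d-k}T$. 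On $E$ it is expanded by at least $\mathfrak m(T|_E)\|A^{-1}\|^{-1}$, and on $\ker_{d-k}T$ it is expanded by at most $\|A\|\sigma_{k+1}(T)$. The transversality of this splitting is controlled by the angle, and the relative gap $\Delta_k(T,E)$ from \S\ref{sssec:relative:gap} packages exactly this ratio together with the angle. The normalization by the angle is what makes the estimate usable: it lets me compare each increment with $\tfrac12\angle(\ker_{d-k}T_0,E)$ rather than with an absolute constant.

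\emph{Bootstrap.} On $\mathcal G_{\rm gap}$, I then argue inductively in $q$. Suppose the angles satisfy $\angle(\ker_{d-k}T_j,E)\ge\tfrac12\angle(\ker_{d-k}T_0,E)$ for all $j\le q$. Then, using \eqref{eq:continued:gap:hypothesis} to bound $\Delta_k(T_j,E)$ from below, the increments summed up to $q$ are bounded by
\begin{displaymath}
\angle(\ker_{d-k}T_0,E)\cdot\frac{2C_{d,k}}{C}\sum_{j\le q}\|A_{t+j}\|^2\|A_{t+j}^{-1}\|^2e^{-\Lambda j-\chi p_0}.
\end{displaymath}
Strictness of the $k$-gap for each $T_q$ comes for free, since $\Delta_k(T_q,E)>1$. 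So it suffices that $\sum_j \|A_{t+j}\|^2\|A_{t+j}^{-1}\|^2 e^{-\Lambda j-\chi p_0}\le C/(8C_{d,k})$, which closes the induction with a factor $\tfrac12$ to spare.

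\emph{Probabilistic step.} By \eqref{eq:height:controlled:operator:norm}, $\|A_i\|^2\|A_i^{-1}\|^2\le e^{4B_{\rm lin}}H_i^{4B_{\rm lin}}$. Hence the sum above is small provided $H_{t+j}\le e^{(\Lambda j+\chi p_0)/(8B_{\rm lin})}$ for every $j$; with this cap each term is at most $e^{4B_{\rm lin}}e^{-(\Lambda j+\chi p_0)/2}$. Choosing $C$ large, depending on $\Lambda$ and $B_{\rm lin}$, absorbs the geometric series. Iterating \eqref{eq:abstract:margulis:height:drift} from $H_t\le R$ gives $\mathbb E(H_{t+j}\mid\mathcal F_t)\le R+b/(1-c)$. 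Markov's inequality and a union bound over $j\ge0$ then bound the conditional probability that some cap fails by
\begin{displaymath}
(R+b/(1-c))\sum_j e^{-(\Lambda j+\chi p_0)/(8B_{\rm lin})}\le Ce^{-c_1\chi p_0},
\end{displaymath}
with $c_1=1/(8B_{\rm lin})$. The stopping-time version follows verbatim, since only the drift after time $t$ and $H_t\le R$ are used.

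The main obstacle is the deterministic step: getting a kernel perturbation bound whose error is measured relative to $\angle(\ker_{d-k}T,E)$ and is controlled by the relative gap $\Delta_k(T,E)$ rather than by the absolute gap $\Delta_k(T)$. If the definition of $\Delta_k(T,E)$ does not directly absorb the angle, I would first prove a lower bound for $\angle(\ker_{d-k}T,E)$ in terms of $\Delta_k(T,E)$, in the spirit of \autoref{lemma:relative:gap:to:angle}. I would then run the bootstrap with the squared angle in the denominator, which \eqref{eq:continued:gap:hypothesis} should still dominate after enlarging $C$.
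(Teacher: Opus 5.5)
Your probabilistic step (caps on $H_{t+j}$, then Markov and a union bound) is fine; it is essentially \autoref{lemma:weighted:height:sum}. The gap is the deterministic step: the inequality you propose is false, and your fallback does not repair it. The relative gap $\Delta_k(T,E)$ does \emph{not} absorb the angle. Take $d=4$, $k=2$, $T=\operatorname{diag}(\sigma_1,\sigma_2,\sigma_3,\sigma_4)$ and $E=\langle \sin\theta\, e_1+\cos\theta\, e_4,\ e_2\rangle$, with $\sigma_1\sin\theta\geq\sigma_2$ and $\sigma_4\leq\sigma_3\sin\theta$. Then $\angle(\ker_2T,E)=\theta$. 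A direct computation gives $\mathfrak m(T|_E)=\sigma_2$ and $\|\bar T_E\|=\sigma_3$, so $\Delta_2(T,E)=\Delta_2(T)$ however small $\theta$ is. Now let $A$ be the shear $e_3\mapsto e_3+e_2$ fixing $e_1,e_2,e_4$, so that $\|A\|\|A^{-1}\|<3$. Then $\ker_2(AT)=\langle e_3-\epsilon e_2,\ e_4\rangle$ with $\epsilon\approx\sigma_3/\sigma_2$. Hence $\dist_{\Gr}(\ker_2(AT),\ker_2T)\approx 1/\Delta_2(T,E)$, and the left side of your displayed inequality is $\approx 1/(\theta\,\Delta_2(T,E))$. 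This is not $O(1/\Delta_2(T,E))$ as $\theta\to0$. The same example shows there is no lower bound on $\angle(\ker_{d-k}T,E)$ in terms of $\Delta_k(T,E)$ alone. \autoref{lemma:relative:gap:to:angle} only gives $\theta\gtrsim\sigma_{k+1}(T)/\|T\|$, which is uncontrolled: $T_0$ is an arbitrary $\mathcal F_t$-measurable isomorphism, and $C,c_1$ may not depend on it. For the same reason, summing absolute increments of size $\lesssim\|A_{t+j}\|\|A_{t+j}^{-1}\|/\Delta_k(T_j,E)$ cannot be compared with $\tfrac12\theta_0$.

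Notice that in this example the angle to $E$ is in fact unchanged, because the kernel moved in the $e_2$ direction, which lies in $E$. The missing idea is to measure the motion of the kernel obliquely along $E$, not in $\dist_{\Gr}$. Decompose $v\in\ker_{d-k}T_{q+1}$ as $v=s+u$ with $s\in\ker_{d-k}T_q$ and $u\in E$. The relative gap bounds $\|u\|/\|v\|$, and since $u\in E$ one has $\dist(v,E)=\dist(s,E)\geq\sin\theta_q\,\|s\|$. This is the decomposition in the proof of \autoref{lemma:slow:space:from:dominated:splitting} with $U=E$, keeping the $E$-component out of the distance to $E$. It yields the multiplicative estimate of \autoref{lemma:GM-step}: $\sin\theta_{q+1}\geq(1-r_q)\sin\theta_q$ with $r_q=(\|A_{t+q}\|\|A_{t+q}^{-1}\|+1)/\Delta_k(T_{q+1},E)$. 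This involves no loss depending on $\theta_q$. Then $\sin\theta_q\geq(1-\sum_i r_i)\sin\theta_0\geq\tfrac34\sin\theta_0\geq\sin(\theta_0/2)$ once $\sum_i r_i\leq\tfrac14$. Your probabilistic argument supplies $\sum_i r_i\leq\tfrac14$ outside an event of probability $Ce^{-c_1\chi p_0}$, which is how the paper concludes.
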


\begin{proof}
Put $\Delta_q:=\Delta_k(T_q,E)$ and, on
$\mathcal G_{\rm gap}$, $\theta_q:=\angle(\ker_{d-k}T_q,E)$.
By definition of $\mathcal G_{\rm gap}$,
$\Delta_q\geq C e^{\chi p_0+\Lambda q}>1$.
Set $\kappa_q:=\|A_{t+q}\|\|A_{t+q}^{-1}\|$ and
$r_q:=(\kappa_q+1)/\Delta_{q+1}$.
Lemma~\ref{lemma:GM-step} gives
$\sin\theta_{q+1}\geq(1-r_q)\sin\theta_q$.
By \eqref{eq:height:controlled:operator:norm},
$\kappa_q+1\leq C_0H_{t+q}^{\beta}$ for fixed $C_0,\beta$.
Lemma~\ref{lemma:weighted:height:sum} gives, outside an event of conditional
probability at most $C_2e^{-c_2\chi p_0}$,
\[
\sum_{q=0}^{\infty}H_{t+q}^{\beta}e^{-\Lambda q}\leq C_2e^{\chi p_0}.
\]
On this event and $\mathcal G_{\rm gap}$,
\[
\begin{aligned}
\sum_{i=0}^{m-1}r_i
&\leq \frac{C_0}{C}e^{-\chi p_0}
\sum_{i=0}^{m-1}H_{t+i}^{\beta}e^{-\Lambda(i+1)}\\
&\leq\frac{C_0C_2}{C}\leq\tfrac14,
\end{aligned}
\]
after taking $C\geq4C_0C_2$ in (\ref{eq:continued:gap:hypothesis}).
Therefore, for every $0\leq q\leq m$, 
\[
\sin\theta_q\geq\sin\theta_0\prod_{i=0}^{q-1}(1-r_i) \ge
\sin\theta_0\left(1 - \sum_{i=0}^{q-1}r_i\right)
\geq\tfrac34\sin\theta_0\geq\sin(\theta_0/2).
\]
Thus $\theta_q\geq\theta_0/2$, proving
\eqref{eq:kernel:stability:probabilistic}. The stopping-time version follows from
that of Lemma~\ref{lemma:weighted:height:sum}.  
\end{proof}

\begin{proposition}[Parameterized one-block selection]
\label{prop:parameterized:one:block:selection}
Assume the standing assumptions from \autoref{ssec:abstract:good:event}.
Fix $R<\infty$ sufficiently large.  Then there exist constants
\begin{displaymath}
\gamma>0,
\qquad
\lambda_{\rm ctr}>0,
\qquad
K_{\rm sel}<\infty,
\end{displaymath}
where $\gamma$ and $\lambda_{\rm ctr}$ depend only on $c_0,d,k$, while
$K_{\rm sel}$ may also depend on $R$ and the height-control constants, such
that the following holds.  For every
$\eta>0$, there exist constants
\begin{displaymath}
C_\eta<\infty,\qquad b_\eta>0,\qquad N_0=N_0(\eta,R)\geq 1
\end{displaymath}
such that, for every $t\geq 1$ with $H_t\leq R$, every
$\mathcal F_t$-measurable $k$-plane $E\subset V_t$, and every
$N\geq N_0$, conditionally on $\mathcal F_t$, with probability at least
\begin{displaymath}
1-C_\eta e^{-b_\eta\sqrt N},
\end{displaymath}
there exists an integer
$\frac{N}{2}\leq \ell\leq N$
such that $H_{t+\ell} < R$ and, writing
\begin{displaymath}
B:=A(t,\ell)=A_{t+\ell-1}\cdots A_t:V_t\to V_{t+\ell},
\end{displaymath}
one has
\begin{equation}
\label{eq:parameterized:selection:angle}
\angle(E,\ker_{d-k}B)\geq e^{-\eta\ell},
\end{equation}
\begin{equation}
\label{eq:parameterized:selection:cushion}
\Delta_k(B)\angle(E,\ker_{d-k}B)\geq e^{\gamma\ell},
\end{equation}
and
\begin{equation}
\label{eq:parameterized:selection:block:contraction}
-\log\sigma_{k+1}(B)\geq \lambda_{\rm ctr}\ell.
\end{equation}
Moreover,
\begin{equation}
\label{eq:parameterized:selection:block:tameness}
\max_{0\leq r\leq\ell}
\log\max\left\{
\|A(t,r)\|,\|A(t,r)^{-1}\|
\right\}
\leq K_{\rm sel}\ell.
\end{equation}
The same statement holds conditionally after a stopping time $t$, on the event
$H_t\leq R$.
\end{proposition}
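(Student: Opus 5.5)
The plan is to build the block $B=A(t,\ell)$ in two stages, as the preamble to this subsection suggests. A short initial segment produces a definite (subexponentially small) angle between $E$ and the slow space. A long continuation then grows the gap exponentially without destroying that angle. Given $\eta>0$, I would fix a small fraction $\tau=\tau(\eta)\in(0,1/8)$ with $2A\tau\le\eta/2$, where $A$ is the constant of \autoref{prop:initial:transverse:gap}. I would then set $p:=\lceil\tau N\rceil$; the allowed failure probabilities are all of the form $Ce^{-c\sqrt{p}}\le C_\eta e^{-b_\eta\sqrt N}$.

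\emph{Stage 1.} Apply \autoref{prop:initial:transverse:gap} at time $t$ with the plane $E$ and parameter $p$. Off an event of probability $Ce^{-c_1\sqrt p}$, this gives $p_\ast\in[p,2p]$ with the following properties:
\begin{itemize}
\item the return condition $H_{t+p_\ast}<R$;
\item the relative gap bound $\Delta_k(A(t,p_\ast),E)\ge e^{ap_\ast}$;
\item the angle bound $\angle(E,\ker_{d-k}A(t,p_\ast))\ge e^{-Ap_\ast}\ge e^{-\eta N/2}$.
\end{itemize}

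\emph{Stage 2.} Restart at the stopping time $t':=t+p_\ast$, which is legitimate because $H_{t'}<R$. Apply \autoref{prop:tail:gap:from:prefix} with $Q:=A(t,p_\ast)$, the plane $E$, and loss $L:=\chi p$, where $\chi$ is small. Off probability $Ce^{-c_1\sqrt{\chi p}}$, for all $q\ge0$ this gives
\[
\log\Delta_k(A(t,p_\ast+q),E)\ge ap_\ast+\tfrac{c_0}{2}q-\chi p-C_{d,k}.
\]
Choosing $\chi<a/4$ and $N_0$ large, this is at least $C e^{\Lambda q+\chi' p_\ast}$ with $\Lambda=c_0/2$ and some $\chi'\asymp a$. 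That is exactly the hypothesis \eqref{eq:continued:gap:hypothesis} of \autoref{lemma:kernel:stability:continued:gap} with $T_0=Q$. That lemma then keeps $\angle(\ker_{d-k}A(t,p_\ast+q),E)\ge\frac12 e^{-Ap_\ast}$ for all $q\le N$, off probability $Ce^{-c\,p}$. Given $\ell=p_\ast+q\ge N/2$ and $p_\ast\le 2\tau N\le \ell/2$, this yields \eqref{eq:parameterized:selection:angle} once $N_0$ is large.

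For \eqref{eq:parameterized:selection:cushion}, I would compare $\Delta_k(B)\angle(E,\ker_{d-k}B)$ with the relative gap $\Delta_k(B,E)$ using \autoref{lemma:relative:gap:to:angle} (in the direction $\Delta_k(B)\gtrsim\Delta_k(B,E)$, up to dimensional constants and possibly one angle factor). Since $\Delta_k(B,E)\ge e^{c_0 q/4}$ and $q\ge \ell/2$, while the angle loss is only $e^{-\eta\ell}$, this gives $\gamma$ of order $c_0/16$ after restricting to $\eta<c_0/32$. This is harmless, since the statement only becomes weaker for larger $\eta$.

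The contraction \eqref{eq:parameterized:selection:block:contraction} comes from \autoref{prop:tail:contracting:from:prefix} applied at time $t$ with $Q=\Id$ and $L=\chi N$. Off a small event this gives $-\log\sigma_{k+1}(A(t,\ell))\ge \frac{c_0}{2}\ell-\chi N-C_{d,k}\ge \frac{c_0}{8}\ell$, so $\lambda_{\rm ctr}=c_0/8$. Tameness \eqref{eq:parameterized:selection:block:tameness} follows exactly as in \eqref{eq:initial:window:product:norm}: apply \autoref{lemma:abstract:height:occupation} on the window $[t,t+N]$ together with \eqref{eq:height:controlled:operator:norm}, giving $K_{\rm sel}$ depending on $R$.

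Finally, $\ell$ itself must be chosen with $\ell\in[N/2,N]$ and $H_{t+\ell}<R$. By \autoref{lemma:abstract:margulis:return:tails} applied after time $t+N/2$, combined with the drift \eqref{eq:abstract:margulis:height:drift} from $H_t\le R$, such a return time exists off probability $Ce^{-c N}$. All of the estimates above hold simultaneously for every $q$ in the window, so any such return time works. Intersecting the good events gives the claimed probability, and the stopping-time version follows because every ingredient has one.

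I expect the main obstacle to be the cushion estimate \eqref{eq:parameterized:selection:cushion}. This requires the precise relation between the relative gap $\Delta_k(B,E)$, the true gap $\Delta_k(B)$, and the angle $\angle(E,\ker_{d-k}B)$, and checking that the losses are bounded by a power of the angle rather than by $\|B\|\|B^{-1}\|$. If \autoref{lemma:relative:gap:to:angle} only gives an inequality with the wrong sign, I would instead argue directly: bound $\sigma_k(B)$ below by the $k$-volume growth of $E$ divided by the growth of a $(k-1)$-subplane, and bound $\sigma_{k+1}(B)$ above via the quotient $B|_{V/E}$. Both comparisons involve only the angle between $E$ and $\ker_{d-k}B$.
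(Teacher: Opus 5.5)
Your proposal is correct and follows essentially the same route as the paper. The paper likewise applies \autoref{prop:initial:transverse:gap}, then the tail gap estimate after the stopping time $t+p_\ast$ feeding \autoref{lemma:kernel:stability:continued:gap}, then tail contraction with identity prefix, height occupation for tameness, and a return time in $[N/2,N]$. The step you flagged as the main obstacle is immediate and loses no angle factor at all: \eqref{eq:relative:vs:absolute:gap}, which follows from min--max and \autoref{lemma:bar:B:bound} (not \autoref{lemma:relative:gap:to:angle}), gives exactly $\Delta_k(B)\ge\Delta_k(B,E)$, and this is what the paper uses. Also take $2A\tau$ strictly below $\eta/2$, so that the factor $\tfrac12$ in the persisted angle bound is absorbed when $\ell$ is close to $N/2$.
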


\begin{proof}
For $q\geq 0$, write
\begin{displaymath}
A(t,q):=A_{t+q-1}\cdots A_t:V_t\to V_{t+q},
\qquad
A(t,0)=\Id_{V_t}.
\end{displaymath}
Throughout the proof we condition on $\mathcal F_t$.

Let $a>0$ and $A<\infty$ be the constants from
\autoref{prop:initial:transverse:gap}.  Put
\begin{displaymath}
\lambda:=\frac{c_0}{4}.
\end{displaymath}
Fix $\eta>0$.  Choose $\chi>0$ and $\tau>0$ so small that
\begin{displaymath}
\chi<\frac a3,\qquad
\tau<\frac14,\qquad
4A\tau<\frac{\eta}{2},
\end{displaymath}
and
\begin{displaymath}
4\tau\left(\lambda+A\right)<\frac{\lambda}{4}.
\end{displaymath}
For $N$ large, set
\begin{displaymath}
p:=\lfloor \tau N\rfloor .
\end{displaymath}

Apply \autoref{prop:initial:transverse:gap} with starting time $t$, plane
$E$, and length $p$.  Outside an event of conditional probability at most
$Ce^{-c_1\sqrt p}$, there exists an integer $p_\ast$ with
\begin{displaymath}
p\leq p_\ast\leq 2p
\end{displaymath}
such that $H_{t+p_\ast} < R$ and
\begin{equation}
\label{eq:initial:cushion:lower}
\Delta_k(A(t,p_\ast),E)\,
\geq e^{ap_\ast},
\end{equation}
\begin{equation}
\label{eq:initial:angle:lower}
\angle\bigl(E,\ker_{d-k}A(t,p_\ast)\bigr)
\geq e^{-Ap_\ast}.
\end{equation}
On this event, choose $p_\ast$ to be the first integer in $[p,2p]$ satisfying
these conclusions.  (If no such integer exists, set $p_\ast = 2p$).
Then $p_\ast$ is a bounded stopping time.

Set
\begin{displaymath}
Q:=A(t,p_\ast),
\qquad
K_{p_\ast}:=\ker_{d-k}A(t,p_\ast),
\qquad
\delta_{p_\ast}:=\angle(E,K_{p_\ast}).
\end{displaymath}

We next apply the tail gap estimate after the stopping time $t+p_\ast$.
By \autoref{prop:tail:gap:from:prefix}, with prefix
\begin{displaymath}
Q=A(t,p_\ast):V_t\to V_{t+p_\ast}
\end{displaymath}
and loss $ap_\ast/4$, outside an additional exceptional event of conditional
probability at most $Ce^{-c_1\sqrt p}$, one has
\begin{equation}
\label{eq:selection:continuation:gap}
\log\Delta_k(A(t,p_\ast+q),E)-\log\Delta_k(A(t,p_\ast),E)
\geq
\lambda q-\frac a3p_\ast
\end{equation}
for every $q\geq 0$.  Here we have increased $N_0$ so that the additive
constant $C_{d,k}$ in \autoref{prop:tail:gap:from:prefix} is absorbed by
$ap_\ast/12$.

We now control the motion of the slow kernel during the continuation.  Apply
\autoref{lemma:kernel:stability:continued:gap} after the stopping time
$t+p_\ast$, with $T_q=A(t,p_\ast+q)$, $p_0=p_\ast$, $m=N$ and
$\Lambda=\lambda$. The condition
(\ref{eq:continued:gap:hypothesis})  holds for sufficiently
large $N_0$ since $\chi < a/3$ and by
\eqref{eq:selection:continuation:gap}  and
\eqref{eq:initial:cushion:lower},
\begin{displaymath}
\Delta_k(A(t,p_\ast+q),E)\geq e^{\lambda q - (a/3) p_\ast}
\Delta_k(A(t,p_\ast),E) \ge  e^{\lambda q+(2a/3)p_\ast}
\geq C e^{\lambda q+\chi p_\ast}.
\end{displaymath}
We get, outside an exceptional event
of conditional probability at most
$C e^{-c_1\chi p}$, for every $0\leq q\leq N$,
\begin{equation}
\label{eq:selection:angle:persist}
\angle\bigl(E,\ker_{d-k}A(t,p_\ast+q)\bigr)
\geq
\frac12\delta_{p_\ast}.
\end{equation}

We also need the block contraction estimate.  Apply
\autoref{prop:tail:contracting:from:prefix} with starting time $t$, prefix
$\Id_{V_t}:V_t\to V_t$, and loss $c_0N/16$.  Outside an exceptional event of
conditional probability at most $Ce^{-c_1\sqrt N}$, one has
\begin{equation}
\label{eq:selection:tail:contraction}
-\log\sigma_{k+1}(A(t,q))
\geq
\frac{c_0}{2}q-\frac{c_0}{16}N-C_{d,k}
\end{equation}
for every $q\geq 0$.

Apply \autoref{lemma:abstract:height:occupation}, with exponent $1$, to the
shifted process after time $t$ and the horizon $N$.  There is a constant
$D<\infty$, depending only on $R$ and the height-control constants, such
that, outside an event of conditional probability at most
$Ce^{-c_1\sqrt N}$,
\begin{equation}
\label{eq:selection:height:occupation}
\sum_{i=t}^{t+N-1}(1+\log H_i)\leq DN.
\end{equation}
On this event, \eqref{eq:height:controlled:operator:norm} gives, for every
$0\leq r\leq N$,
\begin{displaymath}
\begin{aligned}
\log\max\left\{
\|A(t,r)\|,\|A(t,r)^{-1}\|
\right\}
&\leq
\sum_{i=t}^{t+r-1}
\log\left(\|A_i\|+\|A_i^{-1}\|\right) \\
&\leq
B_{\rm lin}DN.
\end{aligned}
\end{displaymath}
Set
\begin{displaymath}
K_{\rm sel}:=2B_{\rm lin}D.
\end{displaymath}
Since every selected length satisfies $\ell\geq N/2$, it follows that
\begin{equation}
\label{eq:selection:partial:product:tameness}
\max_{0\leq r\leq\ell}
\log\max\left\{
\|A(t,r)\|,\|A(t,r)^{-1}\|
\right\}
\leq K_{\rm sel}\ell.
\end{equation}

It remains to choose the endpoint.  By \autoref{lemma:abstract:margulis:return:tails},
applied to the shifted process after time $t+\lceil N/2\rceil$, we have
\begin{displaymath}
\mathbb P\left(
H_{t+q} \ge R
\text{ for every }
\left\lceil\frac N2\right\rceil\leq q\leq N
\,\middle|\,
\mathcal F_{t+\lceil N/2\rceil}
\right)
\leq
C H_{t+\lceil N/2\rceil}e^{-c_1N/2}.
\end{displaymath}
Taking conditional expectation with respect to $\mathcal F_t$ and using
\eqref{eq:abstract:margulis:height:drift}, together with $H_t\leq R$, gives
\begin{displaymath}
\mathbb P\left(
H_{t+q} \ge R
\text{ for every }
\left\lceil\frac N2\right\rceil\leq q\leq N
\,\middle|\,
\mathcal F_t
\right)
\leq
Ce^{-c_1N}.
\end{displaymath}
Thus, outside this exceptional event, there exists
\begin{displaymath}
\left\lceil\frac N2\right\rceil\leq \ell\leq N
\end{displaymath}
such that $H_{t+\ell} < R$.  Fix such an $\ell$, and put
\begin{displaymath}
q_\ell:=\ell-p_\ast.
\end{displaymath}
Since $\tau<1/4$, after increasing $N_0$ we have
\begin{displaymath}
0\leq q_\ell\leq N.
\end{displaymath}

We verify the three conclusions at this length $\ell$.  First, by
\eqref{eq:selection:angle:persist} and \eqref{eq:initial:angle:lower},
\begin{displaymath}
\angle\bigl(E,\ker_{d-k}A(t,\ell)\bigr)
\geq
\frac12 e^{-Ap_\ast}.
\end{displaymath}
Since $p_\ast\leq 2p\leq 2\tau N\leq 4\tau\ell$, our choice of $\tau$ and a
further increase of $N_0$ give
\begin{displaymath}
\angle\bigl(E,\ker_{d-k}A(t,\ell)\bigr)
\geq
e^{-\eta\ell}.
\end{displaymath}
This proves \eqref{eq:parameterized:selection:angle}.

Second, using \eqref{eq:selection:angle:persist}
and
\eqref{eq:selection:continuation:gap} with $q=q_\ell$, we obtain
\begin{displaymath}
\begin{aligned}
\Delta_k(A(t,\ell),E)\angle\bigl(E,\ker_{d-k}A(t,\ell)\bigr)
&\geq \tfrac12\Delta_k(A(t,p_\ast),E)\delta_{p_\ast}
\exp\left(\lambda q_\ell-\tfrac a3p_\ast\right) \\
&\geq \tfrac12\exp\left(ap_\ast-Ap_\ast+\lambda q_\ell-\tfrac a3p_\ast\right) \\
&=\tfrac12\exp\left(\lambda\ell+
\left(\tfrac{2a}{3}-\lambda-A\right)p_\ast\right).
\end{aligned}
\end{displaymath}
(In the second inequality we used \eqref{eq:initial:cushion:lower} and
\eqref{eq:initial:angle:lower}; the equality uses $q_\ell=\ell-p_\ast$). 
Since $p_\ast\leq 4\tau\ell$, the choice of $\tau$ implies, after increasing
$N_0$, that (using (\ref{eq:relative:vs:absolute:gap}))
\begin{multline*}
\Delta_k(A(t,\ell))\angle\bigl(E,\ker_{d-k}A(t,\ell)\bigr) \ge
\Delta_k(A(t,\ell),E)\angle\bigl(E,\ker_{d-k}A(t,\ell)\bigr)  
\\  \geq \tfrac12\exp\left(\lambda\ell+
\left(\tfrac{2a}{3}-\lambda-A\right)p_\ast\right).
\end{multline*}
We claim that for $N_0$ sufficiently  large, 
\begin{displaymath}
\tfrac12\exp\left(\lambda\ell+
\left(\tfrac{2a}{3}-\lambda-A\right)p_\ast\right) \ge e^{(\lambda/2)
\ell}.  
\end{displaymath}
Indeed, suppose $N_0$ is large enough so that for  $N  \ge  N_0$,
$\lambda N/8 \ge \log
2$. Then, using $a>0$, $p_\ast \le 4\tau \ell$,
$\ell \ge N/2$ and the chosen condition
$4\tau(\lambda+A)<\lambda/4$:
\begin{displaymath}
\begin{aligned}
\lambda\ell+\left(\tfrac{2a}{3}-\lambda-A\right)p_\ast-\log2
&\geq\lambda\ell-(\lambda+A)p_\ast-\log2 \\
&\geq\bigl(\lambda-4\tau(\lambda+A)\bigr)\ell-\log2 \\
&\geq\tfrac{3\lambda}{4}\ell-\log2 \\
&\geq\tfrac\lambda2\ell.
\end{aligned}
\end{displaymath}
Thus, \eqref{eq:parameterized:selection:cushion} holds with $\gamma =
\frac{\lambda}{2}$. 

Third, applying \eqref{eq:selection:tail:contraction} at $q=\ell$ gives
\begin{displaymath}
-\log\sigma_{k+1}(A(t,\ell))
\geq
\frac{c_0}{2}\ell-\frac{c_0}{16}N-C_{d,k}.
\end{displaymath}
Since $\ell\geq N/2$, after increasing $N_0$ this gives
\begin{displaymath}
-\log\sigma_{k+1}(A(t,\ell))
\geq
\frac{c_0}{4}\ell.
\end{displaymath}
Thus we may take
\begin{displaymath}
\lambda_{\rm ctr}:=\frac{c_0}{4}.
\end{displaymath}
This proves \eqref{eq:parameterized:selection:block:contraction}.
The estimate \eqref{eq:parameterized:selection:block:tameness} is
\eqref{eq:selection:partial:product:tameness}.

The total exceptional probability is bounded by
\begin{displaymath}
Ce^{-c_1\sqrt p}
+
Ce^{-c_1\sqrt N}
+
Ce^{-c_2\chi p}
+
Ce^{-c_1N},
\end{displaymath}
where the second term also absorbs the exceptional event in
\eqref{eq:selection:height:occupation}.
Since $p=\lfloor\tau N\rfloor$, this is at most
\begin{displaymath}
C_\eta e^{-b_\eta\sqrt N}
\end{displaymath}
after changing constants.  The stopping-time version follows from the
stopping-time versions of the tail estimates, the return estimate, and the
kernel-stability estimate.
\end{proof}

\subsubsection{Defining the good events}
\label{sssec:good:event}
\label{sssec:def:event}

Fix $R_{\rm init}<\infty$ sufficiently large, and fix
$\epsilon_{\rm init}>0$.  Let
\begin{displaymath}
\gamma_0>0,
\qquad
\lambda_0>0
\end{displaymath}
be the constants from
\autoref{prop:parameterized:one:block:selection}.  Set
\begin{displaymath}
\gamma:=\frac{\gamma_0}{2},
\qquad
\lambda_{\rm sel}:=\frac{\lambda_0}{2}.
\end{displaymath}

Let
\begin{displaymath}
M_{\rm buf}=M_{\rm buf}(d,k)
\end{displaymath}
be the constant from \autoref{lemma:create:terminal:buffer}, and let
\begin{displaymath}
C_{\rm reg}^\ast=C_{\rm reg}^\ast(d,k)\geq1
\end{displaymath}
be larger than all regularity thresholds occurring in
\autoref{ssec:regular:sequences} and
\autoref{prop:hyperbolicity:admissible}.

Choose $\eta_0>0$ sufficiently small that, for every
$0<\eta<\eta_0$,
\begin{equation}
\label{eq:good:event:eta:gap}
\frac{\gamma}{2}-2\eta
\geq
4M_{\rm buf}\eta
\end{equation}
and
\begin{equation}
\label{eq:good:event:eta:contraction}
\frac{\lambda_{\rm sel}}{2}
\geq
4M_{\rm buf}\eta.
\end{equation}

Fix $0<\eta<\eta_0$.  Apply
\autoref{prop:parameterized:one:block:selection} with parameter $\eta/2$ and
height level $R_{\rm init}$.  Let
\begin{displaymath}
K_\eta<\infty,
\qquad
b_\eta>0,
\qquad
K_{\rm sel}<\infty,
\qquad
N_0=N_0(\eta,R_{\rm init})
\end{displaymath}
be the resulting constants, and set
\begin{equation}
\label{eq:good:event:block:tameness:constant}
K_{\rm blk}:=2K_{\rm sel}.
\end{equation}

We can increase $N_0 = N_0(\eta,R_{\rm init}, \epsilon_{\rm init})$ to be sufficiently large so
that for $N > N_0$, 
\begin{equation}
\label{eq:good:event:N:probability}
K_\eta e^{-b_\eta\sqrt N}\leq\frac12
\end{equation}
and
\begin{equation}
\label{eq:good:event:N:initial:gap}
\frac{2}{\pi}e^{\gamma N}>e^{\epsilon_{\rm init}}.
\end{equation}
Set
\begin{equation}
\label{eq:good:event:delta:C}
\delta_\eta:=e^{-\eta N},
\qquad
C_\eta:=e^{(\gamma/2-2\eta)N},
\qquad
L_\eta:=\frac{\lambda_{\rm sel}}{2}N.
\end{equation}
We further require that $\eta_0$ is small enough and $N_0 = N_0(\eta,R_{\rm
  init}, \epsilon_{\rm init})$ is large enough so that for $0 < \eta < \eta_0$ and $N >
N_0$, 
\begin{equation}
\label{eq:good:event:C:large}
\frac12C_\eta\geq C_{\rm reg}^\ast,
\end{equation}
\begin{equation}
\label{eq:good:event:C:dominates:delta}
\log C_\eta\geq
4M_{\rm buf}|\log\delta_\eta|,
\end{equation}
\begin{equation}
\label{eq:good:event:L:dominates:delta}
L_\eta\geq
2M_{\rm buf}(|\log\delta_\eta|+1),
\end{equation}
\begin{equation}
\label{eq:good:event:deterministic:effective}
\log C_\eta-\log2\geq\frac{\gamma}{128}N,
\end{equation}
\begin{equation}
\label{eq:good:event:deterministic:contraction}
L_\eta+\log\delta_\eta-\log2
\geq
\frac{\lambda_{\rm sel}}{8}N,
\end{equation}
and
\begin{equation}
\label{eq:N:lower:bound:in:terms:of:eta}
N \ge \eta^{-1}.  
\end{equation}
\begin{equation}
\label{eq:pi:2:gamma:N}
\frac{2}{\pi}e^{\gamma N}\ge e^{\gamma N/2}.
\end{equation}
We now fix $N = N(\eta)$ to be an odd integer greater than $N_0(\eta,R_{\rm init}, \epsilon_{\rm init})$. 
Let
\begin{displaymath}
\mathcal O\subset\Gr_k(\bR^d)\times GL(d,\bR)
\end{displaymath}
be the set of pairs $(E,Q)$ satisfying
\begin{equation}
\label{eq:good:event:open:next:gap}
\Delta_k(Q)>e^{\epsilon_{\rm init}},
\end{equation}
\begin{equation}
\label{eq:good:event:open:angle}
\angle\bigl(E,\ker_{d-k}(Q)\bigr)>e^{-\eta N},
\end{equation}
\begin{equation}
\label{eq:good:event:open:cushion}
\Delta_k(Q)
\angle\bigl(E,\ker_{d-k}(Q)\bigr)>e^{\gamma N},
\end{equation}
and
\begin{equation}
\label{eq:good:event:open:contraction}
-\log\sigma_{k+1}(Q)>\lambda_{\rm sel} N.
\end{equation}
The set $\mathcal O$ is open and invariant under orthogonal
changes of coordinates. It depend on the constants of
\S\ref{ssec:abstract:good:event}, $\eta$, $R_{\rm init}$ and $\epsilon_{\rm
  init}$. (The constant $N$ is now fixed in terms of the other
constants).

\begin{lemma}[Positive probability of a good block]
\label{lemma:good:event:positive:probability}
Let $t\geq1$ be an integer-valued stopping time, and let
\begin{displaymath}
E \in  \Gr_k(V_t)
\end{displaymath}
be an $\mathcal F_t$-measurable $k$-plane.  Almost surely on the
event  $H_t \le R_{\rm init}$, 
one has
\begin{multline}
\label{eq:good:event:one:block}
\mathbb P\Biggl(
\exists\,\ell\in\N,\ \frac N2<\ell\leq N
\text{ such that }\;
H_{t+\ell}<R_{\rm init},
(P,A(t,\ell))\in\mathcal O,\\
\max_{0\leq r\leq\ell}
\log\max\left\{
\|A(t,r)\|,\|A(t,r)^{-1}\|
\right\}
<K_{\rm blk}\ell
\ \Bigm|\ \mathcal F_t
\Biggr)
\geq\frac12 .
\end{multline}
\end{lemma}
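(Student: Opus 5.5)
This is a direct application of \autoref{prop:parameterized:one:block:selection}, run with parameter $\eta/2$ and height level $R_{\rm init}$ at the stopping time $t$ and the plane $E$. In the statement, $P$ denotes this plane $E$. Since $N>N_0(\eta,R_{\rm init},\epsilon_{\rm init})$, the proposition applies. On the event $H_t\le R_{\rm init}$ it gives, with conditional probability at least $1-K_\eta e^{-b_\eta\sqrt N}$, a length $N/2\le\ell\le N$ with the following properties. First, $H_{t+\ell}<R_{\rm init}$. Second, $B=A(t,\ell)$ satisfies
\[
\angle(E,\ker_{d-k}B)\ge e^{-\eta\ell/2},\qquad \Delta_k(B)\angle(E,\ker_{d-k}B)\ge e^{\gamma_0\ell},\qquad -\log\sigma_{k+1}(B)\ge\lambda_0\ell.
\]
Third, the partial products satisfy the tameness bound with constant $K_{\rm sel}$. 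By \eqref{eq:good:event:N:probability}, this probability is at least $1/2$. Two small points remain before checking $\mathcal O$. Because $N$ was chosen odd, $\ell\ge N/2$ is the same as $\ell>N/2$. The strict inequality in the tameness bound follows from $K_{\rm sel}\ell<2K_{\rm sel}\ell=K_{\rm blk}\ell$, since $\ell\ge1$.

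It remains to check that $(E,B)\in\mathcal O$, one condition at a time, using $N/2<\ell\le N$.

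\emph{Angle condition} \eqref{eq:good:event:open:angle}: we have $e^{-\eta\ell/2}\ge e^{-\eta N/2}>e^{-\eta N}$.

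\emph{Cushion condition} \eqref{eq:good:event:open:cushion}: we have $e^{\gamma_0\ell}>e^{\gamma_0 N/2}=e^{\gamma N}$, since $\gamma=\gamma_0/2$.

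\emph{Contraction condition} \eqref{eq:good:event:open:contraction}: we have $\lambda_0\ell>\lambda_0N/2=\lambda_{\rm sel}N$.

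\emph{Gap condition} \eqref{eq:good:event:open:next:gap}: angles are at most $\pi/2$, so $\Delta_k(B)\ge\frac{2}{\pi}\Delta_k(B)\angle(E,\ker_{d-k}B)>\frac{2}{\pi}e^{\gamma N}>e^{\epsilon_{\rm init}}$ by \eqref{eq:good:event:N:initial:gap}.

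The main obstacle is bookkeeping rather than analysis. The proposition provides a non-strict bound $\ell\ge N/2$ and non-strict inequalities, while $\mathcal O$ and the event in the statement use strict ones; the parity of $N$ and the factor-of-two slack in $\gamma,\lambda_{\rm sel},K_{\rm blk}$ and $\eta/2$ absorb this. We also need the probability bound to hold at a stopping time. This is supplied by the stopping-time version of \autoref{prop:parameterized:one:block:selection}, and it gives the estimate almost surely on $\{H_t\le R_{\rm init}\}$.
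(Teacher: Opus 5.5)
Your proof is correct and follows the paper's argument essentially verbatim. You apply \autoref{prop:parameterized:one:block:selection} at the stopping time $t$ with parameter $\eta/2$, height level $R_{\rm init}$ and target length $N$, use the oddness of $N$ to get $N/2<\ell\le N$, and check the four defining conditions of $\mathcal O$ and the strict tameness bound via the factor-of-two slack in $\gamma$, $\lambda_{\rm sel}$, $K_{\rm blk}$ together with \eqref{eq:good:event:N:probability} and \eqref{eq:good:event:N:initial:gap}. You also read the $P$ in the statement as the input plane $E$, which matches the paper's conclusion $(E,Q)\in\mathcal O$.
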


\begin{proof}
Apply \autoref{prop:parameterized:one:block:selection} at the stopping time
$t$, with height level $R_{\rm init}$, input plane $E$, parameter $\eta/2$,
and target length $N$.  Since $N$ is odd, with conditional probability at
least
\begin{displaymath}
1-K_\eta e^{-b_\eta\sqrt N},
\end{displaymath}
there is an integer $N/2<\ell\leq N$ such that
\begin{displaymath}
H_{t+\ell}<R_{\rm init},
\end{displaymath}
and, writing $Q:=A(t,\ell)$, the estimates
\eqref{eq:parameterized:selection:angle},
\eqref{eq:parameterized:selection:cushion},
\eqref{eq:parameterized:selection:block:contraction}, and
\eqref{eq:parameterized:selection:block:tameness} hold.

Using
\begin{displaymath}
\gamma=\frac{\gamma_0}{2},
\qquad
\lambda_{\rm sel}=\frac{\lambda_0}{2},
\qquad
K_{\rm blk}=2K_{\rm sel},
\end{displaymath}
together with $N/2<\ell\leq N$, these estimates give
\begin{displaymath}
\Delta_k(Q)
\geq
\frac{2}{\pi}e^{\gamma_0\ell}
>
\frac{2}{\pi}e^{\gamma N}
> e^{\epsilon_{\rm init}},
\end{displaymath}
\begin{displaymath}
\angle\bigl(E,\ker_{d-k}(Q)\bigr)>e^{-\eta N},
\end{displaymath}
\begin{displaymath}
\Delta_k(Q)
\angle\bigl(E,\ker_{d-k}(Q)\bigr)>e^{\gamma N},
\end{displaymath}
\begin{displaymath}
-\log\sigma_{k+1}(Q)>\lambda_{\rm sel}N,
\end{displaymath}
and
\begin{displaymath}
\max_{0\leq r\leq\ell}
\log\max\left\{
\|A(t,r)\|,\|A(t,r)^{-1}\|
\right\}
<K_{\rm blk}\ell.
\end{displaymath}
Here the first inequality in the first display
uses that Grassmannian angles are at most
$\pi/2$, and the last inequality in that line follows from
(\ref{eq:good:event:N:initial:gap}).

Thus $(E,Q)\in\mathcal O$, and all the conditions in
\eqref{eq:good:event:one:block} hold.  The conclusion follows from
\eqref{eq:good:event:N:probability}.
\end{proof}

\subsection{Extending uniformly regular prefixes to admissible sequences}
\label{ssec:regular:to:admissible}

\begin{definition}[Sequence with $(C,\epsilon,\lambda,\lambda_{\rm
    ctr},K_{\rm blk},N_{\rm adm})$-admissible  tail]
\label{def:tail:admissible}
Let
\begin{displaymath}
A_i:V_i\to V_{i+1},
\qquad i\geq0,
\end{displaymath}
be a sequence of linear isomorphisms between $d$-dimensional Euclidean
spaces.  We say that the sequence $A_i$ has a
$(C,\epsilon,\lambda,\lambda_{\rm ctr},K_{\rm blk},N_{\rm
  adm})$-admissible tail in dimension
$k$ if there exists an increasing sequence of times
\begin{displaymath}
0=t_0 <t_1 <t_2<t_3<\cdots
\end{displaymath}
such that, writing for $j \ge 0$,
$\ell_j:=t_{j+1}-t_j$,
and
\begin{displaymath}
B_j:=A(t_j,\ell_j) :V_{t_j}\to V_{t_{j+1}},
\end{displaymath}
the following conditions hold.

Assume each $B_j$ has a strict $k$-gap.  For $j\geq1$, set
\begin{displaymath}
\theta_j:=
\angle\bigl(\im_k(B_{j-1}),\ker_{d-k}(B_j)\bigr).
\end{displaymath}

\begin{enumerate}
\item The block lengths satisfy 
\begin{equation}
\label{eq:admissible:length}
\tfrac{N_{\rm adm}}{2}  \le \ell_j\leq\max\{N_{\rm adm},\epsilon t_j\}
\qquad
\text{for every }j\geq1.
\end{equation}
\item Every block is $K_{\rm blk}$-tame:
\begin{equation}
\label{eq:admissible:block:tameness}
\max_{0\leq r\leq\ell_j}
\log\max\left\{
\|A(t_j,r)\|,\|A(t_j,r)^{-1}\|
\right\}
\leq K_{\rm blk}\ell_j
\qquad
\text{for every }j\geq1.
\end{equation}

\item The block sequence is $C$-regular:
\begin{equation}
\label{eq:admissible:regular}
\Delta_k(B_j)\theta_j\theta_{j+1}\geq C
\qquad
\text{for every }j\geq 1.
\end{equation}

\item The regularity is effective at the scale of the block length:
\begin{equation}
\label{eq:admissible:effective}
\log\bigl(\Delta_k(B_j)\theta_j\theta_{j+1}\bigr)
\geq\lambda\ell_j
\qquad
\text{for every }j\geq 1.
\end{equation}

\item The stable contraction is effective after paying the outgoing angle:
\begin{equation}
\label{eq:admissible:contraction}
-\log\sigma_{k+1}(B_j)+\log\theta_{j+1}
\geq\lambda_{\rm ctr}\ell_j
\qquad
\text{for every }j\geq 1.
\end{equation}
\end{enumerate}
\end{definition}

\begin{definition}[$(C,\epsilon,\lambda,\lambda_{\rm
    ctr},K_{\rm blk},N_{\rm adm})$-admissible  sequence]
\label{def:admissible}
We say that the sequence $A_i$ is
$(C,\epsilon,\lambda,\lambda_{\rm ctr},K_{\rm blk},N_{\rm
  adm})$-admissible in dimension $k$ if it has a
$(C,\epsilon,\lambda,\lambda_{\rm ctr},K_{\rm blk},N_{\rm adm})$-admissible tail and we can take $t_1 = 1$.   
\end{definition}

\begin{lemma}[Good prefixes admit admissible extensions]
\label{lemma:good:prefix:admissible:extension}
There exist constants
$\lambda_{\rm adm}>0$,
$\lambda_{\rm ctr}>0$, $\eta_1 > 0$,
depending only on the constants in \S\ref{ssec:abstract:good:event}, 
and for every $0<\eta<\eta_1$ there exist constants
\begin{displaymath}
C_{\rm adm}\geq C_{\rm reg}^\ast,
\qquad
\epsilon_{\rm adm}=\epsilon_{\rm adm}(\eta)>0,
\qquad
\beta>0,
\end{displaymath}
depending on the constants in \S\ref{ssec:abstract:good:event}, $R_{\rm
  init}$, $\epsilon_{\rm init}$ and $\eta$, with 
$\epsilon_{\rm adm}(\eta)\to 0$
as $\eta\to 0$,
such that the following holds. Let $\epsilon_{\rm init}$ and
$N = N(\eta,R_{\rm init},\epsilon_{\rm init})$ be as in \S\ref{sssec:def:event}.
Let $C_\eta$ be as in (\ref{eq:good:event:delta:C}). 
Let
\begin{displaymath}
0=s_0<s_1=1<\cdots<s_m
\end{displaymath}
be integer-valued stopping times.  Suppose that $m$ is sufficiently
large and
\begin{displaymath}
\mathcal E^{(m)}(s_0,\ldots,s_m)
\end{displaymath}
occurs, where $\cE^{(m)}(s_0, \ldots, s_m)$ be as in
Proposition~\ref{prop:good:block:events}(b). 
Then, conditionally on $\mathcal F_{s_m}$, an infinite random
extension of $A_0,A_1,\dots, A_{s_m-1}$
is, with probability at least
$1-e^{-\beta\sqrt m}$,
a $(\tfrac{1}{2}C_\eta,\epsilon_{\rm adm},
\lambda_{\rm adm},\lambda_{\rm ctr},K_{\rm blk},N)$-admissible sequence.
\end{lemma}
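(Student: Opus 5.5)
The plan is to build the infinite admissible block sequence by concatenating two stages. The first stage is the given good prefix, whose blocks have fixed size. The second is a random continuation whose block lengths are allowed to grow slowly, so that failure probabilities become summable.

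\textbf{Prefix.} On $\mathcal E^{(m)}(s_0,\ldots,s_m)$ take $t_j:=s_j$ for $j\le m$ and $B_j=A(s_j,s_{j+1}-s_j)$. Adaptedness to $\mathcal O$ gives for each $1\le j<m$:
\begin{itemize}
\item $\theta_j>e^{-\eta N}=\delta_\eta$ by \eqref{eq:good:event:open:angle};
\item $\Delta_k(B_j)\theta_j>e^{\gamma N}$ by \eqref{eq:good:event:open:cushion};
\item $-\log\sigma_{k+1}(B_j)>\lambda_{\rm sel}N$ by \eqref{eq:good:event:open:contraction}.
\end{itemize}
Combining these with $\theta_{j+1}\ge\delta_\eta$ gives $\Delta_k(B_j)\theta_j\theta_{j+1}\ge e^{(\gamma-\eta)N}\ge C_\eta$. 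Hence the prefix blocks form a $(C_\eta,\delta_\eta)$-uniformly regular sequence, with effective regularity \eqref{eq:admissible:effective} for $\lambda\approx\gamma/2$. Effective contraction \eqref{eq:admissible:contraction} follows from \eqref{eq:good:event:deterministic:contraction}. Tameness is part of height control with $K_{\rm blk}$.

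\textbf{Continuation.} After time $s_m$, generate blocks inductively. At each stopping time $t_j$ with $H_{t_j}<R_{\rm init}$, feed $E=\im_k(B_{j-1})$ into \autoref{prop:parameterized:one:block:selection} with target length $N_j:=\max\{N,\lfloor\epsilon' t_j\rfloor\}$, for a small $\epsilon'$ depending on $\eta$. This produces a block with:
\begin{itemize}
\item angle $\ge e^{-\eta\ell_j/2}$;
\item cushion $\ge e^{\gamma_0\ell_j}$;
\item contraction $\ge\lambda_0\ell_j$;
\item tameness $K_{\rm sel}\ell_j$;
\item return $H_{t_{j+1}}<R_{\rm init}$.
\end{itemize}
The failure probability is $\le C_\eta e^{-b_\eta\sqrt{N_j}}$.

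Since $t_j\ge s_m\gtrsim mN/2$, we have $N_j\gtrsim\epsilon' mN$ once $t_j$ is large. The sum $\sum_j e^{-b\sqrt{N_j}}$ over the continuation is then bounded by $e^{-\beta\sqrt m}$, because the $N_j$ grow at least linearly in $j$ after the prefix. The length condition \eqref{eq:admissible:length} holds with $\epsilon_{\rm adm}:=\epsilon'\to0$ as $\eta\to0$.

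The regularity checks use the incoming and outgoing angles. The incoming angle of block $j$ is given by the selection. The outgoing angle $\theta_{j+1}\ge e^{-\eta\ell_{j+1}/2}$ involves the next block length, but $\ell_{j+1}\le(1+\epsilon')\ell_j+N$, so it costs only $e^{-\eta\ell_j}$ up to constants. This yields \eqref{eq:admissible:regular}, \eqref{eq:admissible:effective} with $\lambda_{\rm adm}\approx\gamma_0/2$, and \eqref{eq:admissible:contraction} with $\lambda_{\rm ctr}\approx\lambda_0/2$, once $\eta<\eta_1$ is small relative to $\gamma_0,\lambda_0$.

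\textbf{Junction.} The remaining check is the junction between prefix block $m-1$ and the first continuation block. Here $\theta_m$ comes from selection with $E=\im_k(B_{m-1})$, so the same estimate applies. If greater robustness is needed, first use \autoref{lemma:create:terminal:buffer} on the last $r$ prefix blocks. Its hypotheses are exactly \eqref{eq:good:event:C:dominates:delta} and \eqref{eq:good:event:L:dominates:delta}. This recoarsens them into a single block with gap $\ge C_\eta^{r/2}$ and angle $\ge\delta_\eta/2$ to the preceding image, which absorbs the loss of angle at the junction.

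\textbf{Main obstacle.} The hardest step is the probabilistic bookkeeping. One must make the block lengths grow fast enough, like $\epsilon' t_j$, to get the total failure bound $e^{-\beta\sqrt m}$. At the same time, consecutive lengths must be nearly equal so that the cross term $\theta_{j+1}$, governed by the next block length, does not destroy effectivity at scale $\ell_j$. I would handle this by making $N_j$ a deterministic function of $t_j$, giving the ratio bound $\ell_{j+1}/\ell_j\le 1+O(\epsilon')+N/\ell_j$. The failures would then be controlled through a union bound over the stopping times, conditioning at each $\mathcal F_{t_j}$ via the stopping-time versions of the selection proposition.
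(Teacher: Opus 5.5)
Your architecture matches the paper's: a $(C_\eta,\delta_\eta)$-uniformly regular $\mathcal O$-adapted prefix, a terminal buffer from \autoref{lemma:create:terminal:buffer}, and a random continuation. The continuation is produced by \autoref{prop:parameterized:one:block:selection} with target length proportional to the current time (the paper uses $\lfloor\kappa u_q\rfloor$), and its failures are summed via the stopping-time versions. However, your junction step is false as stated. The buffer is not an optional robustness device; it is the crux of the lemma.

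\textbf{Why the direct junction fails.} The first continuation block must have length $\ell_m\gtrsim\epsilon' s_m\gtrsim\epsilon' mN$. This is forced: its failure probability $C_\eta e^{-b_\eta\sqrt{N_m}}$ must already be at most $e^{-\beta\sqrt m}$, so $\ell_m$ cannot be $O(N)$.
\begin{itemize}
\item Selection only guarantees $\theta_m=\angle(\im_k(B_{m-1}),\ker_{d-k}B_m)\geq e^{-\eta\ell_m/2}$, which may be as small as $e^{-c\,\eta\epsilon' mN}$.
\item The last prefix block has length at most $N$. By tameness, $\Delta_k(B_{m-1})\theta_{m-1}\leq\frac{\pi}{2}e^{2K_{\rm blk}N}$ and $-\log\sigma_{k+1}(B_{m-1})\leq K_{\rm blk}N$.
\item Hence none of \eqref{eq:admissible:regular}, \eqref{eq:admissible:effective}, \eqref{eq:admissible:contraction} can be verified for $j=m-1$ once $m\gg K_{\rm blk}/(\eta\epsilon')$. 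That is exactly the regime of the lemma.
\end{itemize}
Your bound $\ell_{j+1}\leq(1+\epsilon')\ell_j+N$ breaks down here, since the ratio $\ell_m/\ell_{m-1}$ is of order $\epsilon' m$. The bound is also wrong in the interior: selection only gives $\ell_j\in[N_j/2,N_j]$, so the correct bound is $\ell_{j+1}\leq 2(1+O(\epsilon'))\ell_j$. That interior error is harmless, because small $\eta$ absorbs the factor $2$.

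\textbf{What the buffer must accomplish.} The buffer does fix the junction, but only with quantitative content you have not supplied.
\begin{itemize}
\item You must aggregate the last $r=\lfloor\rho n\rfloor$ prefix blocks into $\widehat B$, with $\rho>0$ a \emph{fixed fraction}, so that $r$ grows with $m$.
\item The buffer's gap $\tfrac r2\log C_\eta\geq\tfrac{\rho}{4}\bigl(\tfrac{\gamma}{2}-2\eta\bigr)nN$ must dominate the outgoing-angle loss $\eta d_0'\leq 2\eta\epsilon' nN$ toward the first random block. This is needed in both the regularity and the contraction conditions. In the paper's notation these read $16\eta\kappa\leq\rho a_\eta$ and $32\eta\kappa\leq\rho\lambda_{\rm sel}$.
\item The buffer's own length $d_{\rm buf}\leq rN$ must satisfy $d_{\rm buf}\leq\epsilon_{\rm adm}t_a$, which forces $\rho\lesssim\epsilon_{\rm adm}$.
\item Effectivity \eqref{eq:admissible:effective} must be checked at the scale $d_{\rm buf}$, not $N$.
\item The block $B_{a-1}$ preceding the buffer must use the buffer angle $\geq\delta_\eta/2$ from \eqref{eq:buffer:concl:angle} as its outgoing angle.
\end{itemize}
The paper reconciles these constraints by taking $\kappa=\eta^2$, $\rho=\eta$ and $\epsilon_{\rm adm}=100(\rho+\kappa)$, which also gives $\epsilon_{\rm adm}(\eta)\to0$. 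Without choosing $r$ proportional to $m$ and checking these inequalities, admissibility of the concatenated sequence is not established.
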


\begin{proof}
Put
\begin{displaymath}
n:=m-1,
\qquad
d_j:=s_{j+1}-s_j,
\qquad
B_j:=A(s_j,d_j)
\qquad
(0\leq j\leq n).
\end{displaymath}

For $0<\eta<\eta_1$, put
\begin{equation}
\label{eq:good:prefix:kappa:rho:def}
\kappa:=\eta^2,
\qquad
\rho:=\eta,
\qquad
a_\eta:=\frac{\gamma}{2}-2\eta.
\end{equation}
Choose $\eta_1 < \eta_0$ sufficiently small such that for $\eta <
\eta_1$,  $\rho<1/4$,
$a_\eta\geq\gamma/4$, and
\begin{equation}
\label{eq:good:prefix:eta:random:regular}
\gamma-2(1+2\kappa)\eta\geq\frac{\gamma}{2},
\end{equation}
\begin{equation}
\label{eq:good:prefix:eta:random:contraction}
\lambda_{\rm sel}-2(1+2\kappa)\eta
\geq\frac{\lambda_{\rm sel}}{2},
\end{equation}
\begin{equation}
\label{eq:good:prefix:eta:buffer:regular}
16\eta\kappa\leq\rho a_\eta,
\end{equation}
and
\begin{equation}
\label{eq:good:prefix:eta:buffer:contraction}
32\eta\kappa\leq\rho\lambda_{\rm sel}.
\end{equation}

Let  $C_\eta$ be as in (\ref{eq:good:event:delta:C}), and
set
\begin{equation}
\label{eq:good:prefix:constants}
C_{\rm adm}:=\frac12C_\eta,
\qquad
\lambda_{\rm adm}:=\frac{\gamma}{128},
\qquad
\lambda_{\rm ctr}:=\frac{\lambda_{\rm sel}}{32},
\end{equation}
and
\begin{equation}
\label{eq:good:prefix:epsilon:def}
\epsilon_{\rm adm}:=100(\rho+\kappa).
\end{equation}
By \eqref{eq:good:event:C:large},
\begin{displaymath}
C_{\rm adm}\geq C_{\rm reg}^\ast.
\end{displaymath}
Since $\mathcal E^{(m)}$ occurs, the sequence $s_0,\dots, s_m$ is
$(R_{\rm init}, K_{\rm blk})$-height controlled and is 
adapted to $\cO$. 
Put
\begin{displaymath}
\theta_j:=
\angle\bigl(\im_k(B_{j-1}),\ker_{d-k}(B_j)\bigr).
\end{displaymath}
The height-control condition and the definition of $\cO$ gives
\begin{displaymath}
H_{s_1} \le R_{\rm init}
\end{displaymath}
\begin{equation}
\label{eq:good:prefix:event:return}
H_{s_j}<R_{\rm init}, \qquad\text{$j \ge 2$}
\end{equation}
and for $1 \le  j \le n$,
\begin{equation}
\label{eq:good:prefix:event:length}
\frac N2<d_j\leq N,
\end{equation}
\begin{equation}
\label{eq:good:prefix:event:angle}
\theta_j>e^{-\eta N},
\end{equation}
\begin{equation}
\label{eq:good:prefix:event:cushion}
\Delta_k(B_j)\theta_j> e^{\gamma N} \ge e^{\gamma d_j},
\end{equation}
and
\begin{equation}
\label{eq:good:prefix:event:contraction}
-\log\sigma_{k+1}(B_j)>\lambda_{\rm sel}d_j.
\end{equation}
Moreover,
\begin{equation}
\label{eq:good:prefix:event:tameness}
\max_{0\leq r\leq d_j}
\log\max\left\{
\|A(s_j,r)\|,\|A(s_j,r)^{-1}\|
\right\}
<K_{\rm blk}d_j.
\end{equation}
Since $\theta_j\leq \pi/2$ it follows from
(\ref{eq:good:prefix:event:cushion}) and (\ref{eq:pi:2:gamma:N}) that
\begin{equation}
\label{eq:extension:cushion}
\Delta_k(B_j)>\frac{2}{\pi} e^{\gamma N}\geq e^{\gamma N/2}.
\end{equation}
Let $\delta_\eta$ 
be as in (\ref{eq:good:event:delta:C}). Then,
for $1\leq j\leq n$,
\begin{equation}
\label{eq:extension:angle}
\theta_j> e^{-\eta N}=\delta_\eta,
\end{equation}
and, for $1\leq j\leq n$, in view of (\ref{eq:extension:cushion}), 
\begin{equation}
\label{eq:extension:uniform:regularity}
\Delta_k(B_j)\delta_\eta^2 >
e^{\gamma N/2-2\eta N}
=
C_\eta.
\end{equation}
Thus $B_1,\ldots,B_n$ is
$(C_\eta,\delta_\eta)$-uniformly regular.  Finally, with $L_\eta$ as
in (\ref{eq:good:event:delta:C}), 
\begin{equation}
\label{eq:extension:prefix:contraction}
-\log\sigma_{k+1}(B_j) >
\frac{\lambda_{\rm sel}}2N
=
L_\eta.
\end{equation}

For $n$ sufficiently large, set
\begin{displaymath}
r:=\lfloor\rho n\rfloor,
\qquad
a:=n-r+1,
\end{displaymath}
and define
\begin{displaymath}
\widehat B:=B_nB_{n-1}\cdots B_a.
\end{displaymath}
Its length is
\begin{displaymath}
d_{\rm buf}:=\sum_{j=a}^n d_j.
\end{displaymath}
We have
\begin{equation}
\label{eq:good:prefix:buffer:length}
\frac{rN}{2}<d_{\rm buf}\leq rN.
\end{equation}

Apply \autoref{lemma:create:terminal:buffer} with
\begin{displaymath}
C=C_\eta,
\qquad
\delta=\delta_\eta,
\qquad
L_{\rm blk}=L_\eta.
\end{displaymath}
The hypotheses follow from
\eqref{eq:good:event:C:large},
\eqref{eq:good:event:C:dominates:delta},
\eqref{eq:good:event:L:dominates:delta}, and
\eqref{eq:good:prefix:event:contraction}.  We obtain
\begin{equation}
\label{eq:good:prefix:buffer:gap}
\log\Delta_k(\widehat B)
\geq
\frac12r\log C_\eta,
\end{equation}
\begin{equation}
\label{eq:good:prefix:buffer:angle}
\angle\bigl(
\im_k(B_{a-1}),
\ker_{d-k}\widehat B
\bigr)
\geq
\frac{\delta_\eta}{2},
\end{equation}
\begin{equation}
\label{eq:good:prefix:buffer:cushion}
\log \left( \Delta_k(\widehat B)  \angle\bigl(
\im_k(B_{a-1}),
\ker_{d-k}\widehat B
\bigr)\right) \geq 
\frac12r\log C_\eta+\log\delta_\eta-\log2,
\end{equation}
and
\begin{equation}
\label{eq:good:prefix:buffer:contraction}
-\log\sigma_{k+1}(\widehat B)
\geq
\frac14r\lambda_{\rm sel}N.
\end{equation}

Set
\begin{displaymath}
u_0:=s_{n+1}=s_m.
\end{displaymath}
By \eqref{eq:good:prefix:event:return},
\begin{displaymath}
H_{u_0}<R_{\rm init}.
\end{displaymath}
Suppose recursively that $u_q$ has been defined and
$H_{u_q}\leq R_{\rm init}$.  Put
\begin{displaymath}
m_q:=\lfloor\kappa u_q\rfloor.
\end{displaymath}
For $n$ sufficiently large, $m_q\geq N_0$.  Apply
\autoref{prop:parameterized:one:block:selection} at time $u_q$, with target
length $m_q$, height level $R_{\rm init}$, parameter $\eta/2$, and input
plane
\begin{displaymath}
E_0:=\im_k(\widehat B),
\qquad
E_q:=\im_k(R_{q-1})
\quad
(q\geq1).
\end{displaymath}
Let $\mathcal G_q$ be the success event.  On $\mathcal G_q$, choose the least
good length $d_q'$ and set
\begin{displaymath}
u_{q+1}:=u_q+d_q',
\qquad
R_q:=A(u_q,d_q').
\end{displaymath}
By Proposition~\ref{prop:parameterized:one:block:selection}, one has
\begin{displaymath}
\frac{m_q}{2}\leq d_q'\leq m_q,
\end{displaymath}
\begin{equation}
\label{eq:good:prefix:random:angle}
\angle(E_q,\ker_{d-k}R_q)\geq e^{-\eta d_q'},
\end{equation}
\begin{equation}
\label{eq:good:prefix:random:cushion}
\Delta_k(R_q)\angle(E_q,\ker_{d-k}R_q)
\geq e^{\gamma d_q'},
\end{equation}
\begin{equation}
\label{eq:good:prefix:random:contraction}
-\log\sigma_{k+1}(R_q)
\geq\lambda_{\rm sel}d_q',
\end{equation}
\begin{equation}
\label{eq:good:prefix:random:tameness}
\max_{0\leq r\leq d_q'}
\log\max\left\{
\|A(u_q,r)\|,\|A(u_q,r)^{-1}\|
\right\}
\leq K_{\rm sel}d_q'
<K_{\rm blk}d_q',
\end{equation}
and
\begin{displaymath}
H_{u_{q+1}}<R_{\rm init}.
\end{displaymath}
Moreover,
\begin{equation}
\label{eq:good:prefix:random:failure}
\mathbb P\left(
\mathcal G_q^c
\,\middle|\,
\mathcal F_{u_q}
\right)
\leq
K_\eta e^{-b_\eta\sqrt{m_q}}.
\end{equation}

For all sufficiently large $n$, on the preceding successful blocks,
\begin{displaymath}
m_q\geq\frac{\kappa}{2}u_q,
\qquad
d_q'\geq\frac{\kappa}{4}u_q,
\end{displaymath}
and hence
\begin{displaymath}
u_{q+1}\geq
\left(1+\frac{\kappa}{4}\right)u_q.
\end{displaymath}
It follows from \eqref{eq:good:prefix:random:failure} that
\begin{displaymath}
\mathbb P\left(
\bigcup_{q\geq0}\mathcal G_q^c
\,\middle|\,
\mathcal F_{u_0}
\right)
\leq
C e^{-c_1\sqrt{u_0}}.
\end{displaymath}
Since
\begin{displaymath}
u_0=s_{n+1}>1+\frac{nN}{2},
\end{displaymath}
the last quantity is at most
\begin{displaymath}
e^{-\beta\sqrt n}
\end{displaymath}
for some $\beta>0$, after increasing the lower threshold on $n$.  We work
on the event that every $\mathcal G_q$ occurs.

Set $t_0 = s_0$.  
The admissible blocks are
\begin{displaymath}
B_1,\ldots,B_{a-1},\widehat B,R_0,R_1,\ldots.
\end{displaymath}
Their starting times are
\begin{displaymath}
t_j:=s_j
\qquad
(1\leq j\leq a),
\end{displaymath}
\begin{displaymath}
t_{a+1}:=u_0,
\end{displaymath}
and
\begin{displaymath}
t_{a+q+2}:=u_{q+1}
\qquad
(q\geq0).
\end{displaymath}

The deterministic blocks have length at most $N$.  Since
\begin{displaymath}
t_a=s_a>1+\frac{(n-r)N}{2},
\end{displaymath}
\eqref{eq:good:prefix:buffer:length} gives, for all sufficiently large
$n$, in view of (\ref{eq:good:prefix:epsilon:def}), 
\begin{displaymath}
d_{\rm buf}\leq8\rho t_a
\leq\epsilon_{\rm adm}t_a.
\end{displaymath}
The random blocks satisfy, in view of (\ref{eq:good:prefix:epsilon:def}),
\begin{displaymath}
d_q'\leq m_q\leq\kappa u_q
\leq\epsilon_{\rm adm}u_q.
\end{displaymath}
Thus the upper bound in \eqref{eq:admissible:length} holds with
$N_{\rm adm}=N$. We choose $n$ large enough so that
\begin{displaymath}
m_q \ge  N \qquad \text{for every $q  \ge 0$.} 
\end{displaymath}
Then,
\begin{displaymath}
d_q' \ge \frac{m_q}{2} \ge \frac{N}{2}.
\end{displaymath}
Then the lower bound in \eqref{eq:admissible:length} also holds with
$N_{\rm adm}=N$.

We next verify \eqref{eq:admissible:block:tameness}.  It holds for
$B_1,\ldots,B_{a-1}$ by \eqref{eq:good:prefix:event:tameness}, and for every
random block $R_q$ by \eqref{eq:good:prefix:random:tameness}.  For the buffer
block, put
\begin{displaymath}
\mathfrak q(T):=\max\{\|T\|,\|T^{-1}\|\}.
\end{displaymath}
The inequality
\begin{displaymath}
\mathfrak q(T_2T_1)\leq\mathfrak q(T_2)\mathfrak q(T_1)
\end{displaymath}
and \eqref{eq:good:prefix:event:tameness} imply, by decomposing any partial
product inside $\widehat B$ into complete good blocks and at most one final
partial block, that
\begin{displaymath}
\max_{0\leq r\leq d_{\rm buf}}
\log\mathfrak q\bigl(A(s_a,r)\bigr)
\leq K_{\rm blk}d_{\rm buf}.
\end{displaymath}
Thus \eqref{eq:admissible:block:tameness} holds for every block in the
admissible sequence.

It remains to verify (\ref{eq:admissible:regular}),
(\ref{eq:admissible:effective}) and (\ref{eq:admissible:contraction}).
First, we record two auxiliary estimates. 
Note that by (\ref{eq:good:event:deterministic:effective})
 and
(\ref{eq:good:prefix:constants}),
\begin{equation}
\label{eq:extension:Ceta:lower:bound}
\log C_\eta - \log 2 \ge \max(\tfrac{\gamma}{128}  N, \log C_{\rm adm})
\ge \max(\lambda_{\rm adm}\tfrac{N}{2}, \log C_{\rm adm}).
\end{equation}
Also in view of (\ref{eq:good:event:deterministic:contraction})
and (\ref{eq:good:prefix:constants}), 
\begin{equation}
\label{eq:extension:Leta:lower:bound}
L_\eta + \log \delta_\eta - \log 2 \ge \tfrac{\lambda_{\rm sel}}{8}N \ge 
\lambda_{\rm ctr}\tfrac{N}{2}.  
\end{equation}
We now begin the verification of (\ref{eq:admissible:regular}),
(\ref{eq:admissible:effective}) and (\ref{eq:admissible:contraction}).
For $1\leq j\leq a-2$, using (\ref{eq:extension:angle}) and
(\ref{eq:extension:uniform:regularity}), 
\begin{displaymath}
\log\bigl(\Delta_k(B_j)\theta_j\theta_{j+1}\bigr)
\geq \log C_\eta \ge \max(\lambda_{\rm adm}d_j, \log C_{\rm adm}),
\end{displaymath}
where for the last estimate we used
(\ref{eq:extension:Ceta:lower:bound}) and (\ref{eq:good:prefix:event:length}).
Similarly,  using (\ref{eq:extension:prefix:contraction}), for $1  \le
j \le a-2$, 
\begin{displaymath}
-\log\sigma_{k+1}(B_j)+\log\theta_{j+1} \ge L_\eta + \log \delta_\eta 
\geq \lambda_{\rm ctr}d_j,
\end{displaymath}
where for the last estimate we used
(\ref{eq:extension:Leta:lower:bound}) and (\ref{eq:good:prefix:event:length}).
Thus (\ref{eq:admissible:regular}),
(\ref{eq:admissible:effective}) and (\ref{eq:admissible:contraction})
hold for $B_j$, $1 \le  j  \le  a-2$.  

For $B_{a-1}$, (\ref{eq:extension:uniform:regularity}),
(\ref{eq:extension:angle}) and
\eqref{eq:good:prefix:buffer:angle} give
\begin{displaymath}
\Delta_k(B_{a-1})\theta_{a-1}
\angle\bigl(\im_k(B_{a-1}),\ker_{d-k}\widehat B\bigr)
\geq
\frac12C_\eta
\end{displaymath}
Then, by (\ref{eq:extension:Ceta:lower:bound}) and
(\ref{eq:good:prefix:event:length}), 
\begin{displaymath}
\log\left(\Delta_k(B_{a-1})\theta_{a-1}
\angle\bigl(\im_k(B_{a-1}),\ker_{d-k}\widehat B\bigr)\right) \ge
\max(\lambda_{\rm adm}d_{a-1}, \log C_{\rm adm}). 
\end{displaymath}
By (\ref{eq:extension:prefix:contraction}) and
(\ref{eq:good:prefix:buffer:angle}), 
\begin{displaymath}
-\log\sigma_{k+1}(B_{a-1})
+
\log\angle\bigl(\im_k(B_{a-1}),\ker_{d-k}\widehat B\bigr) \ge L_\eta +
\log \delta_\eta
- \log 2 \geq
\lambda_{\rm ctr}d_{a-1},
\end{displaymath}
where we have used (\ref{eq:extension:Leta:lower:bound}) and
(\ref{eq:good:prefix:event:length}) for the last estimate. 
Thus, (\ref{eq:admissible:regular}),
(\ref{eq:admissible:effective}) and (\ref{eq:admissible:contraction})
hold  for  $B_{a-1}$.  

For the buffer block, \eqref{eq:good:prefix:random:angle}, with $q=0$, gives
\begin{displaymath}
\angle\bigl(\im_k(\widehat B),\ker_{d-k}R_0\bigr)
\geq
e^{-\eta d_0'}.
\end{displaymath}
Combining this with \eqref{eq:good:prefix:buffer:cushion}, we obtain
\begin{equation}
\label{eq:extension:buffer:regularity:initial}
\begin{aligned}
&\log\left(
\Delta_k(\widehat B)
\angle\bigl(\im_k(B_{a-1}),\ker_{d-k}\widehat B\bigr)
\angle\bigl(\im_k(\widehat B),\ker_{d-k}R_0\bigr)
\right) \\
&\qquad\geq
\frac12r\log C_\eta+\log\delta_\eta-\log2-\eta d_0'.
\end{aligned}
\end{equation}
By \eqref{eq:good:prefix:kappa:rho:def} and
\eqref{eq:good:event:delta:C},
\begin{displaymath}
\log C_\eta=a_\eta N,
\qquad
\log\delta_\eta=-\eta N.
\end{displaymath}
Moreover, for all sufficiently large $n$,
\begin{displaymath}
r\geq\frac{\rho n}{2},
\qquad
u_0=s_{n+1}\leq1+nN\leq2nN,
\end{displaymath}
and hence
\begin{displaymath}
d_0'\leq m_0\leq\kappa u_0\leq2\kappa nN.
\end{displaymath}
It follows from \eqref{eq:extension:buffer:regularity:initial} that
\begin{displaymath}
\begin{aligned}
&\log\left(
\Delta_k(\widehat B)
\angle\bigl(\im_k(B_{a-1}),\ker_{d-k}\widehat B\bigr)
\angle\bigl(\im_k(\widehat B),\ker_{d-k}R_0\bigr)
\right) \\
&\qquad\geq
\left(\frac{\rho a_\eta}{4}-2\eta\kappa\right)nN
-\eta N-\log2.
\end{aligned}
\end{displaymath}
By \eqref{eq:good:prefix:eta:buffer:regular},
\begin{displaymath}
2\eta\kappa\leq\frac{\rho a_\eta}{8}.
\end{displaymath}
Therefore, after increasing the lower threshold on $n$,
\begin{displaymath}
\begin{aligned}
&\log\left(
\Delta_k(\widehat B)
\angle\bigl(\im_k(B_{a-1}),\ker_{d-k}\widehat B\bigr)
\angle\bigl(\im_k(\widehat B),\ker_{d-k}R_0\bigr)
\right) \\
&\qquad\geq
\frac{\rho a_\eta}{16}nN
\geq
\frac{\rho\gamma}{64}nN,
\end{aligned}
\end{displaymath}
where the last inequality uses $a_\eta\geq\gamma/4$.

Since
\begin{displaymath}
d_{\rm buf}\leq rN\leq\rho nN
\end{displaymath}
and $\lambda_{\rm adm}=\gamma/128$, the preceding lower bound is at least
\begin{displaymath}
\lambda_{\rm adm}d_{\rm buf}.
\end{displaymath}
After increasing the lower threshold on $n$ once more, it is also at least
$\log C_{\rm adm}$.  Thus
\eqref{eq:admissible:regular} and
\eqref{eq:admissible:effective} hold for the buffer block.

For the contraction estimate, combining
\eqref{eq:good:prefix:buffer:contraction} with the outgoing-angle estimate
gives
\begin{displaymath}
\begin{aligned}
&-\log\sigma_{k+1}(\widehat B)
+
\log\angle\bigl(\im_k(\widehat B),\ker_{d-k}R_0\bigr) \\
&\qquad\geq
\frac14r\lambda_{\rm sel}N-\eta d_0' \\
&\qquad\geq
\left(
\frac{\rho\lambda_{\rm sel}}8-2\eta\kappa
\right)nN.
\end{aligned}
\end{displaymath}
By \eqref{eq:good:prefix:eta:buffer:contraction},
\begin{displaymath}
2\eta\kappa
\leq
\frac{\rho\lambda_{\rm sel}}{16},
\end{displaymath}
and therefore
\begin{displaymath}
-\log\sigma_{k+1}(\widehat B)
+
\log\angle\bigl(\im_k(\widehat B),\ker_{d-k}R_0\bigr)
\geq
\frac{\rho\lambda_{\rm sel}}{16}nN.
\end{displaymath}
Since
\begin{displaymath}
\lambda_{\rm ctr}d_{\rm buf}
\leq
\frac{\lambda_{\rm sel}}{32}\rho nN,
\end{displaymath}
this proves \eqref{eq:admissible:contraction} for the buffer block.

It remains to verify the three admissibility estimates for the random
blocks.  We first compare successive random block lengths.  Since
\begin{displaymath}
u_{q+1}=u_q+d_q',
\qquad
\frac{m_q}{2}\leq d_q'\leq m_q,
\qquad
m_q=\lfloor\kappa u_q\rfloor,
\end{displaymath}
we have
\begin{displaymath}
\begin{aligned}
m_{q+1}
&\leq
\kappa u_{q+1} \\
&=
\kappa u_q+\kappa d_q' \\
&<
m_q+1+\kappa m_q.
\end{aligned}
\end{displaymath}
For all sufficiently large $n$, one has $m_q\geq1/\kappa$ for every
$q\geq0$.  Hence
\begin{displaymath}
m_{q+1}\leq(1+2\kappa)m_q.
\end{displaymath}
Since $d_q'\geq m_q/2$ and $d_{q+1}'\leq m_{q+1}$, it follows that
\begin{equation}
\label{eq:good:prefix:successive:random:lengths}
d_{q+1}'
\leq
2(1+2\kappa)d_q'
\qquad
(q\geq0).
\end{equation}

For $q\geq0$, put
\begin{displaymath}
\vartheta_q'
:=
\angle(E_q,\ker_{d-k}R_q).
\end{displaymath}
Then
\begin{displaymath}
E_{q+1}=\im_k(R_q),
\end{displaymath}
so that $\vartheta_{q+1}'$ is the outgoing angle for the block $R_q$.
By \eqref{eq:good:prefix:random:cushion},
\eqref{eq:good:prefix:random:angle}, and
\eqref{eq:good:prefix:successive:random:lengths},
\begin{displaymath}
\begin{aligned}
\log\left(
\Delta_k(R_q)\vartheta_q'\vartheta_{q+1}'
\right)
&\geq
\gamma d_q'-\eta d_{q+1}' \\
&\geq
\left(
\gamma-2(1+2\kappa)\eta
\right)d_q' \\
&\geq
\frac{\gamma}{2}d_q',
\end{aligned}
\end{displaymath}
where the last inequality is
\eqref{eq:good:prefix:eta:random:regular}.  Since
$\lambda_{\rm adm}=\gamma/128$, this is at least
\begin{displaymath}
\lambda_{\rm adm}d_q'.
\end{displaymath}

Furthermore, for all sufficiently large $n$,
\begin{displaymath}
d_q'
\geq
\frac{\kappa}{4}u_q
\geq
\frac{\kappa}{4}u_0
>
\frac{\kappa}{8}nN.
\end{displaymath}
Consequently,
\begin{displaymath}
\frac{\gamma}{2}d_q'
>
\frac{\gamma\kappa}{16}nN
\geq
\log C_{\rm adm}
\end{displaymath}
after increasing the lower threshold on $n$.  This proves
\eqref{eq:admissible:regular} and
\eqref{eq:admissible:effective} for every random block.

Finally, by \eqref{eq:good:prefix:random:contraction},
\eqref{eq:good:prefix:random:angle}, and
\eqref{eq:good:prefix:successive:random:lengths},
\begin{displaymath}
\begin{aligned}
-\log\sigma_{k+1}(R_q)+\log\vartheta_{q+1}'
&\geq
\lambda_{\rm sel}d_q'-\eta d_{q+1}' \\
&\geq
\left(
\lambda_{\rm sel}-2(1+2\kappa)\eta
\right)d_q' \\
&\geq
\frac{\lambda_{\rm sel}}2d_q' \\
&\geq
\lambda_{\rm ctr}d_q',
\end{aligned}
\end{displaymath}
where the penultimate inequality is
\eqref{eq:good:prefix:eta:random:contraction}, and the last one follows from
$\lambda_{\rm ctr}=\lambda_{\rm sel}/32$.  Thus
\eqref{eq:admissible:contraction} holds for every random block.

We have now verified
\eqref{eq:admissible:length},
\eqref{eq:admissible:block:tameness},
\eqref{eq:admissible:regular},
\eqref{eq:admissible:effective}, and
\eqref{eq:admissible:contraction} for all the blocks in the sequence.
Therefore the infinite extension is  \\
$(C_{\rm adm},\epsilon_{\rm adm},
\lambda_{\rm adm},\lambda_{\rm ctr},K_{\rm blk},N)$-admissible.
\end{proof}

\begin{lemma}[Almost surely admissible tails]
\label{lemma:almost:sure:admissible:tail}
There exist constants
$\lambda_{\rm adm}>0$,
$\lambda_{\rm ctr}>0$
depending only on the constants of
\S\ref{ssec:abstract:good:event}, with the following property.

For every $\epsilon_{\rm adm}>0$ and every
$C_{\rm adm}\geq C_{\rm reg}^\ast$, there exists
\begin{displaymath}
N_\ast=N_\ast(\epsilon_{\rm adm},C_{\rm adm},R_{\rm init})<\infty
\end{displaymath}
such that, for every $N_{\rm adm}\geq N_\ast$, almost every sample
path has a \\
$(C_{\rm adm},\epsilon_{\rm adm},
\lambda_{\rm adm},\lambda_{\rm ctr},
K_{\rm blk},N_{\rm adm})$-admissible tail.
\end{lemma}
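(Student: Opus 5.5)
We build the admissible tail by a random restart argument that reuses the single-block machinery. The aim is to show that almost surely there is some time $T$ after which the construction in the proof of \autoref{lemma:good:prefix:admissible:extension} succeeds forever. Fix $\epsilon_{\rm adm}$ and $C_{\rm adm}$, and choose $\eta$ small and $N=N_{\rm adm}$ large so that three things hold: $\epsilon_{\rm adm}(\eta)\le\epsilon_{\rm adm}$, $\tfrac12 C_\eta\ge C_{\rm adm}$, and the thresholds of \S\ref{sssec:def:event} are met. The constants $\lambda_{\rm adm}=\gamma/128$ and $\lambda_{\rm ctr}=\lambda_{\rm sel}/32$ come from that proof and depend only on the standing constants. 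For a general $N_{\rm adm}\ge N_\ast$ we run the construction with $N=N_{\rm adm}$. This is harmless because all the requirements of \S\ref{sssec:def:event} are of the form ``$N$ sufficiently large''; if $N$ must be odd, we allow block lengths in $(N/2,N]$ and absorb the parity issue.

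\emph{Step 1: returns.} By the Margulis drift \eqref{eq:abstract:margulis:height:drift} and \autoref{lemma:abstract:margulis:return:tails}, almost surely $H_i<R_{\rm init}$ for infinitely many $i$. We then define a sequence of stopping times $\tau_1<\tau_2<\cdots$ at which $H_{\tau_j}\le R_{\rm init}$, with $\tau_{j+1}$ the first return after the attempt started at $\tau_j$ has been decided or has failed.

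\emph{Step 2: attempts.} At a stopping time $\tau$ with $H_\tau\le R_{\rm init}$, we apply the construction from \autoref{lemma:good:prefix:admissible:extension}, with $\tau$ playing the role of time $1$. The first block $[0,\tau)$ is the tail-prefix. This is legitimate because \autoref{def:tail:admissible} imposes conditions (1), (2), (4), (5) only for $j\ge1$, and condition (3) only through $\theta_1$. Concretely, we pick an arbitrary $\mathcal F_\tau$-measurable $k$-plane (say $\im_k A(0,\tau)$ when it is defined) as input. We then iterate \autoref{lemma:good:event:positive:probability} to get $m$ good blocks of length $\le N$ adapted to $\mathcal O$, followed by the buffer and the geometrically growing random blocks. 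Getting $m$ consecutive successes has conditional probability at least $2^{-m}$. Given that, the extension succeeds with probability at least $1-e^{-\beta\sqrt m}$. Fixing $m$ large therefore gives a uniform lower bound $p_\ast>0$ on the conditional success probability of each attempt, given $\mathcal F_\tau$.

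There is one new feature: the block-length bound $\ell_j\le\max\{N,\epsilon t_j\}$ now involves the absolute time $t_j\ge\tau$ rather than time since $\tau$. Since $t_j\ge\tau$, this only makes the bound weaker to satisfy, so the buffer and random-block length bounds still hold. Similarly, condition (3) for the first real block needs $\theta_1$, the angle between $\im_k$ of the prefix and $\ker_{d-k}$ of the first good block. We arrange this by choosing the input plane $E$ to be $\im_k A(0,\tau)$, so that membership in $\mathcal O$ controls $\theta_1$ via \eqref{eq:good:event:open:angle} and \eqref{eq:good:event:open:cushion}.

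\emph{Step 3: Borel--Cantelli/Lévy.} An attempt either succeeds, in which case it never ends, or fails at some almost surely finite stopping time. After a failure we wait for the next return (Step 1) and restart. By the conditional lower bound $p_\ast$ and the strong Markov form of the standing assumptions (conditional estimates hold after every history), the probability that the first $j$ attempts all fail is at most $(1-p_\ast)^j$. Hence almost surely some attempt succeeds, which produces an admissible tail.

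\emph{Main obstacle.} The delicate step is making the failure of an attempt a genuine stopping time that is finite almost surely. Failures of the infinite random-block phase are detected at the end of each block $R_q$, which is fine. But we must also ensure the prefix condition at $j=1$ holds when $A(0,\tau)$ has no strict $k$-gap. If that happens, I would first insert one extra initial good block of length $\le N$, using \autoref{lemma:good:event:positive:probability} with an arbitrary input plane. Its output gap $\Delta_k>e^{\epsilon_{\rm init}}$ then lets the real construction start from a well-defined $\im_k$, and that block is absorbed into the prefix $[0,t_1)$.
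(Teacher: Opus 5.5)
Your restart scheme differs from the paper's route. You take a uniform per-attempt success probability $p_\ast\ge 2^{-m}(1-e^{-\beta\sqrt m})$ from \autoref{lemma:good:event:positive:probability} and \autoref{lemma:good:prefix:admissible:extension}, then run geometric trials. The paper skips the fixed-length good blocks and the buffer entirely. It selects blocks of target length $\lfloor\kappa T_j\rfloor$ directly with \autoref{prop:parameterized:one:block:selection}, so the failure probabilities $K_\eta e^{-b_\eta\sqrt{\lfloor\kappa T_j\rfloor}}$ are summable along the geometrically growing $T_j$, and conditional Borel--Cantelli leaves only finitely many failures.

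Your trials argument is fine in itself, provided the random-block targets in the shifted construction are measured from $\tau$, as your shift does. With absolute time and fixed $m$, the buffer estimate, which uses $d_0'\le 2\kappa nN$, would fail for large $\tau$. The real problem is the prefix.

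\autoref{def:tail:admissible} requires $B_0=A(0,t_1)$ itself to have a strict $k$-gap. Condition (3) at $j=1$ also involves $\theta_1=\angle(\im_k A(0,t_1),\ker_{d-k}B_1)$. Your Step 1 only produces return times, and nothing ensures $\Delta_k(A(0,\tau))>1$ at any of them.

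Your proposed fix does not work. Suppose you insert a good block $B'$ on $[\tau,\tau')$ and absorb it into the prefix, so the prefix becomes $B'A(0,\tau)$. The bound $\Delta_k(B')>e^{\epsilon_{\rm init}}$ gives no lower bound on $\Delta_k(B'A(0,\tau))$, because the condition number of $A(0,\tau)$ is uncontrolled; for instance $\operatorname{diag}(e^s,1)\operatorname{diag}(1,e^s)=e^s\Id$. Even when the product does have a gap, $\im_k(B'A(0,\tau))\neq\im_k(B')$ in general. Since $B_1$ was selected against $\im_k(B')$, membership in $\mathcal O$ says nothing about $\theta_1$.

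The missing ingredient is that almost surely $\Delta_k(A(0,n))\to\infty$. To get it, apply \autoref{prop:tail:gap:from:prefix} at a return time $\tau$ with prefix $Q=A(0,\tau)$ and any $\mathcal F_\tau$-measurable $k$-plane $E$. Let the loss $L\to\infty$ to get $\Delta_k(A(0,\tau+q),E)\to\infty$ almost surely. Then use $\Delta_k(B)\ge\Delta_k(B,E)$ from \eqref{eq:relative:vs:absolute:gap}. Together with \autoref{lemma:abstract:margulis:return:tails}, this gives almost surely infinitely many return times $T$ with $\Delta_k(A(0,T))>1$.

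If you restart only at such times, including after each failure, and use input plane $\im_k(A(0,T))$, then $\theta_1$ is controlled and your geometric-trials argument goes through. This is exactly how the paper chooses its restart times $T_j$.
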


\begin{proof}
Set
\begin{displaymath}
\lambda_{\rm adm}:=\frac{\gamma}{128},
\qquad
\lambda_{\rm ctr}:=\frac{\lambda_{\rm sel}}{32},
\end{displaymath}
Fix $\epsilon_{\rm adm}>0$, and choose
\begin{displaymath}
0<\kappa<\frac{\epsilon_{\rm adm}}{2}.
\end{displaymath}
Choose $\eta>0$ sufficiently small that
\begin{displaymath}
\gamma-2(1+2\kappa)\eta\geq\frac{\gamma}{2}
\end{displaymath}
and
\begin{displaymath}
\lambda_{\rm sel}-2(1+2\kappa)\eta
\geq\frac{\lambda_{\rm sel}}{2}.
\end{displaymath}

Let
\begin{displaymath}
K_\eta<\infty,
\qquad
b_\eta>0,
\qquad
N_0=N_0(\eta,R_{\rm init})
\end{displaymath}
be the constants from
\autoref{prop:parameterized:one:block:selection}, applied with parameter
$\eta/2$. We will prove the result for every perscribed 
\begin{displaymath}
N_{\rm adm} \ge N_0.
\end{displaymath}

We first choose a time from which the construction can be restarted.  Let
$\tau$ be a sufficiently late return time to
\begin{displaymath}
\{H < R_{\rm init}\}.
\end{displaymath}
Such a time exists almost surely by
\autoref{lemma:abstract:margulis:return:tails}.  Apply
\autoref{prop:tail:gap:from:prefix} at time $\tau$, with prefix
\begin{displaymath}
Q:=A(0,\tau).
\end{displaymath}
Letting the loss parameter in that proposition tend to infinity shows that,
almost surely,
\begin{displaymath}
\Delta_k(A(0,n))\longrightarrow\infty.
\end{displaymath}
Since the process returns to $\{H < R_{\rm init}\}$ infinitely often,
there are almost surely arbitrarily large times $T$ for which
\begin{displaymath}
H_T < R_{\rm init}
\qquad\text{and}\qquad
\Delta_k(A(0,T))>1.
\end{displaymath}

Choose a deterministic $T_\ast$ sufficiently large that, whenever
$t\geq T_\ast$,
\begin{displaymath}
\lfloor\kappa t\rfloor\geq N_{\rm adm},
\qquad
\lfloor\kappa t\rfloor\geq\frac{\kappa t}{2}, \qquad \frac{\gamma}{4}\lfloor\kappa t\rfloor
\geq
\log C_{\rm adm},
\end{displaymath}
and all the elementary floor estimates used below hold.  

Let $T_0\geq T_\ast$ be the first time such that
\begin{displaymath}
H_{T_0} < R_{\rm init}
\qquad\text{and}\qquad
\Delta_k(A(0,T_0))>1,
\end{displaymath}
and set
\begin{displaymath}
E_0:=\im_k(A(0,T_0)).
\end{displaymath}

Suppose that $T_j$ and an $\mathcal F_{T_j}$-measurable $k$-plane $E_j$
have been defined, with $H_{T_j}\leq R_{\rm init}$.  Put
\begin{displaymath}
m_j:=\lfloor\kappa T_j\rfloor.
\end{displaymath}
Apply \autoref{prop:parameterized:one:block:selection} at time $T_j$, with
target length $m_j$, height level $R_{\rm init}$, parameter $\eta/2$, and
input plane $E_j$.  Let $\mathcal G_j$ be the success event.

On $\mathcal G_j$, choose the least good length $d_j$, and set
\begin{displaymath}
T_{j+1}:=T_j+d_j,
\qquad
E_{j+1}:=\im_k(A(T_j,d_j)).
\end{displaymath}
Then
\begin{displaymath}
\frac{m_j}{2}\leq d_j\leq m_j,
\qquad
H_{T_{j+1}}<R_{\rm init}.
\end{displaymath}

On $\mathcal G_j^c$, let $T_{j+1}$ be the first time
$t\geq T_j+m_j$ such that
\begin{displaymath}
H_t < R_{\rm init}
\qquad\text{and}\qquad
\Delta_k(A(0,t))>1,
\end{displaymath}
and set
\begin{displaymath}
E_{j+1}:=\im_k(A(0,T_{j+1})).
\end{displaymath}
The preceding observations show that $T_{j+1}<\infty$ almost surely.

For every $j$,
\begin{displaymath}
T_{j+1}
\geq
T_j+\frac{m_j}{2}
\geq
\left(1+\frac{\kappa}{4}\right)T_j.
\end{displaymath}
Moreover,
\begin{displaymath}
\mathbb P\left(
\mathcal G_j^c
\,\middle|\,
\mathcal F_{T_j}
\right)
\leq
K_\eta e^{-b_\eta\sqrt{m_j}}.
\end{displaymath}
The conditional probabilities are summable.  By conditional
Borel--Cantelli, only finitely many of the events $\mathcal G_j^c$ occur.

Let $j_0=0$ if no failure occurs, and otherwise let $j_0$ be one greater
than the index of the final failure.  By construction,
\begin{displaymath}
E_{j_0}=\im_k(A(0,T_{j_0})),
\end{displaymath}
and every $\mathcal G_j$ occurs for $j\geq j_0$.

Define
\begin{displaymath}
t_0:=0,
\qquad
t_r:=T_{j_0+r-1}
\quad
(r\geq1).
\end{displaymath}
Then
\begin{displaymath}
B_0:=A(0,t_1)
\end{displaymath}
has a strict $k$-gap.  For $r\geq1$, put
\begin{displaymath}
\ell_r:=t_{r+1}-t_r,
\qquad
B_r:=A(t_r,\ell_r).
\end{displaymath}
Thus $B_r$ is the successful block selected at time
$T_{j_0+r-1}$.

For $r\geq1$, set
\begin{displaymath}
\theta_r:=
\angle\bigl(\im_k(B_{r-1}),\ker_{d-k}(B_r)\bigr).
\end{displaymath}
The choice of the input planes gives, after weakening the conclusions of
\autoref{prop:parameterized:one:block:selection},
\begin{displaymath}
\theta_r\geq e^{-\eta\ell_r},
\end{displaymath}
\begin{displaymath}
\Delta_k(B_r)\theta_r\geq e^{\gamma\ell_r},
\end{displaymath}
\begin{displaymath}
-\log\sigma_{k+1}(B_r)
\geq
\lambda_{\rm sel}\ell_r,
\end{displaymath}
and
\begin{displaymath}
\max_{0\leq q\leq\ell_r}
\log\max\left\{
\|A(t_r,q)\|,\|A(t_r,q)^{-1}\|
\right\}
\leq
K_{\rm blk}\ell_r.
\end{displaymath}

The selected lengths satisfy
\begin{displaymath}
\frac{m_{j_0+r-1}}{2}
\leq
\ell_r
\leq
m_{j_0+r-1}.
\end{displaymath}
Consequently,
\begin{displaymath}
\frac{N_{\rm adm}}{2}\leq\ell_r
\end{displaymath}
and
\begin{displaymath}
\ell_r
\leq
\kappa t_r
\leq
\epsilon_{\rm adm}t_r.
\end{displaymath}
Thus \eqref{eq:admissible:length} and
\eqref{eq:admissible:block:tameness} hold.

After increasing $T_\ast$ if necessary, the floor estimates also give
\begin{displaymath}
\ell_{r+1}
\leq
2(1+2\kappa)\ell_r
\qquad
(r\geq1).
\end{displaymath}
Therefore
\begin{displaymath}
\begin{aligned}
\log\bigl(\Delta_k(B_r)\theta_r\theta_{r+1}\bigr)
&\geq
\gamma\ell_r-\eta\ell_{r+1}\\
&\geq
\left(
\gamma-2(1+2\kappa)\eta
\right)\ell_r\\
&\geq
\frac{\gamma}{2}\ell_r.
\end{aligned}
\end{displaymath}
Our choice of $T_\ast$ implies that the last expression is at least
$\log C_{\rm adm}$, and it is also at least
$\lambda_{\rm adm}\ell_r$.  Hence
\eqref{eq:admissible:regular} and
\eqref{eq:admissible:effective} hold for every $r\geq1$.

Similarly,
\begin{displaymath}
\begin{aligned}
-\log\sigma_{k+1}(B_r)+\log\theta_{r+1}
&\geq
\lambda_{\rm sel}\ell_r-\eta\ell_{r+1}\\
&\geq
\left(
\lambda_{\rm sel}-2(1+2\kappa)\eta
\right)\ell_r\\
&\geq
\frac{\lambda_{\rm sel}}{2}\ell_r\\
&\geq
\lambda_{\rm ctr}\ell_r.
\end{aligned}
\end{displaymath}
Thus \eqref{eq:admissible:contraction} holds for every $r\geq1$.

The times
\begin{displaymath}
0=t_0<t_1<t_2<\cdots
\end{displaymath}
therefore define a
$(C_{\rm adm},\epsilon_{\rm adm},
\lambda_{\rm adm},\lambda_{\rm ctr},
K_{\rm blk},N_{\rm adm})$-admissible tail.
\end{proof}

\subsection{Hyperbolicity properties of admissible sequences}
\label{ssec:admissible:hyperbolicity}

In this subsection we record the hyperbolicity consequences of an
admissible tail. Throughout, let
\begin{displaymath}
A_i:V_i\to V_{i+1}
\end{displaymath}
be a sequence of linear isomorphisms which have a 
$(C_{\rm adm},\epsilon_{\rm adm},\lambda_{\rm adm},
\lambda_{\rm ctr},K_{\rm blk},N_{\rm adm})$-admissible tail
in dimension $k$. Let
\begin{displaymath}
0=t_0 < t_1 <t_2<t_3<\cdots
\end{displaymath}
be admissible times, and write
\begin{displaymath}
\ell_j:=t_{j+1}-t_j \qquad B_j:=A(t_j,\ell_j):V_{t_j}\to V_{t_{j+1}}.
\end{displaymath}
For $j\ge1$, set
\begin{displaymath}
F_{j-1}:=\im_k(B_{j-1})\subset V_{t_j},
\qquad
K_j:=\ker_{d-k}(B_j)\subset V_{t_j},
\end{displaymath}
and
\begin{displaymath}
\theta_j:=\angle(F_{j-1},K_j).
\end{displaymath}

\begin{lemma}[Angle tempering from admissibility]
\label{lemma:angle:tempering:from:admissibility}
There exists a constant
\begin{displaymath}
C_{\rm ang}=C_{\rm ang}(C_{\rm adm},K_{\rm blk},N_{\rm adm})<\infty
\end{displaymath}
such that, for every $j\ge 1$,
\begin{equation}
\label{eq:angle:tempering:from:admissibility}
\theta_j
\ge
C_{\rm ang}^{-1}e^{-2K_{\rm blk}\epsilon_{\rm adm}t_j}.
\end{equation}
\end{lemma}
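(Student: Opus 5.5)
The bound comes from combining the effective regularity condition \eqref{eq:admissible:effective} with the block tameness \eqref{eq:admissible:block:tameness}. Tameness caps how large $\Delta_k(B_j)$ can be, so a product $\Delta_k(B_j)\theta_j\theta_{j+1}$ bounded below forces $\theta_j\theta_{j+1}$, and hence $\theta_j$, to be not too small.

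First step: bound $\Delta_k(B_j)$ from above using tameness. Since $\Delta_k(B)=\sigma_k(B)/\sigma_{k+1}(B)\le \|B\|\,\|B^{-1}\|$, taking $r=\ell_j$ in \eqref{eq:admissible:block:tameness} gives
\begin{displaymath}
\log\Delta_k(B_j)\le 2K_{\rm blk}\ell_j .
\end{displaymath}
For $j\ge1$ this is available directly.

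Second step: use $C_{\rm adm}$-regularity \eqref{eq:admissible:regular} for the block $B_j$. Since both angles are at most $\pi/2$, we get $\theta_j\ge \tfrac{2}{\pi}\theta_j\theta_{j+1}\ge \tfrac{2}{\pi}C_{\rm adm}\Delta_k(B_j)^{-1}$. Combining with the first step gives
\begin{displaymath}
\theta_j\ge \tfrac{2}{\pi}C_{\rm adm}\,e^{-2K_{\rm blk}\ell_j}.
\end{displaymath}
Note that this uses the regularity inequality for index $j$, whose product contains $\theta_j$. This is legitimate for every $j\ge1$, since \eqref{eq:admissible:regular} is stated for all $j\ge1$.

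Third step: convert $\ell_j$ into $t_j$ using \eqref{eq:admissible:length}, namely $\ell_j\le\max\{N_{\rm adm},\epsilon_{\rm adm}t_j\}\le N_{\rm adm}+\epsilon_{\rm adm}t_j$. This yields
\begin{displaymath}
\theta_j\ge \tfrac{2}{\pi}C_{\rm adm}e^{-2K_{\rm blk}N_{\rm adm}}\,e^{-2K_{\rm blk}\epsilon_{\rm adm}t_j}.
\end{displaymath}
So we may take
\begin{displaymath}
C_{\rm ang}:=\tfrac{\pi}{2}C_{\rm adm}^{-1}e^{2K_{\rm blk}N_{\rm adm}},
\end{displaymath}
or $\max\{1,\cdot\}$ of this, which depends only on $C_{\rm adm},K_{\rm blk},N_{\rm adm}$.

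There is no real obstacle. The only point requiring care is the correct use of the regularity inequality at index $j$ (rather than $j-1$), so that the upper bound on $\Delta_k$ involves the block $B_j$ starting at $t_j$, whose length is controlled by $t_j$. If one instead used index $j-1$, one would need $\ell_{j-1}\le\epsilon_{\rm adm}t_{j-1}\le \epsilon_{\rm adm}t_j$, which also works for $j\ge2$ but fails at $j=1$ since $B_0$ is not assumed tame.
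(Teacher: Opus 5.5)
Your proposal is correct and follows the paper's proof step for step. Both use $C_{\rm adm}$-regularity at index $j$ with $\theta_{j+1}\le\pi/2$, then bound $\Delta_k(B_j)\le\|B_j\|\,\|B_j^{-1}\|\le e^{2K_{\rm blk}\ell_j}$ by tameness, then use $\ell_j\le N_{\rm adm}+\epsilon_{\rm adm}t_j$, arriving at the same constant $C_{\rm ang}=\tfrac{\pi}{2}C_{\rm adm}^{-1}e^{2K_{\rm blk}N_{\rm adm}}$.
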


\begin{proof}
By admissibility,
\begin{displaymath}
\Delta_k(B_j)\theta_j\theta_{j+1}\ge C_{\rm adm}
\qquad
\text{for every }j\ge 1.
\end{displaymath}
Since $\theta_{j+1}\le \pi/2$,
\begin{displaymath}
\theta_j\ge \frac{2}{\pi} C_{\rm adm}\Delta_k(B_j)^{-1}.
\end{displaymath}
Moreover, \eqref{eq:admissible:block:tameness}, applied at
$r=\ell_j$, gives
\begin{displaymath}
\Delta_k(B_j)
\le
\|B_j\|\,\|B_j^{-1}\|
\le
\exp(2K_{\rm blk}\ell_j).
\end{displaymath}
By the admissible length condition,
\begin{displaymath}
\ell_j\le \max\{N_{\rm adm},\epsilon_{\rm adm}t_j\}
\le N_{\rm adm}+\epsilon_{\rm adm}t_j.
\end{displaymath}
Therefore
\begin{displaymath}
\theta_j
\ge
\frac{2}{\pi} C_{\rm adm}e^{-2K_{\rm blk}N_{\rm adm}}
e^{-2K_{\rm blk}\epsilon_{\rm adm}t_j}.
\end{displaymath}
Taking
\begin{displaymath}
C_{\rm ang}:=\frac{\pi}{2} C_{\rm adm}^{-1}e^{2K_{\rm blk}N_{\rm adm}}
\end{displaymath}
proves the lemma.
\end{proof}

\begin{proposition}[Hyperbolicity of admissible tails]
\label{prop:hyperbolicity:admissible}
There exist constants
$C_0=C_0(d,k)\geq1$
and
$N_0=N_0(d,k,\lambda_{\rm adm},\lambda_{\rm ctr})\geq1$
with the following property.  Assume
\begin{displaymath}
C_{\rm adm}\geq C_0,
\qquad
N_{\rm adm}\geq N_0.
\end{displaymath}
Let
\begin{displaymath}
\mathcal U_j,\mathcal S_j\subset V_{t_j},
\qquad j\geq1,
\end{displaymath}
be the fast and slow spaces from
\autoref{def:C:regular:fast:slow:spaces}, applied to the infinite
$C_{\rm adm}$-regular sequence $B_j$, $j\geq1$.  Then
\begin{displaymath}
B_j\mathcal U_j=\mathcal U_{j+1},
\qquad
B_j\mathcal S_j=\mathcal S_{j+1},
\qquad V_{t_j}=\mathcal U_j\oplus\mathcal S_j,
\end{displaymath}
for every $j\geq1$.
There is a constant $C_{\rm split}<\infty$ such that
\begin{equation}
\label{eq:admissible:splitting:angle}
\angle(\mathcal U_j,\mathcal S_j)
\geq
C_{\rm split}^{-1}e^{-2K_{\rm blk}\epsilon_{\rm adm}t_j}
\qquad
(j\geq1).
\end{equation}

Finally, for $j\geq1$ and $r\geq1$, put
\begin{displaymath}
B(j,r):=B_{j+r-1}\cdots B_j
:
V_{t_j}\to V_{t_{j+r}}.
\end{displaymath}
Then
\begin{equation}
\label{eq:admissible:splitting:gap}
\log
\frac{
\mathfrak m\bigl(B(j,r)|_{\mathcal U_j}\bigr)
}{
\bigl\|B(j,r)|_{\mathcal S_j}\bigr\|
}
\geq
\frac{\lambda_{\rm adm}}{2}(t_{j+r}-t_j),
\end{equation}
and
\begin{equation}
\label{eq:admissible:stable:contraction}
\bigl\|B(j,r)|_{\mathcal S_j}\bigr\|
\leq
\exp\left(
-\frac{\lambda_{\rm ctr}}{2}(t_{j+r}-t_j)
\right).
\end{equation}
\end{proposition}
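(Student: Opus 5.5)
The plan is to reduce everything to the regular-sequence machinery of \autoref{ssec:regular:sequences}, applied to the infinite block sequence $B_j$, $j\geq1$. By \eqref{eq:admissible:regular} this sequence is $C_{\rm adm}$-regular. Taking $C_0$ at least the thresholds in \autoref{prop:C:regular:neighborhoods}, \autoref{lemma:C:regular:pullback:sets} and \autoref{lemma:C:regular:fast:slow:basic}, we get the following from \autoref{def:C:regular:fast:slow:spaces} and \autoref{lemma:C:regular:fast:slow:basic}(i),(iii): equivariance of $\mathcal U_j,\mathcal S_j$, the decomposition $V_{t_j}=\mathcal U_j\oplus\mathcal S_j$, and $\angle(\mathcal U_j,\mathcal S_j)>\tfrac12\theta_j$. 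One caveat concerns the endpoint convention $\theta_a=\pi/2$ at $a=1$. Since $\theta_1\leq\pi/2$ and the regularity inequality holds with the actual $\theta_1$, it also holds with the convention. At $j=1$ the angle bound only uses $\mathcal U_1=K_1^\perp$, which makes a definite angle with $\mathcal S_1$. The angle estimate \eqref{eq:admissible:splitting:angle} then follows from \autoref{lemma:angle:tempering:from:admissibility}, with $C_{\rm split}=2C_{\rm ang}$ (the case $j=1$ is even better).

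For the gap \eqref{eq:admissible:splitting:gap}, I will not use \eqref{eq:C:regular:product:gap:growth} directly, since it only gives growth $q(C)^{-r}$ in the number of blocks, not in time. Instead I will use the one-step estimates \eqref{eq:C:regular:fast:one:step} and \eqref{eq:C:regular:slow:one:step}. By supermultiplicativity of conorms and submultiplicativity of norms,
\[
\log\frac{\mathfrak m(B(j,r)|_{\mathcal U_j})}{\|B(j,r)|_{\mathcal S_j}\|}\geq \sum_{i=j}^{j+r-1}\Bigl(\log\bigl(\Delta_k(B_i)\theta_i\theta_{i+1}\bigr)-\log 16\Bigr).
\]
By \eqref{eq:admissible:effective} each summand is at least $\lambda_{\rm adm}\ell_i-\log16$. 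Since $\ell_i\geq N_{\rm adm}/2$, choosing $N_0$ with $\lambda_{\rm adm}N_0/4\geq\log16$ gives at least $\tfrac{\lambda_{\rm adm}}2\ell_i$ per block. Summing over the blocks yields $\tfrac{\lambda_{\rm adm}}2(t_{j+r}-t_j)$.

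For the contraction \eqref{eq:admissible:stable:contraction}, I apply \eqref{eq:C:regular:slow:one:step} blockwise: $\|B_i|_{\mathcal S_i}\|\leq 4\theta_{i+1}^{-1}\sigma_{k+1}(B_i)$. This gives
\[
-\log\|B(j,r)|_{\mathcal S_j}\|\geq\sum_i\bigl(-\log\sigma_{k+1}(B_i)+\log\theta_{i+1}-\log4\bigr)\geq\sum_i(\lambda_{\rm ctr}\ell_i-\log 4),
\]
using \eqref{eq:admissible:contraction}. Enlarging $N_0$ so that $\lambda_{\rm ctr}N_0/4\geq\log4$, each term is at least $\tfrac{\lambda_{\rm ctr}}2\ell_i$, which gives the claim.

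The main obstacle is bookkeeping rather than substance. The first point is making sure the constants in \autoref{lemma:C:regular:fast:slow:basic}(v) really are absolute (the factors $4$ and $\tfrac14$), so that absorbing them into $N_{\rm adm}$ is legitimate and $N_0$ depends only on $d,k,\lambda_{\rm adm},\lambda_{\rm ctr}$. The second point is handling the initial index $j=1$, where the conventional $\theta_1=\pi/2$ replaces the true incoming angle $\angle(\im_k B_0,K_1)$. For the product estimates starting at $j\geq1$ this replacement only affects $\mathcal U_1$. I would check that the inequalities still hold with $\theta_1$ taken as either value, which is true because $\mathfrak m(B_1|_{\mathcal U_1})=\sigma_k(B_1)$ exactly when $\mathcal U_1=K_1^\perp$.
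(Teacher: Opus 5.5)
Your proposal is correct and follows essentially the same route as the paper. The paper also checks $C_{\rm adm}$-regularity of $B_j$, $j\geq1$, under the $\pi/2$ convention, and gets the splitting and angle bound from \autoref{lemma:C:regular:fast:slow:basic} and \autoref{lemma:angle:tempering:from:admissibility}; it then multiplies the blockwise fast and slow estimates (via \eqref{eq:C:regular:product:fast:lower}--\eqref{eq:C:regular:product:slow:upper}, which are exactly the products of the one-step bounds you use) and absorbs the $4\log 2$ and $2\log 2$ per-block losses by taking $N_0$ large.
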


\begin{proof}
Choose $C_0$ large enough so that both
\autoref{lemma:C:regular:fast:slow:basic} and
\autoref{lemma:C:regular:product:estimates} apply whenever
$C_{\rm adm}\geq C_0$.

By admissibility,
\begin{displaymath}
\Delta_k(B_j)\theta_j\theta_{j+1}\geq C_{\rm adm}
\qquad (j\geq1).
\end{displaymath}
For the regular sequence $B_j$, $j\geq1$, the initial angle is by convention
equal to $\pi/2$.  Since $\pi/2\geq\theta_1$, the sequence $B_j$, $j\geq1$, is
$C_{\rm adm}$-regular.

Let $\bar\theta_1:=\pi/2$ and $\bar\theta_j:=\theta_j$ for $j\geq2$.  By
\autoref{lemma:C:regular:fast:slow:basic}, applied to this
$C_{\rm adm}$-regular sequence, we get
\begin{displaymath}
B_j\mathcal U_j=\mathcal U_{j+1},
\qquad
B_j\mathcal S_j=\mathcal S_{j+1},
\qquad
V_{t_j}=\mathcal U_j\oplus\mathcal S_j,
\end{displaymath}
and
\begin{displaymath}
\angle(\mathcal U_j,\mathcal S_j)\geq\frac12\bar\theta_j.
\end{displaymath}
Since $\bar\theta_j\geq\theta_j$ for all $j\geq1$,
\autoref{lemma:angle:tempering:from:admissibility} gives
\eqref{eq:admissible:splitting:angle}, after changing the constant.

It remains to prove the product estimates.  From
\eqref{eq:C:regular:product:fast:lower} and
\eqref{eq:C:regular:product:slow:upper}, we get
\begin{displaymath}
\frac{
\mathfrak m\bigl(B(j,r)|_{\mathcal U_j}\bigr)
}{
\bigl\|B(j,r)|_{\mathcal S_j}\bigr\|
}
\geq
4^{-2r}
\prod_{q=j}^{j+r-1}
\Delta_k(B_q)\theta_q\theta_{q+1}.
\end{displaymath}
Taking logarithms and using admissibility gives
\begin{displaymath}
\log
\frac{
\mathfrak m\bigl(B(j,r)|_{\mathcal U_j}\bigr)
}{
\bigl\|B(j,r)|_{\mathcal S_j}\bigr\|
}
\geq
\sum_{q=j}^{j+r-1}\lambda_{\rm adm}\ell_q
-
4r \log 2.
\end{displaymath}
Choose $N_0$ so large that
\begin{displaymath}
4 \log 2
\leq
\frac{\lambda_{\rm adm}}{2}\ell_q
\end{displaymath}
for every admissible block.  This is possible because
\begin{displaymath}
\ell_q\geq\frac{N_{\rm adm}}{2}\geq\frac{N_0}{2}.
\end{displaymath}
Thus
\begin{displaymath}
\log
\frac{
\mathfrak m\bigl(B(j,r)|_{\mathcal U_j}\bigr)
}{
\bigl\|B(j,r)|_{\mathcal S_j}\bigr\|
}
\geq
\frac{\lambda_{\rm adm}}{2}
\sum_{q=j}^{j+r-1}\ell_q
=
\frac{\lambda_{\rm adm}}{2}(t_{j+r}-t_j),
\end{displaymath}
which proves \eqref{eq:admissible:splitting:gap}.

Similarly, \eqref{eq:C:regular:product:slow:upper} gives
\begin{displaymath}
\bigl\|B(j,r)|_{\mathcal S_j}\bigr\|
\leq
4^{r}
\prod_{q=j}^{j+r-1}
\theta_{q+1}^{-1}\sigma_{k+1}(B_q).
\end{displaymath}
Taking logarithms and using the admissible contraction condition gives
\begin{displaymath}
\log
\bigl\|B(j,r)|_{\mathcal S_j}\bigr\|
\leq
2 r\log 2
-
\sum_{q=j}^{j+r-1}\lambda_{\rm ctr}\ell_q.
\end{displaymath}
Increasing $N_0$ if necessary, we may assume
\begin{displaymath}
2 \log 2
\leq
\frac{\lambda_{\rm ctr}}{2}\ell_q
\end{displaymath}
for every admissible block.  Therefore
\begin{displaymath}
\log
\bigl\|B(j,r)|_{\mathcal S_j}\bigr\|
\leq
-\frac{\lambda_{\rm ctr}}{2}
\sum_{q=j}^{j+r-1}\ell_q
=
-\frac{\lambda_{\rm ctr}}{2}(t_{j+r}-t_j),
\end{displaymath}
which proves \eqref{eq:admissible:stable:contraction}.
\end{proof}

\subsection{From admissibility to uniformly tempered splittings}
\label{ssec:admissible:to:uniformly:tempered}

We now pass from hyperbolicity along the admissible block times to a
one-step uniformly tempered splitting.

\begin{lemma}[Interpolation inside one admissible block]
\label{lemma:interpolation:domination:inside:block}
Let
\begin{displaymath}
T_r:E_r\to E_{r+1},
\qquad
0\le r<L,
\end{displaymath}
be linear isomorphisms between Euclidean spaces. Suppose that we are given
equivariant splittings
\begin{displaymath}
E_r=U_r\oplus S_r,
\qquad
T_rU_r=U_{r+1},
\qquad
T_rS_r=S_{r+1}.
\end{displaymath}
Put
\begin{displaymath}
T(r):=T_{r-1}\cdots T_0:E_0\to E_r,
\qquad
T(0)=\Id,
\end{displaymath}
and
\begin{displaymath}
T(a,b):=T_{b-1}\cdots T_a:E_a\to E_b.
\end{displaymath}
Assume that, for some $\Lambda_{\rm dom},\Lambda_{\rm st}>0$,
\begin{equation}
\label{eq:interpolation:block:domination}
\log
\frac{
\mathfrak m\bigl(T(0,L)|_{U_0}\bigr)
}{
\bigl\|T(0,L)|_{S_0}\bigr\|
}
\ge
\Lambda_{\rm dom}L,
\end{equation}
and
\begin{equation}
\label{eq:interpolation:block:stable}
\bigl\|T(0,L)|_{S_0}\bigr\|
\le
e^{-\Lambda_{\rm st}L}.
\end{equation}
Let
\begin{displaymath}
0<\lambda<\frac12\min\{\Lambda_{\rm dom},\Lambda_{\rm st}\}.
\end{displaymath}
Then there are norms
\begin{displaymath}
\|\cdot\|'_r,
\qquad
0\le r\le L,
\end{displaymath}
on $E_r$  such that: 
\begin{enumerate}
\item $\|  \cdot\|'_0$ agrees with the Euclidean norm on $U_0$ and on $S_0$.
\item $\| \cdot \|'_L$ agrees with the Euclidean norm on $U_L$ and on $S_L$.
\item For every $s\in S_r$,
\begin{equation}
\label{eq:interpolation:stable:one:step}
\|T_rs\|'_{r+1}\le e^{-\lambda}\|s\|'_r.
\end{equation}

\item For every $s\in S_r$ and $u\in U_r$ satisfying
\begin{displaymath}
\|s\|'_r=\|u\|'_r=1,
\end{displaymath}
one has
\begin{equation}
\label{eq:interpolation:domination:one:step}
\|T_rs\|'_{r+1}
\le
e^{-\lambda}\|T_ru\|'_{r+1}.
\end{equation}

\item The norms come from inner products and the splitting is
  orthogonal with respect to these inner products:
\begin{equation}
\label{eq:interpolation:adapted:transverse}
(\|u+s\|'_r)^2 = (\|u\|'_r)^2 + (\|s\|'_r)^2
\end{equation}
for every $u\in U_r$ and $s\in S_r$.

\item Put
\begin{displaymath}
\alpha_0:=\angle(U_0,S_0),
\qquad
\alpha_L:=\angle(U_L,S_L).
\end{displaymath}
If
\begin{equation}
\label{eq:interpolation:block:tameness}
\max_{0\le r\le L}
\log\max\left\{
\|T(0,r)\|,\|T(0,r)^{-1}\|
\right\}
\le K_{\rm blk}L,
\end{equation}
then there is a constant $C_*=C_*(d,K_{\rm blk},\lambda)$ such that, for
every $0\le r\le L$ and every $w\in E_r$,
\begin{equation}
\label{eq:interpolation:norm:comparison}
C_*^{-1}\min\{\alpha_0,\alpha_L\}e^{-C_*L}\|w\|
\le
\|w\|'_r
\le
C_*\max\{\alpha_0^{-1},\alpha_L^{-1}\}e^{C_*L}\|w\|.
\end{equation}
\end{enumerate}
\end{lemma}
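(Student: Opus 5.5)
We construct the adapted norms separately on $U_r$ and on $S_r$, by an explicit interpolation of the defect, and then declare the splitting orthogonal, so that (5) holds by definition.

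On $S_r$ we set, for $s\in S_r$,
\[
\|s\|'_r:=e^{\lambda r}\,\bigl\|T(r,L)s\bigr\|\,e^{\,(L-r)\Lambda_{\rm st}}\cdot e^{-(L-r)(\Lambda_{\rm st}-\lambda)}\cdot c_r ,
\]
but a cleaner choice is the standard Lyapunov-norm sum adapted to a finite block:
\[
\|s\|'_r:=\max\Bigl(\,e^{-\lambda(L-r)}\|T(r,L)s\|\,e^{\lambda L}\cdot e^{-\lambda L}\ ,\ \ldots\Bigr).
\]
This needs care to get exact agreement at both ends, so we commit instead to a geometric interpolation. The plan is to give $S_0$ its Euclidean inner product, transport it along $T(0,r)$, and multiply by the scalar factor $e^{\phi_r}$, where $\phi_r$ is linear in $r$. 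Write $q_r$ for the transported form, $q_r(s)=\|T(0,r)^{-1}s\|^2$. At $r=L$ the form $q_L$ differs from the Euclidean form on $S_L$ by an arbitrary positive matrix $P=(T(0,L)T(0,L)^\dagger)^{-1}|_{S_L}$, whose eigenvalues are all at least $e^{2\Lambda_{\rm st}L}$ by (\ref{eq:interpolation:block:stable}). We interpolate the defect by matrix powers. Define the form on $S_r$ by pulling back to $S_L$ via $T(r,L)$, and take there the form
\[
\langle P^{(L-r)/L}\cdot,\cdot\rangle\, e^{-2\lambda(L-r)}\cdot(\text{correction}).
\]
More simply: on $S_L$ set $g_r:=\langle P^{-r/L}\,\cdot,\cdot\rangle$, then define $\|s\|'_r:=g_r(T(r,L)s)^{1/2}$.

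We check the endpoints. At $r=L$ this gives the Euclidean norm. At $r=0$ it gives $\langle P^{-1}T(0,L)s,T(0,L)s\rangle=\|s\|^2$, as required. For the one-step estimate, $\|T_rs\|'_{r+1}/\|s\|'_r$ is, with $y=T(r,L)s$, the ratio $g_{r+1}(y)/g_r(y)$. Since $P^{-1/L}$ commutes with these powers and has norm at most $e^{-2\Lambda_{\rm st}}$, this ratio is at most $e^{-2\Lambda_{\rm st}}\le e^{-2\lambda}$, which gives (\ref{eq:interpolation:stable:one:step}).

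On $U_r$ we do the same with $P_U$, the analogous matrix for $T(0,L)|_{U_0}$, giving norms $\|u\|'_r$ with one-step expansion at least $\mathfrak m(T(0,L)|_{U_0})^{1/L}$. The key point is the following: $\|T_rs\|'_{r+1}/\|s\|'_r\le\|P^{-1/L}\|^{1/2}=\|T(0,L)|_{S_0}\|^{1/L}$, and $\|T_ru\|'_{r+1}/\|u\|'_r\ge \mathfrak m(T(0,L)|_{U_0})^{1/L}$, both holding exactly because each step multiplies by the same $L$-th root. Taking the quotient and using (\ref{eq:interpolation:block:domination}) gives domination by $e^{-\Lambda_{\rm dom}}\le e^{-\lambda}$, which is (\ref{eq:interpolation:domination:one:step}).

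Finally we put $\|u+s\|'^2_r:=\|u\|'^2_r+\|s\|'^2_r$. Items (1) and (2) hold because on each summand we recover the Euclidean norm at $r=0$ and $r=L$.

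For (6), the hard step, the plan is to bound $g_r$ on each summand. Using (\ref{eq:interpolation:block:tameness}), the eigenvalues of $P$ and $P_U$ lie in $[e^{-2K_{\rm blk}L},e^{2K_{\rm blk}L}]$, and so do those of their fractional powers. Similarly, $T(r,L)=T(0,L)T(0,r)^{-1}$ has norm and inverse norm at most $e^{2K_{\rm blk}L}$. This gives $e^{-CL}\|w\|\le\|w\|'_r\le e^{CL}\|w\|$ for $w$ in $S_r$ or in $U_r$.

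To pass to a general $w=u+s$, we need the Euclidean projection norms $\|u\|,\|s\|\le \|w\|/\sin\angle(U_r,S_r)$. Here we bound $\angle(U_r,S_r)$ from below via $T(0,r)$ applied to the splitting at time $0$: the angle shrinks by at most a factor $\|T(0,r)\|\|T(0,r)^{-1}\|\le e^{2K_{\rm blk}L}$. This yields $\sin\angle(U_r,S_r)\gtrsim \alpha_0e^{-2K_{\rm blk}L}$; alternatively, one can argue from time $L$ using $\alpha_L$. The upper bound in (\ref{eq:interpolation:norm:comparison}) then follows from the triangle inequality, and the lower bound from $\|w\|\le\|u\|+\|s\|$. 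This is where $\min$ and $\max$ of $\alpha_0,\alpha_L$ appear. The dependence of $C_*$ on $\lambda$ is not actually needed, which is harmless.
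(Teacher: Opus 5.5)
Your approach works and differs from the paper's, but the formula you finally commit to has its exponent reversed. With $g_r=\langle P^{-r/L}\cdot,\cdot\rangle$ you get $g_0=\langle\cdot,\cdot\rangle$ and $g_L=\langle P^{-1}\cdot,\cdot\rangle$. So $\|s\|'_0=\|T(0,L)s\|$, and $\|\cdot\|'_L$ is not Euclidean. Your endpoint check $\langle P^{-1}T(0,L)s,T(0,L)s\rangle=\|s\|^2$ is false for a $P$ with eigenvalues $\ge e^{2\Lambda_{\rm st}L}$.

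Use instead the exponent you wrote one line earlier:
\begin{itemize}
\item Put $A_S:=T(0,L)|_{S_0}\colon S_0\to S_L$ and $P:=(A_SA_S^{*})^{-1}$, an operator on $S_L$ with the adjoint taken for the induced inner products. This is the compression of $(T(0,L)T(0,L)^\dagger)^{-1}$ to $S_L$, and the fractional powers must be those of this operator on $S_L$.
\item Set $g_r:=\langle P^{1-r/L}\cdot,\cdot\rangle$. Then $g_L$ is Euclidean and $g_0(A_Ss)=\|A_S^{-1}A_Ss\|^2=\|s\|^2$.
\item With $y=T(r,L)s$ and $z=P^{(1-r/L)/2}y$ you get $g_{r+1}(y)=\langle P^{-1/L}z,z\rangle\le\|A_S\|^{2/L}g_r(y)$. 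Likewise $g^U_{r+1}(y)\ge\mathfrak m(A_U)^{2/L}g^U_r(y)$ for $A_U:=T(0,L)|_{U_0}$.
\end{itemize}
After this fix your remaining steps go through: the tameness bounds put each summand within $e^{\pm 3K_{\rm blk}L}$ of Euclidean, and the transversality bound finishes item (6). To make the transversality step precise, note that $\pi_{U_r}^{S_r}=T(0,r)\,\pi_{U_0}^{S_0}\,T(0,r)^{-1}$, and similarly for $\pi_{S_r}^{U_r}$. Hence both projection norms at time $r$ are at most $e^{2K_{\rm blk}L}/\sin\alpha_0$, which is all you use. The two abandoned formulas at the start should simply be deleted.

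\emph{Comparison with the paper.} The paper transports the Euclidean norms from time $0$ with scalar weights, $\|T(0,r)s_0\|'_r=e^{-\lambda r}\|s_0\|$ and $\|T(0,r)u_0\|'_r=e^{\lambda r}\|u_0\|$ for $0<r<L$, and reverts to the Euclidean norm at $r=L$. The whole mismatch is thus absorbed in the final step. Checking that step needs $\Lambda_{\rm st}>\lambda$ and $\Lambda_{\rm dom}L\ge\lambda L+\lambda(L-1)$; this is the source of the hypothesis $\lambda<\frac12\min\{\Lambda_{\rm dom},\Lambda_{\rm st}\}$, and it makes $C_*$ depend on $\lambda$.

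Your geometric-mean interpolation spreads the defect evenly over the block. Each step contracts $S$ by $\|A_S\|^{1/L}\le e^{-\Lambda_{\rm st}}$ and dominates by $(\|A_S\|/\mathfrak m(A_U))^{1/L}\le e^{-\Lambda_{\rm dom}}$. So you get the sharp per-step rates, need only $\lambda\le\min\{\Lambda_{\rm dom},\Lambda_{\rm st}\}$, and your $C_*$ is independent of $\lambda$. The price is functional calculus for a positive operator, and the commutation of its powers that makes the step ratios uniform. The paper's construction is purely scalar, and its verification of items (3)--(4) is a two-line interior computation plus one check at $r=L-1$. For the paper's application either version suffices, since both satisfy items (1)--(2) and therefore glue across consecutive admissible blocks.
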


\begin{proof}
We construct the norms separately on the two equivariant bundles $S_r$ and
$U_r$, and then combine them by taking the orthrogonal direct sum.

First consider the stable spaces. At the endpoints $r=0$ and $r=L$, let
$\|\cdot\|'_{r,S}$ be the Euclidean norm on $S_r$. For $0<r<L$, if
\begin{displaymath}
s=T(0,r)s_0,
\qquad
s_0\in S_0,
\end{displaymath}
define
\begin{equation}
\label{eq:interpolation:stable:norm:def}
\|s\|'_{r,S}:=e^{-\lambda r}\|s_0\|.
\end{equation}
We claim that
\begin{equation}
\label{eq:interpolation:stable:constructed}
\|T_rs\|'_{r+1,S}\le e^{-\lambda}\|s\|'_{r,S}
\qquad
(s\in S_r,\ 0\le r<L).
\end{equation}
If $0\le r<L-1$ and $s=T(0,r)s_0$, then
\begin{displaymath}
T_rs=T(0,r+1)s_0,
\end{displaymath}
and therefore
\begin{displaymath}
\|T_rs\|'_{r+1,S}
=
e^{-\lambda(r+1)}\|s_0\|
=
e^{-\lambda}\|s\|'_{r,S}.
\end{displaymath}
For the last step, let $r=L-1$ and write $s=T(0,L-1)s_0$. Since the norm at
time $L$ is Euclidean,
\begin{displaymath}
\|T_{L-1}s\|'_{L,S}
=
\|T(0,L)s_0\|.
\end{displaymath}
By \eqref{eq:interpolation:block:stable},
\begin{displaymath}
\|T(0,L)s_0\|
\le
e^{-\Lambda_{\rm st}L}\|s_0\|.
\end{displaymath}
On the other hand,
\begin{displaymath}
\|s\|'_{L-1,S}=e^{-\lambda(L-1)}\|s_0\|.
\end{displaymath}
Thus
\begin{displaymath}
\|T_{L-1}s\|'_{L,S}
\le
e^{-\Lambda_{\rm st}L+\lambda(L-1)}
\|s\|'_{L-1,S}.
\end{displaymath}
Since
\begin{displaymath}
-\Lambda_{\rm st}L+\lambda(L-1)
=
-\lambda-(\Lambda_{\rm st}-\lambda)L
\le -\lambda,
\end{displaymath}
we get \eqref{eq:interpolation:stable:constructed}.

Now define the unstable norms. At the endpoints $r=0$ and $r=L$, let
$\|\cdot\|'_{r,U}$ be the Euclidean norm on $U_r$. For $0<r<L$, if
\begin{displaymath}
u=T(0,r)u_0,
\qquad
u_0\in U_0,
\end{displaymath}
define
\begin{equation}
\label{eq:interpolation:unstable:norm:def}
\|u\|'_{r,U}:=e^{\lambda r}\|u_0\|.
\end{equation}
For $0\le r<L-1$, if $u=T(0,r)u_0$, then
\begin{displaymath}
T_ru=T(0,r+1)u_0,
\end{displaymath}
and hence
\begin{equation}
\label{eq:interpolation:unstable:expands:intermediate}
\|T_ru\|'_{r+1,U}
=
e^\lambda\|u\|'_{r,U}.
\end{equation}

We now verify the one-step domination. Let $0\le r<L$ and suppose
\begin{displaymath}
s\in S_r,\qquad u\in U_r,
\qquad
\|s\|'_{r,S}=\|u\|'_{r,U}=1.
\end{displaymath}
For $0\le r<L-1$, \eqref{eq:interpolation:stable:constructed} gives
\begin{displaymath}
\|T_rs\|'_{r+1,S}\le e^{-\lambda},
\end{displaymath}
while \eqref{eq:interpolation:unstable:expands:intermediate} gives
\begin{displaymath}
\|T_ru\|'_{r+1,U}=e^\lambda.
\end{displaymath}
Therefore
\begin{displaymath}
\|T_rs\|'_{r+1,S}
\le
e^{-\lambda}\|T_ru\|'_{r+1,U}.
\end{displaymath}

It remains to check the last step $r=L-1$. Write
\begin{displaymath}
s=T(0,L-1)s_0,
\qquad
u=T(0,L-1)u_0.
\end{displaymath}
The normalization
\begin{displaymath}
\|s\|'_{L-1,S}=1
\end{displaymath}
means
\begin{displaymath}
\|s_0\|=e^{\lambda(L-1)}.
\end{displaymath}
Similarly,
\begin{displaymath}
\|u\|'_{L-1,U}=1
\end{displaymath}
means
\begin{displaymath}
\|u_0\|=e^{-\lambda(L-1)}.
\end{displaymath}
Since the endpoint norms at time $L$ are Euclidean,
\begin{displaymath}
\|T_{L-1}s\|'_{L,S}
=
\|T(0,L)s_0\|
\le
\|T(0,L)|_{S_0}\|\,e^{\lambda(L-1)},
\end{displaymath}
and
\begin{displaymath}
\|T_{L-1}u\|'_{L,U}
=
\|T(0,L)u_0\|
\ge
\mathfrak m(T(0,L)|_{U_0})\,e^{-\lambda(L-1)}.
\end{displaymath}
Thus the desired inequality
\begin{displaymath}
\|T_{L-1}s\|'_{L,S}
\le
e^{-\lambda}\|T_{L-1}u\|'_{L,U}
\end{displaymath}
follows if
\begin{displaymath}
\|T(0,L)|_{S_0}\|\,e^{\lambda(L-1)}
\le
e^{-\lambda}
\mathfrak m(T(0,L)|_{U_0})\,e^{-\lambda(L-1)}.
\end{displaymath}
Equivalently, it is enough that
\begin{displaymath}
\log
\frac{
\mathfrak m(T(0,L)|_{U_0})
}{
\|T(0,L)|_{S_0}\|
}
\ge
\lambda L+\lambda(L-1).
\end{displaymath}
By the block domination hypothesis \eqref{eq:interpolation:block:domination},
the left-hand side is at least $\Lambda_{\rm dom}L$. Since
$\lambda<\frac12\Lambda_{\rm dom}$, we have
\begin{displaymath}
\lambda L+\lambda(L-1)
\le
2\lambda L
<
\Lambda_{\rm dom}L.
\end{displaymath}
This proves the last-step domination estimate.

Finally define the norm on $E_r=U_r\oplus S_r$ by
\begin{equation}
\label{eq:interpolation:sum:norm:def}
(\|u+s\|'_r)^2
:= (\|u\|'_{r,U})^2+(\|s\|'_{r,S})^2,
\qquad
u\in U_r,\ s\in S_r.
\end{equation}
Then \eqref{eq:interpolation:stable:one:step},
\eqref{eq:interpolation:domination:one:step}, and
\eqref{eq:interpolation:adapted:transverse} follow immediately from the
corresponding estimates on $S_r$ and $U_r$.

It remains to prove the comparison with the Euclidean norms. We first compare
on the stable spaces. If $s=T(0,r)s_0$, then
\begin{displaymath}
\|s\|'_{r,S}=e^{-\lambda r}\|s_0\|.
\end{displaymath}
Since
\begin{displaymath}
\|T(0,r)\|\le e^{K_{\rm blk}L},
\qquad
\|T(0,r)^{-1}\|\le e^{K_{\rm blk}L},
\end{displaymath}
we have
\begin{displaymath}
\|s_0\|\le e^{K_{\rm blk}L}\|s\|,
\qquad
\|s\|\le e^{K_{\rm blk}L}\|s_0\|.
\end{displaymath}
Since $0\le r\le L$, it follows that
\begin{displaymath}
e^{-(K_{\rm blk}+\lambda)L}\|s\|
\le
\|s\|'_{r,S}
\le
e^{K_{\rm blk}L}\|s\|.
\end{displaymath}

The unstable comparison is analogous. If $u=T(0,r)u_0$, then
\begin{displaymath}
\|u\|'_{r,U}=e^{\lambda r}\|u_0\|.
\end{displaymath}
Using the same bounds for $T(0,r)$ and $T(0,r)^{-1}$, we get
\begin{displaymath}
e^{-K_{\rm blk}L}\|u\|
\le
\|u\|'_{r,U}
\le
e^{(K_{\rm blk}+\lambda)L}\|u\|.
\end{displaymath}

The norm \eqref{eq:interpolation:sum:norm:def} is an orthogonal direct
sum norm with respect to
the splitting $E_r=U_r\oplus S_r$. Its comparison with the Euclidean norm
therefore also involves the norms of the projections associated to this
splitting. These projection norms are bounded in terms of
$\angle(U_r,S_r)^{-1}$. Since
\begin{displaymath}
U_r=T(0,r)U_0,
\qquad
S_r=T(0,r)S_0,
\end{displaymath}
and the condition number of $T(0,r)$ is at most $e^{2K_{\rm blk}L}$,
the endpoint angle lower bounds and \eqref{eq:interpolation:block:tameness}
imply
\begin{displaymath}
\angle(U_r,S_r)^{-1}
\le
C_d\,e^{C_dK_{\rm blk}L}
\max\{\alpha_0^{-1},\alpha_L^{-1}\}.
\end{displaymath}
Combining this with the stable and unstable comparisons above gives, after
increasing $C_*=C_*(d,K_{\rm blk},\lambda)$,
\begin{displaymath}
C_*^{-1}\min\{\alpha_0,\alpha_L\}e^{-C_*L}\|w\|
\le
\|w\|'_r
\le
C_*\max\{\alpha_0^{-1},\alpha_L^{-1}\}e^{C_*L}\|w\|.
\end{displaymath}
This is \eqref{eq:interpolation:norm:comparison}.
\end{proof}

\begin{proposition}[Admissible sequences give uniformly tempered splittings]
\label{prop:admissible:to:uniformly:tempered}
There exist constants $C_0=C_0(d,k)\ge1$ and
$L_0=L_0(d,k,\lambda_{\rm adm},\lambda_{\rm ctr})\ge1$ with the following
property. Suppose that $A_i$ has a 
$(C_{\rm adm},\epsilon_{\rm adm},\lambda_{\rm adm},\lambda_{\rm ctr},
K_{\rm blk},N_{\rm adm})$-admissible tail in dimension $k$, with $C_{\rm
  adm}\ge C_0$, and $N_{\rm adm} > 2 L_0$.
Let the admissible times be
\begin{displaymath}
0 = t_0 <  t_1 < t_2 < \dots.
\end{displaymath}
Then the sequence $A_i$ admits a $(C,\lambda,\epsilon)$-uniformly tempered
splitting in dimension $k$, where $\lambda>0$ depends only on
$\lambda_{\rm adm}$ and $\lambda_{\rm ctr}$, and
\begin{displaymath}
\epsilon=O_{d,k,K_{\rm blk},\lambda_{\rm adm}, \lambda_{\rm ctr}}(\epsilon_{\rm adm}). 
\end{displaymath}
The constant
$C<\infty$ may depend on
$C_{\rm adm},N_{\rm adm},K_{\rm blk},d,k$ and on the $A_j$ for $0 \le
j <  t_1$. 
\end{proposition}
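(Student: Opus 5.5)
We combine the block-level hyperbolicity of \autoref{prop:hyperbolicity:admissible} with the within-block interpolation of \autoref{lemma:interpolation:domination:inside:block}, and then treat the initial segment $[0,t_1)$ separately.

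\emph{Splitting on $[t_1,\infty)$.} Take $C_0$ at least the threshold of \autoref{prop:hyperbolicity:admissible}. That proposition then gives equivariant fast and slow spaces $\mathcal U_j,\mathcal S_j\subset V_{t_j}$ for $j\ge1$. Transport them inside each block by the partial products, setting $\mathcal U(i):=A(t_j,i-t_j)\mathcal U_j$ and $\mathcal S(i):=A(t_j,i-t_j)\mathcal S_j$ for $t_j\le i\le t_{j+1}$. Equivariance $B_j\mathcal U_j=\mathcal U_{j+1}$ and $B_j\mathcal S_j=\mathcal S_{j+1}$ makes these definitions consistent at the endpoints.

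On each block $[t_j,t_{j+1}]$, with $r=1$, \eqref{eq:admissible:splitting:gap} and \eqref{eq:admissible:stable:contraction} give the hypotheses \eqref{eq:interpolation:block:domination} and \eqref{eq:interpolation:block:stable}, with $\Lambda_{\rm dom}=\lambda_{\rm adm}/2$, $\Lambda_{\rm st}=\lambda_{\rm ctr}/2$ and $L=\ell_j$. Block tameness \eqref{eq:admissible:block:tameness} supplies \eqref{eq:interpolation:block:tameness}. Fix $\lambda:=\frac15\min\{\lambda_{\rm adm},\lambda_{\rm ctr}\}$ and apply \autoref{lemma:interpolation:domination:inside:block}.

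The endpoint norms must agree between consecutive blocks. The lemma only specifies the norm on $\mathcal U$ and on $\mathcal S$ separately, so at each $t_j$ we use the orthogonal sum of the Euclidean norms on $\mathcal U_j$ and $\mathcal S_j$; the norms from the two adjacent blocks then coincide. Contraction \eqref{eq:uniformly:tempered:stable:contraction}, one-step domination \eqref{eq:uniformly:tempered:one:step:domination} and orthogonality \eqref{eq:uniformly:tempered:angle} hold for $i\ge t_1$ directly from the lemma.

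\emph{Tempering, the main step.} For $t_j\le i\le t_{j+1}$, \eqref{eq:interpolation:norm:comparison} bounds the distortion between $\|\cdot\|'_i$ and the Euclidean norm by
\[
C_*\max\{\alpha_{t_j}^{-1},\alpha_{t_{j+1}}^{-1}\}e^{C_*\ell_j}.
\]
Here $\alpha_{t_j}=\angle(\mathcal U_j,\mathcal S_j)$, and \eqref{eq:admissible:splitting:angle} gives $\alpha_{t_j}^{-1}\le C_{\rm split}e^{2K_{\rm blk}\epsilon_{\rm adm}t_j}$.

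The length condition \eqref{eq:admissible:length} gives $\ell_j\le N_{\rm adm}+\epsilon_{\rm adm}t_j$, hence also $t_{j+1}\le(1+\epsilon_{\rm adm})t_j+N_{\rm adm}$. The total distortion is therefore at most
\[
C\exp\bigl((2K_{\rm blk}(1+\epsilon_{\rm adm})+C_*)\epsilon_{\rm adm}\,i\bigr),
\]
with $C$ depending on $N_{\rm adm}$, $C_{\rm split}$ and $C_*$. Since $C_*=C_*(d,K_{\rm blk},\lambda)$, this yields $\epsilon=O(\epsilon_{\rm adm})$ with the stated dependence.

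The subtle point is that $C_*$ must not depend on the block. This holds because $\lambda$ is fixed in terms of $\lambda_{\rm adm}$ and $\lambda_{\rm ctr}$ alone. The hypothesis $N_{\rm adm}>2L_0$, which forces $\ell_j\ge L_0$, is there to absorb the $\log 2$-type additive losses already used in \autoref{prop:hyperbolicity:admissible}.

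\emph{Initial segment $[0,t_1)$.} Pull the splitting back by $A(0,t_1)=B_0$, setting $\mathcal U(i):=A(i,t_1-i)^{-1}\mathcal U_1$ and similarly for $\mathcal S(i)$, for $0\le i<t_1$. Define norms as in the proof of \autoref{lemma:expanding:initial:map}: pull back $\|\cdot\|'_{t_1}$ along $A(i,t_1-i)$, and rescale the stable part by $e^{\lambda(t_1-i)}$ so that the stable directions contract by exactly $e^{-\lambda}$ and the unstable maps are isometries. This gives contraction and domination on the initial segment.

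The comparison constants on this segment depend only on $\|A_j^{\pm1}\|$ for $j<t_1$, on $t_1$, and on $\angle(\mathcal U_1,\mathcal S_1)$, which is controlled via \eqref{eq:admissible:splitting:angle} at $t_1$. Enlarging $C$ accordingly, which is permitted since $C$ may depend on the $A_j$ for $j<t_1$, completes the construction of the $(C,\lambda,\epsilon)$-uniformly tempered splitting.
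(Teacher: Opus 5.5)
Your proposal is correct and follows the paper's proof essentially step for step. The shared steps are: block-time splittings from \autoref{prop:hyperbolicity:admissible}; equivariant transport inside blocks; \autoref{lemma:interpolation:domination:inside:block} with norms glued at each $t_j$ as the orthogonal sum of the Euclidean norms on $\mathcal U_j$ and $\mathcal S_j$; tempering from \eqref{eq:admissible:splitting:angle} together with $\ell_j\le N_{\rm adm}+\epsilon_{\rm adm}t_j$; and the same pullback construction on $[0,t_1)$ with the stable part rescaled by $e^{\lambda(t_1-i)}$. One point in your favour: you track that $C_*$ depends on $\lambda$, and hence that $\epsilon$ depends on $\lambda_{\rm adm},\lambda_{\rm ctr}$. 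The paper's bookkeeping suppresses this by writing $C_2=C_2(d,k,K_{\rm blk})$.
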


\begin{proof}
Choose
\begin{displaymath}
0<\lambda<
\frac14\min\{\lambda_{\rm adm}/2,\lambda_{\rm ctr}/2\}.
\end{displaymath}
We choose $L_0$ large enough so that
$2L_0$ is at least the lower bound for $N_{\rm adm}$ required in
\autoref{prop:hyperbolicity:admissible}.

By Proposition~\ref{prop:hyperbolicity:admissible}, after increasing $C_0$
and $L_0$ if necessary, there are equivariant splittings
$V_{t_j}=\mathcal U_j\oplus\mathcal S_j$ at the admissible times such that
\begin{equation}
\label{eq:admissible:to:ut:block:domination}
\log
\frac{
\mathfrak m\bigl(B_j|_{\mathcal U_j}\bigr)
}{
\|B_j|_{\mathcal S_j}\|
}
\ge
\frac12\lambda_{\rm adm}(t_{j+1}-t_j)
\end{equation}
and
\begin{equation}
\label{eq:admissible:to:ut:block:stable}
\|B_j|_{\mathcal S_j}\|
\le
\exp\left(-\frac12\lambda_{\rm ctr}(t_{j+1}-t_j)\right).
\end{equation}
Moreover,
\begin{equation}
\label{eq:admissible:to:ut:block:angle}
\log\angle(\mathcal U_j,\mathcal S_j)
\ge
-C_{\rm spl}-2K_{\rm blk}\epsilon_{\rm adm}t_j.
\end{equation}

We extend the splitting to all intermediate times by equivariance. If
$t_j\le s \le t_{j+1}$, set
\begin{displaymath}
\mathcal U(s):=A(t_j,s-t_j)\mathcal U_j,
\qquad
\mathcal S(s):=A(t_j,s-t_j)\mathcal S_j.
\end{displaymath}
Thus
\begin{displaymath}
\mathcal U(t_j)=\mathcal U_j,
\qquad
\mathcal S(t_j)=\mathcal S_j.
\end{displaymath}
Moreover,
\begin{displaymath}
A_i\mathcal U(i)=\mathcal U(i+1),
\qquad
A_i\mathcal S(i)=\mathcal S(i+1)
\end{displaymath}
for every $i\geq t_1$.

On each admissible interval $[t_j,t_{j+1}]$ with $j \ge  1$, apply
Lemma~\ref{lemma:interpolation:domination:inside:block} to the maps
\begin{displaymath}
A_{t_j},A_{t_j+1},\dots,A_{t_{j+1}-1}
\end{displaymath}
and to the equivariant splitting just defined. The block hypotheses
\eqref{eq:interpolation:block:domination} and
\eqref{eq:interpolation:block:stable} are precisely
\eqref{eq:admissible:to:ut:block:domination} and
\eqref{eq:admissible:to:ut:block:stable}, while
\eqref{eq:interpolation:block:tameness} follows from
\eqref{eq:admissible:block:tameness}. 
We obtain norms $\|\cdot\|'_i$ on each block interval.  The constructions
agree at common endpoints: at time $t_j$, both adjacent blocks give
\begin{displaymath}
(\|u+s\|'_{t_j})^2 
=(\|u\|)^2+(\|s\|)^2,
\qquad
u\in\mathcal U_j,\quad s\in\mathcal S_j.
\end{displaymath}
Thus the blockwise norms define a single norm $\|\cdot\|'_i$ at every
time $i \ge t_1$.

By construction, for every $i \ge  t_1$ and every $s\in\mathcal S(i)$,
\begin{equation}
\label{eq:interpolation:stable:contraction}
\|A_is\|'_{i+1}\le e^{-\lambda}\|s\|'_i.
\end{equation}
Also, if $s\in\mathcal S(i)$ and $u\in\mathcal U(i)$ satisfy
$\|s\|'_i=\|u\|'_i=1$, then
\begin{equation}
\label{eq:interpolation:gap}
  \|A_is\|'_{i+1} \le e^{-\lambda}\|A_iu\|'_{i+1}.
\end{equation}
Thus the stable contraction and one-step domination requirements in
Definition~\ref{def:uniformly:tempered} hold.

For $0=t_0\leq i<t_1$ we use a simpler pullback construction.  Define
\begin{displaymath}
\mathcal U(i):=A(i,t_1-i)^{-1}\mathcal U(t_1),
\qquad
\mathcal S(i):=A(i,t_1-i)^{-1}\mathcal S(t_1).
\end{displaymath}
For $u\in\mathcal U(i)$ and $s\in\mathcal S(i)$, put
\begin{displaymath}
\|u\|_{i,\mathcal U}'
:= \|A(i,t_1-i) u\|_{t_1}',
\qquad
\|s\|_{i,\mathcal S}'
:=
e^{\lambda(t_1-i)}\|A(i,t_1-i) s\|_{t_1}',
\end{displaymath}
and define
\begin{displaymath}
(\|u+s\|_i')^2
:=(
\|u\|_{i,\mathcal U}')^2 + (\|s\|_{i,\mathcal S}')^2.
\end{displaymath}
Then, (\ref{eq:interpolation:stable:contraction}) and
(\ref{eq:interpolation:gap}) hold for $0 \le i <  t_1$  as well. 
Moreover, for all $i \ge 0$, $u \in \mathcal U(i)$, $s  \in  \mathcal S(i)$,   
\begin{displaymath}
(\|u+s\|_i')^2 =(\|u\|_i')^2+(\|s\|_i')^2
\end{displaymath}
so the orthogonality condition holds.

It remains to check that the adapted norms are subtempered relative to the
background Euclidean norms. Since the  constant $C$ in the definition
of uniformly tempered splitting is allowed to depend
on $A_i$ for $i < t_1$, we only need to consider $i \ge t_1$. 
Let $t_j\le i\le t_{j+1}$, so $j \ge 1$.  The admissible length
condition (\ref{eq:admissible:length}) gives
\begin{displaymath}
t_{j+1}-t_j\le \max\{N_{\rm adm},\epsilon_{\rm adm}t_j\} \le N_{\rm
  adm}  +  \epsilon_{\rm adm}t_j. 
\end{displaymath}
The endpoint angle estimate \eqref{eq:admissible:to:ut:block:angle}
applied at both $t_{j+1}$ and $t_j$, the
block-tameness condition \eqref{eq:admissible:block:tameness}, and the
comparison estimate \eqref{eq:interpolation:norm:comparison} imply that, for
some \\
$C_1=C_1(d,k,K_{\rm blk},C_{\rm adm},N_{\rm adm})<\infty$ and
$C_2=C_2(d,k,K_{\rm blk})<\infty$,
\begin{displaymath}
C_1^{-1}e^{-C_2 N_{\rm adm} - C_2\epsilon_{\rm adm} i}\|v\|
\le
\|v\|'_i
\le
C_1e^{C_2 N_{\rm adm} + C_2\epsilon_{\rm adm} i}\|v\|.
\end{displaymath}
Then, 
\begin{equation}
\label{eq:admissible:to:ut:norm:tempering}
C^{-1}e^{-\epsilon i}\|v\|
\le
\|v\|'_i
\le
Ce^{\epsilon i}\|v\|,
\end{equation}
where $C = C_1  e^{C_2 N_{\rm adm}}$,  $\epsilon:=C_2\epsilon_{\rm adm}$.
By construction, the norms come  from an inner product and the two
summands are orthogonal.  
Therefore the splitting is $(C,\lambda,\epsilon)$-uniformly tempered. This
proves the proposition.
\end{proof}

\begin{proof}[Proof of \autoref{prop:good:block:events}]
Choose $R_{\rm init}$ sufficiently large that
\autoref{lemma:abstract:margulis:return:tails} and
\autoref{prop:parameterized:one:block:selection} apply, and fix, for example,
\begin{displaymath}
\epsilon_{\rm init}:=1.
\end{displaymath}
Let $K_{\rm blk} = 2 K_{\rm sel}$ where $K_{\rm sel}$ is as in
Proposition~\ref{prop:parameterized:one:block:selection} with
$R=R_{\rm init}$.  Then,
$K_{\rm blk}$ depends only on $R_{\rm init}$ and the constants in
\S\ref{ssec:abstract:good:event}. 
Let $\lambda_{\rm adm}$ and $\lambda_{\rm ctr}$ be the constants from
\autoref{lemma:almost:sure:admissible:tail}; these are the same constants
used in \autoref{lemma:good:prefix:admissible:extension}. All of these
constants depend only on $R_{\rm init}$ and the constants in 
\S\ref{ssec:abstract:good:event}.

Let
\begin{displaymath}
\lambda=\lambda(\lambda_{\rm adm}, \lambda_{\rm ctr}) >0, \quad
C_0  = C_0(d,k)>0, \quad L_0 =  L_0(d,k,\lambda_{\rm adm}, \lambda_{\rm ctr}) > 0,
\end{displaymath}
be as in Proposition~\ref{prop:admissible:to:uniformly:tempered}.

We first prove {\rm (a)}.
Fix $\epsilon>0$.  Choose
$\epsilon_{\rm adm}>0$ sufficiently small that the exponent produced by
by \autoref{prop:admissible:to:uniformly:tempered} satisfies
\begin{displaymath}
O_{d,k,K_{\rm blk},\lambda_{\rm adm},  \lambda_{\rm ctr}}(\epsilon_{\rm adm}) \le \epsilon.
\end{displaymath}
Then, choose
\begin{displaymath}
C_{\rm adm} \ge \max\{ C_{\rm reg}^\ast, C_0 \}
\end{displaymath}
and choose
\begin{displaymath}
N_{\rm adm} \ge  \max\left\{N_\ast(\epsilon_{\rm adm}, C_{\rm adm},
  R_{\rm init}), 2L_0 + 1 \right\}, 
\end{displaymath}
where $N_\ast(\cdot,\cdot,\cdot)$ is as in
Lemma~\ref{lemma:almost:sure:admissible:tail}. 

By \autoref{lemma:almost:sure:admissible:tail}, almost every sample path has a\\
$(C_{\rm adm},\epsilon_{\rm adm},
\lambda_{\rm adm},\lambda_{\rm ctr},
K_{\rm blk},N_{\rm adm})$-admissible tail.
Our choices of $C_{\rm adm}$ and $N_{\rm adm}$ ensure that
the hypotheses of \autoref{prop:admissible:to:uniformly:tempered} hold.
Applying that proposition gives a
$(C,\lambda,\epsilon)$-uniformly tempered splitting.
The construction in
\autoref{lemma:almost:sure:admissible:tail} (or
\autoref{lemma:abstract:margulis:return:tails}) also gives
\begin{displaymath}
H_n<R_{\rm init}
\end{displaymath}
for infinitely many $n$.
By Corollary~\ref{cor:subexponential:height} with
$\eta=\delta_\Phi$, almost surely there is a finite $C_H=C_H(\omega)$
such that $H_i\le C_H e^{\delta_\Phi\sqrt i}$ for every $i\ge0$.
Thus \eqref{eq:abstract:systole:height} gives
\[
\Phi_i\le(C_H/\kappa_\Phi)^{1/\delta_\Phi}e^{\sqrt i}
\qquad(i\ge1).
\]
Enlarging the common prefactor $C(\omega,\epsilon)$ to include
$(C_H/\kappa_\Phi)^{1/\delta_\Phi}$ completes the proof of part~{\rm (a)}.

We now prove {\rm (b)}.  Fix $\epsilon>0$.
Let $\eta_0>0$ be as \S\ref{sssec:def:event}, and let $\eta_1>0$  be
as in Lemma~\ref{lemma:good:prefix:admissible:extension}. For $0 <
\eta < \min(\eta_0,\eta_1)$, let $C_\eta$,
$\delta_\eta$ be as in (\ref{eq:good:event:delta:C}). Let
$\epsilon_{\rm adm}(\eta)$ be as in
Lemma~\ref{lemma:good:prefix:admissible:extension}. 
Let $N=N(\eta,R_{\rm init},\epsilon_{\rm init})$ be as in
\S\ref{sssec:def:event}. In view of
(\ref{eq:N:lower:bound:in:terms:of:eta}), we can choose $\eta > 0$
small enough so that
\begin{equation}
\label{eq:tmp:C:eta:C:reg:C0}
\frac{1}{2} C_\eta > \max\left(C_{\rm reg}^\ast, C_0\right),
\end{equation}
\begin{equation}
\label{eq:tmp:N:L0}
N > 2 L_0 + 1
\end{equation}
and so that that the exponent produced by
by \autoref{prop:admissible:to:uniformly:tempered} satisfies
\begin{displaymath}
O_{d,k,K_{\rm blk}, \lambda_{\rm adm}, \lambda_{\rm
    ctr}}(\epsilon_{\rm adm}(\eta)) \le \epsilon.
\end{displaymath}
Let $\cO$ be the open set defined in \S\ref{sssec:def:event} for this
choice of $\eta$, $N$.  
Set $p_0:=\frac12$.
The estimate  (\ref{eq:good:block:events:extension})
is exactly \autoref{lemma:good:event:positive:probability},  applied
at $t = s_m$ with $E=\im_k(A(s_{m-1},s_m-s_{m-1}))$. 

Let $n$ be sufficiently large and suppose that $\mathcal
E^{(n)}(s_0,s_1, \dots, s_n)$ occurs. 
By \autoref{lemma:good:prefix:admissible:extension}, conditionally on
$\mathcal F_{s_n}$, outside an event of probability at most
$e^{-\beta\sqrt n}$, the continuation is
$(\tfrac{1}{2}C_\eta,\epsilon_{\rm adm},
\lambda_{\rm adm},\lambda_{\rm ctr},
K_{\rm blk},N)$-admissible. Let
\begin{displaymath}
0 = t_0 <  t_1 = 1 < t_2 < \dots
\end{displaymath}
be the admissible times. Write for $j \ge 1$,
\begin{displaymath}
\ell_j:=t_{j+1}-t_j \qquad B_j:=A(t_j,\ell_j):V_{t_j}\to V_{t_{j+1}}.
\end{displaymath}
In  view of (\ref{eq:tmp:C:eta:C:reg:C0}) and (\ref{eq:tmp:N:L0}),
we can apply \autoref{prop:admissible:to:uniformly:tempered}. We get a
$(C_{\rm temp},\lambda,\epsilon)$-uniformly tempered splitting.
The constant
$C_{\rm temp}$ may depend on $A_0$, but not on the rest of the
sequence $A_i$. 

Take
$\mathcal T^{(n)}_{\rm old}$ to be the success event constructed in the proof of
Lemma~\ref{lemma:good:prefix:admissible:extension}, and put
$C_\Phi:=(R_{\rm init}/\kappa_\Phi)^{1/\delta_\Phi}$, 
$C_*=C_\Phi A_\Phi^N$.  We define
\[
\mathcal T^{(n)}=\mathcal T_{\rm old}^{(n)}\cap
\{\Phi_i\le C_*e^{\sqrt i}\text{ for every }i\ge s_n\}.
\]
By \eqref{eq:abstract:systole:height},
$\Phi_{s_j}\leq C_\Phi$ for $1 \le j \le n$. Since for $1 \le  j  \le
n-1$, $s_{j+1}-s_j  \le  N$, iterating
\eqref{eq:abstract:systole:growth} gives
\begin{equation}
\label{eq:prefix:bound}
\Phi_i\leq C_\Phi A_\Phi^N \le C_* \qquad\text{for $1\leq i\leq s_n$.}
\end{equation}
On $\mathcal E^{(n)}$ we have $H_{s_n}\le R_{\rm init}$.
Corollary~\ref{cor:subexponential:height}, with $T=s_n$,
$R=R_{\rm init}$, $\eta=\delta_\Phi$ and
$A=\kappa_\Phi C_*^{\delta_\Phi}$, together with
\eqref{eq:abstract:systole:height}, gives
\[
\mathbb P(\exists i\ge s_n:\Phi_i>C_*e^{\sqrt i}
\mid\mathcal F_{s_n})\le K e^{-\delta_\Phi\sqrt{s_n}/2},
\]
with $K$ independent of $n$ and the history.
Since $s_n\ge1+(n-1)N/2$, a union bound with the failure estimate for
$\mathcal T_{\rm old}^{(n)}$ proves
\eqref{eq:good:block:events:tempered:extension} for $\mathcal T^{(n)}$,
after decreasing $\beta$ and increasing the minimum $n$.
Together with (\ref{eq:prefix:bound}), this gives
$\Phi_i\le C_{\rm temp}e^{\sqrt i}$ for all $i\ge1$, after enlarging
$C_{\rm temp}$ to at least $C_*$.
The inclusion $\mathcal T^{(n)}\subseteq\mathcal T_{\rm old}^{(n)}$
also ensures infinitely many returns below $R_{\rm init}$.

It remains to estimate the stable space at time $0$.  Put
\begin{displaymath}
K_0 = \ker_{d-k}(A_0), \qquad
  \theta_1:=
\angle\bigl(\im_k(A_0),\ker_{d-k}(B_1)\bigr).
\end{displaymath}
Since the chosen sequence is $\cO$-adapted,
(\ref{eq:good:event:open:angle}) implies 
\begin{displaymath}
\theta_1\geq \delta_\eta.
\end{displaymath}

Choose $R_\eta\geq1$ sufficiently large that
\begin{equation}
\label{eq:tmp:R:eta:delta:eta}
R_\eta\delta_\eta\geq \frac{1}{2} C_\eta.
\end{equation}
Suppose first that
\begin{equation}
\label{eq:tmp:Delta:k:A0}
\Delta_k(A_0)\geq R_\eta.
\end{equation}
Then, we claim that the sequence of matrices
$A_0, B_1, B_2, \dots$
is $\tfrac{1}{2} C_\eta$-regular. Indeed, the sequence $B_1,B_2,
\dots$ is $\tfrac{1}{2}C_\eta$-regular by the definition of
admissibility, and the extra condition
\begin{displaymath}
\Delta_k(A_0) \theta_1 \ge \frac{1}{2} C_\eta
\end{displaymath}
needed to prepend $A_0$ holds by (\ref{eq:tmp:R:eta:delta:eta}) and
(\ref{eq:tmp:Delta:k:A0}). 

Then, by Lemma~\ref{lemma:C:regular:fast:slow:basic} conclusion
(\ref{eq:C:regular:slow:subspace:near:kernel}), 
\begin{displaymath}
\dist_{\Gr}\bigl(\mathcal S_0,K_0 \bigr)
\leq
\frac{16}{\Delta_k(A_0)\theta_1}
\leq
16\,\delta_\eta^{-1}\Delta_k(A_0)^{-1}.
\end{displaymath}
where $\cS_0$ is the slow subspace defined in
Definition~\ref{def:C:regular:fast:slow:spaces}.

Note that $\cS_1 = \cS(1)$ by construction (since $t_1 = 1$). Since
$\cS(0) = A_0^{-1} \cS(1)$ and $\cS_0 = A_0^{-1}  \cS_1$, it follows
that $\cS_0 = \cS(0)$. Therefore,
\begin{displaymath}
\dist_{\Gr}\bigl(\mathcal S(0),K_0\bigr)
\leq 16\,\delta_\eta^{-1}\Delta_k(A_0)^{-1}.
\end{displaymath}

Suppose instead that
\begin{displaymath}
\Delta_k(A_0)<R_\eta.
\end{displaymath}
Let $D_{d,k}$ be the diameter of the Grassmannian.  Since
$\Delta_k(A_0)>e^{\epsilon_{\rm init}}>1$,
\begin{displaymath}
\min\left\{1,\Delta_k(A_0)^{-1}\right\}
=
\Delta_k(A_0)^{-1}
\geq R_\eta^{-1}.
\end{displaymath}
Therefore
\begin{displaymath}
\dist_{\Gr}\bigl(\mathcal S(0),K_0\bigr)
\leq
D_{d,k}R_\eta
\min\left\{1,\Delta_k(A_0)^{-1}\right\}.
\end{displaymath}

Thus the required estimate holds with
\begin{displaymath}
C_{\rm init}
:=
\max\left\{
16 \delta_\eta^{-1},
D_{d,k}R_\eta
\right\},
\end{displaymath}
which is independent of the fixed prefix $A_0$.
This proves {\rm (b)} and completes the proof.
\end{proof}

\section{Non-concentration of stable manifolds}
\label{sec:non:concentration}

\begin{theorem}
\label{theorem:non:concentration}
Assume the dynamical assumptions of Section~\ref{ssec:dynamical:assumptions}
and the additional dynamical assumptions of
Section~\ref{ssec:extra:dynamical:assumptions}. Let $x\in M$, and let
$H\subset T_xM$ be a hyperplane. Then, with positive probability,
\begin{displaymath}
T_x\mathcal W_{d-k}(x,\omega)\not\subset H.
\end{displaymath}
\end{theorem}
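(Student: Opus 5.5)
The plan is to prove the stronger statement that, for $\mu^{\otimes\mathbb N}$-a.e.\ $\omega\in\Omega_\infty(x)$, one has $T_x\mathcal W_{d-k}(x,\omega)\not\subset H$. We argue by contradiction. Suppose that on a set of positive probability $S(0):=T_x\mathcal W_{d-k}(x,\omega)=\mathcal S(0)\subset H$. By \autoref{theorem:Good-stables-everywhere} and \autoref{prop:good:block:events}(a), a.e.\ such $\omega$ carries a $(C,\lambda,\epsilon)$-uniformly tempered splitting $T_{x_i}M=\mathcal U(i)\oplus\mathcal S(i)$ with $\epsilon$ as small as we like, together with the systole bound $\Phi_i\le Ce^{\sqrt i}$. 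Write $D_n:=Df^n_\omega(x)$ and $H_n:=D_nH$. The idea is to compute the growth of the one-dimensional quotient $T_xM/H\to T_{x_n}M/H_n$ in two ways and obtain incompatible exponents.

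\emph{Upper bound (from the extra assumptions).} The scalar cocycle $\alpha_d(f,\cdot)-\alpha_{d-1}(f,H_i)$ is exactly the log-growth of the quotient line bundle, and $H_i$ is $\mathcal F_i$-measurable. By \eqref{eq:codim:one:uniform:expansion} and \eqref{eq:codim:zero:uniform:expansion} (with $N=1$ and $\epsilon_0<C/4$), its conditional expectation is at most $-C+\epsilon_0$. It is also bounded by $B(1+\log H_i)$ via \eqref{eq:height:controlled:operator:norm}. Applying \autoref{prop:azuma:margulis} to the negated increments, and then Borel--Cantelli, we get almost surely
\[
\log\|D_n|_{T/H}\|\le -\tfrac{C}{2}n \quad\text{for all large } n .
\]

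\emph{Lower bound (from (FS) and $S\subset H$).} Using $\det=\mathrm{Jac}_S\cdot\mathrm{Jac}_{T/S}$ and $\mathrm{Jac}_H=\mathrm{Jac}_S\cdot\mathrm{Jac}_{H/S}$, which is legitimate because $S\subset H$ and $S$ is equivariant, the quotient growth equals $\mathrm{Jac}_{T/S}/\mathrm{Jac}_{H/S}$. Identifying $T/S\cong\mathcal U$, this is the growth of $D_n$ on the line $\mathcal U(0)/(\mathcal U(0)\cap H)$, up to factors $e^{\pm O(\epsilon)n}$ coming from the tempered angles \eqref{eq:uniformly:tempered:norms}. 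Hence it is at least $e^{-O(\epsilon)n}\,\mathfrak m(D_n|_{\mathcal U(0)})$. Domination together with tempered angles gives $\mathfrak m(D_n|_{\mathcal U(0)})\ge e^{-O(\epsilon)n}\sigma_k(D_n)$.

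It then remains to show $\log\sigma_k(D_n)\ge -2\epsilon_0 n$ eventually, almost surely. For this, take $\psi_i:=\int\alpha'_{k-1}(A_i,D_iF)\,d\rho(F)$, averaged over the invariant measure on $\Flag'_{k-1}(T_xM)$. This is $\mathcal F_i$-measurable, has conditional mean at most $\epsilon_0$ by (FS) \eqref{eq:uniform:gap:positive}, and sums by the cocycle identity to $-\log\sigma_k(D_n)+O(1)$ by \autoref{lemma:avg-gamma-sigmak}. Applying \autoref{prop:azuma:margulis} again (upper deviation) gives the claim. Combining, the quotient grows at least at rate $e^{-(2\epsilon_0+O(\epsilon))n}$. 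Choosing $\epsilon_0,\epsilon\ll C$, this contradicts the upper bound. Hence $\mathcal S(0)\not\subset H$ almost surely, and in particular with positive probability.

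The main obstacle I expect is the comparison step $\mathfrak m(D_n|_{\mathcal U(0)})\gtrsim e^{-O(\epsilon)n}\sigma_k(D_n)$, together with the identification of the quotient growth with the $\mathcal U$-restriction. This requires turning the adapted-norm estimates of the tempered splitting into Euclidean singular-value statements with only subexponential loss. I would first try to use the adapted norms directly: in them the splitting is orthogonal, so by min--max $\sigma_k$ computed in adapted norms equals $\mathfrak m$ on $\mathcal U$ once domination holds, and the norm comparison \eqref{eq:uniformly:tempered:norms} costs only $C^2e^{2\epsilon n}$. A secondary technical point is that the Azuma bounds only control the partial sums along $\mathcal F_i$-measurable flags. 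This is why I average over all flags rather than use $\mathcal U(i)$, which depends on the future through the block construction.
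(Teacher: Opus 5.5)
Your argument is correct. It proves more than the theorem, namely $\mathbb P\bigl(T_x\mathcal W_{d-k}(x,\omega)\subset H\bigr)=0$, and it gets there by a different probabilistic route from the paper's.

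The deterministic core is the same inequality in dual form: on $\{S\subset H\}$, $\alpha_d(D_n)-\alpha_{d-1}(D_n,H)\ge\log\sigma_k(D_n)-O(\epsilon n)-O(1)$. The paper obtains it from $\Jac_{d-1}(D_n|_H)\le\Jac_{d-k}(D_n|_S)\,\|\wedge^{k-1}D_n\|$ together with \autoref{lemma:stable:volume:bottom:singular}. You obtain it from $T/S\cong\mathcal U$ and $\mathfrak m(D_n|_{\mathcal U(0)})\ge C^{-4}e^{-2\epsilon n}\sigma_k(D_n)$. Your adapted-norm min--max argument does establish the latter, by the same mechanism as \autoref{lemma:unstable:volume:top:singular}.

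The real difference is how the three drift hypotheses enter. The paper uses them only in expectation. The bound $\mathbb E\,\alpha_{d-1}(D_n,H)\ge Cn$ comes from the tower property alone, and (FS) and the volume condition enter as expectation bounds along multiples of $N_1,N_2$. So the paper needs no concentration inequality and no further convolution power. Its costs are two: the pathwise error must be dominated by $K\sup_nU_n$ with $\mathbb E\sup_nU_n<\infty$ so that reverse Fatou applies, and expectations of a pathwise inequality can be taken only if it holds almost surely. That is why the paper can only refute $\mathbb P(S\subset H)=1$.

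You instead turn each drift into an almost-sure linear bound via \autoref{prop:azuma:margulis}, and then compare pathwise on an arbitrary positive-measure event. You should record the routine points this requires:
\begin{itemize}
\item The codimension-one, codimension-zero and (FS) conditions must all hold with one-step conditional drift. So pass to a common convolution power first, and choose the tempering exponent afterwards. This is harmless, since $\mathcal S(0)=\lim_n\ker_{d-k}D_n$ does not depend on that exponent.
\item \autoref{prop:azuma:margulis} requires $H_0\le R$. So start it at the almost surely finite first return to $\{h<R\}$ and let $L\to\infty$; this, rather than Borel--Cantelli, gives the almost-sure statements.
\item Your $\psi_i$ are $\mathcal F_{i+1}$-measurable, with $\mathcal F_i$-measurable flags, not $\mathcal F_i$-measurable as written.
\item The quotient rate you actually obtain is $(C-\epsilon_0)/2$ rather than $C/2$, which does not affect the contradiction.
\end{itemize}
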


\begin{lemma}[Unstable volume realizes the top singular volume]
\label{lemma:unstable:volume:top:singular}
There exists $C_0=C_0(d,k)<\infty$ with the following property.

Let
\begin{displaymath}
A_i:V_i\to V_{i+1},
\qquad
i\ge 0,
\end{displaymath}
be a sequence of linear isomorphisms admitting a
$(C,\lambda,\epsilon)$-uniformly tempered splitting
\begin{displaymath}
V_i=\mathcal U(i)\oplus\mathcal S(i),
\qquad
\dim\mathcal U(i)=k,
\qquad
\dim\mathcal S(i)=d-k.
\end{displaymath}
Set
\begin{displaymath}
A(0,n):=A_{n-1}\cdots A_0.
\end{displaymath}
Then, for every $n\ge1$,
\begin{equation}
\label{eq:unstable:top:singular:volume}
\log\Jac_k\bigl(A(0,n)|_{\mathcal U(0)}\bigr)
\ge
\sum_{j=1}^k\log\sigma_j(A(0,n))
-
C_0(C+\epsilon n).
\end{equation}
\end{lemma}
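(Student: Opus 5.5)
Write $T:=A(0,n)$, and let $\|\cdot\|'_0$, $\|\cdot\|'_n$ be the adapted inner products. The plan is to compare the $k$-volume growth of $\mathcal U(0)$ under $T$ with the top $k$ singular values, first in the adapted metrics and then in the Euclidean ones. The price of changing metrics will be $O_{d,k}(\log C+\epsilon n)$, which is at most $C_0(C+\epsilon n)$ since $\log C\le C$.

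\emph{Step 1 (adapted metrics).} In the adapted inner products the splitting is orthogonal at times $0$ and $n$, and $T$ preserves it. So $T$ is block diagonal:
\[
T=T_U\oplus T_S,\qquad T_U:\mathcal U(0)\to\mathcal U(n),\quad T_S:\mathcal S(0)\to\mathcal S(n).
\]
The singular values of $T$ for $(\|\cdot\|'_0,\|\cdot\|'_n)$ are therefore the union of those of $T_U$ and $T_S$. Iterating the one-step domination \eqref{eq:uniformly:tempered:one:step:domination} gives $\mathfrak m'(T_U)\ge e^{\lambda n}\|T_S\|'\ge\|T_S\|'$; this is the same iteration used in the proof of Lemma~\ref{lemma:uniformly:tempered:kernel:convergence}. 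Hence every singular value of $T_U$ dominates every singular value of $T_S$, and the $k$ largest adapted singular values of $T$ are exactly those of $T_U$. Consequently
\[
\Jac'_k(T|_{\mathcal U(0)})=\prod_{j=1}^k\sigma'_j(T)=\|\textstyle\bigwedge^kT\|'.
\]

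\emph{Step 2 (back to Euclidean metrics).} By \eqref{eq:uniformly:tempered:norms}, the identity map between $(V_i,\|\cdot\|)$ and $(V_i,\|\cdot\|'_i)$ has norm and conorm within $Ce^{\epsilon i}$ of $1$. On $\bigwedge^k$ this distortion becomes at most $(Ce^{\epsilon i})^k$. The quantity $\prod_{j\le k}\sigma_j(T)=\|\bigwedge^kT\|$ is an operator norm, so it changes by at most a factor $(Ce^{\epsilon n})^k C^k$ when passing between the two metric pairs. Likewise, $\Jac_k(T|_{\mathcal U(0)})$ is the ratio of the volume of $T(\text{unit cube in }\mathcal U(0))$ to the volume of the cube itself. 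Each of these $k$-volumes changes by a factor of at most $(Ce^{\epsilon n})^k$ at time $n$ and $C^k$ at time $0$. Combining the two comparisons with Step 1 gives
\[
\log\Jac_k(T|_{\mathcal U(0)})\ge\sum_{j\le k}\log\sigma_j(T)-4k(\log C+\epsilon n),
\]
and we take $C_0=4k$.

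\emph{Main obstacle.} The only delicate point is that in Step 1 orthogonality must hold in the adapted inner products at both ends. The angle between the subspaces in the Euclidean metric could be tiny, and the argument must never use it. Definition~\ref{def:uniformly:tempered} supplies exactly this orthogonality through \eqref{eq:uniformly:tempered:angle}. Step 2 then only uses the norm comparison \eqref{eq:uniformly:tempered:norms}, which is insensitive to angles.
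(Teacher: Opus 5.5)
Your proposal is correct and follows essentially the same route as the paper's proof. Orthogonality in the adapted inner products makes $A(0,n)$ block diagonal. Iterated one-step domination then puts every adapted singular value of the $\mathcal U$-block above those of the $\mathcal S$-block, so $\Jac'_k$ equals the product of the top $k$ adapted singular values. The norm comparison \eqref{eq:uniformly:tempered:norms} transfers this to the Euclidean metrics at a cost of $O_{d,k}(\log C+\epsilon n)\le C_0(C+\epsilon n)$; your explicit exterior-power bookkeeping (giving $C_0=4k$) just spells out the paper's one-line passage back to the background norms.
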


\begin{proof}
We use the adapted norms from the definition of uniformly tempered. Since the
adapted norms are $e^{C+\epsilon i}$-comparable with the background Euclidean
norms, the induced norms on exterior powers are
$e^{C_0(C+\epsilon i)}$-comparable, with $C_0=C_0(d,k)$.

Let
\begin{displaymath}
P:=A(0,n).
\end{displaymath}
The domination part of the uniformly tempered splitting gives, for every
non-zero $u\in\mathcal U(0)$ and every non-zero $s\in\mathcal S(0)$,
\begin{displaymath}
\frac{\|Pu\|_n'}{\|u\|_0'}
\ge
e^{\lambda n}
\frac{\|Ps\|_n'}{\|s\|_0'}.
\end{displaymath}
Equivalently,
\begin{displaymath}
\mathfrak m'\bigl(P|_{\mathcal U(0)}\bigr)
\ge
e^{\lambda n}
\bigl\|P|_{\mathcal S(0)}\bigr\|',
\end{displaymath}
where the primes denote operator norm and conorm with respect to the adapted
norms.

Let
\begin{displaymath}
E:=\mathcal U(0).
\end{displaymath}
The splittings are orthogonal in the adapted inner products, and
$P$ is block diagonal. Its singular values are therefore those of
its two restrictions. The domination estimate above shows that every
singular value of $P|_{\mathcal U(0)}$ exceeds every singular value of
$P|_{\mathcal S(0)}$. Consequently,
\[
\Jac_k'(P|_E)=\prod_{j=1}^k\sigma_j'(P).
\]
Passing back from adapted norms to the background Euclidean norms changes both
sides by at most a factor
\begin{displaymath}
\exp\bigl(C_2(C+\epsilon n)\bigr),
\end{displaymath}
where $C_2=C_2(d,k)$. Hence
\begin{displaymath}
\Jac_k\bigl(P|_{\mathcal U(0)}\bigr)
\ge
\exp\bigl(-C_0(C+\epsilon n)\bigr)
\prod_{j=1}^k\sigma_j(P).
\end{displaymath}
Taking logarithms gives \eqref{eq:unstable:top:singular:volume}.
\end{proof}

\begin{lemma}[Stable volume realizes the bottom singular volume]
\label{lemma:stable:volume:bottom:singular}
There exists $C_0=C_0(d,k)<\infty$ with the following property.

Assume that
\begin{displaymath}
A_i:V_i\to V_{i+1},
\qquad
i\ge0,
\end{displaymath}
is a sequence of linear isomorphisms admitting a
$(C,\lambda,\epsilon)$-uniformly tempered splitting
\begin{displaymath}
V_i=\mathcal U(i)\oplus\mathcal S(i),
\qquad
\dim\mathcal U(i)=k,
\qquad
\dim\mathcal S(i)=d-k.
\end{displaymath}
Set
\begin{displaymath}
A(0,n):=A_{n-1}\cdots A_0.
\end{displaymath}
Then, for every $n\ge1$,
\begin{equation}
\label{eq:stable:bottom:singular:volume}
\log\Jac_{d-k}\bigl(A(0,n)|_{\mathcal S(0)}\bigr)
\le
\sum_{j=k+1}^d\log\sigma_j(A(0,n))
+
C_0(C+\epsilon n).
\end{equation}
\end{lemma}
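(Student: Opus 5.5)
The plan is to mirror the proof of \autoref{lemma:unstable:volume:top:singular}, working in the adapted inner products $\|\cdot\|'_i$ at times $0$ and $n$. Set $P:=A(0,n)$. The first step is to observe that $P$ maps $\mathcal U(0)\oplus\mathcal S(0)$ to $\mathcal U(n)\oplus\mathcal S(n)$ block diagonally. Both splittings are orthogonal for the adapted inner products, by the orthogonality condition in \autoref{def:uniformly:tempered}. Hence the adapted singular values $\sigma_j'(P)$ are the union of the singular values of $P|_{\mathcal U(0)}$ and of $P|_{\mathcal S(0)}$.

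The second step is the ordering of these singular values. Iterating the one-step domination condition gives
\[
\mathfrak m'\bigl(P|_{\mathcal U(0)}\bigr)\ge e^{\lambda n}\bigl\|P|_{\mathcal S(0)}\bigr\|'.
\]
So every singular value coming from $\mathcal S(0)$ lies strictly below every singular value coming from $\mathcal U(0)$. The $d-k$ singular values of $P|_{\mathcal S(0)}$ are therefore exactly $\sigma'_{k+1}(P),\dots,\sigma'_d(P)$, and we get the identity
\[
\Jac'_{d-k}\bigl(P|_{\mathcal S(0)}\bigr)=\prod_{j=k+1}^d\sigma_j'(P).
\]

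The third step is to return to the background Euclidean norms. By \eqref{eq:uniformly:tempered:norms}, the norms $\|\cdot\|'_0$ and $\|\cdot\|'_n$ are comparable to the Euclidean ones within factors $Ce^{\epsilon n}$. Two consequences follow. First, each individual singular value changes by at most a factor $C^2e^{2\epsilon n}$; this is seen by writing $P=I_n^{-1}P'I_0$ with comparison maps of norm and conorm controlled, and using $\sigma_j(XYZ)\le\|X\|\sigma_j(Y)\|Z\|$. Second, the $(d-k)$-Jacobian of the restriction to the fixed subspace $\mathcal S(0)$ changes by at most $(C^2e^{2\epsilon n})^{d-k}$, since exterior power norms are comparable with dimensional exponents.

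Putting these together, the product $\prod_{j>k}\sigma_j$ and the Jacobian $\Jac_{d-k}$ each move by a factor at most $\exp\bigl(C_0(\log C+\epsilon n)\bigr)$. Since $\log C\le C$ for $C\ge1$, this gives \eqref{eq:stable:bottom:singular:volume}. The inequality is one-sided only because we are bounding errors; equality holds in the adapted norms. The only point needing care is that the singular-value comparison under a change of inner products is multiplicative per index, and this is standard via min-max. I do not expect a real obstacle; the argument is symmetric to the previous lemma, with the roles of top and bottom singular values exchanged.
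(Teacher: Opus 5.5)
Your argument is correct, but it is not the route the paper takes for this lemma. You redo the block-diagonal analysis of \autoref{lemma:unstable:volume:top:singular} for the bottom block. The paper instead writes $P\circ\Phi_0=\Phi_n\circ\bigl(P|_{\mathcal U(0)}\oplus P|_{\mathcal S(0)}\bigr)$ with $\Phi_i(u,s)=u+s$, which gives the volume identity
\[
\Jac_d(P)=\Jac_k\bigl(P|_{\mathcal U(0)}\bigr)\,\Jac_{d-k}\bigl(P|_{\mathcal S(0)}\bigr)\,\frac{\operatorname{Ang}(\mathcal U(n),\mathcal S(n))}{\operatorname{Ang}(\mathcal U(0),\mathcal S(0))}.
\]
It then bounds $\bigl|\log\operatorname{Ang}(\mathcal U(i),\mathcal S(i))\bigr|$ by $C_1(C+\epsilon i)$ using the tempering. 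Finally it feeds in the \emph{lower} bound of \autoref{lemma:unstable:volume:top:singular} together with $\log\Jac_d(P)=\sum_{j=1}^d\log\sigma_j(P)$. So the paper obtains the stable upper bound as the formal complement of the unstable lower bound.

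Your version is self-contained and avoids the angle factors. It also gives a two-sided estimate, $\bigl|\log\Jac_{d-k}(P|_{\mathcal S(0)})-\sum_{j>k}\log\sigma_j(P)\bigr|\le C_0(C+\epsilon n)$, since equality holds in the adapted norms. The paper's version isolates exactly what is needed: a lower bound on the unstable Jacobian and subexponential control of $\operatorname{Ang}$. It therefore does not use orthogonality of the adapted norms beyond what enters those two inputs.

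Two small points in your write-up:
\begin{itemize}
\item The per-index comparison factor is $C^2e^{\epsilon n}$, not $C^2e^{2\epsilon n}$, since the time-$0$ comparison costs only $C$. This is harmless.
\item $C\ge 1$ is forced by \eqref{eq:uniformly:tempered:norms} at $i=0$, which is what justifies your step $\log C\le C$.
\end{itemize}
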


\begin{proof}
Set
\begin{displaymath}
P:=A(0,n).
\end{displaymath}
Since the splitting is equivariant,
\begin{displaymath}
P\mathcal U(0)=\mathcal U(n),
\qquad
P\mathcal S(0)=\mathcal S(n).
\end{displaymath}
Choose bases of $\mathcal U(0)$ and $\mathcal S(0)$. Comparing the
$d$-dimensional volume of their union before and after applying $P$, we get
\begin{displaymath}
\Jac_d(P)
=
\Jac_k\bigl(P|_{\mathcal U(0)}\bigr)
\Jac_{d-k}\bigl(P|_{\mathcal S(0)}\bigr)
\frac{
\operatorname{Ang}(\mathcal U(n),\mathcal S(n))
}{
\operatorname{Ang}(\mathcal U(0),\mathcal S(0))
},
\end{displaymath}
where $\operatorname{Ang}(\mathcal U,\mathcal S)$ denotes the absolute
Jacobian of the map
\begin{displaymath}
\mathcal U\oplus\mathcal S\to V,
\qquad
(u,s)\mapsto u+s,
\end{displaymath}
computed using Euclidean volume forms.
Note that
\begin{displaymath}
\log\Jac_d(P) = \sum_{j=1}^d \log \sigma_j(P). 
\end{displaymath}
The uniform tempering of the splitting gives
\begin{displaymath}
\left|
\log\operatorname{Ang}(\mathcal U(i),\mathcal S(i))
\right|
\le
C_1(C+\epsilon i)
\end{displaymath}
for all $i$, after increasing $C_1=C_1(d,k)$ if necessary. Hence
\begin{displaymath}
\log\Jac_{d-k}\bigl(P|_{\mathcal S(0)}\bigr)
\le
-\log\Jac_k\bigl(P|_{\mathcal U(0)}\bigr) +  \log \Jac_d(P)
+
C_1(C+\epsilon n).
\end{displaymath}
By Lemma~\ref{lemma:unstable:volume:top:singular},
\begin{displaymath}
\log\Jac_k\bigl(P|_{\mathcal U(0)}\bigr)
\ge
\sum_{j=1}^k\log\sigma_j(P)
-
C_2(C+\epsilon n).
\end{displaymath}
Therefore
\begin{align*}
\log\Jac_{d-k}\bigl(P|_{\mathcal S(0)}\bigr)
  & \leq\log\Jac_d(P)-\sum_{j=1}^k\log\sigma_j(P)+C_0(C+\epsilon n) \\
  & = \sum_{j=k+1}^d \log \sigma_j(P) + C_0(C + \epsilon n). 
\end{align*}
\end{proof}

\begin{proof}[Proof of Theorem~\ref{theorem:non:concentration}]
Let $c_{d-1}>0$ be the constant in the additional dynamical assumption giving
uniform expansion in codimension one. Thus, for every $y\in M$ and every
hyperplane $H_y\subset T_yM$,
\begin{displaymath}
\int \alpha_{d-1}(f,y,H_y)\,d\mu(f)\ge c_{d-1}.
\end{displaymath}

Choose $\epsilon>0$ sufficiently small so that the error term in
Lemma~\ref{lemma:stable:volume:bottom:singular} is negligible compared with
$c_{d-1}n$. More precisely, if $C_0$ is the dimensional constant in
Lemma~\ref{lemma:stable:volume:bottom:singular}, choose $\epsilon$ so that
\begin{displaymath}
(C_0+1)\epsilon<\frac12 c_{d-1}.
\end{displaymath}

Assume, toward contradiction, that the conclusion fails. Then there exist
$x\in M$ and a hyperplane $H\subset T_xM$ such that
\begin{equation}
\label{eq:nonconcentration:failure}
\mathbb P\left(T_x\mathcal W_{d-k}(x,\omega)\subset H\right)=1.
\end{equation}
Let
\begin{displaymath}
S:=T_x\mathcal W_{d-k}(x,\omega).
\end{displaymath}
On a full-probability set, the stable tangent space exists, the derivative
cocycle admits a $(C,\lambda,\epsilon)$-uniformly tempered splitting with
stable space $S$ at time $0$, and the conclusions of
Lemma~\ref{lemma:stable:volume:bottom:singular} hold.

For $n\ge1$, write
\begin{displaymath}
A(n):=D_x(f_{\omega_n}\cdots f_{\omega_1})
\end{displaymath}
and
\begin{displaymath}
x_n:=f_{\omega_n}\cdots f_{\omega_1}x.
\end{displaymath}
Also set
\begin{displaymath}
H_n:=A(n)H\subset T_{x_n}M.
\end{displaymath}
The codimension-one Jacobian cocycle is additive along the propagated
hyperplanes:
\begin{displaymath}
\alpha_{d-1}(A(n),H)
=
\sum_{j=0}^{n-1}
\alpha_{d-1}
\left(
f_{\omega_{j+1}},
x_j,
H_j
\right).
\end{displaymath}
Since $H_j$ is measurable with respect to the past up to time $j$, the
additional dynamical assumption gives
\begin{displaymath}
\mathbb E\left[
\alpha_{d-1}
\left(
f_{\omega_{j+1}},
x_j,
H_j
\right)
\middle|
\mathcal F_j
\right]
\ge c_{d-1}.
\end{displaymath}
Taking expectations and summing, we obtain
\begin{equation}
\label{eq:hyperplane:growth:expectation:lower}
\mathbb E\,\alpha_{d-1}(A(n),H)\ge c_{d-1}n.
\end{equation}

We now prove the opposite upper estimate. By
\eqref{eq:nonconcentration:failure}, we have $S\subset H$ almost surely.
Let $E = H  \cap S^\perp$. Then $E$ is 
a $(k-1)$-dimensional subspace, and we have the orthogonal
decomposition 
\begin{displaymath}
H=S\oplus E.
\end{displaymath}
The comparison between the Euclidean volume on $H$ and the product volume on
$S\oplus E$ gives 
\begin{displaymath}
\Jac_{d-1}\bigl(A(n)|_H\bigr)
\le \Jac_{d-k}\bigl(A(n)|_S\bigr)\,
\left\|\wedge^{k-1}A(n)\right\|.
\end{displaymath}
Taking logarithms gives
\begin{displaymath}
\alpha_{d-1}(A(n),H)
\le
\log\Jac_{d-k}\bigl(A(n)|_S\bigr)
+
\sum_{i=1}^{k-1}\log\sigma_i(A(n)).
\end{displaymath}
By Lemma~\ref{lemma:stable:volume:bottom:singular},
\begin{displaymath}
\log\Jac_{d-k}\bigl(A(n)|_S\bigr)
\le
\sum_{i=k+1}^{d}\log\sigma_i(A(n))
+
C_0(C+\epsilon n).
\end{displaymath}
Therefore
\begin{displaymath}
\alpha_{d-1}(A(n),H)
\le
\sum_{i=1}^{k-1}\log\sigma_i(A(n))
+
\sum_{i=k+1}^{d}\log\sigma_i(A(n))
+
C_1+C_0\epsilon n,
\end{displaymath}
where $C_1<\infty$ may depend on the trajectory through the finite uniformly
tempered constant $C$ but is independent of $n$.

We have
\begin{displaymath}
\sum_{i=1}^{d}\log\sigma_i(A(n))=\alpha_d(A(n),T_xM).
\end{displaymath}
Hence
\begin{equation}
\label{eq:hyperplane:growth:upper}
\alpha_{d-1}(A(n),H)
\le
-\log\sigma_k(A(n)) + \alpha_d(A(n),T_xM) 
+
C_1+C_0\epsilon n.
\end{equation}


Let
\begin{displaymath}
D_n:=\alpha_{d-1}(A(n),H)+\log\sigma_k(A(n))
     -\alpha_d(A(n),T_xM).
\end{displaymath}
By \eqref{eq:hyperplane:growth:upper},
\begin{equation}
\label{eq:Dn:growth}  
D_n\leq C_1+C_0\epsilon n.
\end{equation}
By the generator estimates in
\S\ref{sssec:systole_function} together with
\eqref{eq:def:random:margulis:systole-domination}, for any $f$ in the
support of $\mu$ and any $y \in M$,
\begin{displaymath}
\log(\|D_yf\|+\|(D_yf)^{-1}\|)\leq K'(1+\log h(y)),
\end{displaymath}
where $K'$ depends only on the constants in
\S\ref{ssec:assumptions_on_the_total_space} and the constants in the
definition of the Margulis function $h$. Then,
\begin{displaymath}
\frac{|D_n|}{n}\leq K U_n,\qquad
U_n:=\frac1n\sum_{j=0}^{n-1}(1+\log h(x_j)),
\end{displaymath}
where $K=2dK'$. Choose $R\geq h(x)$ sufficiently large
and apply \autoref{lemma:abstract:height:occupation} with $\beta=1$.
For $U_*:=\sup_{n\geq1}U_n$, a union bound gives, for sufficiently large $s$,
\[
\mathbb P(U_*>s)
\leq K_1(x)\sum_{n\geq1}e^{-a_1\sqrt{ns}}
\leq K_2(x)e^{-a_2\sqrt{s}},
\]
Thus $\mathbb E U_*<\infty$. Let $f_n = K U_* - \frac{D_n}{n}$. Then,
$f_n \ge 0$, and Fatou's lemma gives
$\bE (\liminf f_n) \le  \liminf \bE(f_n)$,
hence
\begin{equation}
\label{eq:limisup:Dn}
\limsup \bE\left(\frac{D_n}{n}\right) \le \bE\left(\limsup \frac{D_n}{n}\right) \le C_0 \epsilon,
\end{equation}
where in the last estimate we used (\ref{eq:Dn:growth}).

Choose $N_1$ in \eqref{eq:uniform:gap:positive}
with error $\epsilon$. Then, by the cocycle property, for every $F$, 
\begin{displaymath}
\mathbb E\alpha'_{k-1}(A(mN_1),F)\leq m\epsilon.
\end{displaymath}
Then, by Lemma~\ref{lemma:avg-gamma-sigmak},
\begin{equation}
\label{eq:Elog:sigmak:subseq}
\mathbb E\log\sigma_k(A(mN_1))
\ge -C_{d,k} -\epsilon mN_1
\qquad
\text{for every }m\ge1.
\end{equation}
By (\ref{eq:codim:zero:uniform:expansion}),  
there exists $N_2 \in \bN$
such that
\begin{equation}
\label{eq:E:alpha:d:subseq}
\mathbb E \alpha_d(A(mN_2))
\le \epsilon mN_2
\qquad
\text{for every }m\ge1.
\end{equation}
Let $N$ be the least common multiple of $N_1$ and $N_2$. Then,
\[
\mathbb E\alpha_{d-1}(A(mN),H)
=-\mathbb E\log\sigma_k(A(mN))
 +\mathbb E\alpha_d(A(mN),T_xM)+\mathbb E D_{mN}.
\]
Dividing both sides by $mN$, and using (\ref{eq:limisup:Dn}),
(\ref{eq:Elog:sigmak:subseq}) and (\ref{eq:E:alpha:d:subseq}), we get
\begin{displaymath}
\limsup_{m \to \infty} \frac{\mathbb E\alpha_{d-1}(A(mN),H)}{mN} \le
(C_0 + 2)\epsilon. 
\end{displaymath}
This contradicts \eqref{eq:hyperplane:growth:expectation:lower} when
$(C_0+2)\epsilon<c_{d-1}$.
This contradiction proves that \eqref{eq:nonconcentration:failure} cannot
hold.

Therefore, for every $x\in M$ and every hyperplane $H\subset T_xM$,
\begin{displaymath}
\mathbb P\left(T_x\mathcal W_{d-k}(x,\omega)\not\subset H\right)>0.
\end{displaymath}
\end{proof}

\section{Words in trees}
\label{sec:words:in:trees}

\begin{theorem}
\label{theorem:ac:conclusion}
Let $M$, $\mu$ be as in \S\ref{ssec:assumptions_on_the_total_space}.
Let $\nu$ be an ergodic probability measure on
$M$ which is invariant by any $f$ in the support of $\mu$. We assume
the following:
\begin{itemize}
\item The  conclusions of
  Theorem~\ref{theorem:Good-stables-everywhere} are satisfied.
\item The conclusions of Theorem~\ref{theorem:good:words} are
  satisfied.
\item The conclusions of Theorem~\ref{theorem:non:concentration} are
  satisfied.
\item The conditional measures of $\nu$ along almost any
  $\cW_{d-k}(x,\omega)$ are in the Lebesgue measure class. 
\end{itemize}
Then, $\nu$ is absolutely continuous with respect to volume on $M$,
with smooth density. Futhermore $\operatorname{supp}\nu=M$.
\end{theorem}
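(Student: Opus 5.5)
I will prove Theorem~\ref{theorem:ac:conclusion} by the mechanism suggested by the section title: trees of words that make stable manifolds through a fixed point sweep out open sets. The plan has three stages.

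\textbf{Stage 1: a good point and good words.} Using ergodicity and the Margulis function, I will first pick a $\nu$-typical point $x$ satisfying three conditions. Its conditional measure on $\cW_{d-k}(x,\tilde w)$ is equivalent to Riemannian volume on that leaf, for $\mu^{\otimes\bN}$-a.e.\ $\tilde w$. Its orbit returns to $\{h\le R_{\rm init}\}$. Some finite prefix $w_0$ satisfies \eqref{eq:initial:x:return:height} and \eqref{eq:initial:x:return:gap}; such a prefix exists by Theorem~\ref{theorem:Good-stables-everywhere}, since $\Delta_k\to\infty$ along good trajectories.

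By Theorem~\ref{theorem:good:words}, for all $x'$ in a small Bowen neighbourhood of $x$, a set $G(w,x')$ of continuations of probability at least $1-e^{-\beta\sqrt n}$ yields stable manifolds through $x'$ with the following properties:
\begin{itemize}
\item uniform size and uniform $C^r$ geometry;
\item tangent spaces within $C_{\rm init}\Delta_k^{-1}$ of $\ker_{d-k}Df_{w_0}(x)$;
\item Jacobian ratios $J_{\tilde w}$ that are smooth and uniformly bounded, by \eqref{eq:good:words:jacobian:bounds} and \eqref{eq:good:words:jacobian:derivatives}.
\end{itemize}

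\textbf{Stage 2: a transverse family of stable plaques.} The key geometric input is Theorem~\ref{theorem:non:concentration}. For every hyperplane $H\subset T_xM$, a positive-probability set of $\omega$ has $T_x\cW_{d-k}(x,\omega)\not\subset H$.

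By compactness of the space of hyperplanes and lower semicontinuity of this event, I will upgrade this to a quantitative statement: finitely many continuations $\omega_1,\dots,\omega_p$, all in the good sets, whose stable tangent spaces at $x$ together span $T_xM$. Non-containment in every hyperplane gives exactly spanning.

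I then use invariance of $\nu$ under each $f$ in $\supp\mu$, not merely stationarity. Because of it, the conditionals on each of these stable families are Lebesgue. The image under the flow of words reaching back near $x$ transports these conditionals.

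Combining the families through a tree of words, I form iterated saturations: $\cW(\omega_1)$-plaques through points of $\cW(\omega_2)$-plaques, and so on. This should produce a smooth map from a product of leaf balls onto an open neighbourhood $U$ of $x$, which is a submersion at the base point because the tangent spaces span. The main obstacle is that the leaves depend only measurably on the base point. I will handle this by restricting, via Lusin, to large compact sets on which the words $w_{\rm next}$ are fixed from the finite set $J$ and the plaques vary continuously. In Theorem~\ref{theorem:good:words} the choice of $w_{\rm next}$ depends only on the finite future and on Bowen balls, which gives local constancy.

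\textbf{Stage 3: absolute continuity, smoothness, and full support.} A Fubini argument over this saturation shows that the restriction $\nu|_U$ dominates, and is dominated by, the pushforward of a product of leaf volumes. The leaf conditionals are Lebesgue with densities given by the Jacobian ratios $J_{\tilde w}(x,\cdot)$, which are uniformly smooth by \eqref{eq:good:words:jacobian:derivatives}. Hence $\nu|_U\ll\Leb$, and the density is positive on a set of positive $\nu$-measure. Ergodicity then gives $\nu\ll\Leb$ globally.

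Smoothness of the density follows from a second observation. Along each stable leaf the density is proportional to the smooth function $J_{\tilde w}$. Since the tangent spaces of the leaves span $T_xM$, the density is smooth in enough directions, and an elliptic-type or Journé-type regularity lemma gives smoothness in all directions.

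Finally, for full support I will argue as follows. The support of $\nu$ is invariant and contains the open set $U$, and the stable manifolds of Theorem~\ref{theorem:Good-stables-everywhere} exist at every point of $M$. Take a boundary point $y$ of $\supp\nu$. Apply the same tree construction at $y$. A positive-probability set of its stable leaves meets the interior of $\supp\nu$, by equidistribution and recurrence of the random walk from $y$. Propagating the open-set conclusion backwards then shows that $\supp\nu$ is open. Since $\supp\nu$ is also closed and $M$ is connected, $\supp\nu=M$.
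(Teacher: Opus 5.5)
Stage 1 and the reduction by ergodicity are fine. The gap is in Stages 2--3, which carry the whole proof.

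\textbf{Why the saturation and Fubini steps fail.} A plaque $\cW_{d-k}(x',\tilde w)$ exists only when $\tilde w\in G(w,x')$, and this set depends on $x'$. For a fixed continuation $\omega_j$, the set of base points where it is good is merely measurable. The map $x'\mapsto\cW_{d-k}(x',\omega_j)$ is at best continuous on Lusin or Pesin compacta, with no differentiable dependence on $x'$. Local constancy of $w_{\rm next}$ does not help, since it says nothing about how $G(w,x')$ or the plaques vary with $x'$.
\begin{itemize}
\item The iterated saturation ``$\cW(\omega_1)$-plaques through points of $\cW(\omega_2)$-plaques'' is therefore not a smooth map on a product of leaf balls, and it is not a submersion. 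Restricted to Lusin compacta, it is defined on a positive-measure compact set and does not produce an open $U$.
\item More seriously, your Fubini step needs absolute continuity of these random (partial) stable laminations, i.e.\ control of holonomy Jacobians in this noncompact setting with zero exponents. This is neither assumed nor proved. Without it, Lebesgue conditionals along a smooth-leaved measurable lamination give no control of the transverse measure (Fubini's nightmare).
\item Your families are $(d-k)$-dimensional and only span $T_xM$ jointly, so even with absolute continuity there is no local product structure on which to run Fubini.
\item The smoothness step fails for the same reason. Journ\'e-type lemmas need continuous foliations and a function uniformly smooth along their leaves. Here the laminations are measurable, and the density is a priori an $L^1$ class whose restriction to a leaf is $J_{\tilde w}$ times an uncontrolled transverse factor.
\item For full support: a stable leaf through a boundary point $y$ meeting the interior of $\supp\nu$ does not make $y$ an interior point, and no equidistribution is available.
\end{itemize}

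\textbf{What the paper does instead.} The missing idea is to avoid any product structure and prove rapid Fourier decay of $\rho\nu$ one frequency direction at a time. This gives absolute continuity and a smooth density at once.
\begin{itemize}
\item For a unit vector $\xi$, non-concentration with $H=\xi^\perp$ plus compactness of the sphere gives a \emph{finite} set $\mathcal I_0$ of finite initial prefixes $w_0$ (not infinite continuations). Through \eqref{eq:good:words:tangent:control}, every good plaque through every point of a fixed ball $B(x_0,r)$ is then uniformly transverse to $\xi^\perp$. One family per direction suffices, and no spanning is needed.
\item Localization uses a tree of partitions of unity pulled back along words of $J$ chosen by $w_{\rm next}$. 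Because these words contract along stable plaques, the summed leafwise $C^b$ norms of the level-$n$ cutoffs grow only like $(1+n)^b$.
\item The conditionals have the Ledrappier--Young form $J_{\tilde w}\,d\Leb$. Integrating by parts $b$ times inside single plaques gives a factor $|\xi|^{-b}(1+n)^b$, and the bad continuations cost $e^{-\beta\sqrt n}$.
\item Taking $n\approx(\log|\xi|)^2$ gives $|\widehat{\rho\nu}(\xi)|\le C_b|\xi|^{-b}$ for every $b$.
\end{itemize}
Full support is proved separately. Take a ball $D\subset M\setminus\supp\nu$ touching $\supp\nu$ at a point $x_0$, and let $\xi$ be the inward normal. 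Uniform transversality forces the good plaques through points near $x_0$ into $D$. Since those conditionals are Lebesgue, this contradicts $\nu(D)=0$.
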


\subsubsection{Main goal}
For the absolute  continuity and smooth density result 
it is clearly enough to show the following: given $x_0 \in M$, there
exists a bump function $\rho$ which is identically $1$ in a
neighborhood of $x_0$, and supported in a small neighborhood of $x_0$
such that the measure $\nu_\rho:=\rho \cdot \nu$ has Fourier decay:
\begin{align}
\label{eq:main_estimate}
\left|\int e^{\sqrt{-1}\langle \xi, y\rangle} \, d\nu_\rho(y) \right| \leqapprox_b \frac{1}{1+|\xi|^b}	
\end{align}
for any $b\geq 1$ and any $\xi$, where we work in a fixed coordinate chart near $x$.

The full support result is established separately in
Corollary~\ref{cor:full:support}.

\subsection{Step 1 (choosing the initial prefix $w_0$)}
\label{ssec:Step-1}

Suppose $x_0 \in M$ is arbitrary.  By
Theorem~\ref{theorem:Good-stables-everywhere} (or alternatively by
Lemma~\ref{lemma:abstract:margulis:return:tails}), for any $x_0
\in M$ there exists a word $w$ with $y_0 := f_w^{|w|}(x_0)$ satisfying
$h(y_0) < R_{\rm init}$.  Then a non-zero smooth  density at $y_0$
propogates back to a non-zero smooth  density at $x_0$.  
Thus, it is enough to establish  the main goal and the non-vanishing
of the density  for $x_0 \in M$
satisfying $h(x_0) < R_{\rm init}$.

Fix a point $x_0 \in \supp \nu$ with $h(x_0) < R_{\rm init}$
and let $\xi$ be a unit vector.
Let $\epsilon>0$, $\beta>0$, and $C_{\rm init}$ be the constants from
Theorem~\ref{theorem:good:words}. Let $\xi^\sharp$ denote the set of
$d-k$-planes contained in $\xi^\perp$. 

Applying Theorem~\ref{theorem:non:concentration} with
$H=\xi^\perp$, we find a positive-measure set $E_\xi$ of infinite
words $\omega$ such that
\[
T_{x_0}\mathcal W_{d-k}(x_0,\omega)\not\subset \xi^\perp
\Longrightarrow T_{x_0}\mathcal W_{d-k}(x_0,\omega)\not\in \xi^\sharp
\]
In particular, for every $\omega\in E_\xi$ there exists
$\delta_\omega>0$ such that
\begin{displaymath}
\dist_{\Gr}
\bigl(T_{x_0}\mathcal W_{d-k}(x_0,\omega),\xi^\sharp\bigr)
>
\delta_\omega>0 .
\end{displaymath}
Since Theorem~\ref{theorem:Good-stables-everywhere} holds for every $x_0$
and almost every $\omega$, we may choose $\omega\in E_\xi$ such that
Theorem~\ref{theorem:Good-stables-everywhere} holds for the pair
$(x_0,\omega)$. Thus
\begin{displaymath}
h(f_\omega^m(x_0)) < R_{\rm init} \qquad\text{ for infinitely  many  $m$},
\end{displaymath}
\begin{displaymath}
\Delta_k(Df_\omega^m(x_0))\to\infty
\end{displaymath}
and
\begin{displaymath}
\ker_{d-k}(Df_\omega^m(x_0))
\to
T_{x_0}\mathcal W_{d-k}(x_0,\omega)
\end{displaymath}
in the Grassmannian. We can therefore choose $m\in\mathbb N$ so large that
\begin{displaymath}
h(f_\omega^m(x_0)) < R_{\rm init},
\end{displaymath}
\begin{displaymath}
8 C_{\rm init}
\min\left(1,\Delta_k(Df_\omega^m(x_0))^{-1}\right)
< \delta_\omega/4
\end{displaymath}
and
\begin{displaymath}
\dist_{\Gr}
\bigl(
\ker_{d-k}(Df_\omega^m(x_0)),
T_{x_0}\mathcal W_{d-k}(x_0,\omega)
\bigr)
< \delta_\omega/2.
\end{displaymath}
It follows that
\begin{equation}
\label{eq:step1:condition}
\dist_{\Gr}
\bigl(
\ker_{d-k}(Df_\omega^m(x_0)),\xi^\sharp
\bigr) 
>  8 C_{\rm init}
\min\left(1,\Delta_k(Df_\omega^m(x_0))^{-1}\right).
\end{equation}
The condition \eqref{eq:step1:condition} is open in $\xi$. Therefore, by
compactness of the unit sphere, there exists a finite set
$\mathcal I_0$ of finite words such that for every unit vector $\xi$,
there exists a word $w_0\in\mathcal I_0$ satisfying
\begin{equation}
\label{eq:step1:finite:condition}
\dist_{\Gr}
\bigl(\ker_{d-k}(Df_{w_0}(x_0)),\xi^\sharp\bigr) 
\ge 8 C_{\rm init}
  \min\left(1,\Delta_k(Df_{w_0}(x_0))^{-1}\right).
\end{equation}
Set
\begin{equation}
\label{eq:step1:kappa0}
\kappa_0
:=
\frac12
\min_{w_0\in\mathcal I_0}
C_{\rm init}
\min\left(1,\Delta_k(Df_{w_0}(x_0))^{-1}\right).
\end{equation}
Since $\mathcal I_0$ is finite and each $Df_{w_0}(x_0)$ is invertible, we
have
\begin{displaymath}
\kappa_0>0.
\end{displaymath}
Since $\mathcal I_0$ is finite and the maps
\begin{displaymath}
y\mapsto Df_{w_0}(y),
\qquad
w_0\in\mathcal I_0,
\end{displaymath}
depend continuously on $y$, we may choose $r>0$ so small that, for every
$w_0\in\mathcal I_0$,
\begin{displaymath}
B(x_0,r)\subset BB(w_0,x_0,\epsilon) \cap B(x_0,\tfrac{1}{2}\rho_{\rm init}(|w_0|))
\end{displaymath}
where $\rho_{\rm init}(\cdot)$ is as in Theorem~\ref{theorem:good:words}, 
and such that, for every $w_0\in\mathcal I_0$ and every $y\in B(x_0,r)$,
\begin{equation}
\label{eq:step1:error:scale:lower}
C_{\rm init}
\min\left(1,\Delta_k(Df_{w_0}(y))^{-1}\right)
\ge
\kappa_0.
\end{equation}
After decreasing $r$ further if necessary, we may also ensure that the
estimate \eqref{eq:step1:finite:condition} persists with a factor of $4$
instead of $8$: for every unit vector $\xi$ there exists
$w_0\in\mathcal I_0$ such that for all $y\in B(x_0,r)$, 
\begin{equation}
\label{eq:step1:finite:condition:robust}
\dist_{\Gr}
\bigl(
\ker_{d-k}(Df_{w_0}(y)),\xi^\sharp
\bigr)
\ge
4 C_{\rm init}
\min\left(1,\Delta_k(Df_{w_0}(y))^{-1}\right)
\end{equation}
In the rest of this subsection, all constants (such as $\kappa_0$,
$r_s$, $r$) may depend on the fixed base point, coordinate chart, and
finite family $\mathcal I_0$.

We now make one final choice of the localization radius. The stable manifolds
appearing in Theorem~\ref{theorem:good:words} have uniformly
bounded $C^2$ geometry. Hence, after decreasing a constant depending only on
the constants in Theorem~\ref{theorem:good:words}, we may choose
$r_s>0$ such that, on every stable manifold supplied by
Theorem~\ref{theorem:good:words} for $w_0 \in \cI_0$ and $x \in
B(x_0,r)$, tangent spaces vary by at most
$\kappa_0$ on intrinsic balls of radius $r_s$. In other words, if
$\tilde w$ is a good extension based at $x$ and
\[
y\in \mathcal W_{d-k}(x,\tilde w),
\qquad
d_{\mathcal W}(x,y)<r_s,
\]
then
\begin{equation}
\label{eq:step1:tangent:variation}
\dist_{\Gr}
\bigl(
T_y\mathcal W_{d-k}(x,\tilde w),
T_x\mathcal W_{d-k}(x,\tilde w)
\bigr)
\le \kappa_0.
\end{equation}
We also choose
\begin{equation}
\label{eq:rs:rho:st}
r_s<\min_{w_0\in\mathcal I_0}\rho_{\rm st}(w_0),
\end{equation}
where $\rho_{\rm st}$ is as in Theorem~\ref{theorem:good:words}. 

We shrink $r>0$ further so that it also has the following three properties.

First, the initial word $w_0$ is uniformly controlled on $B(x_0,r)$:
for every $w_0\in\mathcal I_0$ and every pair of points
$x,y\in B(x_0,r)$, the orbit segments defined by all prefixes of $w_0$
remain within the Bowen radius $\epsilon$ from
Theorem~\ref{theorem:good:words}. Equivalently,
\[
y\in BB(w_0,x,\epsilon)
\qquad
\text{whenever } x,y\in B(x_0,r).
\]
Second, $B(x_0,r)$ is small compared with the stable-plaque radius $r_s$.
More precisely, using again the uniform $C^2$ geometry of the stable
manifolds, we choose $r$ so small that, whenever $x\in B(x_0,r)$ and
$\tilde w$ is a good extension based at $x$, one has
\begin{equation}
\label{eq:step1:plaque:localization}
\mathcal W_{d-k}(x,\tilde w)\cap B(x_0,r)
\subset
B_{\mathcal W}(x,r_s).
\end{equation}
Here $B_{\mathcal W}(x,r_s)$ denotes the intrinsic ball of radius $r_s$ in
the stable manifold $\mathcal W_{d-k}(x,\tilde w)$.

Third, for all $w_0 \in \cI_0$ and every $y  \in  B(x_0,r)$
\begin{equation}
\label{eq:hfw0}
h(f_{w_0}^{|w_0|}(y)) < R_{\rm init}
\end{equation}
Fourth, for all $w_0 \in \cI_0$ and  every $y \in B(x_0,r)$,
\begin{equation}
\label{eq:step1:initial:gap}
\Delta_k(D_y f_{w_0}^{|w_0|}(y)) >  e^{\epsilon_{\rm init}}. 
\end{equation}

Finally choose a smooth bump function $\rho$ supported in $B(x_0,r)$ and
identically equal to $1$ on a smaller neighborhood of $x_0$. This will be
the bump function used in \eqref{eq:main_estimate}.

\subsection{Step 2 (building the tree)}
\label{ssec:Step-2}
For the Fourier decay estimate, 
we fix the direction $\xi$ and an initial word, say
$w_0$, such that (\ref{eq:step1:finite:condition}) holds. 
We will establish the bound \eqref{eq:main_estimate} for all multiples of the given $\xi$.

\subsubsection{A tree}
\label{sssec:A-Tree}

Fix the initial word $w_0\in\mathcal I_0$ chosen in Step~1 for the
direction $\xi$. Let $J$ be the finite collection of words supplied by
Theorem~\ref{theorem:good:words}, and let $\epsilon>0$ be the
Bowen radius appearing there. Recall that the bump function $\rho$ is
supported in $B(x_0,r)$.

We choose a smooth partition of unity
\begin{displaymath}
\{\psi_i\}_{i\in I}
\end{displaymath}
with the following smallness property. If two points lie in the support of
the same $\psi_i$, then, after applying any prefix of any word in $J$, their
images remain within distance $\epsilon$. We also assume, as arranged in
Step~1 by shrinking the radius $r$ if necessary, that any two points in
$\supp\rho$ remain $\epsilon$-close under all prefixes of the initial word
$w_0$.

We now define the tree. The tree has a root vertex, denoted by
$\varnothing$. Vertices at level $n\ge1$ are indexed by multi-indices
\begin{displaymath}
\bbi=(i_1,\ldots,i_n)\in I^n.
\end{displaymath}
There is an edge from $\varnothing$ to each vertex $(i_1)$ at level $1$.
More generally, there is an edge from
\begin{displaymath}
\bbi=(i_1,\ldots,i_n)
\end{displaymath}
to each child
\begin{displaymath}
\bbj=(i_1,\ldots,i_n,i_{n+1}).
\end{displaymath}
We write
\begin{displaymath}
\bbi_{\le k}:=(i_1,\ldots,i_k)
\end{displaymath}
for the prefix vertex of length $k$, and we use the convention
$\bbi_{\le0}=\varnothing$.

The words will be assigned to edges of the tree. We will construct an
edge-labeling
\begin{displaymath}
\ell:\mathcal E_{\mathcal T}\to \{w_0\}\cup J
\end{displaymath}
with the convention that all edges leaving a fixed vertex have the same
label. The labels of all edges leaving the root are fixed to be $w_0$:
\begin{displaymath}
\ell(\varnothing,(i_1)):=w_0
\qquad\text{for all } i_1\in I.
\end{displaymath}
All other edge labels will lie in $J$.

Once the edge-labeling has been chosen, we associate a word $w(\bbi)$ to
each vertex $\bbi$ as follows. For a level-one vertex $(i_1)$, set
\begin{displaymath}
w((i_1)):=w_0.
\end{displaymath}
If $\bbj$ is a child of a non-root vertex $\bbi$, then set
\begin{displaymath}
w(\bbj):=\ell(\bbi,\bbj)\,w(\bbi).
\end{displaymath}
Thus $w(\bbi)$ is obtained by concatenating the labels of the edges along
the path from the root to $\bbi$.

We also define smooth functions $\theta_\bbi:M\to[0,1]$ recursively. At the
root, set
\begin{displaymath}
\theta_\varnothing:=\rho.
\end{displaymath}
For a level-one vertex $(i_1)$, set
\begin{displaymath}
\theta_{(i_1)}
:=
\theta_\varnothing\cdot w((i_1))^*\psi_{i_1}
=
\rho\cdot w_0^*\psi_{i_1}.
\end{displaymath}
More generally, if $\bbj=(\bbi,i_{n+1})$ is a child of a non-root vertex
$\bbi$, set
\begin{displaymath}
\theta_\bbj
:=
\theta_\bbi\cdot w(\bbj)^*\psi_{i_{n+1}}.
\end{displaymath}
Thus each $\theta_\bbi$ contains the initial localization factor $\rho$.

The following elementary lemma records the point of this construction.

\begin{lemma}[Tree atoms are Bowen-small]
\label{lemma:theta:bowen:small}
Let $\bbi$ be a non-root vertex of the tree. If
\begin{displaymath}
x,y\in \supp\theta_\bbi,
\end{displaymath}
then
\begin{displaymath}
y\in BB(w(\bbi),x,\epsilon) \cap B(x,\rho_{\rm init}(|w_0|)).
\end{displaymath}
Furthermore, if $1 \le k \le |\bbi|$, the two points
\begin{displaymath}
f_{w(\bbi_{\le k})}(x),
\qquad
f_{w(\bbi_{\le k})}(y)
\end{displaymath}
lie in the support of the same partition element $\psi_{i_k}$.
\end{lemma}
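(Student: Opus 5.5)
We argue by induction on the level $n=|\bbi|$, using only the recursive definition of $\theta_\bbi$ and the smallness choices made in Step~1 and in the choice of the partition of unity. The key observation is that $\theta_\bbi$ is a product,
\[
\theta_\bbi=\rho\cdot\prod_{k=1}^{|\bbi|} w(\bbi_{\le k})^*\psi_{i_k},
\]
which one reads off by unwinding the recursion. Hence $x\in\supp\theta_\bbi$ implies $x\in\supp\rho$ and $f_{w(\bbi_{\le k})}(x)\in\supp\psi_{i_k}$ for every $1\le k\le|\bbi|$. More precisely, the support of a product is contained in the intersection of the supports, and the support of a pullback is the preimage of the support. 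This already gives the second assertion of the lemma, for both $x$ and $y$.

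For the first assertion, first note that $x,y\in\supp\rho\subset B(x_0,r)$. The choice of $r$ in Step~1, namely $B(x_0,r)\subset B(x_0,\tfrac12\rho_{\rm init}(|w_0|))$, then gives $d(x,y)<\rho_{\rm init}(|w_0|)$ by the triangle inequality. It remains to check the Bowen condition: for every $0\le j\le|w(\bbi)|$ we need $d(f^j_{w(\bbi)}(x),f^j_{w(\bbi)}(y))<\epsilon$. We split the index range according to the decomposition
\[
w(\bbi)=\ell_{|\bbi|-1}\cdots\ell_1 w_0,
\]
where $\ell_k\in J$ is the label of the edge leaving $\bbi_{\le k}$, so that $w(\bbi_{\le k+1})=\ell_k\, w(\bbi_{\le k})$. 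Times $j\le|w_0|$ are prefixes of $w_0$. For these, the first smallness property of $r$ from Step~1 (that $y\in BB(w_0,x,\epsilon)$ whenever $x,y\in B(x_0,r)$) gives the bound. Times $j$ with $|w(\bbi_{\le k})|\le j\le |w(\bbi_{\le k+1})|$ correspond to prefixes of $\ell_k\in J$ applied to the points $f_{w(\bbi_{\le k})}(x)$ and $f_{w(\bbi_{\le k})}(y)$. By the previous paragraph, both of these points lie in $\supp\psi_{i_k}$, and the defining property of the partition of unity then keeps all images under prefixes of words in $J$ within $\epsilon$.

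The only points requiring care are bookkeeping. We have to make sure that the indices match, namely that the label following $w(\bbi_{\le k})$ is governed by $\psi_{i_k}$ and not by $\psi_{i_{k+1}}$. We also have to handle the endpoint times shared by consecutive segments, and the endpoint at the very end of the word, which is covered because the prefix of full length of $\ell_{|\bbi|-1}$ is included. Finally, we must confirm that $f_w^j$ with composition-order concatenation agrees with applying $w_0$ first and then the labels. Once the convention $uv$ equals ``first $v$, then $u$'' is fixed, this is immediate.
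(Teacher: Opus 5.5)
Your proposal is correct and is essentially the paper's own argument. From the product form of $\theta_\bbi$ you read off two facts: both points lie in $\supp\rho\subset B(x_0,r)$, and their images under $f_{w(\bbi_{\le k})}$ lie in $\supp\psi_{i_k}$. The first gives the $\rho_{\rm init}$ bound and, via the Step~1 choice of $r$, $\epsilon$-closeness along $w_0$; the second, combined with the smallness of the partition, controls each subsequent $J$-labeled segment. The announced induction on $|\bbi|$ is never actually used and can be dropped, since your direct argument already covers all times $0\le j\le|w(\bbi)|$.
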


\begin{proof}
Since every $\theta_\bbi$ contains the factor $\rho$, we have
\begin{displaymath}
x,y\in\supp\rho\subset B(x_0,r) \subset B(x_0,\tfrac{1}{2} \rho_{\rm
  init}(|w_0|)). 
\end{displaymath}
Thus, $y \in B(x,\rho_{\rm init}(|w_0|))$. 

By the choice of $r$ in Step~1, the two orbit segments of $x$ and $y$
remain $\epsilon$-close during the initial word $w_0$. 
Now follow the path from the root to $\bbi$. Let $\bbi_{\le k}$ be a
prefix vertex with $1\le k\le |\bbi|$. By the definition of
$\theta_\bbi$, the two points
\begin{displaymath}
f_{w(\bbi_{\le k})}(x),
\qquad
f_{w(\bbi_{\le k})}(y)
\end{displaymath}
lie in the support of the same partition element $\psi_{i_k}$.

If $k<|\bbi|$, then the next edge label along the path is a word in $J$.
By the smallness property of the partition, applying any prefix of this
next edge label keeps the two corresponding images within distance
$\epsilon$. Repeating this argument for each subsequent edge, and combining
it with the initial control during $w_0$, we see that the two orbit
segments remain $\epsilon$-close throughout the whole word $w(\bbi)$.
Therefore
\begin{displaymath}
y\in BB(w(\bbi),x,\epsilon).
\end{displaymath}
\end{proof}

We now choose the remaining edge labels.

\begin{proposition}[Partitions of unity with words]
\label{prop:Partitions-of-Unity-with-Words}
There exists an edge-labeling
\begin{displaymath}
\ell:\mathcal E_{\mathcal T}\to \{w_0\}\cup J
\end{displaymath}
such that all edges leaving a fixed vertex have the same label, all edges
leaving the root have label $w_0$, all other edge labels lie in $J$, and
such that the following holds. For every non-root vertex $\bbi$ and every
$x\in\supp\theta_\bbi$,
\begin{equation}
\label{eq:hfwi}
h(f_{w(\bbi)}^{|w(\bbi)|}(x)) < R_{\rm init}.
\end{equation}
Furthermore,
the word $w(\bbi)$ and the point $x$ satisfy the recursive hypotheses of
Theorem~\ref{theorem:good:words}.
\end{proposition}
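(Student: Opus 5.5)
We build the labeling level by level, by induction on the depth of the tree, using the preferred-word map $w_{\rm next}$ from Theorem~\ref{theorem:good:words}. The edges leaving the root get the label $w_0$. Then \eqref{eq:hfwi} at level one is exactly \eqref{eq:hfw0}, because $\supp\theta_{(i_1)}\subset\supp\rho\subset B(x_0,r)$. The recursive hypotheses of Theorem~\ref{theorem:good:words} with $n=0$ reduce to the initial conditions \eqref{eq:initial:x:return:height} and \eqref{eq:initial:x:return:gap}, and these follow from \eqref{eq:hfw0} and \eqref{eq:step1:initial:gap}.

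For the inductive step, let $\bbi=(i_1,\ldots,i_n)$ be a non-root vertex, and assume the labels along the path to $\bbi$ are chosen with both properties holding at $\bbi$ and its ancestors. The $\psi_{i_n}$-factor of $\theta_\bbi$ tells us that $f_{w(\bbi)}(\supp\theta_\bbi)\subset\supp\psi_{i_n}$. If $\supp\theta_\bbi$ is empty, any label in $J$ will do. Otherwise pick a reference point $x_\bbi\in\supp\theta_\bbi$ and set $y_\bbi:=f_{w(\bbi)}^{|w(\bbi)|}(x_\bbi)$. By the inductive hypothesis $h(y_\bbi)<R_{\rm init}$. Let $u$ be the last edge label on the path, so $u=w_0$ if $n=1$ and $u\in J$ otherwise. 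Label every edge leaving $\bbi$ by $v:=w_{\rm next}(y_\bbi,u)\in J$. This is well defined because $y_\bbi\in K_{\rm init}$, and it makes all edges leaving $\bbi$ carry the same label.

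Now take a child $\bbj=(\bbi,i_{n+1})$ and a point $x\in\supp\theta_\bbj\subset\supp\theta_\bbi$. Lemma~\ref{lemma:theta:bowen:small} shows that $f_{w(\bbi)}(x)$ and $y_\bbi$ both lie in $\supp\psi_{i_n}$. By the smallness property of the partition, every prefix of $v$ keeps their images $\epsilon$-close, so in particular $d\bigl(f_v^{|v|}(y_\bbi),f_{w(\bbj)}^{|w(\bbj)|}(x)\bigr)<\epsilon$. The uniform return property \eqref{eq:good:words:uniform:return} then gives $h(f_{w(\bbj)}^{|w(\bbj)|}(x))<R_{\rm init}$, which is \eqref{eq:hfwi} at $\bbj$.

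It remains to check the recursive hypotheses of Theorem~\ref{theorem:good:words} for the pair $(w(\bbj),x)$. Write $w(\bbj)=w_{n}\cdots w_1w_0$, where $w_m$ is the label of the edge from $\bbj_{\le m}$ to $\bbj_{\le m+1}$. For each $0\le m<n$ we use the witness $x_m:=x_{\bbj_{\le m+1}}$, the reference point chosen at that ancestor. Since $x,x_m\in\supp\theta_{\bbj_{\le m+1}}$, Lemma~\ref{lemma:theta:bowen:small} gives
\[
x_m\in BB(w(\bbj_{\le m+1}),x,\epsilon)\cap B\bigl(x,\rho_{\rm init}(|w_0|)\bigr).
\]
Moreover $h(y_m)<R_{\rm init}$ holds by \eqref{eq:hfwi} at that ancestor, and $w_{m+1}=w_{\rm next}(y_m,w_m)$ holds by construction. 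The initial conditions are again \eqref{eq:hfw0} and \eqref{eq:step1:initial:gap}.

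The main obstacle is a consistency issue. The witnesses must depend only on the ancestor vertex and not on the descendant point $x$, since one label is shared by all children. Choosing one reference point per vertex settles this. The only place this needs care is the Bowen-ball condition, which is taken relative to the ancestor's word $w(\bbj_{\le m+1})$, not the full word. That is exactly what Lemma~\ref{lemma:theta:bowen:small} provides, applied to the ancestor vertex.
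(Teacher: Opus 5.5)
Your proposal is correct and is essentially the paper's proof. It uses one reference point $x_\bbi$ per vertex and the common outgoing label $w_{\rm next}(f_{w(\bbi)}^{|w(\bbi)|}(x_\bbi),u)$. It gets \eqref{eq:hfwi} at a child from the partition smallness together with \eqref{eq:good:words:uniform:return}, and it uses the ancestors' reference points as the witnesses $x_i$ via Lemma~\ref{lemma:theta:bowen:small}. The only cosmetic difference is that you also verify the initial conditions \eqref{eq:initial:x:return:height}--\eqref{eq:initial:x:return:gap} here, which the paper defers to Proposition~\ref{prop:Good-and-Bad-extensions}.
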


\begin{proof}
The labels of all edges leaving the root are fixed to be $w_0$. This
defines $w((i_1))=w_0$ and
\begin{displaymath}
\theta_{(i_1)}=\rho\cdot w_0^*\psi_{i_1}
\end{displaymath}
for every level-one vertex $(i_1)$.

We now choose the labels leaving non-root vertices inductively. Suppose
that $\bbi$ is a non-root vertex for which the labels along the path from
the root to $\bbi$ have already been chosen. Then $w(\bbi)$ and
$\theta_\bbi$ are defined.

If
\begin{displaymath}
\supp\theta_\bbi=\varnothing,
\end{displaymath}
we label all edges leaving $\bbi$ by an arbitrary word in $J$.
Otherwise choose a representative point
\begin{displaymath}
x_\bbi\in\supp\theta_\bbi.
\end{displaymath}
Let
\begin{displaymath}
y_\bbi =  f^{|w(\bbi)|}_{w(\bbi)}(x_\bbi)
\end{displaymath}
By the inductive assumption, $h(y_\bbi) < R_{\rm init}$. 

Let $u(\bbi)$ be the last edge label entering $\bbi$. This, for a
level-one vertex, $u(\bbi) = w_0$, and for deeper vertices, $u(\bbi)
\in J$.  
For every child $\bbj$ of $\bbi$, define the common outgoing label by
\begin{displaymath}
\ell(\bbi,\bbj):=w_{\rm next}(y_\bbi,u(\bbi)).
\end{displaymath}
This is a word in $J$, by Theorem~\ref{theorem:good:words}. Having
chosen this label, define
\begin{displaymath}
w(\bbj)=\ell(\bbi,\bbj)\,w(\bbi)
\end{displaymath}
and
\begin{displaymath}
\theta_\bbj
=
\theta_\bbi\cdot w(\bbj)^*\psi_{i_{n+1}},
\end{displaymath}
where $\bbj=(\bbi,i_{n+1})$.

It remains to verify the recursive hypotheses of
Theorem~\ref{theorem:good:words} and to show that (\ref{eq:hfwi})
holds. To prove (\ref{eq:hfwi}) we use induction on the vertex depth. 
For a level one vertex, the
equation (\ref{eq:hfwi}) follows from (\ref{eq:hfw0}). 

Let $\bbj$ be a non-root vertex,
and let
$
x\in\supp\theta_\bbj.
$
We must check the recursive condition for each non-root proper prefix
vertex $\bbi$ along the path from the root to $\bbj$. Since
$
\supp\theta_\bbj\subset\supp\theta_\bbi,
$
we have
$
x\in\supp\theta_\bbi.
$

Moreover, the representative point $x_\bbi$ chosen at the vertex $\bbi$
also belongs to $\supp\theta_\bbi$. By
Lemma~\ref{lemma:theta:bowen:small},
\begin{displaymath}
x_\bbi\in BB(w(\bbi),x,\epsilon) \cap B(x,\rho_{\rm init}(|w_0|)). 
\end{displaymath}
Since $\bbi$ is a proper prefix
of $\bbj$, the inductive hypothesis applied to
$x_\bbi\in\operatorname{supp}\theta_\bbi$, shows that (\ref{eq:hfwi})
holds. 

By construction, the edge leaving $\bbi$ along the path to $\bbj$ is labeled
by
\begin{displaymath}
w_{\rm next}(f_{w(\bbi)}^{|w(\bbi)|}(x_\bbi),u(\bbi)),
\end{displaymath}
where $u(\bbi)$ is the label of the last edge entering $\bbi$.  
This is precisely the recursive condition required in
Theorem~\ref{theorem:good:words}.

Since this verification applies at every non-root proper prefix vertex
along the path from the root to $\bbj$, the word $w(\bbj)$ and the point
$x$ satisfy the recursive hypotheses of
Theorem~\ref{theorem:good:words}.

It remains to show that (\ref{eq:hfwi}) holds for $\bbj$. Let $\bbi$
be the parent of $\bbj$. By Lemma~\ref{lemma:theta:bowen:small},
$f_{w(\bbi)}^{|w(\bbi)|}(x)$ and $f_{w(\bbi)}^{|w(\bbi)|}(x_\bbi)$ lie in
the support of the same partition element $\psi_k$. Then, by the smallness property of the
partition chosen in \S\ref{sssec:A-Tree}, applied to
$v=\ell(\bbi,\bbj)\in J$,
\begin{displaymath}
d(f_{w(\bbj)}^{|w(\bbj)|}(x), f_{w(\bbj)}^{|w(\bbj)|}(x_\bbi)) < \epsilon.
\end{displaymath}
By induction,
$h(f_{w(\bbi)}^{|w(\bbi)|}(x_\bbi)) < R_{\rm init}$. 
Then, by \eqref{eq:good:words:uniform:return}, 
$h(f_{w(\bbj)}^{|w(\bbj)|}(x)) < R_{\rm init}$.
This  shows  that (\ref{eq:hfwi}) holds  for
$\bbj$, completing the inductive step. 
\end{proof}

\subsubsection{More notation and basic properties}
\label{sssec:More-notation-and-basic-properties}

For every non-root vertex
$
\bbj=(i_1,\ldots,i_n),
$
we introduce the notation
$
\phi_\bbj:=w(\bbj)^*\psi_{i_n}.
$
Thus, if $\bbj=(\bbi,i_{n+1})$ is a child of $\bbi$, then
$
\theta_\bbj=\theta_\bbi\,\phi_\bbj.
$

For each vertex $\bbi$, let
\begin{displaymath}
\Phi_\bbi
:=
\{\phi_\bbj:\ \bbj \text{ is a child of } \bbi\}.
\end{displaymath}
Then $\Phi_\bbi$ is a partition of unity of $M$. Indeed, all edges leaving
$\bbi$ have the same label, so the word $w(\bbj)$ is independent of the
choice of child except for the final index $i_{n+1}$. Hence
\begin{displaymath}
\sum_{\bbj:\,\bbj \text{ child of }\bbi}\phi_\bbj
=
\sum_{i_{n+1}\in I} w(\bbj)^*\psi_{i_{n+1}}
=
w(\bbj)^*\left(\sum_{i_{n+1}\in I}\psi_{i_{n+1}}\right)
=
1.
\end{displaymath}
Let
$
\Theta_n:=\{\theta_\bbi:\ |\bbi|=n\}
$
denote the family of functions attached to vertices at level $n$. With the
present convention, these are not a partition of unity of $M$; rather, they
partition the initial bump function $\rho$:
\begin{equation}
\label{eq:theta:partition:rho}
\sum_{|\bbi|=n}\theta_\bbi=\rho
\qquad
\text{for every } n\ge1.
\end{equation}
Indeed, for $n=1$,
\begin{displaymath}
\sum_{i_1\in I}\theta_{(i_1)}
=
\rho\sum_{i_1\in I} w_0^*\psi_{i_1}
=
\rho.
\end{displaymath}
The induction step follows from
\begin{displaymath}
\sum_{\bbj:\,\bbj \text{ child of }\bbi}\theta_\bbj
=
\theta_\bbi
\sum_{\bbj:\,\bbj \text{ child of }\bbi}\phi_\bbj
=
\theta_\bbi.
\end{displaymath}

For
$
\bbi=(i_1,\ldots,i_n),
$
write
$
\bbi_{\le k}:=(i_1,\ldots,i_k)
$
for the prefix vertex of length $k$. Then
\begin{equation}
\label{eqn:theta_phi_relation}
\theta_\bbi
=
\rho\prod_{k=1}^{n}\phi_{\bbi_{\le k}}.
\end{equation}

Finally, we record the multiplicity consequence that will be used later. If
$m_I$ denotes the maximal multiplicity of the cover
$\{\supp\psi_i\}_{i\in I}$, then for each vertex $\bbi$ and each point
$x\in M$, at most $m_I$ children $\bbj$ of $\bbi$ satisfy
\begin{displaymath}
\phi_\bbj(x)\neq0.
\end{displaymath}
This follows because the children differ only by the final partition element
$\psi_{i_{n+1}}$, and pullback by the fixed map $f_{w(\bbj)}$ preserves the
multiplicity bound.

\subsection{Step 3 (the infinite extension)}

\subsubsection{Good and bad extensions}
\label{sssec:good:and:bad}

Recall  that for every $x_0$ with $h(x_0) < R_{\rm init}$ and any unit
vector $\xi$, we build a tree $\cT = \cT(x_0,\xi)$ with the root label
$w_0 \in \cI_0$ satisfying \eqref{eq:step1:finite:condition:robust}.

For a non-root vertex $\bbi\in\mathcal T$, let
$
n(\bbi):=|\bbi|-1
$
denote the number of edges after the initial edge labeled by $w_0$ along
the path from the root to $\bbi$.

Let $n_\ast(\omega)$ be as in Theorem~\ref{theorem:good:words}, and
let $n_\ast = \max\{ n_\ast(\omega) \st \omega \in \cI_0\}$. 
\begin{proposition}[Good and bad extensions]
\label{prop:Good-and-Bad-extensions}
Let $\bbi\in\mathcal T$ be a non-root vertex with $n(\bbi) \ge n_\ast$ and let
$x\in\supp\theta_\bbi$. Then there exists a decomposition
\begin{displaymath}
\Omega(w(\bbi))=G(w(\bbi),x)\sqcup B(w(\bbi),x)
\end{displaymath}
such that
\begin{displaymath}
\mathbb P_{w(\bbi)}
\bigl(B(w(\bbi),x)\bigr)
\le e^{-\beta \sqrt{n(\bbi)}}.
\end{displaymath}
Moreover, for every $\tilde w\in G(w(\bbi),x)$, the stable manifold
$\mathcal W_{d-k}(x,\tilde w)$
exists and satisfies the estimates in
Theorem~\ref{theorem:good:words}, in particular the plaque size and all $C^r$-geometry bounds, the contraction
\eqref{eq:good:words:stable:contraction}, and the Jacobian bounds
\eqref{eq:good:words:jacobian:bounds}--\eqref{eq:good:words:jacobian:derivatives}. The
constants in these
estimates are uniform over $x \in \supp \theta_\bbi$, and do not
depend on $\bbi$ or on $\tilde{w} \in G(w(\bbi),x)$. 

Futhermore, there exists $c > 0$ such that
the following holds: for every
$x\in\supp\theta_\bbi$, and every $\tilde w\in G(w(\bbi),x)$, 
\begin{equation}
\label{eq:uniform:transversality}
\bigl|\operatorname{proj}_{T_y\cW_{d-k}(x,\tilde{w})}\xi\bigr|\ge c
\qquad\text{for every }y\in B^{\mathcal W}(x,\tilde w,r_s).
\end{equation}
Projection and norm are Euclidean in the fixed coordinates. The constant
$c$ does not depend on $\xi$, $\bbi$, the labels of
$\cT$, $x \in \supp \theta_\bbi$, or $\tilde{w} \in G(w(\bbi),x)$. 

Also, for each fixed vertex $\bbi$, the conditional measure of $\nu$ along
$\mathcal W_{d-k}(x,\tilde w)$ is equivalent to its leaf volume for
$(\nu|_{\operatorname{supp}\theta_\bbi})\otimes\mathbb P_{w(\bbi)}$-almost
every pair $(x,\tilde w)$ with $\tilde w\in G(w(\bbi),x)$.
\end{proposition}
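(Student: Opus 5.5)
The proof is essentially an application of \autoref{theorem:good:words}, followed by a geometric argument for the transversality \eqref{eq:uniform:transversality} and a measure-theoretic argument for the conditional-measure statement. We take them in this order.

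\emph{Step 1: the decomposition and the uniform estimates.} Fix a non-root vertex $\bbi$ with $n(\bbi)\ge n_\ast$ and a point $x\in\supp\theta_\bbi$. Write $w(\bbi)=w_n\cdots w_1w_0$, with $n=n(\bbi)$ and $w_j$ the successive edge labels. The hypotheses of \autoref{theorem:good:words} are checked one by one.
\begin{itemize}
\item The height condition \eqref{eq:initial:x:return:height} follows from \eqref{eq:hfw0}.
\item The gap condition \eqref{eq:initial:x:return:gap} follows from \eqref{eq:step1:initial:gap}.
\item The recursive condition on the words $w_j$ is exactly the conclusion of \autoref{prop:Partitions-of-Unity-with-Words}. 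The witnessing points $x_i$ are the representative points chosen at the prefix vertices, and \autoref{lemma:theta:bowen:small} places them in $BB(w_i\cdots w_0,x,\epsilon)\cap B(x,\rho_{\rm init}(|w_0|))$.
\end{itemize}
Since $n\ge n_\ast\ge n_\ast(w_0)$, the theorem gives the decomposition, the bound $e^{-\beta\sqrt n}$ on the bad set, and all the plaque, $C^r$, contraction and Jacobian bounds. The constants there depend only on $w_0$ and the standing data. Because $w_0$ ranges over the finite set $\mathcal I_0$, taking the worst case over $\mathcal I_0$ makes them uniform in $x$, in $\bbi$ and in $\tilde w$.

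\emph{Step 2: uniform transversality.} By \eqref{eq:good:words:tangent:control} together with \eqref{eq:step1:finite:condition:robust} applied at $y=x\in B(x_0,r)$, we get
\[
\dist_{\Gr}\bigl(T_x\cW,\xi^\sharp\bigr)\ge 3C_{\rm init}\min\bigl(1,\Delta_k(Df_{w_0}(x))^{-1}\bigr)\ge 3\kappa_0,
\]
where the last inequality is \eqref{eq:step1:error:scale:lower}. By \eqref{eq:step1:tangent:variation}, for every $y\in B^{\mathcal W}(x,\tilde w,r_s)$ we then have $\dist_{\Gr}(T_y\cW,\xi^\sharp)\ge 2\kappa_0$.

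It remains to turn this into a lower bound on the projection of $\xi$. A $(d-k)$-plane $P$ satisfies $|\operatorname{proj}_P\xi|=0$ exactly when $P\in\xi^\sharp$. By compactness of the Grassmannian and continuity of $P\mapsto|\operatorname{proj}_P\xi|$, this function is bounded below by some $c(\kappa_0)>0$ on the set of planes at distance at least $2\kappa_0$ from $\xi^\sharp$. The constant is uniform in the unit vector $\xi$, since the whole configuration is $O(d)$-equivariant. If the chart metric differs from the Euclidean one, this only changes the constants by a bounded factor. The resulting $c$ depends only on $\kappa_0$, hence only on $x_0$ and $\mathcal I_0$, and not on $\xi$, $\bbi$ or $\tilde w$.

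\emph{Step 3: the conditional measures.} This is the main obstacle. The hypothesis of \autoref{theorem:ac:conclusion} says the conditionals of $\nu$ are Lebesgue along $\cW_{d-k}(x,\omega)$ for $\nu\otimes\mu^{\otimes\bN}$-almost every pair $(x,\omega)$. Here we need the corresponding statement for $(\nu|_{\supp\theta_\bbi})\otimes\mathbb P_{w(\bbi)}$, which lives on a cylinder of prefix $w(\bbi)$ and has measure zero under the original product measure. I would try to transport the statement along the first $|w(\bbi)|$ letters, using invariance of $\nu$ under every $f$ in the support of $\mu$.
\begin{itemize}
\item For $\tilde w=v\,w(\bbi)$ with $v$ an infinite word, the stable manifold is equivariant: $\cW(x,\tilde w)$ corresponds locally to $f_{w(\bbi)}^{-1}\cW\bigl(f_{w(\bbi)}x,v\bigr)$.
\item Since $f_{w(\bbi)}$ is a diffeomorphism preserving $\nu$, it maps the leafwise disintegration of $\nu$ to the leafwise disintegration of $\nu$, and it preserves the Lebesgue class on leaves.
\item The pair $(f_{w(\bbi)}x,v)$ is distributed as $\nu\otimes\mu^{\otimes\bN}$, so almost every such pair satisfies the hypothesis. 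Pulling back gives the claim for almost every $(x,\tilde w)$.
\item Finally, restrict to the good set $G(w(\bbi),x)$, on which the leaves exist and have the controlled geometry of Step 1.
\end{itemize}
The delicate points are two. First, the measurable partition used to disintegrate $\nu$ must be chosen compatibly along the orbit, with local plaques versus the global leaves. Second, passing to the restriction $\nu|_{\supp\theta_\bbi}$ must not change the conditionals beyond a multiplication by the positive function $\theta_\bbi$.
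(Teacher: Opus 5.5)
Your proposal is correct and follows essentially the same route as the paper. The hypotheses of \autoref{theorem:good:words} are checked via \eqref{eq:hfw0}, \eqref{eq:step1:initial:gap} and \autoref{prop:Partitions-of-Unity-with-Words}, and the constants are made uniform over the finite set $\mathcal I_0$. Transversality comes from the same triangle-inequality chain through \eqref{eq:good:words:tangent:control}, \eqref{eq:step1:finite:condition:robust}, \eqref{eq:step1:error:scale:lower} and \eqref{eq:step1:tangent:variation}. The conditional-measure claim is obtained, as in the paper, by pulling back along the $\nu$-preserving map $f_{w(\bbi)}$.

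The second ``delicate point'' you raise is not actually an issue. The statement concerns the conditionals of $\nu$ itself, and restricting to $\supp\theta_\bbi$ only weakens the almost-everywhere quantifier. Similarly, $(f_{w(\bbi)}x,v)$ is distributed according to a measure absolutely continuous with respect to $\nu\otimes\mu^{\otimes\bN}$ rather than equal to it, which is all you need.
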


\begin{proof}
The condition $n(\bbi) \ge n_\ast$,
Proposition~\ref{prop:Partitions-of-Unity-with-Words} and equations
\eqref{eq:hfw0} and \eqref{eq:step1:initial:gap}, ensure that the word
$w(\bbi)$ and the point $x$ satisfy the hypotheses of
Theorem~\ref{theorem:good:words}. Applying that theorem gives the
decomposition and the stated estimates.
We take minima of radii and maxima of constants
over the finite set $\mathcal I_0$; the resulting bounds have the
required uniformity properties.

We now begin the proof of (\ref{eq:uniform:transversality}). Since
$x\in\supp\theta_\bbi\subset\supp\rho$,
\eqref{eq:step1:finite:condition:robust} holds. 
Since $\tilde w\in G(w(\bbi),x)$, \eqref{eq:good:words:tangent:control}
gives
\begin{displaymath}
\dist_{\Gr}
\bigl(
T_x\mathcal W_{d-k}(x,\tilde w),
\ker_{d-k}(Df_{w_0}(x))
\bigr)
\le
C_{\rm init}
\min\left(1,\Delta_k(Df_{w_0}(x))^{-1}\right).
\end{displaymath}
Hence, by the triangle inequality, 
\begin{displaymath}
\dist_{\Gr}
\bigl(
T_x\mathcal W_{d-k}(x,\tilde w),\xi^\sharp
\bigr)
\ge
3 C_{\rm init}
\min\left(1,\Delta_k(Df_{w_0}(x))^{-1}\right).
\end{displaymath}
By \eqref{eq:step1:error:scale:lower}, the right-hand side is bounded below
by $3\kappa_0$. Since by \eqref{eq:step1:tangent:variation}
tangent spaces vary by at most $\kappa_0$ on
$B_{\mathcal W}(x,r_s)$, we get
\begin{equation}
\label{eq:leafwise:transversality}
\dist_{\Gr}
\bigl(
T_y\mathcal W_{d-k}(x,\tilde w),\xi^\sharp
\bigr)
\ge
2\kappa_0 \qquad\text{for every
$y\in B^\cW(x,\tilde{w},r_s)$.}
\end{equation}
Equivalently, the restriction of the linear functional
$y\mapsto\langle \xi,y\rangle$ to the stable tangent space has norm bounded
below by a constant multiple of $|\xi|$, uniformly on $B^\cW(x,\tilde{w},
r_s)$. This concludes the proof of
(\ref{eq:uniform:transversality}). 

The last assertion follows by pulling
back the conditional-measure assertion for the random tail under
$f_{w(\bbi)}$: this map preserves $\nu$, and smooth pullback preserves the
Lebesgue class.
\end{proof}

\begin{corollary}
\label{cor:full:support}
Under the hypotheses of Theorem~\ref{theorem:ac:conclusion}, 
$\operatorname{supp}\nu=M$.
\end{corollary}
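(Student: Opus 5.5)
The plan is to show $\operatorname{supp}\nu=M$ by proving that $\operatorname{supp}\nu$ is open. Since it is nonempty and closed and $M$ is connected, this is enough. Invariance of $\nu$ under every $f\in\cS_\mu$ will let us pull information back along the random walk.

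First, fix $x_0\in\operatorname{supp}\nu$. Following Step~1, we can reduce to the case $h(x_0)<R_{\rm init}$. Indeed, by \autoref{lemma:abstract:margulis:return:tails} there is a word $w$ with $h(f_w^{|w|}(x_0))<R_{\rm init}$. Because $f_w$ preserves $\nu$, the point $f_w^{|w|}(x_0)$ lies in $\operatorname{supp}\nu$. An open neighborhood of it in the support pulls back under the diffeomorphism $f_w$ to an open neighborhood of $x_0$ in the support.

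So assume $h(x_0)<R_{\rm init}$, and take the bump function $\rho$, the ball $B(x_0,r)$ and the tree from Steps~1--3. Fix any direction $\xi$. Since $\rho\nu\neq0$, the identity \eqref{eq:theta:partition:rho} at any level $n\ge n_\ast$ produces a vertex $\bbi$ with $\nu(\supp\theta_\bbi)>0$. By \autoref{prop:Good-and-Bad-extensions}, for $\nu$-a.e.\ $x\in\supp\theta_\bbi$ and most $\tilde w\in G(w(\bbi),x)$, the stable plaque $\mathcal W_{d-k}(x,\tilde w)$ has size bounded below uniformly. On this plaque the conditional measure of $\nu$ is equivalent to leaf volume. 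Its support therefore contains the whole plaque $B^{\mathcal W}(x,\tilde w,r_s)$, and hence $\operatorname{supp}\nu$ contains these plaques.

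To fill an open set, I would vary $\xi$, which varies the root word $w_0\in\mathcal I_0$. The plaques through $\nu$-typical points near $x_0$ then have tangent spaces transverse to arbitrary prescribed hyperplanes, by \eqref{eq:uniform:transversality}. Choosing $d$ directions $\xi_1,\dots,\xi_d$ spanning $T_{x_0}M$ should give plaques whose tangent spaces together span $T_{x_0}M$. Composing leaf-saturations in a Fubini / accessibility argument would then show that $\operatorname{supp}\nu$ contains a neighborhood of some point close to $x_0$. The construction is: start at a typical point, move along a plaque for direction $\xi_1$, and from the (Lebesgue-a.e.) typical points of that plaque move along plaques for $\xi_2$, and so on. This is the same step that the Fourier-decay argument quantifies.

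The main obstacle is making this iterated saturation rigorous. The plaques depend on the random future $\tilde w$, not on the point alone. Typicality has to be transferred from $\nu$-a.e.\ points to Lebesgue-a.e.\ points of a plaque, which is exactly what absolute continuity of the conditionals gives. One also needs the dimension count, in the image of the map $(s_1,\dots,s_d)\mapsto$ concatenated plaque coordinates, to produce an open set. The simpler route I would try first is to avoid this and use part~(i): \autoref{theorem:ac:conclusion} yields a smooth density for $\rho\nu$ near $x_0$ via \eqref{eq:main_estimate}. The density is continuous and not identically zero on $\supp\rho$, so $\{\text{density}>0\}$ is a nonempty open subset of $\operatorname{supp}\nu$ near $x_0$.

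The remaining step is to show this density is positive at $x_0$ itself, and more generally at every point of $\operatorname{supp}\nu$. For this I would use the non-vanishing statement asserted in Step~1 together with invariance. Combining these, every point of $\operatorname{supp}\nu$ has a neighborhood inside $\operatorname{supp}\nu$, so the support is open, and by connectedness it equals $M$.
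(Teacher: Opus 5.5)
There is a genuine gap: neither of your two routes deals with boundary points of $K:=\operatorname{supp}\nu$, and handling them is the whole content of the corollary.

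\textbf{The density route.} Let $g$ be the continuous density of $\rho\nu$ and suppose $x_0\in\partial K$. Then $g=0$ on the open set $K^c$, so $g(x_0)=0$ by continuity. Hence ``$g>0$ at every point of $K$'' is equivalent to $\partial K=\varnothing$, which is the statement you are trying to prove. The ``non-vanishing'' mentioned in Step~1 is only a reduction: positivity at $f_w^{|w|}(x_0)$ propagates back to $x_0$. It is not established by the Fourier-decay argument, which only yields smoothness. Invariance cannot supply it either, because each $f\in\mathcal{S}_\mu$ maps $\partial K$ onto itself.

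\textbf{The saturation route.} This has the same defect, even if the iterated Fubini argument were made rigorous. You say yourself that it produces a neighborhood of \emph{some point close to} $x_0$. That shows only that the interior of $K$ is dense in $K$, which does not make $K$ open; a closed ball has this property.

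\textbf{The missing idea.} Work directly at a boundary point that touches an exterior ball; then a single direction suffices.
\begin{enumerate}
\item Suppose $K\neq M$ and take $z\in\partial K$. Move it to $y$ with $h(y)<R_{\rm init}$; by invariance $y$ is still in $\partial K$.
\item Pick $y_0\notin K$ close to $y$, and let $x_0$ be a nearest point to $y_0$ of $K$ intersected with a small closed ball around $y$. Then the open ball $D=B(y_0,d(x_0,y_0))$ lies in $K^c$, and $x_0\in K\cap\partial D$.
\item Build the tree for the one direction $\xi$ pointing from $x_0$ to $y_0$.
\item Use \eqref{eq:uniform:transversality} together with the uniform $C^2$ geometry of good plaques. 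A compactness argument, taking $C^1$-limits of curves in the plaques whose initial velocity is the normalized projection of $\xi$, gives a neighborhood $U$ of $x_0$ with the following property: every good plaque $B^{\mathcal W}(x,\tilde w,\rho_{\rm st})$ based at a point $x\in U$ of a level-$n$ tree support meets $D$ in a nonempty relatively open set.
\end{enumerate}

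Now use your own correct observation: for almost every good pair the Lebesgue-class conditional cannot vanish on a nonempty relatively open subset of the plaque. Such pairs have positive measure, because $x_0\in\operatorname{supp}\nu$ and $\rho=1$ near $x_0$. So $\nu(D)>0$, contradicting $D\subset K^c$. This argument needs neither the smooth density nor a spanning family of directions.
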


\begin{remark} In Corollary~\ref{cor:full:support}, instead of
assuming that $\nu$ is a probability measure,  it is enough to 
assume that $\nu$ is locally  finite. 
The proof is quite similar to the one in Zhang \cite[\S5]{Zhang2019_On-stable-transitivity-of-finitely-generated-groups-of-volume-preserving}.
\end{remark}

\begin{proof}[Proof of Corollary~\ref{cor:full:support}]
Let $K = \supp \nu$. If $K \ne M$, there exists a point  $z \in
\partial K$.  By Theorem~\ref{theorem:Good-stables-everywhere} (or
alternatively Lemma~\ref{lemma:abstract:margulis:return:tails}), there
exists a finite word $w$ with $y = f_w^{|w|}(z)$ satisfying $h(y) <
R_{\rm init}$.  Then, by invariance
of the measure, $f_w^{|w|}(K)=K$, so $y \in \partial K$. 
In the rest of the proof,  we work with the Euclidean distance in a
fixed coordinate chart $\cU$ around $y$. 

Let $\ve$ be such that
\begin{displaymath}
\overline{B(y,\ve)} \subset \{ x \st h(x) < R_{\rm init}
\}  \cap \cU.
\end{displaymath}
Since $y \in \partial  K$, there  exists a point $y_0 \in
B(y,\ve/2)$ such that $y_0 \not\in K$. Let $x_0$ be the closest point
in $K \cap \overline{B(y,\ve)}$
to $y_0$, and let $D = B(y_0, d(x_0,y_0))$.  Since $y \in K$,
$d(x_0,y_0) < \ve/2$. 
Then, $x_0 \in K \cap \partial D$, $D \subset K^c$
and $\overline{D} \subset \{x \st h(x) < R_{\rm init}\}$.

Let $\xi$ be the unit vector based at  $x_0$ pointing in the direction
of $y_0$. We choose $w_0 \in \cI_0$ and $r$, $\rho$ as in Step~1, and
so that, in particular, \eqref{eq:step1:finite:condition:robust} holds. We
build the associated tree as in Step~2. Then,
Proposition~\ref{prop:Good-and-Bad-extensions} holds.
Fix $n \in \bN$ with $n \ge n_\ast$.

We claim that there exists a neighborhood $U$ of $x_0$ with the
following  property. Suppose $\bbi$ is any vertex with $n(\bbi) = n$,
$x  \in \supp \theta_\bbi \cap U$, $\tilde{w} \in G(w(\bbi),x)$. Then,
$B^{\cW}(x,\tilde{w},\rho_{\rm st}) \subset \cW_{d-k}(x,\tilde{w})$
intersects $D$.

Suppose no such $U$ exists. Then (since there are only finitely many
$\bbi$ with $n(\bbi)=n$ and $\theta_\bbi$ not identically $0$),
there exists $\bbi$ with $n(\bbi) = n$,
a sequence $x_j \to x_0$ with $x_j \in \supp \theta_\bbi$
and $\tilde{w}_j \in G(w(\bbi), x_j)$ with
\begin{equation}
\label{eq:missing:D}
B^\cW(x_j, \tilde{w}_j, \rho_{\rm st}) \cap D = \emptyset.   
\end{equation}
Let $v_j'$ be the orthogonal projection of $\xi$  to $T_{x_j}
\cW_{d-k}(x_j, \tilde{w}_j)$, and let $v_j = v_j'/\|v_j'\|$. Then
$v_j$ is a unit vector based at $x_j$ tangent to $\cW_{d-k}(x_j,
\tilde{w}_j)$. Write
\begin{displaymath}
v_j = a_j \xi + u_j, \qquad\text{ $u_j  \in \xi^\perp$.} 
\end{displaymath}
In view of the uniform bounds on the $C^2$
geometry of $\cW_{d-k}(x_j, \tilde{w}_j)$ we can pick $\epsilon' > 0$
and  curves
$\gamma_j: [-\epsilon',\epsilon']
\to B^\cW(x_j,\tilde{w}_j, \rho_{\rm st})$ with
uniformly bounded second derivatives, such that $\gamma_j(0) = x_j$
and $\gamma_j'(0) = v_j$. In view of (\ref{eq:missing:D}),
\begin{equation}
\label{eq:gamma:j:image}
\gamma_j([-\epsilon',\epsilon']) \cap D = \emptyset.
\end{equation}
We can pass to a subsequence for which the curves $\gamma_j$ converge
in  the $C^1$ topology to a curve $\gamma$.  Then, $\gamma(0) = x_0$
and in view of (\ref{eq:gamma:j:image}), $\gamma'(0) \in
\xi^\perp$. Then, along the subsequence, $a_j \to 0$. 
However, by
(\ref{eq:uniform:transversality}), $a_j = \langle v_j,\xi\rangle \ge
c$, where $c>0$ does not depend  on $j$. This contradiction
completes the proof of the claim. 

Without loss of generality, we may assume that
\begin{displaymath}
U  \subset B(x_0,r) \cap \{ \rho = 1\}. 
\end{displaymath}
Since $n(\bbi)=n$ on
$\mathcal T_{n+1}$, Proposition~\ref{prop:Good-and-Bad-extensions}
and \eqref{eq:theta:partition:rho} give
\[
\begin{aligned}
&\sum_{\bbi\in\mathcal T_{n+1}}
\int_U\theta_\bbi(x)\,
\mathbb P_{w(\bbi)}\bigl(G(w(\bbi),x)\bigr)\,d\nu(x)\\
&\qquad\geq
\bigl(1-e^{-\beta\sqrt n}\bigr)\int_U\rho\,d\nu>0.
\end{aligned}
\]
The last inequality uses $x_0\in\operatorname{supp}\nu$ and
$\rho=1$ on $U$. Thus, there exists $\bbi \in \cT_{n+1}$ with $(\nu
\cross  \mathbb P_{w(\bbi)})(E_\bbi) > 0$, where
\begin{displaymath}
E_\bbi = \{ (x,\tilde{w}) \st x \in U \cap
\supp \theta_\bbi,
\tilde{w} \in G(w(\bbi),x) \}. 
\end{displaymath}
Let $\nu_{\cW(x,\tilde{w})}$ denote the conditional measure along
$\cW(x,\tilde{w})$. Then, for $\bP_{w(\bbi)}$-almost all
$\tilde{w} \in \Omega(w(\bbi))$, disintegration of $\nu(D) = 0$ with
respect to the conditionals $\nu_{\cW(x,\tilde{w})}$ implies that
for $\nu$-almost all $x$,
\begin{equation}
\label{eq:nu:D:cap:BcW}
\nu_{\cW(x,\tilde{w})}(D \cap
B^\cW(x,\tilde{w}, \rho_{\rm st})) = 0.
\end{equation}
By Fubini, this means  that (\ref{eq:nu:D:cap:BcW}) holds
for $\nu\otimes\mathbb P_{w(\bbi)}$-almost  all $(x,\tilde{w}) \in E_\bbi$.
By the last assertion of Proposition~\ref{prop:Good-and-Bad-extensions},
for $\nu\otimes\mathbb P_{w(\bbi)}$-almost all $(x,\tilde{w})\in
E_\bbi$, $\nu_{\cW(x,\tilde{w})}$ is in the 
Lebesgue measure class on $B^\cW(x,\tilde{w},\rho_{\rm st})$. 
But for $(x,\tilde{w}) \in E_\bbi$, $D \cap  B^\cW(x,\tilde{w},\rho_{\rm
  st})$ is a non-empty relatively open subset of $B^\cW(x,\tilde{w},\rho_{\rm
  st})$. This is a contradiction.
\end{proof}

\begin{proposition}[Pointwise boundedness of $\phi_\bbi$ along stables]
\label{prop:Boundedness-along-stables}
For every $b\ge1$, there exists $C_b<\infty$ with the following
property. Let $\bbj\in\mathcal T$ satisfy $n(\bbj)\ge n_\ast$,
let $x\in\operatorname{supp}\theta_\bbj$, and let
$\tilde w\in G(w(\bbj),x)$.
Then, for every non-root ancestor $\bbi$ of $\bbj$
(including $\bbi=\bbj$) and every
\[
y\in\operatorname{supp}\theta_\bbj
   \cap\mathcal W_{d-k}(x,\tilde w),
\]
one has
\[
\left\|
D_y^b\bigl(
\phi_\bbi|_{\mathcal W_{d-k}(x,\tilde w)}
\bigr)
\right\|\le C_b.
\]
The constant is independent of $\bbi$, $\bbj$, their depths,
$x$, $y$, and the good extension $\tilde w$.
\end{proposition}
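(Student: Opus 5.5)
The plan is to write $\phi_\bbi=\psi_{i_n}\circ f_{w(\bbi)}^{|w(\bbi)|}$, where $n=|\bbi|$. Restricted to the stable leaf, this becomes
\[
\phi_\bbi|_{\mathcal W}=\psi_{i_n}\circ\bigl(f_{w(\bbi)}^{|w(\bbi)|}\big|_{\mathcal W_{d-k}(x,\tilde w)}\bigr).
\]
Since $\bbi$ is an ancestor of $\bbj$, the word $w(\bbi)$ is a prefix (in time) of $w(\bbj)$. Every $\tilde w\in G(w(\bbj),x)$ extends $w(\bbj)$, so $f_{w(\bbi)}^{|w(\bbi)|}=f^m_{\tilde w}$ with $m=|w(\bbi)|$. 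The partition $\{\psi_i\}$ is finite, or at least has uniformly bounded $C^b$ norms since it is fixed once and for all. The claim therefore reduces to uniform $C^b$ bounds on the restricted maps $f^m_{\tilde w}|_{\mathcal W}$ at the point $y$, uniform in $m$. These are then combined with the bounds on $\psi_{i_n}$ through the Faà di Bruno formula.

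\textbf{Step 1 (localization).} We have $y\in\operatorname{supp}\theta_\bbj\subset\operatorname{supp}\rho\subset B(x_0,r)$, and $x$ is in the same set. By \eqref{eq:step1:plaque:localization}, $y\in B_{\mathcal W}(x,r_s)$, and by \eqref{eq:rs:rho:st} we have $r_s<\rho_{\rm st}$. Thus $y$ lies in the intrinsic ball on which Theorem~\ref{theorem:good:words} gives controlled geometry. Proposition~\ref{prop:Good-and-Bad-extensions} makes those constants uniform over $x$, $\bbj$ and $\tilde w$, with minima and maxima taken over the finite family $\mathcal I_0$.

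\textbf{Step 2 (derivative bounds along the leaf).} For the higher derivatives I would use the all-orders estimate \eqref{eq:stable:all:derivatives} of Theorem~\ref{theorem:stable:manifold:theorem}. The proof of Theorem~\ref{theorem:good:words} applies that theorem with parameters $(C_*,\lambda,\epsilon_{\rm temp})$ that depend only on $w_0$ and the fixed data. This gives, for all $1\le r\le b$ and all $m\ge0$,
\[
\bigl\|\nabla^r\bigl(f^m_{\tilde w}|_{\mathcal W}\bigr)(y)\bigr\|\le C_{{\rm st},r}\,e^{-\lambda m/2}\le C_{{\rm st},r}.
\]
Here covariant derivatives are taken in the induced leaf metric on the source side and the ambient metric on the target side. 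The first-order case is \eqref{eq:good:words:stable:contraction}.

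\textbf{Step 3 (composition).} I would apply Faà di Bruno to $\psi\circ g$ with $g=f^m_{\tilde w}|_{\mathcal W}$. The $b$-th derivative is a universal polynomial combination of $D^j\psi(g(y))$, for $j\le b$, and products of $\nabla^{r}g(y)$ with $\sum r=b$. The factors $D^j\psi$ must be measured in coordinates at $g(y)$. The point $g(y)$ lies within $\epsilon$ of $f_{w(\bbi)}(x_\bbi)$, whose height is $<R_{\rm init}$ by \eqref{eq:hfwi}. Uniform continuity of $h$ on compacts then keeps $g(y)$ in a fixed compact set, where the chart and jet constants ($C_k$, $\phi_a$) are bounded. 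Hence $\|D^j\psi_{i}\|$ is bounded there uniformly in $i$, and one can take $C_b$ to be a universal polynomial in $\max_{j\le b}\|\psi\|_{C^j}$ and $\max_{r\le b}C_{{\rm st},r}$.

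The step I expect to be the main obstacle is Step 2, specifically the uniformity in $m$ of the higher derivatives together with the fact that $m=|w(\bbi)|$ ranges over all ancestors, including those at depth below $n_\ast$. Theorem~\ref{theorem:good:words} as stated records only first-order contraction. I would therefore go back to the proof of Theorem~\ref{theorem:good:words}, which invokes Theorem~\ref{theorem:stable:manifold:theorem} directly, and quote \eqref{eq:stable:all:derivatives} with its constants depending only on $(C_*,\lambda,\epsilon_{\rm temp})$. I would also check that the ambient metric used there agrees with the fixed chart near $g(y)$ up to the bounded jet constants from Step 3. For shallow ancestors the same estimate still applies, because it holds for every $m\ge0$ along the whole good extension $\tilde w$.
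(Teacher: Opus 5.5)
Your proposal is correct and matches the paper's proof: write $\phi_\bbi=\psi_{i_q}\circ f^m_{\tilde w}$, then bound the leafwise derivatives of $f^m_{\tilde w}|_{\mathcal W}$ at $y\in B^{\mathcal W}(x,\rho_{\rm st})$ uniformly in $m$ using \eqref{eq:stable:all:derivatives} with the uniform constants for good extensions, and conclude by the chain rule. The only small differences are that the paper puts $f_{w(\bbi)}(y)$ in $\{h\le R_{\rm init}\}$ directly from \eqref{eq:hfwi}, since $y\in\operatorname{supp}\theta_\bbj\subset\operatorname{supp}\theta_\bbi$, without your detour through $x_\bbi$ and $\epsilon$-closeness, and that it gets uniformity over the possibly infinite family $\{\psi_i\}$ from local finiteness on that compact set rather than from the partition being finite or "fixed once and for all".
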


\begin{proof}
Write $\bbi=(i_1,\ldots,i_q)$, $m=|w(\bbi)|$, and
$\mathcal W=\mathcal W_{d-k}(x,\tilde w)$.
Since $f_{w(\bbi)}=f_{\tilde w}^m$ and
$y\in B^{\mathcal W}(x,\rho_{\rm st})$ by
\eqref{eq:step1:plaque:localization}, the estimate
\eqref{eq:stable:all:derivatives} in
Theorem~\ref{theorem:stable:manifold:theorem}, with the uniform
constants for good extensions, bounds all derivatives through order
$b$ of $f_{w(\bbi)}|_{\mathcal W}$ at $y$.
By support inclusion and \eqref{eq:hfwi}, $f_{w(\bbi)}(y)$ lies in
the fixed compact set $K=\{h\le R_{\rm init}\}$, on which the
derivatives of all $\psi_i$ through order $b$ are uniformly bounded
by local finiteness. The chain rule applied to
$\phi_\bbi=\psi_{i_q}\circ f_{w(\bbi)}$ now gives the assertion.
\end{proof}

\begin{proposition}[Leafwise Fourier estimate]
\label{prop:leafwise_Fourier_estimate}
For every $b\in\mathbb N$, there exists $C_b<\infty$ such that the following
holds. Suppose $\xi \ne 0$, and let $\cT$ be the tree constructed for the direction $\xi/\|\xi\|$.
Let $\bbi\in\mathcal T$ be a vertex with $n(\bbi) > n_\ast$, let
$x\in\supp\theta_\bbi$, and let
$
\tilde w\in G(w(\bbi),x).
$

Then
\begin{multline*}
\left|
\int_{\mathcal W_{d-k}(x,\tilde w)}
\theta_\bbi(y)J_{\tilde w}(x,y)
e^{\sqrt{-1}\langle\xi,y\rangle}\,
d\Leb_{\mathcal W_{d-k}(x,\tilde w)}(y)
\right|
\le \\
\frac {C_b}{|\xi|^b}
\sum_{j=0}^b
\int_{\mathcal W_{d-k}(x,\tilde w)}
\left\|
J_{\tilde w}(x,y) D_{\mathcal W}^j\theta_\bbi(y)
\right\|\,
d\Leb_{\mathcal W_{d-k}(x,\tilde w)}(y).
\end{multline*}
Here $D_{\mathcal W}^j$ denotes the $j$th derivative along the stable
manifold $\mathcal W_{d-k}(x,\tilde w)$.
\end{proposition}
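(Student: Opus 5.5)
This is a leafwise non-stationary phase estimate: integrate by parts $b$ times along the stable plaque, using the uniform transversality \eqref{eq:uniform:transversality} of $\xi$ to the stable tangent spaces.

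\emph{Setup.} Write $\mathcal W=\mathcal W_{d-k}(x,\tilde w)$ and $g(y)=\theta_\bbi(y)J_{\tilde w}(x,y)$. The support of $\theta_\bbi$ lies in $\supp\rho\subset B(x_0,r)$. By \eqref{eq:step1:plaque:localization}, $\supp g\cap\mathcal W$ is contained in the intrinsic ball $B^{\mathcal W}(x,\tilde w,r_s)$, and $r_s<\rho_{\rm st}$ by \eqref{eq:rs:rho:st}. Hence $g$ is compactly supported in the interior of the plaque, and no boundary terms will appear.

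\emph{Vector field.} On this ball, let $v(y)$ be the Euclidean orthogonal projection of $\xi/|\xi|$ onto $T_y\mathcal W$, taken in the fixed chart. Define
\[
X(y):=\frac{v(y)}{|v(y)|^2}.
\]
Then $X$ is a smooth tangent vector field on $\mathcal W$, and
\[
X\bigl(e^{\sqrt{-1}\langle\xi,y\rangle}\bigr)=\sqrt{-1}\,|\xi|\,e^{\sqrt{-1}\langle\xi,y\rangle}.
\]
By \eqref{eq:uniform:transversality}, $|v|\ge c$ uniformly. Since Proposition~\ref{prop:Good-and-Bad-extensions} and Theorem~\ref{theorem:good:words} give uniform $C^r$-geometry bounds on $\mathcal W$, the tangent-projection field and all its leafwise covariant derivatives are bounded uniformly. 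So $X$ and $\operatorname{div}_{\mathcal W}X$, together with their derivatives up to order $b$, are bounded by constants independent of $\xi$ (beyond its direction), $\bbi$, $x$ and $\tilde w$. Here the divergence is taken with respect to the Riemannian leaf volume, whose density in local parametrizations is controlled by the same geometry bounds.

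\emph{Integration by parts.} Write
\[
e^{\sqrt{-1}\langle\xi,y\rangle}=(\sqrt{-1}|\xi|)^{-1}X e^{\sqrt{-1}\langle\xi,y\rangle}
\]
and integrate by parts on $\mathcal W$, with no boundary terms. This replaces $g$ by $-(\sqrt{-1}|\xi|)^{-1}L g$, where
\[
Lg=X(g)+(\operatorname{div}_{\mathcal W}X)\,g.
\]
Iterating $b$ times gives
\[
\Bigl|\int_{\mathcal W} g\,e^{\sqrt{-1}\langle\xi,y\rangle}\,d\Leb_{\mathcal W}\Bigr|\le|\xi|^{-b}\int_{\mathcal W}|L^b g|\,d\Leb_{\mathcal W}.
\]
Expanding $L^bg$ by Leibniz, $|L^bg|\le C_b\sum_{j+l\le b}|D^j_{\mathcal W}\theta_\bbi|\,|D^l_{\mathcal W}J_{\tilde w}(x,\cdot)|$, with $C_b$ depending only on the uniform bounds on $X$.

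\emph{Absorbing the Jacobian derivatives (main obstacle).} The target right-hand side has $J$ only undifferentiated. By \eqref{eq:good:words:jacobian:derivatives}, $|D^l_{\mathcal W}J|\le C_{{\rm Jac},l}$, and by \eqref{eq:good:words:jacobian:bounds}, $J\ge C_{\rm Jac}^{-1}$. Therefore
\[
|D^l_{\mathcal W}J|\le C_{{\rm Jac},l}\,C_{\rm Jac}\,J,
\]
which converts each term into $C'_b\,\|J\,D^j_{\mathcal W}\theta_\bbi\|$ with $j\le b$. All these constants are uniform, since they depend only on $w_0\in\mathcal I_0$, a finite family, and the standing data. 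The real work is confirming this uniformity: that \eqref{eq:uniform:transversality} holds on all of $\supp g\cap\mathcal W$, and that the leaf-volume divergence terms are controlled by the $C^{b+1}$ geometry of the plaques. Finally, if the integrand is taken with absolute values on $\langle\xi,\cdot\rangle$ for $|\xi|$ small, the bound is trivial for $|\xi|\le1$ by taking the $j=0$ term, so assuming $|\xi|\ge1$ costs nothing.
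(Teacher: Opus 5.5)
Your proposal is correct and follows essentially the same argument as the paper: non-stationary phase along the plaque, using the localization \eqref{eq:step1:plaque:localization} with $r_s<\rho_{\rm st}$, the transversality \eqref{eq:uniform:transversality}, the uniform $C^{b+1}$ plaque geometry, and the absorption $\|D^l_{\mathcal W}J_{\tilde w}(x,\cdot)\|\le C_{{\rm Jac},l}C_{\rm Jac}J_{\tilde w}(x,\cdot)$. The only difference is cosmetic. You integrate by parts intrinsically with the tangent field $X=v/|v|^2$ and its leaf divergence, whereas the paper pulls back to a single stable chart $\Psi$ and uses $L=\frac{1}{\sqrt{-1}}\frac{\nabla\Phi}{|\nabla\Phi|^2}\cdot\nabla$ with $\Phi=\langle\xi,\Psi\rangle$, so that $\Jac_\Psi$ plays the role of your divergence term.
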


\begin{proof}
Since $\theta_\bbi$ contains the factor $\rho$, the integrand is supported
on
\begin{displaymath}
\mathcal W_{d-k}(x,\tilde w)\cap\supp\rho .
\end{displaymath}
By \eqref{eq:step1:plaque:localization}, this set is contained
in the intrinsic stable ball
\begin{displaymath}
B_{\mathcal W}(x,r_s)\subset \mathcal W_{d-k}(x,\tilde w).
\end{displaymath}
On this intrinsic ball 
\eqref{eq:uniform:transversality} from
Proposition~\ref{prop:Good-and-Bad-extensions} holds.

By Theorem~\ref{theorem:good:words}, the relevant stable plaque has
uniform $C^{b+1}$ geometry. Since the support of the integrand is contained in a
single intrinsic stable ball, we may parametrize it by one stable chart
\begin{displaymath}
\Psi:U\subset\mathbb R^{d-k}\to \mathcal W_{d-k}(x,\tilde w),
\end{displaymath}
with uniformly bounded $C^{b+1}$ norm and so that uniformly on the chart
$c|v|\le |D\Psi(t)v|\le C|v|$. In this chart, the integral has the form
\begin{displaymath}
\int_U
a(t)e^{\sqrt{-1}\Phi(t)}\,dt,
\end{displaymath}
where
\begin{displaymath}
\Phi(t)=\langle \xi,\Psi(t)\rangle
\text{ and }
a(t)
=
\theta_\bbi(\Psi(t))
J_{\tilde w}(x,\Psi(t))
\Jac_\Psi(t).
\end{displaymath}
Note that the chart is chosen so that
$\operatorname{supp}a\Subset U$. 
The transversality estimate \eqref{eq:leafwise:transversality}
(applied to $\xi/\|\xi\|$ in place of $\xi$) implies that
\begin{displaymath}
|\nabla\Phi(t)|\ge c|\xi|
\end{displaymath}
on the support of $a$, for a uniform constant $c>0$.

Using the integration-by-parts operator
\begin{displaymath}
L
=
\frac{1}{\sqrt{-1}}
\frac{\nabla\Phi}{|\nabla\Phi|^2}\cdot\nabla,
\qquad
L e^{\sqrt{-1}\Phi}=e^{\sqrt{-1}\Phi},
\end{displaymath}
and integrating by parts $b$ times, we obtain
\begin{displaymath}
\left|
\int_U
a(t)e^{\sqrt{-1}\Phi(t)}\,dt
\right|
\le
\frac{C_b}{|\xi|^b}
\sum_{j=0}^b
\int_U |\nabla^j a(t)|\,dt .
\end{displaymath}
The constants are uniform because the stable chart has uniformly bounded
$C^{b+1}$ geometry and because the lower bound for $|\nabla\Phi|/|\xi|$ is
uniform.

For $y  \in B^\cW(x,\tilde{w},\rho_{\rm st})$,
\eqref{eq:good:words:jacobian:bounds} and
\eqref{eq:good:words:jacobian:derivatives} give, for $r \ge 1$, 
\begin{displaymath}
\|D^r_{\mathcal W}J_{\tilde w}(x,y)\|
\le C_{{\rm Jac},r}C_{\rm Jac}J_{\tilde w}(x,y).
\end{displaymath}
The derivatives of
$\Jac_\Psi$ are uniformly bounded by the stable-chart geometry. Hence the
product rule gives
\begin{displaymath}
\sum_{j=0}^b
\int_U |\nabla^j a(t)|\,dt
\le
C_b
\sum_{j=0}^b
\int_{\mathcal W_{d-k}(x,\tilde w)}
\left\| J_{\tilde{w}}(x,y)
D_{\mathcal W}^j\theta_\bbi(y)
\right\|\,
d\Leb_{\mathcal W_{d-k}(x,\tilde w)}(y).
\end{displaymath}
Combining the last two estimates gives
\begin{multline*}
\left|
\int_{\mathcal W_{d-k}(x,\tilde w)}
\theta_\bbi(y)J_{\tilde w}(x,y)
e^{\sqrt{-1}\langle\xi,y\rangle}\,
d\Leb_{\mathcal W_{d-k}(x,\tilde w)}(y)
\right|
\le \\
\frac {C_b}{|\xi|^b}
\sum_{j=0}^b
\int_{\mathcal W_{d-k}(x,\tilde w)}
\left\| J_{\tilde{w}}(x,y)
D_{\mathcal W}^j\theta_\bbi(y)
\right\|\,
d\Leb_{\mathcal W_{d-k}(x,\tilde w)}(y).
\end{multline*}
This proves the proposition.
\end{proof}

\subsubsection{Good and bad extensions and points}
\label{sssec:Good-and-bad-extensions-and-points}

Fix a non-root vertex $\bbi\in\mathcal T$, and write
$
n(\bbi):=|\bbi|-1
$.
For each $x\in\supp\theta_\bbi$,
Proposition~\ref{prop:Good-and-Bad-extensions} gives a decomposition
\begin{displaymath}
\Omega(w(\bbi))
=
G(w(\bbi),x)\sqcup B(w(\bbi),x).
\end{displaymath}

For a fixed extension $\tilde w\in\Omega(w(\bbi))$, we define the good set
$G_\bbi(\tilde w)$ by declaring a point to be good if it lies on a good
local stable plaque.
More preciselty, with $x_0$ and $r$ fixed in Step~1, write
$\cW_{d-k}(x,\tilde w,r)$ for the connected component containing $x$
of $\cW_{d-k}(x,\tilde w)\cap B(x_0,r)$. Here $r$ specifies the
ambient ball centered at $x_0$, not an intrinsic stable ball.
Define
\[
G_\bbi(\tilde w)
=\supp\theta_\bbi\cap
\bigcup_{\substack{x\in\supp\theta_\bbi\\
                    \tilde w\in G(w(\bbi),x)}}
\cW_{d-k}(x,\tilde w,r).
\]
Thus $G_\bbi(\tilde w)$ consists of the points in
$\supp\theta_\bbi$ locally stably related to a good point.

The set of pairs $(y,\tilde w)$ with
$y\in G_\bbi(\tilde w)$ is the projection onto $(y,\tilde w)$ of
the Borel relation
\[
\bigl\{(x,y,\tilde w):x,y\in\supp\theta_\bbi,\quad
\tilde w\in G(w(\bbi),x),\quad
y\in\cW_{d-k}(x,\tilde w,r)\bigr\}.
\]
It is therefore analytic and universally measurable.
The displayed relation is Borel by the measurable dependence
of the good events and stable plaques on $(x,\tilde w)$, with membership
in the connected component containing $x$ expressible by countably many
path conditions in plaque coordinates.

Let
$
B_\bbi(\tilde w)
:=
\supp\theta_\bbi\setminus G_\bbi(\tilde w).
$
In view of the above discussion, the set 
\begin{equation}
\label{eq:set:of:bad:pairs}
\{ (y,\tilde w) \st  y\in B_\bbi(\tilde w) \} 
\end{equation}
is universally  measurable.

\begin{proposition}[Bad points have small average mass]
\label{prop:bad:points:small:average}
For every non-root vertex $\bbi\in\mathcal T$,
\begin{displaymath}
\int_{\Omega(w(\bbi))}
\int_{B_\bbi(\tilde w)}
\theta_\bbi(y)\,d\nu(y)\,
d\mathbb P_{w(\bbi)}(\tilde w)
\le
e^{-\beta \sqrt{n(\bbi)}}
\int \theta_\bbi\,d\nu .
\end{displaymath}
\end{proposition}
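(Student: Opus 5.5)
The plan is to swap the order of integration with Fubini–Tonelli and then use the pointwise bound on bad extensions from Proposition~\ref{prop:Good-and-Bad-extensions}. The set of bad pairs \eqref{eq:set:of:bad:pairs} is universally measurable, and the integrand $\theta_\bbi(y)\mathbf 1_{B_\bbi(\tilde w)}(y)$ is nonnegative. Tonelli therefore applies to the completed product measure $\nu\otimes\mathbb P_{w(\bbi)}$, and the left-hand side equals
\begin{displaymath}
\int_{\supp\theta_\bbi}\theta_\bbi(y)\,
\mathbb P_{w(\bbi)}\bigl(\{\tilde w\in\Omega(w(\bbi)) \st y\in B_\bbi(\tilde w)\}\bigr)\,d\nu(y).
\end{displaymath}

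The key pointwise inclusion is the following: for each $y\in\supp\theta_\bbi$,
\begin{displaymath}
\{\tilde w \st y\in B_\bbi(\tilde w)\}\subset B(w(\bbi),y).
\end{displaymath}
To see it, suppose $\tilde w\in G(w(\bbi),y)$. Then $y$ is admissible as the base point $x$ in the union defining $G_\bbi(\tilde w)$, and $y$ lies in $\cW_{d-k}(y,\tilde w,r)$. This holds because $y\in\supp\theta_\bbi\subset\supp\rho\subset B(x_0,r)$, and $y$ belongs to the connected component of $\cW_{d-k}(y,\tilde w)\cap B(x_0,r)$ containing itself. Hence $y\in G_\bbi(\tilde w)$. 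Taking contrapositives, $y\in B_\bbi(\tilde w)$ forces $\tilde w\notin G(w(\bbi),y)$, i.e.\ $\tilde w\in B(w(\bbi),y)$, since $\Omega(w(\bbi))=G\sqcup B$.

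With the inclusion in hand, Proposition~\ref{prop:Good-and-Bad-extensions} gives $\mathbb P_{w(\bbi)}(B(w(\bbi),y))\le e^{-\beta\sqrt{n(\bbi)}}$ for every $y\in\supp\theta_\bbi$. Integrating against $\theta_\bbi\,d\nu$ then yields the claim. The decomposition $G(w(\bbi),y)$ is defined for every $y\in\supp\theta_\bbi$, not just $\nu$-a.e., so this pointwise bound is legitimate.

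The one point needing care is that Proposition~\ref{prop:Good-and-Bad-extensions} is stated for $n(\bbi)\ge n_\ast$. For vertices with $n(\bbi)<n_\ast$, I would read the good/bad decomposition (and the constant $\beta$) as set up so that the bound still holds. The simplest convention is to shrink $\beta$, which is possible since only finitely many depths are involved. If that is not available, the statement should be understood for $n(\bbi)\ge n_\ast$, which is the only range used later. The measurability of $y\mapsto\mathbb P_{w(\bbi)}(\{\tilde w \st y\in B_\bbi(\tilde w)\})$ comes from the universal measurability already established, so I expect no real obstacle. The argument is essentially Tonelli plus the observation that each point is its own witness for goodness.
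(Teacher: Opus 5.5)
Your argument is the same as the paper's: the pointwise inclusion $B_\bbi(\tilde w)\subset\{y\in\supp\theta_\bbi \st \tilde w\in B(w(\bbi),y)\}$ (each point is its own witness, since $y\in\cW_{d-k}(y,\tilde w,r)$), followed by Fubini and the bound from Proposition~\ref{prop:Good-and-Bad-extensions}. Your caveat about $n(\bbi)\ge n_\ast$ applies equally to the paper's proof, but shrinking $\beta$ would not fix depths $1\le n(\bbi)<n_\ast$, because no decomposition with bad probability below $1$ is constructed there; the correct reading is your fallback, that only $n(\bbi)\ge n_\ast$ is ever used.
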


\begin{proof}
If $y\in B_\bbi(\tilde w)$, then
\begin{displaymath}
\tilde w\notin G(w(\bbi),y).
\end{displaymath}
Indeed, if $\tilde w\in G(w(\bbi),y)$, then taking $x=y$ in the definition
of $G_\bbi(\tilde w)$ and using 
$y\in\cW_{d-k}(y,\tilde w,r)$ would imply
$y\in G_\bbi(\tilde w)$, a contradiction.
Hence
\begin{displaymath}
B_\bbi(\tilde w)
\subset
\{y\in\supp\theta_\bbi:\tilde w\in B(w(\bbi),y)\}.
\end{displaymath}
(Recall that we use the standing convention that integrals are always taken with
respect to the completion of the measure. The set
(\ref{eq:set:of:bad:pairs}) is universally measurable therefore
measurable with respect to the completion of $\nu\otimes\mathbb
P_{w(\bbi)}$, which allows us to use Fubini in the calculation below).
Therefore, by Fubini and Proposition~\ref{prop:Good-and-Bad-extensions},
\begin{align*}
&\int_{\Omega(w(\bbi))}
\int_{B_\bbi(\tilde w)}
\theta_\bbi(y)\,d\nu(y)\,
d\mathbb P_{w(\bbi)}(\tilde w)
\\
&\qquad\le
\int
\theta_\bbi(y)
\mathbb P_{w(\bbi)}\bigl(B(w(\bbi),y)\bigr)\,
d\nu(y)
\\
&\qquad\le
e^{-\beta \sqrt{n(\bbi)}}
\int\theta_\bbi\,d\nu .
\end{align*}
\end{proof}

\begin{proposition}[Ledrappier-Young conditional measure formula]
\label{prop:Ledrappier:Young:conditional:measure}
Assume the setup of \S\ref{ssec:assumptions_on_the_total_space}. Let
$\nu$ be a measure invariant under every map in the support of $\mu$
such that for almost all $(x,\tilde{w})$, the
conditional measure of $\nu$ along $\cW_{d-k}(x,\tilde{w})$ is in the
Lebesgue measure class. Then, for almost all $(x,\tilde{w})$, 
the conditional measure of $\nu$ on $\cW_{d-k}(x,\tilde{w})$ is, up to
normalization,
\begin{equation}
\label{eq:Ledrappier:Young:density}
J_{\tilde w}(x,y)\,
d\Leb_{\mathcal W_{d-k}(x,\tilde w)}(y).
\end{equation}
\end{proposition}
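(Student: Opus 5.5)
We will prove the formula by the standard Ledrappier--Young argument: use invariance of $\nu$ under every map in $\operatorname{supp}\mu$ to push the conditional measures forward along the random orbit, and compare densities at large times, where the stable plaques are exponentially small.

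\textbf{Step 1 (equivariance of conditionals).} Write $\nu_{x,\tilde w}$ for the conditional measure of $\nu$ on $\mathcal W_{d-k}(x,\tilde w)$, with respect to a measurable partition subordinate to the stable plaques. The plaque family is defined on the skew product $\Omega\times M$ with the shift dynamics $(\tilde w,x)\mapsto(\sigma\tilde w, f_{\tilde w_0}x)$. This dynamics preserves $\mu^{\otimes\mathbb N}\otimes\nu$, because $\nu$ is invariant under each $f\in\operatorname{supp}\mu$. Stable plaques are mapped into stable plaques: $f^m_{\tilde w}\mathcal W_{d-k}(x,\tilde w)\subset \mathcal W_{d-k}(f^m_{\tilde w}x,\sigma^m\tilde w)$. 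Refining partitions as usual, we get that for a.e. $(x,\tilde w)$ the push-forward $(f^m_{\tilde w})_*\nu_{x,\tilde w}$ coincides, up to normalization, with the restriction of $\nu_{f^m_{\tilde w}x,\sigma^m\tilde w}$ to the image plaque.

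\textbf{Step 2 (densities transform by Jacobians).} By hypothesis, $\nu_{x,\tilde w}=g_{x,\tilde w}\,d\Leb_{\mathcal W}$ with $g$ positive a.e. on the plaque. The change of variables in Step~1 gives, for $y$ on the plaque,
\[
\frac{g_{x,\tilde w}(y)}{g_{x,\tilde w}(x)}=\frac{\Jac(f^m_{\tilde w}|_{\mathcal W})(y)}{\Jac(f^m_{\tilde w}|_{\mathcal W})(x)}\cdot\frac{g_m(f^m_{\tilde w}y)}{g_m(f^m_{\tilde w}x)},
\]
where $g_m=g_{f^m_{\tilde w}x,\sigma^m\tilde w}$. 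As $m\to\infty$, the first factor tends to $J_{\tilde w}(x,y)$, by Theorem~\ref{theorem:stable:manifold:theorem}.

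\textbf{Step 3 (the second factor tends to $1$; main obstacle).} We must show $g_m(f^m y)/g_m(f^m x)\to1$ along a subsequence. The points $f^m y$ and $f^m x$ are at distance $\le C e^{-\lambda m/2}$, but the densities $g_m$ have no a priori regularity. The plan is to use a Lebesgue density / martingale argument on the skew product. Fix $\delta>0$ and choose a compact set $K_\delta$ of measure $>1-\delta$ on which $(x,\tilde w)\mapsto\log g_{x,\tilde w}$ is uniformly continuous, in leafwise coordinates at a fixed scale (Lusin). By Poincaré recurrence, or by the Birkhoff theorem applied to the indicator of $K_\delta$, a.e. $(x,\tilde w)$ has infinitely many times $m$ with $(f^m x,\sigma^m\tilde w)\in K_\delta$. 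We additionally restrict to times at which $h(f^m x)<R_{\rm init}$, which happens infinitely often by Theorem~\ref{theorem:Good-stables-everywhere}, so that the plaque geometry is uniform. To handle $f^m y$ as well, we apply the same argument to the set of pairs whose leafwise density is uniformly continuous on a whole plaque neighborhood. Then, along such $m$, the ratio $g_m(f^m y)/g_m(f^m x)$ tends to $1$. This proves $g_{x,\tilde w}(y)/g_{x,\tilde w}(x)=J_{\tilde w}(x,y)$ for $\nu_{x,\tilde w}$-a.e. $y$.

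\textbf{Step 4 (extension to all $y$).} Continuity of $J_{\tilde w}(x,\cdot)$, together with the bounds \eqref{eq:good:words:jacobian:bounds}, extends the identity from almost every $y$ to all $y$ in $B^{\mathcal W}(x,\tilde w,\rho_{\rm st})$. This identifies the conditional density with \eqref{eq:Ledrappier:Young:density} up to normalization.

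The delicate point is Step~3, namely making the measurable-selection/Lusin argument compatible with the plaque partition, which varies measurably with $(x,\tilde w)$. An alternative we would try first is to cite the standard argument of Ledrappier--Young for random dynamics, transported to the present uniform-geometry setting via Theorem~\ref{theorem:good:words}.
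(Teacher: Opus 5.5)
Your route differs from the paper's. The paper gives no argument of its own: it cites Ledrappier--Young and the random versions (Ledrappier--Young 1988, Liu--Qian, Bahnm\"uller--Liu), whose proofs are entropy-based, and it explicitly leaves the adaptation to the non-compact, partial-stable setting to the reader. A direct argument that uses invariance of $\nu$ under each individual $f^m_{\tilde w}$ would avoid entropy entirely, including any partial-entropy theory for the $(d-k)$-dimensional leaves, so it is worth having. As written, however, Step~3 has a genuine gap.

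Let $\mathcal P$ be your subordinate partition. Lusin's theorem gives a compact $K_\delta$ on which the pointwise density $G(z,\tilde w)$ is continuous; here $G(z,\tilde w)$ is the density, with respect to leaf volume, of the conditional on the $\mathcal P$-atom of $(z,\tilde w)$, evaluated at $z$. Lusin does \emph{not} give continuity of a leafwise density on any plaque neighborhood. A priori $g_{x,\tilde w}$ is only an $L^1$ function and may be discontinuous at every point of every leaf. So the set of pairs ``whose leafwise density is uniformly continuous on a whole plaque neighborhood'' may be null; showing it has positive measure is essentially the conclusion you want. Poincar\'e recurrence for $(x,\tilde w)$ also says nothing about where $(f^m_{\tilde w}y,\sigma^m\tilde w)$ is at those same times, so you cannot compare $G$ at the two images.

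What you need is \emph{simultaneous} returns of both points.
\begin{itemize}
\item Put into $K_\delta$ the extra conditions $G\ge c_\delta>0$ and ``the atom contains the intrinsic leaf ball of radius $r_\delta$''. Let $A=\{\mathbb E(\mathbf 1_{K_\delta}\mid\mathcal I)>1/2\}$, where $\mathcal I$ is the invariant $\sigma$-algebra of the skew product. Then $(\mu^{\otimes\mathbb N}\otimes\nu)(A^c)\le 2\delta$.
\item Consider pairs $(x,y)$ in a common atom with $(x,\tilde w),(y,\tilde w)\in A$. Under $\int\nu^{\mathcal P}_z\otimes\nu^{\mathcal P}_z\,d(\mu^{\otimes\mathbb N}\otimes\nu)(z)$ they have mass at least $1-4\delta$. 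For them Birkhoff gives a positive density of times $m$ at which both images lie in $K_\delta$.
\item For large such $m$ the images are at intrinsic distance at most $Ce^{-\lambda m/2}<r_\delta$, hence in one atom. This is exactly when your Step~2 identity is valid; otherwise $g_m(f^my)/g_m(f^mx)$ mixes two differently normalized densities.
\item Uniform continuity of $G|_{K_\delta}$, together with $G\ge c_\delta$, then forces that ratio to tend to $1$. Letting $\delta\to0$ gives $G(y)/G(x)=J_{\tilde w}(x,y)$ for a.e.\ pair. This already identifies the conditional, so Step~4 is superfluous.
\end{itemize}

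There is also a smaller problem in Step~1: do not obtain equivariance by refining with $F^{-m}\mathcal P$ for the skew product $F$. It is non-invertible, and an atom of $F^{-m}\mathcal P$ contains all words with the same tail. Restricting to a single word is then conditioning on a null set when $\mu$ is non-atomic. Instead, work for a.e.\ fixed $\tilde w$ on $(M,\nu)$, and compare $\mathcal P_{\tilde w}$ with $(f^m_{\tilde w})^{-1}\mathcal P_{\sigma^m\tilde w}$ using $(f^m_{\tilde w})_*\nu=\nu$.
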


\begin{proof}

For a single diffeomorphism of a compact manifold, the conditional-density
formula along full unstable manifolds is given in
\cite[Corollaries~6.1.4 and~6.2, pp.~534--535]{LedrappierYoung1985I};
the stable version is obtained by applying it to the inverse map.
The random adaptation is discussed in
\cite[\S6.1, pp.~236--237]{LedrappierYoung1988}, with a detailed
treatment of the random {SRB} characterization in
\cite[Chapter~VI, pp.~128--181]{LiuQian1995} and a treatment of
stationary random systems in \cite{BahnmullerLiu1998}.
For the broader random entropy--dimension formula, see
\cite{QianQianXie2003}.
These references concern compact manifolds; the formulation below in our
possibly non-compact setting, including the selected partial stable
manifolds, requires the corresponding adaptation.
\end{proof}

\begin{proposition}[Good plaques give Fourier decay]
\label{prop:good:plaques:fourier:decay}
For every $b\in\mathbb N$, there exists $C_b<\infty$ such that, for every
non-root vertex $\bbi\in\mathcal T$ and $\bP(w(\bbi))$-almost every
$\tilde w\in\Omega(w(\bbi))$,
\begin{displaymath}
\left|
\int_{G_\bbi(\tilde w)}
\theta_\bbi(y)
e^{\sqrt{-1}\langle\xi,y\rangle}\,d\nu(y)
\right|
\le 
\frac{C_b}{|\xi|^b}
\sum_{j=0}^b
\int_{G_\bbi(\tilde w)}
\left\|
D_{\mathcal W}^j\theta_\bbi(y)
\right\|
\,d\nu(y).
\end{displaymath}
\end{proposition}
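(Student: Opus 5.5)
The plan is to disintegrate $\nu$ restricted to $G_\bbi(\tilde w)$ along the good local stable plaques, and to apply \autoref{prop:leafwise_Fourier_estimate} on each plaque.

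Fix $\bbi$ and a $\mathbb P_{w(\bbi)}$-typical $\tilde w$. For a.e.\ $\tilde w$, the stable manifolds $\cW_{d-k}(\cdot,\tilde w)$ (for the fixed future) form a measurable family. We choose a measurable partition $\xi_{\tilde w}$ of $G_\bbi(\tilde w)$ subordinate to these plaques. Concretely, each atom is a plaque $\cW_{d-k}(x,\tilde w,r)\cap\supp\theta_\bbi$ with $x\in\supp\theta_\bbi$ and $\tilde w\in G(w(\bbi),x)$. By \eqref{eq:step1:plaque:localization}, each such atom lies in $B_{\cW}(x,r_s)\subset B^{\cW}(x,\tilde w,\rho_{\rm st})$. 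This is where \eqref{eq:rs:rho:st} enters.

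Two plaques through good points that intersect must coincide on their overlap. Here I would use the characterization of stable manifolds by exponential contraction from \autoref{theorem:Good-stables-everywhere} together with the connected-component definition. So after a measurable selection of a base point $x$ in each atom, we get a genuine partition. Measurability follows from the universal measurability discussion preceding \autoref{prop:bad:points:small:average}, and we work with completions.

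Write $\nu|_{G_\bbi(\tilde w)}=\int \nu_{P}\,d\hat\nu(P)$. By the last assertion of \autoref{prop:Good-and-Bad-extensions} and \autoref{prop:Ledrappier:Young:conditional:measure}, for a.e.\ atom $P$ based at a good $x$ we have
\[
d\nu_P(y)=Z_P^{-1}\,\mathbf 1_P(y)\,J_{\tilde w}(x,y)\,d\Leb_{\cW_{d-k}(x,\tilde w)}(y).
\]
This holds up to identifying conditionals on the finer partition: the conditional on an atom of a partition subordinate to the leaves is the restriction of the leaf conditional, renormalized. Replacing the base point $x$ by another point of the same plaque only rescales $J$ by a constant, by the cocycle property of the Jacobian ratio.

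We then apply \autoref{prop:leafwise_Fourier_estimate} to the integral over $\cW_{d-k}(x,\tilde w)$ of $\theta_\bbi J_{\tilde w}(x,\cdot)e^{\sqrt{-1}\langle\xi,\cdot\rangle}$. Here there is one subtlety. The leafwise estimate integrates over the whole plaque, and $\supp\theta_\bbi\cap\cW$ may a priori not be a single atom. Two ways around this:
\begin{itemize}
\item observe that $\theta_\bbi$ is supported in $B(x_0,r)$, so the integrand lives on the single component $\cW_{d-k}(x,\tilde w,r)$, which is exactly the atom;
\item or, failing that, include the indicator of the atom and note that $\theta_\bbi$ vanishes on its relative boundary in the plaque.
\end{itemize}
For $n(\bbi)\le n_\ast$ the statement holds trivially: $G(w(\bbi),x)$ is then defined vacuously, or we absorb this case into $C_b$. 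Dividing by $Z_P$ yields
\[
\Bigl|\int\theta_\bbi e^{\sqrt{-1}\langle\xi,y\rangle}d\nu_P\Bigr|\le \frac{C_b}{|\xi|^b}\sum_{j\le b}\int\|D^j_{\cW}\theta_\bbi\|\,d\nu_P .
\]
Integrating over $\hat\nu$ and using the triangle inequality then gives the claim.

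The main obstacle is the measure-theoretic step. One must show that the good plaques form a measurable partition of $G_\bbi(\tilde w)$ on which the Ledrappier--Young density applies atomwise. This requires the non-overlap/coincidence of distinct good plaques, and the fact that conditionals on this sub-partition are restrictions of the leaf conditionals. I would handle it by refining to a countable measurable partition of $\supp\theta_\bbi$ into small sets. On each such set the good plaques form a measurable lamination, so the Rokhlin disintegration can be taken with respect to it.
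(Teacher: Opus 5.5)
Your proposal is correct and follows the paper's proof. You disintegrate $\nu$ restricted to $G_\bbi(\tilde w)$ along the good plaques $\cW_{d-k}(x,\tilde w,r)$, identify the conditionals with $J_{\tilde w}(x,\cdot)\,d\Leb_{\cW}$ via \autoref{prop:Ledrappier:Young:conditional:measure}, apply \autoref{prop:leafwise_Fourier_estimate} on each plaque, and integrate over the quotient. The paper simply asserts that these plaques form a measurable partition, so your discussion of overlaps and base-point changes goes beyond what it writes.

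For the leafwise step, your second option is exactly what the paper uses: since $\supp\rho\Subset B(x_0,r)$, the amplitude on the component is smooth and compactly supported, so integration by parts gives no boundary term. Your first option would need an extra argument, since $\cW_{d-k}(x,\tilde w)\cap B(x_0,r)$ need not a priori be connected.
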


\begin{proof}
By definition, $G_\bbi(\tilde w)$ is partitioned by its intersection
with local stable plaques
$\mathcal W_{d-k}(x,\tilde w,r)$ with
\begin{displaymath}
x\in\supp\theta_\bbi,
\qquad
\tilde w\in G(w(\bbi),x).
\end{displaymath}
This is a measurable partition. 
Disintegrate the restriction of $\nu$ to $G_\bbi(\tilde w)$ along these
plaques. By
Proposition~\ref{prop:Ledrappier:Young:conditional:measure}, 
the conditional measure on such a plaque is, up to the normalization of the
conditional measure, given by (\ref{eq:Ledrappier:Young:density}). 
Thus the contribution of each plaque is an integral of the form
\begin{displaymath}
\int_{\mathcal W_{d-k}(x,\tilde w,r)}
\theta_\bbi(y)
J_{\tilde w}(x,y)
e^{\sqrt{-1}\langle\xi,y\rangle}\,
d\Leb_{\mathcal W_{d-k}(x,\tilde w)}(y).
\end{displaymath}
In view of \eqref{eq:step1:plaque:localization}, 
on each $\cW_{d-k}(x,\tilde{w},r)$ the leafwise amplitude is smooth
and compactly supported in its interior, since
$\supp\rho\Subset B(x_0,r)$ and $\theta_\bbi$ contains the factor
$\rho$. Thus integration by parts introduces no boundary term.

Since the plaque is good by construction,
Proposition~\ref{prop:leafwise_Fourier_estimate} applies to this inner
integral. Hence the absolute value of the inner integral is bounded by
\begin{displaymath}
\frac{C_b}{|\xi|^b}
\sum_{j=0}^b
\int_{\mathcal W_{d-k}(x,\tilde w)}
\left\|
D_{\mathcal W}^j\theta_\bbi(y)
\right\|
J_{\tilde w}(x,y)\,
d\Leb_{\mathcal W_{d-k}(x,\tilde w)}(y).
\end{displaymath}
Integrating this estimate over the quotient space of good plaques, and
using again the conditional-measure formula, gives
\begin{displaymath}
\left|
\int_{G_\bbi(\tilde w)}
\theta_\bbi(y)
e^{\sqrt{-1}\langle\xi,y\rangle}\,d\nu(y)
\right|
\le \frac{C_b}{|\xi|^b}
\sum_{j=0}^b
\int_{G_\bbi(\tilde w)}
\left\|
D_{\mathcal W}^j\theta_\bbi(y)
\right\|
\,d\nu(y).
\end{displaymath}
This proves the proposition.
\end{proof}

\begin{corollary}[Good--bad Fourier estimate]
\label{cor:good:bad:fourier:estimate}
For every $b\in\mathbb N$, there exists $C_b<\infty$ such that, for every
non-root vertex $\bbi\in\mathcal T$,
\begin{multline*}
\left|
\int
\theta_\bbi(y)
e^{\sqrt{-1}\langle\xi,y\rangle}\,d\nu(y)
\right|
\le
e^{-\beta \sqrt{n(\bbi)}}
\int\theta_\bbi\,d\nu
\\
+
\frac{C_b}{|\xi|^b}
\sum_{j=0}^b
\int_{\Omega(w(\bbi))}
\int_{G_\bbi(\tilde w)}
\left\|
D_{\mathcal W}^j\theta_\bbi(y)
\right\|
\,d\nu(y)\,
d\mathbb P_{w(\bbi)}(\tilde w).
\end{multline*}
\end{corollary}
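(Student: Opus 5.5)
The plan is to average over the random extension and split the $\nu$-mass of $\theta_\bbi$ into good and bad parts. Fix a non-root vertex $\bbi$. The integrand $\theta_\bbi(y)e^{\sqrt{-1}\langle\xi,y\rangle}$ does not depend on $\tilde w$, so for each $\tilde w\in\Omega(w(\bbi))$ we have
\begin{displaymath}
\int\theta_\bbi e^{\sqrt{-1}\langle\xi,y\rangle}\,d\nu
=
\int_{G_\bbi(\tilde w)}\theta_\bbi e^{\sqrt{-1}\langle\xi,y\rangle}\,d\nu
+\int_{B_\bbi(\tilde w)}\theta_\bbi e^{\sqrt{-1}\langle\xi,y\rangle}\,d\nu,
\end{displaymath}
because $\theta_\bbi$ is supported in $\supp\theta_\bbi=G_\bbi(\tilde w)\sqcup B_\bbi(\tilde w)$. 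Integrate this identity over $\tilde w$ with respect to the probability measure $\mathbb P_{w(\bbi)}$; the left side is unchanged. Then apply the triangle inequality, moving the absolute value inside the $\tilde w$-integral.

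For the bad term, bound $|e^{\sqrt{-1}\langle\xi,y\rangle}|\le1$ and use $\theta_\bbi\ge0$. This reduces it to $\int_{\Omega(w(\bbi))}\int_{B_\bbi(\tilde w)}\theta_\bbi\,d\nu\,d\mathbb P_{w(\bbi)}$, which \autoref{prop:bad:points:small:average} bounds by $e^{-\beta\sqrt{n(\bbi)}}\int\theta_\bbi\,d\nu$. The joint set \eqref{eq:set:of:bad:pairs} is universally measurable, so Fubini on the completion justifies writing the iterated integral. The good set is the complement of the bad set inside $\supp\theta_\bbi$, so the good part is measurable as well.

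For the good term, apply \autoref{prop:good:plaques:fourier:decay}, which holds for $\mathbb P_{w(\bbi)}$-a.e.\ $\tilde w$. This gives the pointwise-in-$\tilde w$ bound $\frac{C_b}{|\xi|^b}\sum_{j\le b}\int_{G_\bbi(\tilde w)}\|D^j_{\mathcal W}\theta_\bbi\|\,d\nu$. Integrating over $\tilde w$ yields exactly the second term of the claimed estimate.

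The only delicate point is measurability: the map $\tilde w\mapsto\int_{G_\bbi(\tilde w)}\|D^j_{\mathcal W}\theta_\bbi\|\,d\nu$ must be measurable so that the integral makes sense, and $D_{\mathcal W}$ depends on the plaque through $\tilde w$. I would handle this with the same universal-measurability argument used for \eqref{eq:set:of:bad:pairs} together with the measurable dependence of the stable plaques. In the remaining cases the statement is trivial: if $n(\bbi)<n_\ast$ the bad set may be all of $\supp\theta_\bbi$, and the definitions make the estimate hold trivially (or vacuously, by convention). Otherwise the argument is a direct combination of the two preceding propositions.
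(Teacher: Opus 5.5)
Your argument is the paper's proof. You average the $\tilde w$-independent integral over $\mathbb P_{w(\bbi)}$, split $\supp\theta_\bbi$ into $G_\bbi(\tilde w)\sqcup B_\bbi(\tilde w)$, bound the bad part by \autoref{prop:bad:points:small:average} and the good part by \autoref{prop:good:plaques:fourier:decay}, and then average. You should drop your closing remark about $n(\bbi)<n_\ast$: if everything is bad and $0<n(\bbi)<n_\ast$, the bound is not trivial, since $e^{-\beta\sqrt{n(\bbi)}}<1$ and it fails for small $|\xi|$. It is also unnecessary, because you may cite the two propositions as stated, and the corollary is only ever applied with $n(\bbi)\ge n_\ast$.
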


\begin{proof}
For each fixed $\tilde w\in\Omega(w(\bbi))$, decompose
\begin{displaymath}
\supp\theta_\bbi
=
G_\bbi(\tilde w)\sqcup B_\bbi(\tilde w).
\end{displaymath}
Since $\nu$ is stationary under the maps in the support of $\mu$, and
$\mathbb P_{w(\bbi)}$ is the Bernoulli measure on extensions of the fixed
prefix $w(\bbi)$, averaging over $\tilde w$ does not change the integral:
\begin{displaymath}
\int
\theta_\bbi(y)
e^{\sqrt{-1}\langle\xi,y\rangle}\,d\nu(y)
=
\int_{\Omega(w(\bbi))}
\int
\theta_\bbi(y)
e^{\sqrt{-1}\langle\xi,y\rangle}\,d\nu(y)\,
d\mathbb P_{w(\bbi)}(\tilde w).
\end{displaymath}
For each $\tilde w$, split the inner integral into the contributions from
$G_\bbi(\tilde w)$ and $B_\bbi(\tilde w)$. The contribution from
$B_\bbi(\tilde w)$ is bounded in absolute value by
$
\int_{B_\bbi(\tilde w)}\theta_\bbi\,d\nu.
$
Averaging over $\tilde w$ and applying
Proposition~\ref{prop:bad:points:small:average} gives the first term on the
right-hand side.

The contribution from $G_\bbi(\tilde w)$ is bounded by
Proposition~\ref{prop:good:plaques:fourier:decay}. Averaging that estimate
over $\Omega(w(\bbi))$ gives the second term. This proves the corollary.
\end{proof}

\subsubsection{Partitions of Unity}
\label{ssec:Partitions-of-Unity}

We next estimate the derivative terms which occur in
Corollary~\ref{cor:good:bad:fourier:estimate} after summing over all
vertices at a fixed level.

\begin{proposition}[Boundedness of the tree partition along good plaques]
\label{prop:Boundedness-of-rho-v-along-stables}
For every $b\in\mathbb Z_{\geq 0}$, there exists $C_b<\infty$ such that,
for every $n\geq n_\ast+1$,
\begin{displaymath}
\sum_{\bbi\in\mathcal T_n}
\int_{\Omega(w(\bbi))}
\int_{G_\bbi(\tilde w)}
\left\|
D_{\mathcal W(y,\tilde{w})}^b\theta_\bbi(y)
\right\|
\,d\nu(y)\,
d\mathbb P_{w(\bbi)}(\tilde w)  \le
C_b(1+n)^b
\int \rho\,d\nu .
\end{displaymath}
Here $D_{\cW(y,\tilde{w})}^b$ denotes the $b$-th derivative
along the stable manifold $\cW(y,\tilde{w})$. 
\end{proposition}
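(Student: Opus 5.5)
The plan is to expand $D^b_{\mathcal W}\theta_\bbi$ by the Leibniz rule using the product formula \eqref{eqn:theta_phi_relation}, $\theta_\bbi=\rho\prod_{k=1}^{n}\phi_{\bbi_{\le k}}$. A $b$-th derivative of a product of $n+1$ factors distributes $b$ derivatives among the factors. Each resulting term has at most $b$ differentiated factors, and there are at most $(n+1)^b$ ways to choose which factors receive derivatives, with multinomial coefficients bounded by $b!$. Fix $\bbi\in\mathcal T_n$, $\tilde w$, and $y\in G_\bbi(\tilde w)$. Then $y$ lies on a good plaque $\mathcal W_{d-k}(x,\tilde w,r)$ with $x\in\supp\theta_\bbi$ and $\tilde w\in G(w(\bbi),x)$, and along this plaque $\mathcal W(y,\tilde w)$ coincides locally with $\mathcal W_{d-k}(x,\tilde w)$.

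Each differentiated factor will be bounded using Proposition~\ref{prop:Boundedness-along-stables}, applied with $\bbj=\bbi$ and every non-root ancestor. This bounds every derivative of order at most $b$ of each $\phi_{\bbi_{\le k}}$ at $y$ by a uniform $C_b$. The derivatives of $\rho$ along the plaque are bounded because $\rho$ is a fixed bump function and the plaques have uniform $C^b$ geometry by Theorem~\ref{theorem:good:words}.

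The main obstacle is the undifferentiated factors. Bounding them by $1$ gives a pointwise estimate $C_b(n+1)^b\,\mathbf 1_{\supp\theta_\bbi}$, which cannot be summed over $\bbi$ against $\nu$ because the supports overlap. I would instead keep the undifferentiated product and use the identity
\[
\prod_{k\notin S}\phi_{\bbi_{\le k}}\;\le\;\theta_\bbi\big/\bigl(\rho\textstyle\prod_{k\in S}\phi_{\bbi_{\le k}}\bigr)
\]
only in the direction of the summation. Here $S$ is the set of at most $b$ indices receiving derivatives. Summing over $\bbi\in\mathcal T_n$, first sum over the free final indices $i_k$ with $k\notin S$. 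Using $\sum_{\text{children}}\phi_\bbj=1$ (the partition-of-unity property of $\Phi_\bbi$), the undifferentiated factors telescope to at most $\rho$. For each $k\in S$, the differentiated factor $\phi_{\bbi_{\le k}}$ is nonzero near $y$ for at most $m_I$ choices of $i_k$, by the multiplicity bound. Its derivative is supported in $\supp\phi_{\bbi_{\le k}}$, so the sum over $i_k$ contributes at most $m_I C_b$.

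This must be done carefully in the order of tree depth. The labels of later edges depend on earlier indices, but the partition-of-unity identity holds for each fixed parent, so the sum can be taken from the leaves inward. Doing so yields, pointwise in $y$ for fixed $\tilde w$,
\[
\sum_{\bbi\in\mathcal T_n}\mathbf 1_{G_\bbi(\tilde w)}(y)\,\|D^b\theta_\bbi(y)\|\le C_b(1+n)^b m_I^b\rho(y).
\]
A subtlety is that $\tilde w$ ranges over $\Omega(w(\bbi))$, which depends on $\bbi$. I would handle this by integrating $d\mathbb P_{w(\bbi)}$ first for each $\bbi$ and then summing. This requires the pointwise bound above to hold with the integrand $\int\|D^b\theta_\bbi\|\mathbf 1_{G_\bbi}\,d\mathbb P_{w(\bbi)}\le(\text{derivative bound})\cdot\mathbf 1_{\supp}$ for the corresponding product, so the same telescoping applies. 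Integrating against $\nu$ then gives $C_b(1+n)^b\int\rho\,d\nu$.
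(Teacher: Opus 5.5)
Your argument is correct, but it is organized differently from the paper's proof. The paper runs a double induction. It inducts on $b$, and for fixed $b$ on the depth, using the one-step rule $\theta_\bbj=\theta_\bbi\phi_\bbj$. This gives the recursion $S_{n,b}\le S_{n-1,b}+C_b\sum_{r<b}S_{n-1,r}$. Goodness at a child does not imply goodness at its parent ($y\in G_\bbj(\tilde w)$ does not give $y\in G_\bbi(\tilde w)$). So to close the recursion the paper enlarges the good sets to $E_\bbi(y)$, the good continuations for $\bbi$ or any eligible descendant, and takes a supremum over $\tilde w\in E_\bbi(y)$. That in turn costs a separate measurability discussion.

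You instead apply the Leibniz rule to the full product $\rho\prod_{k=1}^n\phi_{\bbi_{\le k}}$ at the final level. You invoke \autoref{prop:Boundedness-along-stables} only once, with $\bbj=\bbi$. This is legitimate because $n(\bbi)=n-1\ge n_\ast$, and that single application already controls every ancestor factor at $y$. Each Leibniz term is then dominated by a product of the values $\phi_{\bbi_{\le k}}(y)$ and the indicators $\mathbf 1_{\supp\phi_{\bbi_{\le k}}}(y)$. This product does not depend on $\tilde w$, so the $\mathbb P_{w(\bbi)}$-integral is harmless. The tree sum then telescopes from the leaves inward, using $\sum_{\text{children}}\phi_\bbj=1$ and the multiplicity $m_I$. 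You get the explicit constant $\binom{n+b}{b}\,b!\,(Cm_I)^b$.

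Your route bypasses $E_\bbi(y)$ and the supremum entirely. The paper's recursion, in exchange for that auxiliary construction, avoids the multi-index bookkeeping.

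Two small corrections are needed.
\begin{itemize}
\item When $\rho$ itself receives derivatives, the pointwise bound is $C\,\mathbf 1_{\supp\rho}(y)$, not $C\rho(y)$. Near the boundary of $\supp\rho$, the derivatives of a smooth bump are not dominated by $\rho$. You conclude instead from $\nu(\supp\rho)\le 1\le C_\rho\int\rho\,d\nu$, which holds because $\int\rho\,d\nu>0$. This is exactly how the paper handles its initial term $S_{0,b}$.
\item The displayed inequality involving $\theta_\bbi/(\rho\prod_{k\in S}\phi_{\bbi_{\le k}})$ is unnecessary, and it is undefined where the denominator vanishes. What you actually use is only the telescoping.
\end{itemize}
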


\begin{proof}
Define
\[
E_\bbi(y):=
\bigcup_{\substack{\mathbf{k}\succeq\bbi\\ n(\mathbf{k})\ge n_\ast}}
\{\tilde w\in\Omega(w(\mathbf{k})):y\in G_{\mathbf{k}}(\tilde w)\}.
\]
Here the union is over non-root descendants $\mathbf{k}$ of $\bbi$,
including $\mathbf{k}=\bbi$ when eligible.
Note that for a child $\bbj$ of $\bbi$, $E_\bbj(y)\subseteq E_\bbi(y)$.

For every $n,b\ge0$, set
\[
S_{n,b}:=\sum_{\bbi\in\mathcal T_n}\int_M
\sup_{\tilde w\in E_\bbi(y)}
\left\|D_{\mathcal W(y,\tilde w)}^b\theta_\bbi(y)\right\|\,d\nu(y),
\]
with the convention $\sup\varnothing=0$. 

To justify the $\nu$-measurability of the $\sup$ in the above formula,
note that the supremum above is a universally measurable function, hence
$\nu$-measurable under our completion convention, since its strict
superlevel sets are countable unions of projections of Borel relations
describing good plaques and their leafwise derivatives.
From the definitions,
\begin{displaymath}
\sum_{\bbi\in\mathcal T_n}
\int_{\Omega(w(\bbi))}
\int_{G_\bbi(\tilde w)}
\left\|
D_{\mathcal W(y,\tilde{w})}^b\theta_\bbi(y)
\right\|
\,d\nu(y)\,
d\mathbb P_{w(\bbi)}(\tilde w) \le S_{n,b}. 
\end{displaymath}
It is thus enough to show that for all $n \ge 0$ and all $b
\ge 0$, 
\begin{equation}
\label{eq:Snb:main:estimate}
S_{n,b} \le  C_b(1+n)^b \int \rho\,d\nu.
\end{equation}
We prove the estimate by induction on $b$. The case $b=0$ follows from
$
G_\bbi(\tilde w)\subset\supp\theta_\bbi
$
and from the partition identity
$
\sum_{\bbi\in\mathcal T_n}\theta_\bbi=\rho.
$
Indeed,
\[
S_{n,0}\le\sum_{\bbi\in\mathcal T_n}\int\theta_\bbi\,d\nu
=\int\rho\,d\nu.
\]
(For each vertex, the supremum of the zeroth derivative is
$\theta_\bbi(y)$ when $E_\bbi(y)$ is nonempty, and $0$ otherwise.)

Now fix $b\ge1$ and assume (\ref{eq:Snb:main:estimate}) holds for $S_{n,r}$
for all $n$ and all $r < b$. 
Let $\bbj\in\mathcal T_n$, and write
$
\bbj=(\bbi,i_n),
\bbi\in\mathcal T_{n-1}.
$
Then we have
$
\theta_\bbj=\theta_\bbi\,\phi_\bbj.
$

Fix $y$ and
$\tilde w\in E_\bbj(y)$. Then, there
exists a descendant $\mathbf{k}$ of $\bbj$ ($\mathbf{k}=\bbj$ is allowed) such that $y \in
G_\mathbf{k}(\tilde{w})$. Then, by
Proposition~\ref{prop:Boundedness-along-stables}, for $0 \le r < b$, 
\begin{displaymath}
\|D_{\cW(y,\tilde{w})}^{b-r}\phi_\bbj(y)\| \le C_{b-r} \chi_\bbj(y)
\end{displaymath}
where $\chi_\bbj(y) = 1$ if $y \in \supp \phi_\bbj$ and $\chi_\bbj(y) = 0$ if
$y \not\in \supp \phi_\bbj$. 
On $\mathcal W(y,\tilde w)$, the product
rule gives
\begin{align*}
\Big\|
D_{\cW(y,\tilde{w})}^b & \theta_\bbj(y)
\Big\| \le \\
& \le 
\phi_\bbj(y)\|D_{\cW(y,\tilde{w})}^b\theta_\bbi(y)\| + C_b
\sum_{r=0}^{b-1}
\left\|
D_{\cW(y,\tilde{w})}^r\theta_\bbi(y)
\right\|
\left\|
D_{\cW(y,\tilde{w})}^{b-r}\phi_\bbj(y)
  \right\|  \\
& \le  \phi_\bbj(y)\|D_{\cW(y,\tilde{w})}^b\theta_\bbi(y)\| + C_b'
\sum_{r=0}^{b-1}
\left\|
D_{\cW(y,\tilde{w})}^r\theta_\bbi(y)
\right\|
\chi_\bbj(y).
\end{align*}
Taking the supremum over $\tilde{w} \in E_\bbj(y)$ and noting that $E_\bbj(y)
\subset E_\bbi(y)$, we get
\begin{multline*}
\sup_{\tilde{w} \in E_\bbj(y)} \left\|
D_{\cW(y,\tilde{w})}^b\theta_\bbj(y)
\right\|
\le
 \phi_\bbj(y) \sup_{\tilde{w} \in E_\bbi(y)}
 \|D_{\cW(y,\tilde{w})}^b\theta_\bbi(y)\| \\ + C_b' \chi_\bbj(y)
\sum_{r=0}^{b-1} 
\sup_{\tilde{w} \in E_\bbi(y)}  \left\|
D_{\cW(y,\tilde{w})}^r\theta_\bbi(y)
\right\|.
\end{multline*}
We now integrate both sides with respect to $\nu$ and sum over $\bbj
\in \cT_n$. Note that
\[
\sum_{\bbj:\,\bbj\text{ child of }\bbi}\phi_\bbj=1.
\]
Also, for each point $y$ and each
parent vertex $\bbi$, at most $m_I$ children $\bbj$ have
$
\chi_\bbj(y) \neq0,
$
where $m_I$ is the maximal multiplicity of the cover
$\{\supp\psi_i\}_{i\in I}$.
We get 
\[
S_{n,b}
\le
S_{n-1,b}
+
C_b\sum_{r=0}^{b-1}S_{n-1,r}.
\]
By the induction hypothesis in the derivative order,
\[
S_{n-1,r}
\le
C_r n^r\int\rho\,d\nu
\qquad
\text{for } r<b.
\]
Hence
\[
S_{n,b}
\le
S_{n-1,b}
+
C_b n^{b-1}\int\rho\,d\nu .
\]
Iterating in $n$ gives
\[
S_{n,b}
\le
S_{0,b}
+
C_b\sum_{q=1}^n q^{b-1}\int\rho\,d\nu
\le
C_b(1+n)^b\int\rho\,d\nu .
\]
The initial term $S_{0,b}$ is bounded because $\rho$ is fixed and the
plaque geometry is uniformly bounded: in the fixed ambient
coordinates, for $b\ge1$,
\[
S_{0,b}\le C_b\int_M\sum_{q=1}^b\|D^q\rho(y)\|\,d\nu(y)<C_b' < C_b''
\int  \rho \, d\nu
\]
This proves the proposition.
\end{proof}

\subsubsection{Proof of the main estimate}
\label{ssec:proof:main:estimate}

We now prove \eqref{eq:main_estimate}. Fix $b\in\mathbb N$. Let $n\ge1$ be
chosen later.
Since
$\sum_{\bbi\in\mathcal T_n}\theta_\bbi=\rho$,
we have
\begin{displaymath}
\int
\rho(y)e^{\sqrt{-1}\langle\xi,y\rangle}\,d\nu(y)
=
\sum_{\bbi\in\mathcal T_n}
\int
\theta_\bbi(y)e^{\sqrt{-1}\langle\xi,y\rangle}\,d\nu(y).
\end{displaymath}
Applying Corollary~\ref{cor:good:bad:fourier:estimate} to each vertex
$\bbi\in\mathcal T_n$ and summing gives
\begin{multline*}
\left|
\int
\rho(y)e^{\sqrt{-1}\langle\xi,y\rangle}\,d\nu(y)
\right|
\le
e^{-\beta \sqrt{n-1}}
\sum_{\bbi\in\mathcal T_n}
\int\theta_\bbi\,d\nu
\\
+
\frac{C_b}{|\xi|^b}
\sum_{j=0}^b
\sum_{\bbi\in\mathcal T_n}
\int_{\Omega(w(\bbi))}
\int_{G_\bbi(\tilde w)}
\left\|
D_{\cW(y,\tilde{w})}^j\theta_\bbi(y)
\right\|
\,d\nu(y)\,
d\mathbb P_{w(\bbi)}(\tilde w).
\end{multline*}
Using again
$\sum_{\bbi\in\mathcal T_n}\theta_\bbi=\rho$
for the first term, and applying
Proposition~\ref{prop:Boundedness-of-rho-v-along-stables} to the second
term, we get
\begin{displaymath}
\left|
\int
\rho(y)e^{\sqrt{-1}\langle\xi,y\rangle}\,d\nu(y)
\right|
\le
e^{-\beta \sqrt{n-1}}\int\rho\,d\nu
+
\frac{C_b(1+n)^b}{|\xi|^b}\int\rho\,d\nu .
\end{displaymath}
Absorbing $\int\rho\,d\nu$ into the constant, this becomes
\begin{equation}
\label{eq:main:estimate:before:optimization}
\left|
\int
\rho(y)e^{\sqrt{-1}\langle\xi,y\rangle}\,d\nu(y)
\right|
\le
C_b
\left(
e^{-\beta \sqrt{n-1}}
+
\frac{(1+n)^b}{|\xi|^b}
\right).
\end{equation}

We now choose $n$ as a function of $|\xi|$. If $|\xi|\le2$, the estimate is
immediate after increasing $C_b$. Suppose $|\xi|>2$, and choose
\begin{displaymath}
n=\max\left(n_\ast+1,1+\left\lceil
    \frac{2b}{\beta}\log|\xi|\right\rceil^2\right) . 
\end{displaymath}
Then we have
\begin{displaymath}
e^{-\beta \sqrt{n-1}}\le |\xi|^{-2b}
\text{ while }
(1+n)^b\le C_b(\log|\xi|)^{2b}.
\end{displaymath}
Thus \eqref{eq:main:estimate:before:optimization} gives
\begin{displaymath}
\left|
\int
\rho(y)e^{\sqrt{-1}\langle\xi,y\rangle}\,d\nu(y)
\right|
\le
C_b
\frac{(\log|\xi|)^{2b}}{|\xi|^b}.
\end{displaymath}
Since $b$ was arbitrary, this implies the stated rapid polynomial decay.
Indeed, applying the preceding estimate with $2b$ in place of $b$ gives
\begin{displaymath}
\left|
\int
\rho(y)e^{\sqrt{-1}\langle\xi,y\rangle}\,d\nu(y)
\right|
\le
C_b
\frac{1}{|\xi|^b}
\end{displaymath}
for $|\xi|>2$, after increasing $C_b$. Together with the trivial bound for
bounded $|\xi|$, this proves
\begin{displaymath}
\left|
\int
e^{\sqrt{-1}\langle\xi,y\rangle}\,d\nu_\rho(y)
\right|
\le
\frac{C_b}{1+|\xi|^b}.
\end{displaymath}
This is \eqref{eq:main_estimate}.



\appendix
\section{Some lemmas from linear algebra}
	\label{appendix:some_lemmas_linear_algebra}

\subsubsection{Angles and projections}
	\label{sssec:Angles-and-projections}

Suppose that $U,S\subset V$ are subspaces of an Euclidean vector space $V$, of complementary dimensions and giving a decomposition $V=U\oplus S$.
We define:
\[
	\angle(U,S):=\arccos\left(\frac{\max_{u\in U, s\in S} |\langle u,s\rangle|}{\|u\|\|s\|}\right) \in [0,\pi/2].
\]
Then, the norm of the projection from $V$ onto $U$ along $S$ is bounded by
\begin{align}
	\label{eqn:projection-norm-bound}
	\norm{\pi_U^S} \le \frac{1}{\sin(\angle(U,S))}\leq \frac{2}{\angle(U,S)}
\end{align}
Indeed, for arbitrary $v = u+s$ with $u\in U,s\in S$, we will show that $\sin(\angle(U,S))\|u\| \leq \|u+s\|$, and use that $\sin(\theta)\in[\tfrac 2\pi \theta, \theta]$ for $\theta \in [0,\pi/2]$.
But we have (with $\theta:=\angle(U,S)$):
\begin{align*}
	\norm{u+s}^2
	&= \norm{u}^2+\norm{s}^2+2\langle u,s\rangle \\
	&\geq \norm{u}^2+\norm{s}^2-2\norm{u}\norm{s}\cos\theta \\
	&= \norm{u}^2\sin^2\theta +(\norm{s}-\norm{u}\cos\theta)^2 \\
	&\geq \norm{u}^2\sin^2\theta.
\end{align*}
To a linear isomorphism $A\colon V\to W$ between Euclidean spaces of the same dimension, we associate the $k$-gaps:
\begin{align}
	\label{eqn:Delta_k_subspaces}
	\Delta_k^{U,S}(A) := \frac{\frakm(A\vert_U)}{\norm{A\vert_S}} \leq \Delta_k(A) := \frac{\sigma_k(A)}{\sigma_{k+1}(A)}
\end{align}
where $\dim U=k,\dim S = d-k$.

For two subspaces $S,K\in \Gr_{d-k}(V)$ we use the angular gap distance
\begin{align}
	\label{eqn:grassmannian-distance-def}
	\dist_{\Gr}(S,K):=
	\max\left\{
	\max_{\substack{s\in S}}
	\min_{\substack{k\in K}}\angle(s,k),
	\max_{\substack{k\in K}}
	\min_{\substack{s\in S}}\angle(k,s)
	\right\}.
\end{align}
Since $S$ and $K$ have the same dimension, the two one-sided quantities are
equal; this is the largest principal angle between $S$ and $K$. Equivalently,
if $P_S$ and $P_K$ are the orthogonal projections onto $S$ and $K$, then
\[
\sin\bigl(\dist_{\Gr}(S,K)\bigr)=\|P_S-P_K\|.
\]
In particular, for $v\in K$ we have:
\begin{align}
	\label{eqn:bound-norm-projection}
	\norm{v - P_S v} \le \sin\bigl(\dist_{\Gr}(K,S)\bigr) \norm{v}.
\end{align}
and there exists a unit vector $v\in K$ such that equality holds.

\begin{lemma}[Slow singular space from a dominated splitting]
	\label{lemma:slow:space:from:dominated:splitting}
Let $A\colon V\to W$ be a linear isomorphism between $d$-dimensional Euclidean
spaces, and suppose we have a decomposition
$ V=U\oplus S$ with 
$\dim U=k, \dim S=d-k$.

Then we have:
\begin{equation}
\label{eq:slow:space:from:dominated:splitting}
	\dist_{\Gr}\bigl(\ker_{d-k}(A),S\bigr)
	\leq
	\frac{8}{\Delta_k^{U,S}(A)}
\end{equation}
More precisely, if $\frakm(A|_U)>\|A|_S\|$, then
\begin{equation}
\label{eq:slow:space:projection:gap}
	\left\|
	P_{\ker_{d-k}(A)}-P_S
	\right\|
	\leq
	2 \min\left\{1,\frac{1}{\Delta_k^{U,S}(A)-1}\right\}.
\end{equation}
Moreover,
\begin{displaymath}
	\|A|_S\|\le 2  \frac{\sigma_{k+1}(A)}{\angle(AS,\im_k(A))}.
\end{displaymath}
\end{lemma}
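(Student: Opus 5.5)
The plan is to reduce everything to the graph description of $S$ over $U^\perp$-type coordinates, using singular value decomposition of $A$. Write $K:=\ker_{d-k}(A)$ and $F:=\im_k(A)$, so that $V=K^\perp\oplus K$ orthogonally, $A(K^\perp)=F$, $A(K)=F^\perp$, and $\|A|_K\|=\sigma_{k+1}(A)$, $\frakm(A|_{K^\perp})=\sigma_k(A)$. Set $\Delta:=\Delta_k^{U,S}(A)$. If $\Delta\le 8$ there is nothing to prove for \eqref{eq:slow:space:from:dominated:splitting}, since $\dist_{\Gr}\le \pi/2<8/\Delta$; similarly \eqref{eq:slow:space:projection:gap} is trivial when the minimum is $1$, because $\|P_K-P_S\|\le 1$ for orthogonal projections onto subspaces of the same dimension. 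So I assume $\Delta>2$ throughout.

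The key step is to take a unit vector $s\in S$ and decompose $s=a+b$ with $a\in K^\perp$, $b\in K$. I then bound $\|a\|$, which equals $\|s-P_K s\|$ and so controls $\sin\dist_{\Gr}(S,K)$ via \eqref{eqn:bound-norm-projection}. Because $A$ maps $K^\perp$ and $K$ to the orthogonal subspaces $F$ and $F^\perp$, we have $\|As\|^2=\|Aa\|^2+\|Ab\|^2\ge \sigma_k(A)^2\|a\|^2$. Hence $\|a\|\le \|A|_S\|/\sigma_k(A)$. It remains to compare $\sigma_k(A)$ with $\frakm(A|_U)$. The min--max principle only gives $\sigma_k(A)\ge\frakm(A|_U)$, and this inequality goes in the direction we need, so $\|a\|\le \|A|_S\|/\frakm(A|_U)=1/\Delta$.

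Therefore $\sin\dist_{\Gr}(S,K)=\|P_S-P_K\|\le 1/\Delta$. Converting sine to angle with $\theta\le \tfrac{\pi}{2}\sin\theta$ gives \eqref{eq:slow:space:from:dominated:splitting} with room to spare. It also gives \eqref{eq:slow:space:projection:gap}, since $1/\Delta\le 2/(\Delta-1)$. Note that no angle between $U$ and $S$ is needed, which is a pleasant surprise. The one thing to check carefully is that $\|P_S-P_K\|$ is governed by the one-sided sup $\max_{s}\|s-P_Ks\|$ for equal-dimensional subspaces; the paper has already recorded this in the definition of $\dist_{\Gr}$.

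For the final inequality, take a unit $s\in S$ and write $As=f+g$ with $f\in F$, $g\in F^\perp$. Then $\|g\|=\|A P_K s\|\le \sigma_{k+1}(A)$. Moreover $As\in AS$ forms angle at least $\theta:=\angle(AS,F)$ with $F$. This gives $\|g\|\ge \sin\theta\,\|As\|$, since the component of a vector orthogonal to $F$ is at least $\sin$ of its angle to $F$ times its length. Thus $\|As\|\le \sigma_{k+1}(A)/\sin\theta\le 2\sigma_{k+1}(A)/\theta$ by \eqref{eqn:projection-norm-bound}. The main point of care is the convention that $\angle(AS,F)$, for subspaces of complementary dimension, bounds the angle of every vector of $AS$ to $F$ from below. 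This follows directly from the definition of $\angle$ via maximal inner products, since the angle of $v\in AS$ to $F$ is attained at its projection onto $F$.
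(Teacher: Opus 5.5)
Your proof is correct. The ``moreover'' bound follows the paper's argument: the paper writes $v=P_Kv+A^{-1}f$ and bounds $\|Av\|$ by the norm of the projection onto $AS$ along $F$ applied to $AP_Kv$. That vector $AP_Kv$ is your $F^\perp$-component $g$, and the projection bound is your inequality $\|g\|\ge\sin\theta\,\|As\|$.

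For the main estimate you take a genuinely different route.
\begin{itemize}
\item The paper starts from a unit vector $v\in K$ and splits it along the given, non-orthogonal, decomposition $v=u+s\in U\oplus S$. It combines the min--max bound $\|A|_K\|=\sigma_{k+1}(A)\le\|A|_S\|$ with the triangle inequality to get $\mathfrak m(A|_U)\|u\|\le\|A|_S\|(2+\|u\|)$. Hence $\|v-P_Sv\|\le\|u\|\le 2/(\Delta-1)$.
\item You start from a unit vector of $S$ and split it orthogonally along $K^\perp\oplus K$. You use that $A$ carries this to the orthogonal splitting $F\oplus F^\perp$, and invoke the opposite min--max inequality $\sigma_k(A)\ge\mathfrak m(A|_U)$.
\end{itemize}
Your route gives the sharper bound $\sin\dist_{\Gr}(S,K)\le 1/\Delta$. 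It also makes clear that transversality of $U$ and $S$ plays no role; only $\dim U=k$ matters. The paper's argument, in exchange, never uses the orthogonality of $AK$ and $AK^\perp$; it only needs $\|A|_K\|\le\|A|_S\|$. So it shows more generally that any $(d-k)$-plane contracted at least as strongly as $S$ lies close to $S$. That is a pure domination statement about the given splitting.

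Two small points.
\begin{itemize}
\item Your preliminary claim that the case $\Delta\le 8$ is trivial because $\pi/2<8/\Delta$ is false for $16/\pi<\Delta\le 8$. This is harmless: you then assume only $\Delta>2$, and your main argument in fact covers every $\Delta>1$. The trivial reduction should just be to $\Delta\le 2$.
\item You should say explicitly that $\Delta>1$ gives $\sigma_k(A)\ge\mathfrak m(A|_U)>\|A|_S\|\ge\sigma_{k+1}(A)$. This is what makes $K=\ker_{d-k}(A)$ and $F=\im_k(A)$ well defined in your argument.
\end{itemize}
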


\begin{proof}

$K:=\ker_{d-k}(A)$
and note that since $K$ is a bottom $(d-k)$-dimensional singular subspace, and $\dim S=d-k$, we have
\begin{displaymath}
\|A|_K\|=\sigma_{k+1}(A) \leq \norm{A\vert_S}.
\end{displaymath}
Let $v\in K$ be any unit vector, decompose it as
$
v=u+s,
u\in U, s\in S$.
Then on the one hand, we have
$\norm{Au}\geq \frakm(A\vert_U)\norm{u}$
and on the other we have:
\begin{align*}
	\norm{Au} & \leq \norm{Av} + \norm{As} \leq \norm{A\vert_K} + \norm{A\vert_S}\norm{s} \\
	& \leq \norm{A\vert_S}(1+\norm{s}) \leq \norm{A\vert_S}(2 + \norm{u}).
\end{align*}
where we used in the last line $\norm{u}\leq \norm{v}+\norm{s} = 1+\norm{s}$.
Combining the upper and lower bounds on $\norm{Au}$ gives
\[
	\frakm(A\vert_U)\norm{u} \leq \norm{A\vert_S}(2 + \norm{u})
\]
Rearranging, and assuming $\frakm(A\vert_U)>\norm{A\vert_S}$ we find
\[
	\norm{u}\leq \frac{2\norm{A\vert_S}}{\frakm(A\vert_U) - \norm{A\vert_S}} = \frac{2}{\frac{\frakm(A\vert_U)}{\norm{A\vert_S}}-1}.
\]
We thus obtain the bound, for any unit vector $v\in K$,
\[
	\norm{(P_K-P_S)(v)} \leq 2 \min\left\{1,\frac{1}{\frac{\frakm(A\vert_U)}{\norm{A\vert_S}}-1}\right\}
\]
and thus \autoref{eq:slow:space:projection:gap} follows, and hence \autoref{eq:slow:space:from:dominated:splitting} by \autoref{eqn:bound-norm-projection}.

For the ``moreover'' part, we will show that for any $v\in S$ we have
\[
	\norm{Av} \leq \frac{\norm{A{\vert}_K}}{\sin\angle\left(AS,\im_k(A)\right)}\norm{v}
\]
which implies the desired bound.
For this, we will use the orthogonal splittings $V=K\oplus A^{-1}F$ and $W=AK\oplus F$ where $F:=\im_k(A)$ and the splittings are matched by $A$ since they are defined by the singular value decomposition.

Write an arbitrary $v\in S$ as $v = k + A^{-1}f$ where $k\in K$ and $f\in F$.
Rewrite $Av = Ak + f$ as $Ak = Av - f\in AS\oplus F$ and observe that $Av = \pi^{F}_{AS}(Ak)$.
Therefore we have:
\[
	\norm{Av} \leq \norm{\pi^{F}_{AS}}\norm{Ak}
	\leq \frac{\norm{A{\vert}_K}}{\sin\angle\left(AS,\im_k(A)\right)}\norm{k}
\]
and we are done because $\norm{k}\leq \norm{v}$ since the decomposition $V=K\oplus A^{-1}F$ is orthogonal.
\end{proof}

\begin{lemma}[Inverse image of a plane transverse to the fast image]
\label{lemma:inverse:estimate:transverse:to:fast:image}
Suppose $A\colon V\to W$ is a linear isomorphism with a strict $k$-gap,
so that $
\im_k(A)\subset W,
\ker_{d-k}(A)\subset V$
are well-defined.

Then for any $S'\in\Gr_{d-k}(W)$ with $S:=A^{-1}S'$ we have
\begin{displaymath}
\dist_{\Gr}(S,\ker_{d-k}(A))
\le
\frac{8}{\Delta_k(A)\cdot\angle(S',\im_k(A))}
\end{displaymath}
and
\begin{displaymath}
\|A|_S\|\le \frac{2\sigma_{k+1}(A)}{\angle(S',\im_k(A))}.
\end{displaymath}
\end{lemma}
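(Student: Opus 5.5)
The plan is to reduce both estimates to the singular value decomposition of $A$, using the orthogonal splittings
\[
V=K\oplus A^{-1}F,\qquad W=AK\oplus F,
\]
where $K:=\ker_{d-k}(A)$ and $F:=\im_k(A)$. These splittings are matched by $A$: we have $AK\perp F$ and $A(A^{-1}F)=F$. Moreover $\|A|_K\|=\sigma_{k+1}(A)$ and $\mathfrak m(A|_{A^{-1}F})=\sigma_k(A)$. Throughout, write $\theta:=\angle(S',F)$.

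The norm bound has already been proved. It is the ``moreover'' part of \autoref{lemma:slow:space:from:dominated:splitting}, since $AS=S'$. The argument is to write $v\in S$ as $v=\kappa+A^{-1}f$ with $\kappa\in K$ and $f\in F$. Then $Av=\pi^{F}_{S'}(A\kappa)$, and \eqref{eqn:projection-norm-bound} gives
\[
\|Av\|\le \sigma_{k+1}(A)\,\|v\|/\sin\theta\le 2\sigma_{k+1}(A)\|v\|/\theta .
\]
I will simply cite that lemma for this part.

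For the distance bound, take a unit vector $v\in S$ and decompose it as $v=\kappa+g$ with $\kappa\in K$ and $g\in A^{-1}F$ orthogonal. The aim is to bound $\|g\|$, since $\|v-P_Kv\|=\|g\|$. By \eqref{eqn:grassmannian-distance-def} and the equality of the one-sided quantities for equal dimensions, this controls $\sin\dist_{\Gr}(S,K)$. The step is as follows. Put $Av=A\kappa+Ag$, where $A\kappa\in AK=F^\perp$ and $Ag\in F$. Because $Av\in S'$ and $S'$ makes angle $\theta$ with $F$, the $F$-component of $Av$ is controlled by its $F^\perp$-component. Concretely, applying the projection onto $F$ along $S'$ to $A\kappa$ sends it to $-Ag$, because $A\kappa=Av-Ag$. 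Hence
\[
\sigma_k(A)\|g\|\le\|Ag\|\le \|\pi^{S'}_F\|\,\|A\kappa\|\le \frac{\sigma_{k+1}(A)}{\sin\theta}.
\]
This gives $\|g\|\le 1/(\Delta_k(A)\sin\theta)\le \pi/(2\Delta_k(A)\theta)$. Here I use that the projection onto $F$ along $S'$ and the projection onto $S'$ along $F$ both have norm at most $1/\sin\theta$, by \eqref{eqn:projection-norm-bound} applied to the decomposition $W=S'\oplus F$. That decomposition is a genuine direct sum when $\theta>0$; the case $\theta=0$ is vacuous.

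It remains to convert $\sin\dist_{\Gr}\le x$ into $\dist_{\Gr}\le 8/(\Delta_k\theta)$. If $x:=\pi/(2\Delta_k\theta)\le 1/\sqrt2$, then $\dist_{\Gr}\le \arcsin x\le (\pi/2)x\le 8/(\Delta_k\theta)$. Otherwise $8/(\Delta_k\theta)\ge 16x/\pi\ge \pi/2\ge\dist_{\Gr}$, so the bound holds trivially.

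I expect the only delicate point to be the direction of the projections, namely that $\pi^{S'}_F$ indeed sends $A\kappa$ to $-Ag$. This holds because $A\kappa=Av+(-Ag)$ with $Av\in S'$ and $-Ag\in F$. The rest is bookkeeping of constants.
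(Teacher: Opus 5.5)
Your proof is correct. For the distance bound it takes a different route from the paper's; the norm bound is handled identically, by citing the ``moreover'' part of \autoref{lemma:slow:space:from:dominated:splitting} with $AS=S'$.

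\textbf{The paper's route.} It never estimates components directly. It applies \autoref{lemma:slow:space:from:dominated:splitting} to the splitting $V=K^\perp\oplus S$, where $K^\perp=A^{-1}\im_k(A)$, and notes that $\mathfrak m(A|_{K^\perp})=\sigma_k(A)$. The ``moreover'' norm bound then gives $\Delta_k^{K^\perp,S}(A)\ge\tfrac12\Delta_k(A)\,\theta$. This is fed into the general estimate $\dist_{\Gr}(\ker_{d-k}A,S)\le 8/\Delta_k^{U,S}(A)$.

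\textbf{Your route.} You bound the $K^\perp$-component of a unit vector of $S$ through the oblique projection onto $F$ along $S'$. This gives $\sin\dist_{\Gr}(S,K)\le 1/(\Delta_k(A)\sin\theta)$. Since $\dist_{\Gr}\le(\pi/2)\sin\dist_{\Gr}$ always holds, this yields $\dist_{\Gr}(S,K)\le \pi^2/(4\Delta_k(A)\theta)$ unconditionally, so your case split is not actually needed.

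\textbf{Comparison.} The paper's reduction is more economical, since one general lemma produces both conclusions. Yours is self-contained and sharper in constants. In fact, the chain displayed in the paper, combined with the constant $8$ of \autoref{lemma:slow:space:from:dominated:splitting}, literally yields $8/(\tfrac12\Delta_k(A)\theta)=16/(\Delta_k(A)\theta)$. So it is your argument that actually delivers the stated constant $8$. The factor of $2$ is immaterial downstream, where it is absorbed into the large regularity threshold $C_0$.
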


\begin{proof}
We apply \autoref{lemma:slow:space:from:dominated:splitting} with $U
= A^{-1} \im_k(A) = \ker_{d-k}(A)^\perp$,
using the estimate:
\[
	\Delta_k^{U,S}(A) 
	=
	\frac{\sigma_k(A)}{\|A|_S\|} \geq 
	\frac{\sigma_k(A)}{2\sigma_{k+1}(A)/\angle(S',\im_k(A))} 
	=
	\tfrac 12\Delta_k(A)\cdot\angle(S',\im_k(A))
\]
where in the inequality we used the ``moreover'' part of \autoref{lemma:slow:space:from:dominated:splitting}.
\end{proof}

\begin{lemma}[Image of a transverse plane]
\label{lemma:image:of:transverse:plane}
Suppose $B\colon V\to W$ is a linear isomorphism with a strict $k$-gap,
so that $
\im_k(B)\subset W,
\ker_{d-k}(B)\subset V$
are well-defined.

The for any $E\in\Gr_k(V)$ we have
\begin{displaymath}
\dist_{\Gr}(BE,\im_k(B))
\le
\frac{8}{\Delta_k(B)\cdot \angle(E,\ker_{d-k}(B))}
\end{displaymath}
and
\begin{displaymath}
	\mathfrak m(B|_E)\ge \tfrac 12 \angle(E,\ker_{d-k}(B))\,\sigma_k(B).
\end{displaymath}
\end{lemma}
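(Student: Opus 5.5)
Both estimates will come from the singular value decomposition of $B$, with no dynamics involved. Write $K:=\ker_{d-k}(B)$ and $F:=\im_k(B)$, and set $\theta:=\angle(E,K)$. The SVD gives orthogonal splittings $V=K^\perp\oplus K$ and $W=F\oplus F^\perp$ that $B$ respects: $B(K^\perp)=F$ and $B(K)=F^\perp$. On $K^\perp$ the map $B$ expands every vector by at least $\sigma_k(B)$, and on $K$ it expands by at most $\sigma_{k+1}(B)$. If $\theta=0$ the first bound is vacuous, so we may assume $\theta>0$. Then $E\cap K=0$, and since $\dim E=k=\dim K^\perp$, the orthogonal projection $P_{K^\perp}|_E\colon E\to K^\perp$ is an isomorphism.

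\textbf{Step 1: the projection bound.} For $v\in E$ decompose $v=u+s$ with $u=P_{K^\perp}v\in K^\perp$ and $s\in K$. Applying \eqref{eqn:projection-norm-bound} to the splitting $V=E\oplus K$ should give
\[
\|s\|\le\|v\|\le \|u\|/\sin\theta .
\]
The exact constant is the only delicate point. Because $K^\perp\perp K$, we can argue directly: for $v\in E$, $\|u\|=\operatorname{dist}(v,K)\ge\sin\theta\,\|v\|$, since $\theta$ bounds below the angle between $v$ and $K$. This yields $\|s\|\le\|v\|\le\|u\|/\sin\theta$.

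\textbf{Step 2: the conorm bound.} Since $Bu\in F$ and $Bs\in F^\perp$ are orthogonal,
\[
\|Bv\|\ge\|Bu\|\ge\sigma_k(B)\|u\|\ge\sigma_k(B)\sin\theta\,\|v\|\ge \tfrac2\pi\theta\,\sigma_k(B)\|v\|\ge\tfrac12\theta\,\sigma_k(B)\|v\|.
\]
This proves $\mathfrak m(B|_E)\ge\tfrac12\angle(E,K)\,\sigma_k(B)$.

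\textbf{Step 3: the distance bound.} The image $BE$ is the graph over $F$ of the linear map $T\colon F\to F^\perp$ given by $Bu\mapsto Bs$, where $u$ ranges over $K^\perp$ and $s$ is determined by $u$ through Step 1. Its operator norm satisfies
\[
\|T\|\le \frac{\sigma_{k+1}(B)\|s\|}{\sigma_k(B)\|u\|}\le\frac{\sigma_{k+1}(B)}{\sigma_k(B)\sin\theta}=\frac{1}{\Delta_k(B)\sin\theta}.
\]
For a graph over $F$ in the orthogonal complement $F^\perp$, the largest principal angle with $F$ is $\arctan\|T\|\le\|T\|$. Using $\sin\theta\ge 2\theta/\pi$, this gives
\[
\dist_{\Gr}(BE,F)\le \frac{\pi}{2\Delta_k(B)\,\theta}\le\frac{8}{\Delta_k(B)\,\theta}.
\]
The graph/principal-angle identity follows from \eqref{eqn:grassmannian-distance-def}: a unit vector $f+Tf$ makes angle $\arctan(\|Tf\|/\|f\|)$ with $F$.

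As a cross-check, Lemma~\ref{lemma:inverse:estimate:transverse:to:fast:image} applied to $B^{-1}$ gives the same result by duality, because $\ker_{d-k}(B^{-1})=F^\perp$ and $\im_k(B^{-1})$-type spaces swap roles. The direct computation above is simpler, though. The only place requiring care is Step 1, specifically the lower bound $\|P_{K^\perp}v\|\ge\sin\theta\,\|v\|$ coming from the definition of $\angle$.
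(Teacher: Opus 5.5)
Your proof is correct, but it takes a different route from the paper. The paper argues in one line by duality. It applies Lemma~\ref{lemma:inverse:estimate:transverse:to:fast:image} to $A:=B^{-1}$, with $k$ and $d-k$ exchanged and $S':=E$, so that $A^{-1}S'=BE$. It then uses the identities $\ker_k(B^{-1})=\im_k(B)$, $\im_{d-k}(B^{-1})=\ker_{d-k}(B)$, $\Delta_{d-k}(B^{-1})=\Delta_k(B)$ and $\sigma_{d-k+1}(B^{-1})=\sigma_k(B)^{-1}$. Under these identities, the norm bound of that lemma, $\|B^{-1}|_{BE}\|\le 2\sigma_k(B)^{-1}/\theta$, is exactly the conorm bound.

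Your cross-check misstates this duality. In fact $\ker_{d-k}(B^{-1})=\im_{d-k}(B)$, not $F^\perp$; the identities the duality actually uses are $\ker_k(B^{-1})=F$ and $\im_{d-k}(B^{-1})=K$. This does not affect your main argument.

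Your argument instead works directly with the SVD blocks $K^\perp\to F$ and $K\to F^\perp$. You write $BE$ as the graph of $T\colon F\to F^\perp$ and read off the largest principal angle as $\arctan\|T\|$.

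The paper's reduction is economical because the inverse-image estimate, which rests on Lemma~\ref{lemma:slow:space:from:dominated:splitting}, is needed elsewhere in the appendix anyway. Your argument is self-contained and gives sharper bounds:
\[
\dist_{\Gr}(BE,\im_k(B))\le\arctan\bigl(1/(\Delta_k(B)\sin\theta)\bigr)\le\pi/(2\Delta_k(B)\theta),\qquad \mathfrak m(B|_E)\ge\sigma_k(B)\sin\theta .
\]
It therefore certifies the stated constants $8$ and $\tfrac12$ directly. By contrast, the chained route, as written, combines the factor $\tfrac12$ from the ``moreover'' clause with the constant $8$ of Lemma~\ref{lemma:slow:space:from:dominated:splitting}, and that only produces $16$.
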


\begin{proof}
This is \autoref{lemma:inverse:estimate:transverse:to:fast:image}
applied to $A^{-1}:=B$, $k$ in place of $d-k$ and
$E=S'$, noting that $\ker_{d-k}(B)=  \im_{d-k}(A)$, and $\sigma_k(B)^{-1}=\sigma_{d-k+1}(A)$.
\end{proof}

\begin{lemma}[Local Lipschitz estimate for inverse images]
	\label{lemma:inverse:local:lipschitz}
	Let $B\colon V\to W$ be a linear isomorphism with a strict $k$-gap, and set
	$F:=\im_k(B)\subset W$.
	Assume $H\in\Gr_{d-k}(W)$ satisfies
	\begin{displaymath}
		\angle(H,F)=\theta>0.
	\end{displaymath}
	For any	$S_1,S_2\in\Gr_{d-k}(W)$ with $\dist_{\Gr}(S_i,H)\le \tfrac 12\theta, i=1,2$
	we have:
	\begin{equation}
		\label{eq:inverse:local:lipschitz}
		\dist_{\Gr}\left(B^{-1}S_1,B^{-1}S_2\right)
		\le
		\frac{16}{\Delta_k(B)\theta^2}
		\dist_{\Gr}(S_1,S_2).
	\end{equation}
\end{lemma}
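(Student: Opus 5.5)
The plan is to reduce the comparison to inverse images of a single hyperplane-complement, using the orthogonal splittings from the singular value decomposition. Write $K:=\ker_{d-k}(B)$ and recall the orthogonal decompositions $V=K\oplus K^\perp$, $W=BK\oplus F$, matched by $B$. Here $B|_{K^\perp}$ has conorm $\sigma_k(B)$ and $B|_K$ has norm $\sigma_{k+1}(B)$. For $S\in\Gr_{d-k}(W)$ transverse to $F$, represent $S$ as the graph of a linear map $L_S\colon BK\to F$. Likewise represent $B^{-1}S$ as the graph of a map $K\to K^\perp$, namely
\[
\widehat L_S=(B|_{K^\perp})^{-1}\circ L_S\circ B|_K .
\]
This gives the estimate $\|\widehat L_S-\widehat L_{S'}\|\le \Delta_k(B)^{-1}\|L_S-L_{S'}\|$, because the outer factors contribute $\sigma_k(B)^{-1}$ and $\sigma_{k+1}(B)$.

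The remaining work is to compare graph-map differences with Grassmannian distances on both sides.

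On the source side the comparison is favorable. Since $\widehat L$ has norm at most $1$ whenever $\Delta_k$ is large, or in any case, $\dist_{\Gr}$ of two graphs is at most $\|\widehat L_{S_1}-\widehat L_{S_2}\|$ up to an absolute factor. This holds because the graph map $v\mapsto v+\widehat L v$ is expanding from $K$.

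On the target side the subtlety is that $H$ is transverse to $F$ only at angle $\theta$, so the graphs over $BK$ may be steep. The plan is to write $S_i$ as a graph over $H$ rather than over $BK$, or equivalently to bound $\|L_{S_1}-L_{S_2}\|$. The hypotheses $\dist_{\Gr}(S_i,H)\le\theta/2$ together with $\angle(H,F)=\theta$ give $\angle(S_i,F)\ge\theta/2$. By \eqref{eqn:projection-norm-bound}, the projection onto $F$ along $S_i$ then has norm at most $4/\theta$.

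Concretely, take $x\in BK$ a unit vector and let $s_i=x+L_{S_i}x\in S_i$. The vector $s_1$ lies within $\sin\dist_{\Gr}(S_1,S_2)\,\|s_1\|$ of some point of $S_2$, by \eqref{eqn:bound-norm-projection}. Decomposing that discrepancy along $S_2\oplus F$ and projecting gives
\[
\|L_{S_1}x-L_{S_2}x\|\le \frac{4}{\theta}\,\|s_1\|\,\dist_{\Gr}(S_1,S_2),
\]
with $\|s_1\|\le 1+\|L_{S_1}\|$. Also $\|L_{S_1}\|\lesssim 1/\theta$, since $S_1$ makes angle at least $\theta/2$ with $F$. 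Multiplying these yields the factor $\theta^{-2}$, and the constant bookkeeping ($2\cdot 2\cdot 4$) should give $16$.

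The main obstacle I expect is exactly this target-side step. The two factors of $1/\theta$ must be pinned down cleanly: one comes from $\|L_{S_1}\|$, the other from the oblique projection. The estimate also has to be arranged so that it is truly linear in $\dist_{\Gr}(S_1,S_2)$, rather than, say, $\sin$ versus angle, with constants that multiply out to $16$. If the direct constants overshoot, I would first try computing with $\sin$ of distances throughout, and only convert to angles at the end using $\sin t\ge 2t/\pi$.
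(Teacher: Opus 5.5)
Your argument is correct, but it takes a different route from the paper.

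\textbf{Your route.} You parametrize each $S_i$ as the graph of $L_{S_i}\colon F^\perp=BK\to F$ over the \emph{orthogonal} complement of $F$. The identity $\widehat L_S=(B|_{K^\perp})^{-1}L_S\,B|_K$ then produces the factor $\Delta_k(B)^{-1}$ exactly. The source side is essentially free because $K\perp K^\perp$. Both factors of $\theta^{-1}$ come from the target side: one from the steepness $\|s_1\|\le\|x\|/\sin(\theta/2)$ of the graph over $F^\perp$, the other from the oblique projection $\pi_F^{S_2}$.

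\textbf{The paper's route.} The paper writes $S_i$ as graphs of $u_{S_i}\colon H\to F$ over the reference plane $H$, i.e.\ it uses the oblique splitting $W=H\oplus F$. That graph is well conditioned ($\|v_1\|\le(2+\sqrt2)\|h\|$), so $\|u_{S_1}-u_{S_2}\|\le 8\,\dist_{\Gr}(S_1,S_2)/\theta$ costs only one factor of $\theta^{-1}$. The second factor appears in the lower bound $\|B^{-1}v_i\|\ge\sigma_{k+1}(B)^{-1}\|P_{F^\perp}h\|\ge\sigma_{k+1}(B)^{-1}\sin\theta\,\|h\|$. The paper then compares the vectors $B^{-1}v_1$ and $B^{-1}v_2$ directly by an elementary angle inequality, and never returns to graph maps on the source side.

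\textbf{Trade-off.} Your version shows cleanly where $\Delta_k$ enters, and that $H$ matters only through $\angle(S_i,F)\ge\theta/2$. The paper's version avoids converting graph-map differences back into $\dist_{\Gr}$ on the source side.

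\textbf{Caution on the constant.} Suppose you use the crude bounds you list, $\|\pi_F^{S_2}\|\le 4/\theta$ and $\|s_1\|\le 1+\|L_{S_1}\|\le 1+2/\theta$, together with a factor $\pi/2$ from $\dist_{\Gr}\le\frac{\pi}{2}\sin\dist_{\Gr}$ on the source side. Then the product is about $22$ for $\theta$ near $\pi/2$, so the heuristic $2\cdot 2\cdot 4$ does not literally close. Either of two fixes brings it below $16$:
\begin{itemize}
\item Use the sharp forms $\|s_1\|\le\|x\|/\sin(\theta/2)$ and $\|\pi_F^{S_2}\|\le 1/\sin(\theta/2)$. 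These give the factor $\frac{\pi}{2\sin^2(\theta/2)}\le\frac{\pi^3}{4\theta^2}$, since $\sin(\theta/2)\ge\frac{\sqrt2}{\pi}\theta$ for $\theta\le\pi/2$.
\item Note that the source-side factor is actually $1$. The segment joining $x+\widehat L_{S_1}x$ and $x+\widehat L_{S_2}x$ stays at norm at least $\|x\|$, so the angle between its endpoints is at most $\|(\widehat L_{S_1}-\widehat L_{S_2})x\|/\|x\|$.
\end{itemize}
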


\begin{proof}
	For every $S\in \Gr_{d-k}(W)$ with $\angle(S,F)>0$ there exists a linear map $u_S\colon H\to F$ such that $S$ is its graph: $S=\{h + u_S(h)\colon h\in H \}$.
	For any two such $S_1,S_2$ we first establish the bound
	\begin{equation}
		\label{eq:local:graph:difference}
		\norm{u_{S_1} - u_{S_2}} \leq 8 \frac{\dist_{\Gr}(S_1,S_2)}{\theta}.
	\end{equation}
			Set $v_i=v_i(h):=h + u_{S_i}(h)$ and our goal is to estimate from above $\norm{(u_{S_1} - u_{S_2})(h)}=\norm{v_1-v_2}$, set therefore $f:=v_1-v_2$.
	For a vector $v$ and plane $E$, we denote by $\dist(v,E)$ the distance from $v$ to $E$, i.e. the norm of the orthogonal projection of $v$ to $E^\perp$, and by $\angle(v,E)$ the angle between $v$ and $E$, i.e. the angle between $v$ and its orthogonal projection to $E$ (valued in $[0,\pi/2]$).
	We then have:
	\[
		\norm{f}\leq \frac{\dist(f,S_2)}{\sin\angle(f,S_2)}
		\leq
		\frac{\dist(f,S_2)}{\sin(\theta/2)}
	\]
	Now $\dist(f,S_2)=\dist(v_1,S_2)$ since $v_1 - f = v_2 \in S_2$.
	Next, we note that
	\[
		\dist(v_1,S_2) \leq \sin\dist_{\Gr}(S_1,S_2)\norm{v_1}\leq \dist_{\Gr}(S_1,S_2)\norm{v_1}
	\]
	So we arrive at
	\[
		\norm{u_{S_1}(h)-u_{S_2}(h)} \leq \frac{\dist_{\Gr}(S_1,S_2)}{\sin(\theta/2)}\norm{v_1}
	\]
	and proceed to bound $\norm{v_1}$ in terms of $\norm{h}$.
	For this, since $v_1 = h + u_{S_1}(h)$, we have that $\dist(v_1,H)=\dist(u_{S_1}(h),H)$, and we have
	\[
		\sin\theta\norm{u_{S_1}(h)}\leq \dist(u_{S_1}(h),H)=\dist(v_1,H)\leq \sin(\theta/2)\norm{v_1}
	\]
		In particular $\norm{u_{S_1}(h)}\leq \tfrac{\sin(\theta/2)}{\sin (\theta)} \norm{v_1}$.
		Note that $\sin(\theta/2)/\sin(\theta) = (2\cos(\theta/2))^{-1}\leq 1/\sqrt{2}$ since $\theta\in(0,\pi/2]$.
		Thus we have
		\[
			\norm{v_1} \leq \norm{h} + \tfrac{1}{\sqrt2} \norm{v_1}
			\text{and so} \norm{v_1}\leq (2+\sqrt2)\norm{h}.
		\]
		Together with the bound $\sin(\theta/2)\geq \frac{\sqrt2}{\pi}\theta$ this completes the proof of \autoref{eq:local:graph:difference}.
	
	Set now $A:=B^{-1}$.
	We will estimate from above the distance between $AS_1$ and $AS_2$ in the Grassmannian using the definition in \autoref{eqn:grassmannian-distance-def}, and specifically for every nonzero $s_1\in S_1$ we will use a corresponding $s_2\in S_2$ obtained via the isomorphism of each $S_i$ to $H$.
	Concretely, using $v_i(h):=h + u_{S_i}(h)\in S_i$, we have that
	\[
		\dist_{\Gr}(AS_1,AS_2) 
		\leq
		\sup_{h\in H\setminus 0} \angle( A v_1(h), A v_2(h) )
	\]
	Fix one such $h\in H\setminus 0$ and write $v_i:=h + u_{S_i}(h)\in S_i$.
	Quite generally, we have the elementary bound 
	\[
		\angle(x,y) \leq \frac{\pi}{2 \sqrt{2}} \frac{\|x-y\|}{\min\{\|x\|,\|y\|\}}.
	\]
	We will apply this to the vectors $Av_i$.
		First, we note that $v_1-v_2=u_{S_1}(h)-u_{S_2}(h)\in F$ and therefore
	\begin{align*}
		\norm{A(v_1-v_2)} 
		& \leq \sigma_k(B)^{-1}\norm{u_{S_1}(h)-u_{S_2}(h)}\\
			& \leq 8\,\sigma_k(B)^{-1}\frac{\dist_{\Gr}(S_1,S_2)}{\theta}\norm{h}
	\end{align*}
	where the first line used that $F=\im_k(B)$ so $\norm{B^{-1}\vert_{F}}\leq \sigma_k^{-1}(B)$, and the second line used the bound \eqref{eq:local:graph:difference}.

	Next, denote by $P_{F^\perp}$ the orthogonal projection to $F^\perp$.
	Note that $A$ sends $F$ and $F^\perp$ to orthogonal singular subspaces in $V$, and since by assumption $H$ is $\theta$-transverse to $F$, we have that $\norm{P_{F^\perp}h}\geq \sin{\theta}\norm{h}$ for any $h\in H$.
	Using that $u_S(h)\in F$ we estimate:
	\begin{align*}
		\norm{A v_i}
			& \geq \norm{A(P_{F^\perp} v_i)}
			= \norm{A(P_{F^\perp} h)} 
		\geq \sigma_{k+1}(B)^{-1}\norm{P_{F^\perp}h}\\
		& \geq \sigma_{k+1}(B)^{-1}\sin{\theta}\norm{h}
	\end{align*}
	Combining the above estimates, we obtain
		\[
			\dist_{\Gr}(AS_1,AS_2)
				\leq \frac{\pi}{2 \sqrt{2}}\frac{8}{\Delta_k(B)\sin{\theta}}\frac{1}{\theta}\dist_{\Gr}(S_1,S_2)
			\le
			\frac{\sqrt{2}\pi^2}{\Delta_k(B)\theta^2}\dist_{\Gr}(S_1,S_2)
		\]
	which finishes the proof.

\end{proof}

\subsubsection{The quotient map  $\bar{B}_E$.}
Suppose $B: V \to W$ is a linear  isomorphism between Euclidean
spaces, and suppose $E \subset
V$ is a linear subspace with  $\dim E = k$. Let $\bar{B}_E$ denote
the induced  map $V/E \to W/BE$. We equip both $V/E$ and $W/BE$ with
the quotient norm. 

\begin{lemma}
\label{lemma:bar:B:bound}
We have
\begin{displaymath}
\|\bar{B}_E\|  \ge \sigma_{k+1}(B). 
\end{displaymath}
\end{lemma}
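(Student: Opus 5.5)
The plan is to compute the norm of $\bar B_E$ directly from its definition with quotient norms, and to compare it with the min--max characterization of $\sigma_{k+1}(B)$. With the quotient norm, $V/E$ is isometric to $E^\perp$ via the orthogonal projection, and $W/BE$ is isometric to $(BE)^\perp$. Under these identifications $\bar B_E$ becomes the map
\[
T:=P_{(BE)^\perp}\circ B|_{E^\perp}\colon E^\perp\to (BE)^\perp ,
\]
because for $v\in E^\perp$ the class of $Bv$ in $W/BE$ has norm $\dist(Bv,BE)=\|P_{(BE)^\perp}Bv\|$. So it is enough to prove $\|T\|\ge\sigma_{k+1}(B)$.

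To get this bound, I will exhibit a unit vector $v\in E^\perp$ with $\|P_{(BE)^\perp}Bv\|\ge\sigma_{k+1}(B)$. Let $X\subset V$ be the span of the top $k+1$ right singular vectors of $B$, so that $\|Bx\|\ge\sigma_{k+1}(B)\|x\|$ for all $x\in X$. The subspace $B^{-1}(BE)^\perp$ has dimension $d-k$, and $\dim X=k+1$, so the two intersect in a nonzero vector. The trouble is that such a vector lies in $B^{-1}(BE)^\perp$, not necessarily in $E^\perp$, which is where the argument needs more care.

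I will handle this step through the subspace $BX$ rather than a single vector. Since $\dim BX=k+1>\dim BE=k$, there is a unit vector $y\in BX\cap(BE)^\perp$. Write $y=Bx$ with $x\in X$, and decompose $x=e+v$ with $e\in E$ and $v\in E^\perp$. Then
\[
P_{(BE)^\perp}Bv=P_{(BE)^\perp}(Bx-Be)=P_{(BE)^\perp}y=y ,
\]
so $\|Tv\|=\|y\|=1$. On the other side, $\|v\|\le\|x\|\le\sigma_{k+1}(B)^{-1}\|Bx\|=\sigma_{k+1}(B)^{-1}$. Hence $\|T\|\ge\|Tv\|/\|v\|\ge\sigma_{k+1}(B)$, which is the claim.

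I expect the only real obstacle to be the one isolated above: choosing the test vector in the image rather than the source, so that projecting onto $E^\perp$ does not lose control of the norm. The remaining steps, namely the isometric identifications of the quotients and the dimension count, are routine.
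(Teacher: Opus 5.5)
Your argument is correct and is essentially the paper's proof. The paper picks a unit $w\in\im_{k+1}(B)\cap(BE)^\perp$, sets $v=B^{-1}w$, and uses $\|v+E\|\le\|v\|\le\sigma_{k+1}(B)^{-1}$; this is exactly your $y$, your $x$, and your projection to $E^\perp$.

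The one difference is that you take any span of top $k+1$ right singular vectors. This avoids the paper's reduction (by approximation) to the case $\Delta_{k+1}(B)>1$, and it handles $k=d-1$ without a separate case. Also, the ``obstacle'' you flag is not a real one: your $x$ is precisely a nonzero vector of $X\cap B^{-1}(BE)^\perp$.
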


\begin{proof}
Let $d:=\dim V=\dim W$.  Suppose $k <d-1$.
Without loss of generality, we may assume
$\Delta_{k+1}(B) > 1$. (If not  we may approximate $B$ by linear maps
for which $\Delta_{k+1} > 1$). Then, let $F=\im_{k+1}(B) \subset W$.
If $k=d-1$, we let $F = W$.  In both cases, we proceed as follows:

We have $\dim F = k+1$, $\dim (BE)^\perp = d-k$. Thus $F  \cap (BE)^\perp \ne
\{0\}$. Choose $w \in F \cap (BE)^\perp$ with $\|w\|=1$.  Let $v =
B^{-1}  w$. Then $v \in \ker_{d-k-1}(B)^\perp$. Hence,
\begin{displaymath}
1 = \|w \|  = \|B v \| \ge \sigma_{k+1}(B) \|v\|.  
\end{displaymath}
Let $v+E$ denote the coset of $v$ in $V/E$, and let $w+BE$ denote the
coset of $w$ in $W/BE$. Then, 
\begin{displaymath}
\|v+E\| \le \|v\| \le \sigma_{k+1}(B)^{-1}, \text{ and } \|w+BE\| = \|w\|=1.
\end{displaymath}
Since $\bar{B}_E(v+E) = w+BE$, it follows that $\|\bar{B}_E\| \ge
\sigma_{k+1}(B)$. 
\end{proof}

\subsubsection{The relative gap $\Delta_k(B,E)$.}
\label{sssec:relative:gap}
Suppose $B: V \to W$ is a linear  isomorphism, and suppose $E \subset
V$ is a linear subspace with  $\dim E = k$. Let $\bar{B}_E$ denote
the induced  map $V/E \to W/BE$.  Let
\begin{displaymath}
\Delta_k(B,E):=\frac{\mathfrak m(B|_E)}{\|\bar B_E\|}, 
\end{displaymath}
with Euclidean exterior and quotient norms.

By the min-max principle $\mathfrak m(B|_E) \le \sigma_k(B)$. Then, in
view of Lemma~\ref{lemma:bar:B:bound},
\begin{equation}
\label{eq:relative:vs:absolute:gap}
\Delta_k(B,E) \le \frac{\sigma_k(B)}{\sigma_{k+1}(B)} = \Delta_k(B).   
\end{equation}

A basic observation is the following:
\begin{lemma}
\label{lemma:basic:relative:gap}
Suppose $\Delta_k(B) > 1$ and $E$ is not transverse to $\ker_{d-k}(B)$.  Then,
$\Delta_k(B,E) \le 1$.   
\end{lemma}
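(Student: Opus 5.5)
The plan is to show $\|\bar B_E\|\ge \mathfrak m(B|_E)$ by producing one explicit vector in $V/E$. Since $\Delta_k(B,E)=\mathfrak m(B|_E)/\|\bar B_E\|$, this inequality is exactly $\Delta_k(B,E)\le 1$. Write $K:=\ker_{d-k}(B)$. The hypothesis $\Delta_k(B)>1$ makes $K$ well defined, and $B$ maps $K$ onto $BK=\im_k(B)^\perp$ with $\|Bv\|\le\sigma_{k+1}(B)\|v\|$ for every $v\in K$. Since $\dim E+\dim K=d$, the statement that $E$ is not transverse to $K$ means $E\cap K\neq\{0\}$.

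\textbf{Step 1 (bound $\mathfrak m(B|_E)$ from above).} Pick a unit vector $e\in E\cap K$. Then $\mathfrak m(B|_E)\le\|Be\|\le\sigma_{k+1}(B)$.

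\textbf{Step 2 (bound $\|\bar B_E\|$ from below).} By Lemma~\ref{lemma:bar:B:bound}, $\|\bar B_E\|\ge\sigma_{k+1}(B)$.

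\textbf{Step 3 (combine).} Steps 1 and 2 give $\Delta_k(B,E)\le \sigma_{k+1}(B)/\sigma_{k+1}(B)=1$.

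There is no real obstacle; the only point that needs care is the reading of ``not transverse''. For complementary dimensions it means a nontrivial intersection. If one instead used the angle convention $\angle(E,K)=0$, the same conclusion holds, because for complementary dimensions a vanishing angle again forces $E\cap K\neq\{0\}$ by compactness of the unit sphere. Note also that Lemma~\ref{lemma:bar:B:bound} itself does not need the gap hypothesis, so the hypothesis $\Delta_k(B)>1$ is used only to define $K$.
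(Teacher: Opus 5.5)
Your proof is correct and follows the paper's argument: a unit vector in $E\cap\ker_{d-k}(B)$ gives $\mathfrak m(B|_E)\le\sigma_{k+1}(B)$, and Lemma~\ref{lemma:bar:B:bound} gives $\|\bar B_E\|\ge\sigma_{k+1}(B)$. One small point: your opening sentence says you will produce ``one explicit vector in $V/E$'', but the lower bound on $\|\bar B_E\|$ actually comes from the cited lemma, so that sentence misdescribes the argument without affecting its correctness.
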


\begin{proof}
Let $K = \ker_{d-k}(B)$.  
Choose $v \in E \cap K$ with $\|v\|=1$. Then, since  $v  \in K$, $\|B
v  \|  \le  \sigma_{k+1}$. Hence, $\mathfrak{m}(B|_E)  \le
\sigma_{k+1}$. Now the estimate follows from
Lemma~\ref{lemma:bar:B:bound}. 
\end{proof}

The following is a quantitative version of
Lemma~\ref{lemma:basic:relative:gap}. 
\begin{lemma}
\label{lemma:relative:gap:to:angle}
Suppose $\Delta_k(B,E) > 1$. Then
\begin{displaymath}
\angle(E,  \ker_{d-k}(B)) \ge (\Delta_k(B,E)-1) \frac{\sigma_{k+1}(B)}{\|B\|}. 
\end{displaymath}
\end{lemma}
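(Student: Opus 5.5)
The plan is a direct perturbation argument: use the definition of $\angle(E,\ker_{d-k}(B))$ to find a unit vector of $E$ that is nearly in $\ker_{d-k}(B)$. Then bound $\mathfrak m(B|_E)$ from above by $\sigma_{k+1}(B)$ plus an error proportional to the angle. Finally compare with the lower bound $\|\bar B_E\|\ge\sigma_{k+1}(B)$ from Lemma~\ref{lemma:bar:B:bound}. This is the quantitative version of the proof of Lemma~\ref{lemma:basic:relative:gap}.

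\emph{Step 1 (the kernel is well defined).} Since $\Delta_k(B,E)>1$, inequality \eqref{eq:relative:vs:absolute:gap} gives $\Delta_k(B)>1$. Hence $K:=\ker_{d-k}(B)$ is well defined, and $\|B|_K\|=\sigma_{k+1}(B)$.

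\emph{Step 2 (a vector of $E$ close to $K$).} Set $\theta:=\angle(E,K)$. By the definition of the angle in \S\ref{sssec:Angles-and-projections}, there are unit vectors $e\in E$ and $\kappa\in K$ with $|\langle e,\kappa\rangle|=\cos\theta$. Replacing $\kappa$ by $-\kappa$ if necessary, we may assume $\langle e,\kappa\rangle=\cos\theta\ge 0$. Then
\[
\|e-\kappa\|^2=2-2\cos\theta=4\sin^2(\theta/2),
\]
so $\|e-\kappa\|=2\sin(\theta/2)\le\theta$.

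\emph{Step 3 (upper bound on the conorm).} Using Step~2,
\[
\mathfrak m(B|_E)\le\|Be\|\le\|B\kappa\|+\|B\|\,\|e-\kappa\|\le\sigma_{k+1}(B)+\|B\|\,\theta .
\]

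\emph{Step 4 (conclusion).} Lemma~\ref{lemma:bar:B:bound} gives $\|\bar B_E\|\ge\sigma_{k+1}(B)$. Dividing the bound of Step~3 by $\|\bar B_E\|$ gives
\[
\Delta_k(B,E)\le 1+\frac{\|B\|\,\theta}{\sigma_{k+1}(B)} .
\]
Rearranging yields $\theta\ge(\Delta_k(B,E)-1)\,\sigma_{k+1}(B)/\|B\|$, which is the claim.

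There is no real obstacle here. The only points needing care are the sign choice for $\kappa$, so that $\|e-\kappa\|$ is controlled by the angle rather than by $\pi-\theta$, and the inequality $2\sin(\theta/2)\le\theta$. The degenerate case $\theta=0$ cannot occur, because Steps~3 and~4 would then force $\Delta_k(B,E)\le1$.
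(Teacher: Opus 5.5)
Your proof is correct and follows the paper's argument. You bound $\mathfrak m(B|_E)$ above by $\sigma_{k+1}(B)$ plus $\|B\|$ times a quantity at most $\theta$, then apply $\|\bar B_E\|\ge\sigma_{k+1}(B)$ from Lemma~\ref{lemma:bar:B:bound}; the only cosmetic difference is that the paper splits a vector of $E$ orthogonally along $K\oplus K^\perp$ and uses the orthogonality of $BK$ and $BK^\perp$ to get the error term $\|B\|\sin\theta$, whereas you use the chord $\|e-\kappa\|=2\sin(\theta/2)$ and the triangle inequality.
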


\begin{proof}
By \eqref{eq:relative:vs:absolute:gap}, $\Delta_k(B) \ge \Delta_k(B,E)
>  1$. Then, $\ker_{d-k}(B)$ is well-defined. Let $K  =
\ker_{d-k}(B)$, and note that $BK$ and $B(K^\perp)$ are orthogonal. 

Let $\theta = \angle(E,  K)$. Then, there
exists $v \in E\setminus \{0\}$
with $v = u + w$, $u \in K^\perp$, $w \in K$  and
$\|u\| =  \sin \theta \|v\|$.
Then,
\begin{multline*}
\|B v\|^2 = \|Bu\|^2 + \|B  w \|^2 \le \|B\|^2 \|u\|^2  +
\sigma_{k+1}(B)^2 \|w\|^2 \\
\le (\|B\|^2 \sin^2 \theta + \sigma_{k+1}(B)^2) \|v\|^2
\end{multline*}
Thus,
\begin{displaymath}
\mathfrak{m}(B|_E) \le \frac{\|Bv\|}{\|v\|} \le \|B\|\sin \theta + \sigma_{k+1}(B),    
\end{displaymath}
so
\begin{displaymath}
\sin \theta \ge \frac{\mathfrak{m}(B|_E) - \sigma_{k+1}(B)}{\|B\|}.
\end{displaymath}
Also, by Lemma~\ref{lemma:bar:B:bound}, 
$\|\bar{B}_E\| \ge \sigma_{k+1}(B)$, hence
\begin{displaymath}
\mathfrak{m}(B|_E) = \Delta_k(B,E) \|\bar{B}_E\| \ge
\Delta_k(B,E)\sigma_{k+1}(B).
\end{displaymath}
Thus,
\begin{displaymath}
\theta \ge \sin \theta \ge \frac{\Delta_k(B,E)\sigma_{k+1}(B) - \sigma_{k+1}(B)}{\|B\|}. 
\end{displaymath}
\end{proof}

\begin{lemma}[Relative Goldsheid--Margulis type one-step estimate]
\label{lemma:GM-step}
Consider
\[
	V \xrightarrow{P} W \xrightarrow{A} X
\]
linear isomorphisms between $d$-dimensional Euclidean spaces, and set
$Q:=AP$. Let $E \subset V$ be a $k$-plane.  Assume $\Delta_k(P,E) > 1$
and $\Delta_k(Q,E) >  1$.  
Then
\begin{displaymath}
\sin \angle(E,\ker_{d-k}(Q)) \ge \left(1 -
  \frac{\|A\|\|A^{-1}\|+1}{\Delta_k(Q,E)} \right) \sin \angle(E, \ker_{d-k}(P)).
\end{displaymath}
\end{lemma}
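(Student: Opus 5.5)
The plan is to compare the two slow singular spaces $K_P:=\ker_{d-k}(P)$ and $K_Q:=\ker_{d-k}(Q)$ through a single test vector of $E$. Both are well defined, since $\Delta_k(P)\ge\Delta_k(P,E)>1$ and $\Delta_k(Q)\ge\Delta_k(Q,E)>1$ by \eqref{eq:relative:vs:absolute:gap}. Put $\theta_P:=\angle(E,K_P)$, $\theta_Q:=\angle(E,K_Q)$ and $\kappa:=\|A\|\|A^{-1}\|$. By the definition in \autoref{sssec:Angles-and-projections}, $\sin\angle(E,K)=\min_{v\in E,\ \|v\|=1}\operatorname{dist}(v,K)$.

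\textbf{Step 1 (test vector).} Choose a unit vector $v\in E$ realizing $\theta_Q$, and write $v=u+w$ with $u\in K_Q^\perp$, $w\in K_Q$ and $\|u\|=\sin\theta_Q$. Let $\pi^\perp$ denote orthogonal projection onto $K_P^\perp$. Because $\theta_P$ is the minimal angle between $E$ and $K_P$,
\[
\sin\theta_P\le\|\pi^\perp v\|\le \sin\theta_Q+\|\pi^\perp w\|.
\]

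\textbf{Step 2 (bound on the $K_P^\perp$-component of $w$).} The images $P(K_P)$ and $P(K_P^\perp)$ are orthogonal, so $\|Pw\|\ge\sigma_k(P)\|\pi^\perp w\|$. On the other hand $\|Pw\|\le\|A^{-1}\|\|Qw\|\le\|A^{-1}\|\sigma_{k+1}(Q)\|w\|$. For the lower bound on $\sigma_k(P)$ I chain three estimates:
\begin{itemize}
\item the min--max principle gives $\sigma_k(P)\ge\mathfrak m(P|_E)$;
\item since $Q=AP$, we have $\mathfrak m(P|_E)\ge\mathfrak m(Q|_E)/\|A\|$;
\item by the definition of the relative gap and \autoref{lemma:bar:B:bound}, $\mathfrak m(Q|_E)=\Delta_k(Q,E)\|\bar Q_E\|\ge\Delta_k(Q,E)\sigma_{k+1}(Q)$.
\end{itemize}
Together these give $\sigma_k(P)\ge\Delta_k(Q,E)\,\sigma_{k+1}(Q)/\|A\|$, and therefore $\|\pi^\perp w\|\le\kappa\|w\|/\Delta_k(Q,E)$. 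Combined with Step 1, this yields the additive estimate
\[
\sin\theta_Q\ge\sin\theta_P-\frac{\kappa}{\Delta_k(Q,E)}.
\]

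\textbf{Step 3 (multiplicative form; main obstacle).} The stated inequality is multiplicative, so the error term in Step 2 must carry a factor $\sin\theta_P$. I see two routes.
\begin{itemize}
\item First attempt: replace the crude bound $\sigma_k(P)\ge\mathfrak m(P|_E)$ by an estimate that only uses the component of $P$ on the direction of $v$. Write $v=a+b$ with $a\in K_P^\perp$, $b\in K_P$ and $\|a\|\ge\sin\theta_P$. Then $\|Pv\|^2=\|Pa\|^2+\|Pb\|^2$, which gives $\|Pv\|\le\|P|_{K_P^\perp}\|\,\|a\|+\sigma_{k+1}(P)$, while $\|Pv\|\ge\mathfrak m(Q|_E)/\|A\|$. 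These relate $\|a\|$, which is comparable to $\sin\theta_P$, to $\Delta_k(Q,E)$. The term $\sigma_{k+1}(P)\le\|A^{-1}\|\|\bar Q_E\|$, again via \autoref{lemma:bar:B:bound} applied to $P$ and then pushed forward by $A$, is what should produce the extra ``$+1$'' in $\kappa+1$.
\item Fallback: rescale the whole Step~2 estimate by the quotient map $\bar Q_E$ in place of $\sigma_{k+1}(Q)$, and normalize the test vector relative to $\|a\|$ rather than $\|v\|$. The aim is that both error contributions, namely the $w$-leakage of size $\kappa/\Delta$ and the $\sigma_{k+1}$-term of size $1/\Delta$, appear multiplied by $\sin\theta_P$.
\end{itemize}
If the exact constant $\kappa+1$ turns out to be awkward, I would prove the bound with some constant $C(\kappa)$. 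That weaker form still suffices for its only use, in \autoref{lemma:kernel:stability:continued:gap}, after enlarging $C$ there.
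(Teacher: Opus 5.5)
Your Steps 1 and 2 are correct, but they prove only the additive bound $\sin\theta_Q\ge\sin\theta_P-\kappa/\Delta_k(Q,E)$. Step 3, which is where the content of the lemma lies, is not carried out, so there is a genuine gap.

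\textbf{Why the additive bound is not enough.} It says nothing when $\sin\theta_P\lesssim\kappa/\Delta_k(Q,E)$. That is exactly the regime of \autoref{lemma:kernel:stability:continued:gap}: there no lower bound on $\angle(\ker_{d-k}T_0,E)$ is assumed, and the conclusion $\theta_q\ge\theta_0/2$ must hold however small $\theta_0$ is. A multiplicative bound with a worse constant than $\kappa+1$ would be harmless there; an additive error is not.

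\textbf{Why your Step 3 routes do not close the gap.} The first route introduces $\|P|_{K_P^\perp}\|=\sigma_1(P)$, which the hypotheses do not control. It also produces a lower bound on $\operatorname{dist}(v,K_P)$, whereas you need an upper bound on the leakage $\|\pi^\perp w\|$ that is proportional to $\sin\theta_P$. The fallback is only a statement of intent. The underlying problem is structural: with a test vector in $E$ and orthogonal splittings, $\sin\theta_P$ enters only as a lower bound for $\|\pi^\perp v\|$. Every error term is therefore subtracted from $\sin\theta_P$ rather than multiplied by it.

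\textbf{The missing idea.} Take the test vector in $K_Q=\ker_{d-k}(Q)$ instead of $E$, and split it along the \emph{oblique} decomposition $V=K_P\oplus E$. This decomposition exists because $\Delta_k(P,E)>1$ forces $E\cap K_P=0$ (\autoref{lemma:basic:relative:gap}). For $0\ne v\in K_Q$ write $v=s+u$ with $s\in K_P$ and $u\in E$, and set $\Delta:=\Delta_k(Q,E)$.
\begin{itemize}
\item Since $v-s\in E$, we get $\operatorname{dist}(v,E)=\operatorname{dist}(s,E)\ge\sin\theta_P\,\|s\|$. So $\sin\theta_P$ appears as a factor automatically.
\item It remains to show $\|s\|\ge(1-(\kappa+1)/\Delta)\|v\|$, and this uses only ingredients you already assembled. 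We have $\Delta\,\sigma_{k+1}(Q)\|u\|\le\mathfrak m(Q|_E)\|u\|\le\|Qu\|\le\|Qv\|+\|Qs\|\le\sigma_{k+1}(Q)\|v\|+\|A\|\sigma_{k+1}(P)\|s\|$.
\item Combined with $\sigma_{k+1}(P)\le\|A^{-1}\|\sigma_{k+1}(Q)$, this gives $\|u\|\le(\|v\|+\kappa\|s\|)/\Delta$.
\item Then $\|v\|\le\|s\|+\|u\|$ yields $\|s\|/\|v\|\ge(\Delta-1)/(\Delta+\kappa)\ge1-(\kappa+1)/\Delta$.
\item Finally, minimize $\operatorname{dist}(v,E)/\|v\|$ over $v\in K_Q$. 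This minimum equals $\sin\theta_Q$, because $\sin\angle(E,K_Q)$ is also the minimum of $\operatorname{dist}(v,E)$ over unit $v\in K_Q$.
\end{itemize}
This gives the stated inequality with the exact constant $\kappa+1$, and it is the paper's argument.
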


\begin{proof}
It follows from the hypotheses and
(\ref{eq:relative:vs:absolute:gap}) that
$P$ and $Q$ have strict $k$-gaps. Let
\begin{displaymath}
S_P:=\ker_{d-k}P,\qquad S_Q:=\ker_{d-k}Q.
\end{displaymath}
It follows from Lemma~\ref{lemma:basic:relative:gap} that $V=S_P\oplus E$. 
Take a non-zero vector
$v\in S_Q$ and decompose it as
$v=s+u$ with  $s\in S_P,u\in E$.
Since $v\in S_Q$, we have
\begin{displaymath}
\|Qv\|\le \sigma_{k+1}(Q) \|v\|
\end{displaymath}
Also,
\begin{displaymath}
\|Qs\|\leq\|A\|\|Ps\|\le \|A\|\sigma_{k+1}(P)  \|s\|. 
\end{displaymath}
Hence
\begin{displaymath}
\|Qu\|
=
\|Q(v-s)\|
\le
\|Qv\|+\|Qs\|
\le
\sigma_{k+1}(Q)\|v\|+\|A\|\sigma_{k+1}(P)\|s\|
\end{displaymath}
On the other hand, since $u \in E$, 
\begin{displaymath}
\|Qu\|\ge \mathfrak{m}(Q|_E) \|u\|.
\end{displaymath}
Combining the last two estimates gives
\begin{displaymath}
\|u\| \le \frac{\sigma_{k+1}(Q)\|v\| + \|A\|
  \sigma_{k+1}(P)\|s\|}{\mathfrak{m}(Q|_E)}
\end{displaymath}
By Lemma~\ref{lemma:bar:B:bound}, $\mathfrak{m}(Q|_E) \ge
\Delta_k(Q,E) \sigma_{k+1}(Q)$. Also $\sigma_{k+1}(P) \le \|A^{-1}\|
\sigma_{k+1}(Q)$. Therefore,
\begin{displaymath}
\|u\| \le \frac{\|v\|}{\Delta_k(Q,E)} + \frac{\|A\|
  \|A^{-1}\|\|s\|}{\Delta_k(Q,E)}.
\end{displaymath}
The triangle inequality gives
\begin{displaymath}
\|v\|\leq\|s\|+\|u\|
\leq\Delta_k(Q,E)^{-1}\|v\|+(1+\|A\|\|A^{-1}\|\Delta_k(Q,E)^{-1})\|s\|,
\end{displaymath}
so
\begin{displaymath}
\frac{\|s\|}{\|v\|}  \ge
\frac{\Delta_k(Q,E)-1}{\Delta_k(Q,E)+\|A\|\|A^{-1}\|} \ge \left( 1 -
  \frac{\|A\| \|A^{-1}\| + 1}{\Delta_k(Q,E)} \right). 
\end{displaymath}
Since $v-s\in E$,
\[
\frac{\dist(v,E)}{\|v\|}
=\frac{\dist(s,E)}{\|v\|}
\geq\sin \angle(E, S_P) \frac{\|s\|}{\|v\|}
\geq\left( 1 -
  \frac{\|A\| \|A^{-1}\| + 1}{\Delta_k(Q,E)} \right)\sin \angle(E, S_P).
\]
Taking the infimum over nonzero $v\in S_Q$ yields
\[
\sin\angle(E,S_Q)\geq
\left( 1 -
  \frac{\|A\| \|A^{-1}\| + 1}{\Delta_k(Q,E)} \right)\sin\angle(E,S_P).
\]
\end{proof}


\section{Auxiliary probability estimates}
\label{app:auxiliary:probability:estimates}

\subsection{An abstract Azuma--Margulis estimate}
\label{app:azuma:margulis}

Throughout this subsection we use the setup of
\autoref{ssec:abstract:good:event}.  In particular, $H_i$ is an adapted height
process satisfying \eqref{eq:abstract:margulis:height:drift} for all
$i \ge 0$. 

\begin{lemma}[Return tails]
\label{lemma:abstract:margulis:return:tails}
Fix $R<\infty$ sufficiently large, and put
\begin{displaymath}
  \tau_R:=\inf\{n\geq 0\st H_n < R\}.
\end{displaymath}
There is a constant $c_1>0$ such that, for every initial height $H_0$
and every $n\geq 0$,
\begin{equation}
\label{eq:abstract:margulis:return:tails}
\mathbb P(\tau_R>n)\leq H_0 e^{-c_1 n}.
\end{equation}
The same estimate holds conditionally for the shifted process after
any stopping time.
\end{lemma}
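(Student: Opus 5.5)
The plan is the standard exponential supermartingale argument for a Lyapunov (Margulis) function. I fix $R$ large enough that $c+b/R<1$. Then on the event $\{H_i\ge R\}$ the drift condition \eqref{eq:abstract:margulis:height:drift} gives
\[
\mathbb E(H_{i+1}\mid\mathcal F_i)\le cH_i+b\le \Bigl(c+\frac bR\Bigr)H_i=:c'H_i,
\qquad c'<1 .
\]
Define the stopped, rescaled process
\[
Z_n:=(c')^{-(n\wedge\tau_R)}H_{n\wedge\tau_R}.
\]
I would check that $Z_n$ is a nonnegative supermartingale. On $\{\tau_R\le n\}$ we have $Z_{n+1}=Z_n$. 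On $\{\tau_R>n\}$, an $\mathcal F_n$-measurable event on which $H_n\ge R$, we get
\[
\mathbb E(Z_{n+1}\mid\mathcal F_n)=(c')^{-(n+1)}\,\mathbb E(H_{n+1}\mid\mathcal F_n)\le (c')^{-n}H_n=Z_n .
\]
The tower property holds pointwise by the standing convention on transition rules.

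It follows that $\mathbb E Z_n\le Z_0=H_0$. On $\{\tau_R>n\}$ we have $Z_n=(c')^{-n}H_n\ge (c')^{-n}R\ge (c')^{-n}$, since $R\ge 1$. Markov's inequality then gives
\[
\mathbb P(\tau_R>n)\le (c')^{n}H_0=H_0e^{-c_1n},
\qquad c_1:=-\log c'>0 .
\]
If $H_0<R$ then $\tau_R=0$ and the bound is trivial.

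For the conditional version after a stopping time $T$, I would apply the same argument to the shifted process $H'_i=H_{T+i}$ with filtration $\mathcal F_{T+i}$. The drift condition is assumed after every fixed finite history, so it holds for the shifted process conditionally on $\mathcal F_T$. The resulting bound is $\mathbb P(\tau'_R>n\mid\mathcal F_T)\le H_Te^{-c_1n}$.

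I expect no real obstacle. The only point needing care is the choice of $R$ in terms of $b$ and $c$, so that the drift becomes a strict contraction above level $R$; the rest is Markov's inequality.
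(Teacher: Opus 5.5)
Your proposal is correct and is essentially the paper's argument: the paper also fixes $R$ with $\theta=c+b/R<1$, but instead of your stopped, rescaled supermartingale $Z_n$ it iterates $\mathbb E(Y_{n+1})\le\theta\,\mathbb E(Y_n)$ for the killed process $Y_n=H_n\mathbf 1_{\{\tau_R>n\}}$, which is the same computation in different packaging. Both arguments then conclude by Markov's inequality, using $H_n\ge R$ on $\{\tau_R>n\}$ to get $\mathbb P(\tau_R>n)\le R^{-1}H_0\theta^n$, and both treat the stopping-time version by applying the argument to the shifted process.
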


\begin{proof}
Choose $R$ sufficiently large so that $\theta:=c+\frac{b}{R}<1$.
On the event $H_i\ge R$, the Margulis drift gives
\begin{displaymath}
  \mathbb E(H_{i+1}\mid\mathcal F_i)
  \leq cH_i+b
  \leq \theta H_i.
\end{displaymath}
	
Set
$Y_n:=H_n\mathbf 1_{\{\tau_R>n\}}$.
Since $\{\tau_R> n\}$ is $\mathcal F_n$-measurable, the preceding
estimate implies
\begin{displaymath}
  \mathbb E(Y_{n+1})\leq \theta \mathbb E(Y_n)
\text{ and hence }
  \mathbb E(Y_n)\leq \theta^n H_0.
\end{displaymath}
On the event $\{\tau_R>n\}$ we have $H_n \ge R$, and therefore
\begin{displaymath}
  \mathbb P(\tau_R>n)
  \leq R^{-1}\mathbb E(Y_n)
  \leq R^{-1}H_0\theta^n.
\end{displaymath}
This proves \eqref{eq:abstract:margulis:return:tails}, with
$c_1=-\log\theta$.  The conditional statement
follows by applying the same argument to the shifted process.
\end{proof}

\begin{lemma}[Stretch exponential tails]
\label{lemma:subweibull:summation}
Let $0<\alpha\leq1$. Let $(\mathcal G_j)_{j\geq0}$ be a filtration, and
let $X_j$, $0\leq j<N$, be non-negative random variables
such that $X_j$ is $\cG_{j+1}$-measurable.  Suppose that there are constants
$C_0,c_0>0$ such that, for every $j$ and every $u\geq 0$,
\begin{displaymath}
\mathbb P(X_j>u\mid\mathcal G_j)\leq C_0 e^{-c_0u^\alpha}.
\end{displaymath}
Then there are constants $C,c,D_0>0$, depending only on
$C_0,c_0,\alpha$, such that, for every $x\geq D_0N$,
\begin{equation}
\label{eq:subweibull:summation}
\mathbb P\left(\sum_{j=0}^{N-1}X_j>x\right)
\leq
C e^{-cx^\alpha}.
\end{equation}
\end{lemma}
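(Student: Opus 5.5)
We prove the tail bound by a Chernoff argument built from the conditional tails. The only obstacle is that for $\alpha<1$ the variables $X_j$ have no exponential moments. We therefore truncate each $X_j$ at a level proportional to $x$, apply the exponential-moment method at a rate adapted to $x$, and handle the exceptional event that some $X_j$ exceeds the truncation level with a union bound.

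\emph{Step 1 (truncation).} Fix $x\ge D_0N$ and put $u_0:=x/2$. On the event $\{\max_j X_j> u_0\}$ we apply the hypothesis to each $j$ and take expectations:
\[
\mathbb P\Bigl(\max_j X_j>u_0\Bigr)\le N C_0 e^{-c_0(x/2)^\alpha}.
\]
Since $N\le x/D_0$, the factor $N$ is absorbed by halving the exponent, after enlarging the constant $C$.

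\emph{Step 2 (exponential moments of truncated variables).} Set $Y_j:=\min(X_j,u_0)$ and $t:=s\,x^{\alpha-1}$ with a small $s>0$. Integrating the conditional tail gives
\[
\mathbb E(e^{tY_j}\mid\mathcal G_j)\le 1+\int_0^{u_0} t e^{tu} C_0e^{-c_0u^\alpha}\,du .
\]
For $u\le u_0=x/2$ we have $tu=s\,u\,x^{\alpha-1}\le s\,2^{1-\alpha}u^\alpha$, because $(u/x)^{1-\alpha}\le 2^{\alpha-1}$. Choosing $s\le c_0/4$ therefore makes the integrand at most $tC_0e^{-c_0u^\alpha/2}$. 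We then integrate $u$ over all of $[0,\infty)$; for $\alpha=1$ we use the corresponding cruder bound. This yields
\[
\mathbb E(e^{tY_j}\mid\mathcal G_j)\le 1+K t\le e^{Kt},
\]
where $K$ depends only on $C_0,c_0,\alpha$.

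\emph{Step 3 (iteration and conclusion).} Because $Y_j$ is $\mathcal G_{j+1}$-measurable, the tower property applied successively gives
\[
\mathbb E\, e^{t\sum_j Y_j}\le e^{KtN}.
\]
By Markov's inequality,
\[
\mathbb P\Bigl(\sum_j Y_j>x\Bigr)\le e^{-t(x-KN)}.
\]
Choose $D_0\ge 2K$, so that $KN\le x/2$. The bound becomes $e^{-tx/2}=e^{-(s/2)x^\alpha}$.

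Finally, if $\sum_j X_j>x$, then either some $X_j>u_0$ or $\sum_j Y_j=\sum_j X_j>x$. Summing the bounds from Step 1 and Step 3 gives the claimed estimate with $c=\min(s/2,\,c_0 2^{-\alpha}/2)$.

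The main obstacle is Step 2. The truncation level $u_0$ and the rate $t$ must be matched so that $tu$ stays below a fraction of $c_0u^\alpha$ on the whole truncated range, while the Chernoff exponent $tx$ still has order $x^\alpha$. The choice $u_0=x/2$ and $t\asymp x^{\alpha-1}$ achieves both.
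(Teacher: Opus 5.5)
Your proposal is correct and follows essentially the same route as the paper: a union bound for the event that some $X_j$ is large, then a conditional Chernoff bound at rate $t\asymp x^{\alpha-1}$ on the truncated variables, iterated with the tower property, with $D_0$ chosen so that the term $KtN$ is at most $tx/2$. The differences are cosmetic. You truncate at $x/2$ via $\min(X_j,x/2)$ rather than at $x$ via an indicator. The factor you wrote as $2^{1-\alpha}$ should be $2^{\alpha-1}$, and the correct factor only improves the bound. The separate treatment of $\alpha=1$ is unnecessary, since the same estimate already covers that case.
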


\begin{proof}
After changing constants, it is enough to consider $x$ large.  The
probability that $X_j$ is larger than $x$ for some $0\leq j<N$ is at
most
$NC_0e^{-c_0x^\alpha}$,
which is bounded by $Ce^{-cx^\alpha}$, since $x\geq D_0N$.
	
On the complementary event $X_j\leq x$ for all $j$, set
$\lambda:=\epsilon x^{\alpha-1}$,
where $0<\epsilon<c_0$.
Using the layer cake representation theorem, the
tail assumption gives
\begin{displaymath}
  \mathbb E\left(e^{\lambda X_j}\mathbf 1_{\{X_j\leq x\}}
    \,\middle|\,\mathcal G_j\right)
  \leq 1+C_0\lambda\int_0^{x}e^{\lambda u-c_0u^{\alpha}}du\leq 1+C\lambda
\end{displaymath}
with $C$ depending only on $C_0,c_0,\alpha$.  Iterating conditional
expectations gives
	\begin{displaymath}
		\mathbb E\left(
		\exp\left(\lambda\sum_{j=0}^{N-1}X_j\right)
		\mathbf 1_{\{X_j\leq x\text{ for all }j\}}
		\right)
		\leq
		e^{CN\lambda}.
	\end{displaymath}
	Since $x\geq D_0N$, choosing $D_0$ sufficiently large gives
		$CN\lambda\leq \frac{\lambda x}{2}$.
	Thus Markov's inequality gives
	\begin{displaymath}
		\mathbb P\left(
		\sum_{j=0}^{N-1}X_j>x,\ X_j\leq x\text{ for all }j
		\right)
		\leq
		e^{-\lambda x/2}
		=
		e^{-\epsilon x^\alpha/2}.
	\end{displaymath}
	Together with the large-jump estimate, this proves the lemma.
\end{proof}

\begin{lemma}[Height occupation estimate]
\label{lemma:abstract:height:occupation}
Fix $R<\infty$ sufficiently large and let $\beta>0$.  There are constants
$C<\infty$, $c_1>0$, and $D<\infty$, depending on $R$, $\beta$, and the
height constants, such that, if $H_0\leq R$, then, for every horizon
$N\geq 1$ and every $v\geq DN$,
\begin{equation}
\label{eq:abstract:height:occupation}
\mathbb P\left(
  \sum_{i=0}^{N-1}(1+\log H_i)^\beta>v
\right)
\leq
Ce^{-c_1 v^{1/(\beta+1)}}.
\end{equation}
The same estimate holds conditionally for the shifted process after
any stopping time.
\end{lemma}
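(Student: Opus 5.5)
The plan is to reduce the height occupation estimate to \autoref{lemma:subweibull:summation}, with $\alpha=1/(\beta+1)$ applied to suitable block sums. For this, split the horizon $[0,N)$ into excursions of the height process above $R$, separated by returns to $\{H<R\}$. The proof then has two ingredients. The first is a uniform tail bound for the contribution of a single excursion started from height at most $cR+b$. The second is a count of the number of excursions, which is at most $N$.

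First, I will control how large the height can become after one step from a controlled position. By \eqref{eq:abstract:margulis:height:drift} and Markov's inequality, $\mathbb P(H_{i+1}>u\mid\mathcal F_i)\le (cH_i+b)/u$. This bound alone is too weak, so instead I will exploit the supermartingale-like behaviour of $H$ above level $R$. Let $\tau$ be the return time to $\{H<R\}$ after a stopping time $t$ with $H_t\le R'$. By \autoref{lemma:abstract:margulis:return:tails}, $\mathbb P(\tau>n\mid\mathcal F_t)\le R'e^{-c_1n}$. Moreover, $Y_n=H_{t+n}\mathbf 1_{\{\tau>n\}}$ satisfies $\mathbb E Y_n\le \theta^n R'$. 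A union bound then gives $\mathbb P(\max_{n<\tau}H_{t+n}>e^s)\le \sum_n \min(\theta^nR'e^{-s},R'e^{-c_1 n})\lesssim s\,e^{-s}$.

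Second, I will bound the excursion sum $X:=\sum_{n<\tau}(1+\log H_{t+n})^\beta$. This is at most $\tau\,(1+\max\log H)^\beta$. If $X>u$, then either $\tau>u^{1/(\beta+1)}$ or $\max\log H> u^{1/(\beta+1)}-1$. Both events have probability $\lesssim e^{-c u^{1/(\beta+1)}}$ by the two estimates above. This gives exactly the stretched-exponential tail required by \autoref{lemma:subweibull:summation}, with $\alpha=1/(\beta+1)$, conditionally on $\mathcal F_t$.

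Third, I will assemble the estimate. Define stopping times $t_0=0$ and $t_{j+1}=\tau$ after $t_j+1$, so that each excursion starts at a time when $H_{t_j}<R$. The one-step drift then gives $\mathbb E(H_{t_j+1}\mid\mathcal F_{t_j})\le cR+b$. To handle the possibly large value at time $t_j+1$, I will include it as the first term of the excursion and apply the previous argument from $t_j+1$. There the initial height is random, but it has conditional mean at most $cR+b$, and the return tail bound is linear in the initial height, so the same tail estimate survives after averaging. Set $X_j$ to be the block sum over $[t_j,t_{j+1})$, with $X_j=0$ once $t_j\ge N$. The number of nonzero blocks is at most $N$, and $\sum_{i<N}(1+\log H_i)^\beta\le \sum_{j<N}X_j$. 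Then \autoref{lemma:subweibull:summation} with filtration $\mathcal G_j=\mathcal F_{t_j}$ gives the claim for $v\ge DN$. The conditional version after a stopping time follows by shifting.

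I expect the main obstacle to be the first step, namely the tail of the maximum of $H$ during an excursion. Markov's inequality at each time loses the factor $n$, and the linear dependence on the random initial height $H_{t_j+1}$ must be integrated carefully. I would handle this by the $\min$ trick above, splitting at $n\approx s/c_1$, which costs only a polynomial factor in $s$ that the stretched exponential absorbs.
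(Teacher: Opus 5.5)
Your proposal is correct and follows essentially the same route as the paper. Both decompose time into excursions between returns to $\{H<R\}$. Both bound each excursion sum by the union of two events: the excursion is longer than $r=u^{1/(\beta+1)}$, or some height in it exceeds $e^{r-1}$, using the return-time tail from $t_j+1$ averaged against $\mathbb E(H_{t_j+1}\mid\mathcal F_{t_j})\le cR+b$. Both then conclude with the sub-Weibull summation lemma, with $\alpha=1/(\beta+1)$ and filtration at the excursion start times.

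The only difference is minor. The paper controls just the first $r$ heights of each excursion, via the iterated drift bound $\mathbb E(H_{\tau_j+k}\mid\mathcal F_{\tau_j})\le C$ and a union bound. You control the maximum over the whole excursion through the killed process $Y_n$. For that, your $\min$ splitting (and the step you flagged as the main obstacle) is unnecessary: $\sum_n\theta^n$ already converges, so the bound is $O(e^{-s})$.
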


\begin{proof}
Set $\tau_0 = 0$, and define for  $j \ge 0$,
$\tau_{j+1}:=\inf\{n>\tau_j:H_n<R\}$. Then, for $j \ge 0$,
\begin{equation}
\label{eq:Hj:conditional:estimate}
\mathbb E(H_{\tau_j+1}\mid\mathcal F_{\tau_j})\leq cR+b.  
\end{equation}
For an excursion
beginning at $\tau_j$, set
\begin{displaymath}
  X_j:=
  \sum_{i=\tau_j}^{\tau_{j+1}-1}(1+\log H_i)^\beta .
\end{displaymath}
We claim that
\begin{equation}
\label{eq:height:occupation:excursion:tail}
\mathbb P(X_j>u\mid\mathcal F_{\tau_j})
\leq
Ce^{-c_1u^{1/(\beta+1)}}.
\end{equation}
Put $r:=u^{1/(\beta+1)}$. If $X_j>u$, then either
the excursion has length larger than $r$, or
\[
1+\log H_{\tau_j+k}>r
\]
for some integer $0\leq k<r$. Conditionally on $\mathcal F_{\tau_j}$,
the first event has probability at most $Ce^{-c_1r}$ by
\autoref{lemma:abstract:margulis:return:tails}, applied at $\tau_j+1$,
followed by the tower property and \eqref{eq:Hj:conditional:estimate}.
For the second event, iterating the Margulis drift gives
\[
\mathbb E(H_{\tau_j+k}\mid\mathcal F_{\tau_j})\leq C
\]
for every integer $k\geq0$, and hence Markov's inequality gives
\[
\mathbb P(1+\log H_{\tau_j+k}>r\mid\mathcal F_{\tau_j})\leq Ce^{-r}.
\]
Taking a union bound over at most $r+1$ times proves
\eqref{eq:height:occupation:excursion:tail}, after adjusting the constants.

Up to time $N$, there are at most $N+1$ excursions.  Applying
\autoref{lemma:subweibull:summation} with
\begin{displaymath}
  \alpha=\frac{1}{\beta+1}
\end{displaymath}
gives \eqref{eq:abstract:height:occupation}, after increasing $D$ and
changing constants.  The conditional statement follows by applying the
same argument to the shifted process.
\end{proof}

\begin{lemma}[Azuma with random variance]
\label{lemma:azuma:random:variance}
Let $\xi_i$, $0\leq i<N$, be martingale differences with respect to
$\{\mathcal F_i\}_{i\geq 0}$; thus $\xi_i$ is $\mathcal F_{i+1}$-measurable and
\begin{displaymath}
  \mathbb E(\xi_i\mid\mathcal F_i)=0.
\end{displaymath}
Assume that $M_i$ is $\mathcal F_i$-measurable and that
\begin{displaymath}
  |\xi_i|\leq M_i.
\end{displaymath}
Then, for every $L>0$ and $v>0$,
\begin{equation}
\label{eq:azuma:random:variance}
\mathbb P\left(
  \exists\,0\leq n\leq N:
  \sum_{i=0}^{n-1}\xi_i\leq -L
  \text{ and }
  \sum_{i=0}^{n-1}M_i^2\leq v
\right)
\leq
\exp\left(-\frac{L^2}{2v}\right).
\end{equation}
\end{lemma}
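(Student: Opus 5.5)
We will prove the stated bound, Azuma's inequality with a predictable (random) bound on the increments, by the standard exponential supermartingale argument, combined with optional stopping at the first time the partial sum drops below $-L$. Fix $\lambda>0$, to be chosen as $\lambda=L/v$ at the end. Put $S_n:=\sum_{i<n}\xi_i$ and $V_n:=\sum_{i<n}M_i^2$, and define
\[
Z_n:=\exp\Bigl(-\lambda S_n-\tfrac{\lambda^2}{2}V_n\Bigr).
\]
Since $M_i$ is $\mathcal F_i$-measurable, $V_{n+1}$ is $\mathcal F_n$-measurable, so it is predictable.

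The key one-step input is Hoeffding's lemma, applied conditionally. Given $\mathcal F_i$, the variable $-\xi_i$ has conditional mean zero and takes values in $[-M_i,M_i]$, an interval whose endpoints are $\mathcal F_i$-measurable. Hence
\[
\mathbb E\bigl(e^{-\lambda\xi_i}\mid\mathcal F_i\bigr)\le e^{\lambda^2M_i^2/2}.
\]
This is the convexity argument $e^{-\lambda x}\le \frac{M-x}{2M}e^{\lambda M}+\frac{M+x}{2M}e^{-\lambda M}$ for $|x|\le M$, followed by $\cosh t\le e^{t^2/2}$. Multiplying by the $\mathcal F_i$-measurable factor $e^{-\lambda^2M_i^2/2}$ shows $\mathbb E(Z_{i+1}\mid\mathcal F_i)\le Z_i$, so $(Z_n)$ is a nonnegative supermartingale with $Z_0=1$. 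The only subtlety is integrability: $M_i$ may be unbounded. This is handled because Hoeffding's bound holds conditionally for each realization of $M_i$, and nonnegativity lets us apply the tower property without any integrability assumption.

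Next, let $\tau$ be the first $n\le N$ such that $S_n\le -L$ and $V_n\le v$, with $\tau=N$ if no such $n$ exists. Because $V_n$ is nondecreasing, on the event in the statement we indeed have $S_\tau\le -L$ and $V_\tau\le v$, and $\tau$ is a bounded stopping time. Optional stopping for bounded stopping times gives $\mathbb E Z_\tau\le 1$. On the event in question, $Z_\tau\ge \exp(\lambda L-\lambda^2v/2)$, so Markov's inequality yields
\[
\mathbb P(\cdot)\le \exp\bigl(-\lambda L+\lambda^2 v/2\bigr).
\]
Choosing $\lambda=L/v$ gives the bound $\exp(-L^2/(2v))$, which is \eqref{eq:azuma:random:variance}.

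I do not expect any serious obstacle. The one point to check is that the ``exists $n$'' event is captured by a single stopping time; this works because we stop at the first $n$ satisfying both conditions, and the monotonicity of $V_n$ ensures the variance condition still holds at that stopped time.
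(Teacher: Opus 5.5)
Your proposal is correct and follows the paper's proof step for step: conditional Hoeffding makes $Z_n=\exp\bigl(-\lambda S_n-\tfrac{\lambda^2}{2}V_n\bigr)$ a nonnegative supermartingale, optional stopping at the first time both conditions hold gives $\mathbb{E}(Z_\tau\mathbf 1_E)\le 1$, and the choice $\lambda=L/v$ finishes. One small correction: you do not need the monotonicity of $V_n$, since both conditions hold at $\tau$ by the definition of $\tau$.
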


\begin{proof}
For any $\lambda\in\mathbb{R}$ Hoeffding's lemma gives
\begin{displaymath}
  \mathbb E\left(
    \exp\left(
      -\lambda\xi_i-\frac{\lambda^2M_i^2}{2}
    \right)
    \,\middle|\,\mathcal F_i
  \right)
  \leq 1.
\end{displaymath}
Thus
\begin{displaymath}
  Z_n:=
  \exp\left(
    -\lambda\sum_{i=0}^{n-1}\xi_i
    -\frac{\lambda^2}{2}\sum_{i=0}^{n-1}M_i^2
  \right)
\end{displaymath}
is a non-negative supermartingale. Take $\lambda > 0$.  

Let $\tau$ be the first $0\leq n\leq N$ such that
$\sum_{i=0}^{n-1}\xi_i\leq -L$ and $\sum_{i=0}^{n-1}M_i^2\leq
v$.  Let $E$ be the event that such a time exists. (Define
$\tau = N$ if $E$ does not hold). 
Since $Z_n$ is a non-negative supermartingale and $\tau$ is bounded on $E$, Doob's Optional Stopping Theorem gives
\begin{displaymath}
  \mathbb E\left(Z_\tau\mathbf 1_E\right)\leq Z_0=1.
\end{displaymath}
On $E$, one has
\begin{displaymath}
  Z_\tau
  \geq
  \exp\left(\lambda L-\frac{\lambda^2v}{2}\right).
\end{displaymath}
Therefore
\begin{displaymath}
  \mathbb P(E)
  \leq
  \exp\left(-\lambda L+\frac{\lambda^2v}{2}\right).
\end{displaymath}
Taking $\lambda=L/v$ proves \eqref{eq:azuma:random:variance}.
\end{proof}

\begin{proposition}[Azuma--Margulis estimate]
\label{prop:azuma:margulis}
Fix $R$ sufficiently large.  Assume $H_0\leq R$.  Let $\psi_i$,
$i\geq 0$, be real-valued random variables such that $\psi_i$ is
$\mathcal F_{i+1}$-measurable.  Assume that there are constants $a>0$ and
$B<\infty$ such that, for every $i\geq 0$,
\begin{equation}
  \label{eq:azuma:margulis:drift}
  \mathbb E(\psi_i\mid\mathcal F_i)\geq a
\end{equation}
and
\begin{equation}
  \label{eq:azuma:margulis:size}
  |\psi_i|\leq B(1+\log H_i).
\end{equation}
Then there are constants $C<\infty$ and $c_1>0$, depending only on
$R,a,B$ and the constants in \eqref{eq:abstract:margulis:height:drift}, such
that, for every $L>0$,
\begin{equation}
\label{eq:azuma:margulis:general}
  \mathbb P\left(
    \exists\,n\geq 0:
    \sum_{i=0}^{n-1}\psi_i<\frac{a}{2}n-L
  \right)
  \leq
  C e^{-c_1\sqrt L}.
\end{equation}
In particular, for every $\rho>0$, after changing the constants,
\begin{equation}
\label{eq:azuma:margulis:linear:loss}
\mathbb P\left(
  \exists\,0\leq n\leq N:
  \sum_{i=0}^{n-1}\psi_i<\frac{a}{2}n-\rho N
\right)
\leq
C e^{-c_1\sqrt N}.
\end{equation}
The same estimates hold conditionally after any stopping time $T$, on the
event $H_T\leq R$, for the shifted future process.
\end{proposition}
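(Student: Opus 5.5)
The plan is to combine the random-variance Azuma bound of \autoref{lemma:azuma:random:variance} with the height occupation estimate of \autoref{lemma:abstract:height:occupation}. Decompose $\psi_i=\mathbb E(\psi_i\mid\mathcal F_i)+\xi_i$, where $\xi_i:=\psi_i-\mathbb E(\psi_i\mid\mathcal F_i)$ is a martingale difference. By \eqref{eq:azuma:margulis:size}, $|\xi_i|\le M_i:=2B(1+\log H_i)$, which is $\mathcal F_i$-measurable. By \eqref{eq:azuma:margulis:drift} we have $\sum_{i<n}\psi_i\ge an+\sum_{i<n}\xi_i$. So the bad event forces $\sum_{i<n}\xi_i<-\tfrac a2 n-L$ for some $n$. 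This gives a loss that grows linearly in $n$, which is what makes summation over $n$ possible.

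Next decompose dyadically in time. For $n\in[2^j,2^{j+1})$, or $n\le N_0:=\lceil L\rceil$ in the first block, the bad event implies that the martingale $\sum_{i<n}\xi_i$ dips below $-L_j$, where $L_j:=\tfrac a2 2^j+L$. Apply \autoref{lemma:azuma:random:variance} with horizon $N_j=2^{j+1}$ and variance cap $v_j:=4B^2 D' N_j$. Outside the event $\{\sum_{i<N_j}(1+\log H_i)^2>D'N_j\}$ the variance condition holds, so the probability is at most $\exp(-L_j^2/(2v_j))\le \exp(-c\,(2^j+L^2/2^j))\le e^{-c\sqrt{2^j}}\cdot e^{-c\min(L,\,L^2/2^j)}$.

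For the exceptional event, apply \autoref{lemma:abstract:height:occupation} with $\beta=2$ and $v=D'N_j$. This requires $H_0\le R$, which holds, and gives probability at most $Ce^{-c_1 N_j^{1/3}}$. That is too weak on its own when $N_j$ is small compared with $L$. So in the blocks with $2^j\le L$, choose instead $v=L\cdot D'$, which is at least $DN_j$. Then the occupation tail is $Ce^{-c(L)^{1/3}}$ and the Azuma bound becomes $\exp(-cL^2/L)=e^{-cL}$.

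This is the main obstacle: the occupation lemma with $\beta=2$ only yields stretched exponent $1/3$, not $1/2$. I would fix this by bounding the variance more cleverly. Use a truncation instead. On excursions where $1+\log H_i\le \sqrt{L}$, set $M_i$ to the truncated value. Control the event that some $H_i$ with $i\lesssim L$ has $\log H_i>\sqrt L$ by Markov's inequality, using $\mathbb E H_i\le C$ (iterate the drift from $H_0\le R$). This gives $CLe^{-\sqrt L}$. For truncated steps use $\beta=1$ occupation: $\sum M_i^2\le \sqrt L\sum M_i$, whose tail from \autoref{lemma:abstract:height:occupation} with $\beta=1$ is $e^{-c\sqrt v}$. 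Choosing $v\asymp L$ for the early blocks then gives $e^{-c\sqrt L}$ overall. For late blocks with $2^j>L$, the same scheme with $\sqrt{2^j}$ in place of $\sqrt L$ gives $e^{-c\sqrt{2^j}}$. Summing over $j$ gives $Ce^{-c_1\sqrt L}$.

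The linear-loss version \eqref{eq:azuma:margulis:linear:loss} follows by taking $L=\rho N$. The conditional version after a stopping time $T$ follows by applying everything to the shifted process, since all the cited lemmas hold in that form.
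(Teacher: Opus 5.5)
Your proof is correct. Its skeleton is the paper's: the martingale decomposition with $M_i=2B(1+\log H_i)$, \autoref{lemma:azuma:random:variance}, \autoref{lemma:abstract:height:occupation}, and dyadic blocks in $n$. The two arguments differ only in how the quadratic variation $\sum_{i<N}M_i^2$ is controlled.

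The ``main obstacle'' you identify is not actually there. The occupation lemma holds for every threshold $v\geq DN$, so the paper applies it with $\beta=2$ and the superlinear cap $v\asymp (N+L)^{3/2}$, dyadically $(L+2^j)^{3/2}$. With this cap the occupation tail is $Ce^{-cv^{1/3}}=Ce^{-c\sqrt{N+L}}$. Azuma with deviation of order $L+2^j$ gives $\exp(-c(L+2^j)^2/(L+2^j)^{3/2})=e^{-c\sqrt{L+2^j}}$. The two tails balance exactly at this cap. With your caps of order $N_j$ or $L$, the Azuma side has room to spare and only the occupation side is weak.

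Your truncation reaches the same effective cap by a different decomposition, $\sum M_i^2\leq(\max_i M_i)\sum_i M_i$. You control the maximum by Markov and a union bound using $\mathbb E H_i\leq R+b/(1-c)$, and the sum by the $\beta=1$ case of the occupation lemma. This gives caps of order $L^{3/2}$ in early blocks and $2^{3j/2}$ in late ones. For these caps the Azuma exponents are at least a constant times $\sqrt L$ and $2^{j/2}$ respectively. You should write this computation out, since it is the point of the argument and is currently left implicit.

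Comparing the two routes: yours needs only the $\beta=1$ occupation estimate and explains why $(N+L)^{3/2}$ is the natural variance scale. The paper's is a one-line application of the $\beta=2$ estimate.

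One implementation detail: do not literally replace $M_i$ by a truncated value, since \autoref{lemma:azuma:random:variance} needs $|\xi_i|\leq M_i$ pointwise. Instead keep $M_i=2B(1+\log H_i)$ and intersect with the event $\{\max_{i<N_j}(1+\log H_i)\leq\sqrt L\}$. On that event $\sum_{i<N_j} M_i^2\leq 2B\sqrt L\sum_{i<N_j}M_i$.
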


\begin{proof}
Set
\begin{displaymath}
  m_i:=\mathbb E(\psi_i\mid\mathcal F_i),
  \qquad
  \xi_i:=\psi_i-m_i.
\end{displaymath}
Then $\xi_i$ is a martingale difference.  By
\eqref{eq:azuma:margulis:size},
\begin{displaymath}
  |\xi_i|
  \leq
  |\psi_i|+\mathbb E(|\psi_i|\mid\mathcal F_i)
  \leq
  2B(1+\log H_i).
\end{displaymath}
Put
\begin{displaymath}
  M_i:=2B(1+\log H_i).
\end{displaymath}
	
We first record a finite-horizon estimate.  Let $N\geq 1$ and $L>0$.  Choose
\begin{displaymath}
  v:=D(N+L)^{3/2},
\end{displaymath}
with $D$ sufficiently large.  By \autoref{lemma:abstract:height:occupation} with $\beta=2$, after changing constants,
\begin{equation}
\label{eq:azuma:margulis:variance:event}
\mathbb P\left(
  \sum_{i=0}^{N-1}M_i^2>v
\right)
\leq
Ce^{-c_1\sqrt{N+L}}.
\end{equation}
If, for some $0\leq n\leq N$,
\begin{displaymath}
  \sum_{i=0}^{n-1}\psi_i<\frac{a}{2}n-L,
\end{displaymath}
then, by \eqref{eq:azuma:margulis:drift},
\begin{displaymath}
  \sum_{i=0}^{n-1}\xi_i
  =
  \sum_{i=0}^{n-1}\psi_i-\sum_{i=0}^{n-1}m_i
  <
  -\frac{a}{2}n-L
  \leq
  -L.
\end{displaymath}
For $n\leq N$,  $\sum_{i=0}^{n-1}M_i^2\leq \sum_{i=0}^{N-1}M_i^2$, thus \autoref{lemma:azuma:random:variance}, together with
\eqref{eq:azuma:margulis:variance:event}, gives
\begin{equation}
\label{eq:azuma:margulis:finite:horizon}
\begin{aligned}
  &\mathbb P\left(
    \exists\,0\leq n\leq N:
    \sum_{i=0}^{n-1}\psi_i<\frac{a}{2}n-L
    \right) \\
  &\hspace{2cm}\leq
    Ce^{-c_1\sqrt{N+L}}
    +
    \exp\left(-\frac{L^2}{C(N+L)^{3/2}}\right).
\end{aligned}
\end{equation}
	
To extend this to the infinite-horizon estimate (\autoref{eq:azuma:margulis:general}), we partition the time scale into dyadic intervals \([2^j, 2^{j+1})\). Let $\mathcal E_j$ be the event that there exists
$n\in [2^j, 2^{j+1})$ such that
\begin{displaymath}
  \sum_{i=0}^{n-1}\psi_i<\frac{a}{2}n-L.
\end{displaymath}
On $\mathcal E_j$ one has
\begin{displaymath}
  \sum_{i=0}^{n-1}\xi_i
  <
  -L-\frac{a}{2}n
  \leq
  -L-\frac{a}{2}2^j
\end{displaymath}
for some $n<2^{j+1}$.  Applying \autoref{lemma:azuma:random:variance} on the
horizon $N=2^{j+1}$, and using \autoref{lemma:abstract:height:occupation} with
variance cutoff comparable to $(L+2^j)^{3/2}$, gives
\begin{displaymath}
  \mathbb P(\mathcal E_j)
  \leq
  C e^{-c_1\sqrt{L+2^j}}+
  \exp\left(-\frac{(L+a2^{j-1})^2}{C(2^{j}+L)^{3/2}}\right).
\end{displaymath}
Since
\begin{displaymath}
\frac{(L+a2^{j-1})^2}{C(L+2^j)^{3/2}}
\geq \frac{\min\{1,a/2\}^2}{C}\sqrt{L+2^j},
\end{displaymath}
the preceding estimate gives
\begin{displaymath}
\mathbb P(\mathcal E_j)\leq C e^{-c_1\sqrt{L+2^j}},
\end{displaymath}
after increasing $C$ and decreasing $c_1$.
The event at $n=0$ is empty since $L>0$.
Therefore
\begin{displaymath}
  \mathbb P\left(
    \exists\,n\geq 0:
    \sum_{i=0}^{n-1}\psi_i<\frac{a}{2}n-L
  \right)
  \leq
  \sum_{j=0}^{\infty}Ce^{-c_1\sqrt{L+2^j}}
  \leq
  C e^{-c_1\sqrt L},
\end{displaymath}
after changing $C$ and $c_1$. Taking $L=\rho N$ gives
\eqref{eq:azuma:margulis:linear:loss}, after changing the constants.  The
conditional statement follows from the conditional versions of the preceding
lemmas applied to the shifted process.
\end{proof}

\begin{lemma}[Weighted height-sum estimate]
\label{lemma:weighted:height:sum}
Assume the setup of \autoref{ssec:abstract:good:event}.  Fix
$R<\infty$, $\Lambda>0$, and $\beta>0$.  Then there exist constants
$C=C(R,\Lambda,\beta)<\infty$ and $c_1=c_1(R,\Lambda,\beta)>0$ such that the
following holds.
	
Let $m\geq 0$ and assume $H_m\leq R$, then for every $\chi>0$ and every
$p\geq 1$,
\begin{equation}
\label{eq:weighted:height:sum}
\mathbb P\left(
  \sum_{q=0}^{\infty} H_{m+q}^{\beta} e^{-\Lambda q}
  >
  C e^{\chi p}
  \,\middle|\,
  \mathcal F_m
\right)
\leq
C e^{-c_1\chi p}.
\end{equation}
The same estimate holds conditionally after any stopping time $m$, on
the event $H_m\leq R$.
\end{lemma}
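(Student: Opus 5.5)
The plan is to reduce to a tail estimate for a single weighted series via a union bound over dyadic blocks. Write $S:=\sum_{q\ge0}H_{m+q}^\beta e^{-\Lambda q}$ and work after the (stopping) time $m$, on the event $H_m\le R$; the shifted process again satisfies \eqref{eq:abstract:margulis:height:drift}. I would first obtain moment bounds from the Margulis drift, and then derive the tail bound by Markov's inequality combined with the exponential weights.

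The first step is to show that $\mathbb E(H_{m+q}^{s}\mid\mathcal F_m)$ is bounded uniformly in $q$ for some small $s>0$. Iterating \eqref{eq:abstract:margulis:height:drift} gives $\mathbb E(H_{m+q}\mid\mathcal F_m)\le c^qR+b/(1-c)\le C_R$. By Jensen's inequality, $\mathbb E(H_{m+q}^{s}\mid\mathcal F_m)\le C_R^{s}$ for every $0<s\le1$. I would fix $s:=\min\{1,\beta\}/\beta'$ with $\beta'$ chosen below, so that only first moments of $H$ are ever used. No higher moments of $H$ are available, and this is the main obstacle: $H_{m+q}^\beta$ need not even be integrable, so the estimate has to be carried out with fractional powers.

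The second step handles this through subadditivity of $x\mapsto x^{\sigma}$ for $\sigma:=\min\{1,1/\beta\}\le1$, which gives
\begin{displaymath}
S^{\sigma}\le\sum_{q\ge0}H_{m+q}^{\beta\sigma}e^{-\Lambda\sigma q}.
\end{displaymath}
Since $\beta\sigma\le1$, Jensen's inequality and the first step yield
\begin{displaymath}
\mathbb E(S^\sigma\mid\mathcal F_m)\le C_R^{\beta\sigma}\sum_q e^{-\Lambda\sigma q}=:C'.
\end{displaymath}
Markov's inequality then gives
\begin{displaymath}
\mathbb P(S>Ce^{\chi p}\mid\mathcal F_m)\le C'C^{-\sigma}e^{-\sigma\chi p},
\end{displaymath}
which is \eqref{eq:weighted:height:sum} with $c_1=\sigma$, after enlarging $C$ so that $C'C^{-\sigma}\le C$. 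The constants depend only on $R,\Lambda,\beta$ and the drift constants $c,b$.

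The stopping-time version follows verbatim. The strong Markov-type property is built into the standing assumption that the drift inequality holds after every finite history, and applying the tower property on $\{m=j\}$ for each $j$ and summing gives the conditional estimate.
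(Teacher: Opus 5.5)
Your argument is correct, but it takes a different route from the paper.

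\textbf{Paper's route.} The paper also starts from the iterated drift bound $\mathbb E(H_{m+q}\mid\mathcal F_m)\le R+b/(1-c)$. It then applies Markov's inequality to each term separately, with threshold $H_{m+q}^\beta>e^{\chi p}e^{\Lambda q/2}$. Half of the exponential weight is spent to make the union bound over $q$ summable, giving exceptional probability $\le Ce^{-\chi p/\beta}$. On the complementary event it bounds the series deterministically by $e^{\chi p}\sum_q e^{-\Lambda q/2}$.

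\textbf{Your route.} You control the whole series at once. Subadditivity of $x\mapsto x^\sigma$ and conditional Jensen give a uniform bound on $\mathbb E(S^\sigma\mid\mathcal F_m)$, using only first moments of $H$, as you correctly note. A single Markov inequality then finishes.

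\textbf{Comparison.} Your route gives a genuine fractional-moment bound for $S$ itself and avoids splitting the weight. The paper's route gives stronger pathwise information: on the good event, $H_{m+q}^\beta\le e^{\chi p}e^{\Lambda q/2}$ for every $q$. It also yields $c_1=1/\beta$ for all $\beta$, whereas yours gives $c_1=\min\{1,1/\beta\}$; this is irrelevant here, since only some $c_1>0$ is needed.

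\textbf{Clean-up.} Two leftovers should be removed. Your opening sentence announces a union bound over dyadic blocks that you never use. In your first step, the exponent $s=\min\{1,\beta\}/\beta'$ with unspecified $\beta'$ plays no role; the exponent you actually use is $\beta\sigma=\min\{1,\beta\}$.
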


\begin{proof}
By iterating \eqref{eq:abstract:margulis:height:drift}, on the event
$H_m\leq R$ we have
\begin{displaymath}
  \mathbb E(H_{m+q}\mid\mathcal F_m)
  \leq c^qH_m+b\frac{1-c^q}{1-c}
  \leq R+\frac{b}{1-c}
\end{displaymath}
for every $q\geq 0$.
	
For $q\geq 0$, Markov's inequality gives
\begin{displaymath}
  \mathbb P\left(
    H_{m+q}^{\beta}>e^{\chi p}e^{\Lambda q/2}
    \,\middle|\,
    \mathcal F_m
  \right)
  \leq
  C e^{-\chi p/\beta}e^{-\Lambda q/(2\beta)}.
\end{displaymath}
Hence, by summing over $q$,
\begin{displaymath}
  \mathbb P\left(
    \exists q\geq 0 \st
    H_{m+q}^{\beta}>e^{\chi p}e^{\Lambda q/2}
    \,\middle|\,
    \mathcal F_m
  \right)
  \leq
  C e^{-c_1\chi p}.
\end{displaymath}
	
On the complementary event, for every $q\geq 0$ one has
\begin{displaymath}
  H_{m+q}^{\beta}e^{-\Lambda q}
  \leq
  e^{\chi p}e^{-\Lambda q/2}.
\end{displaymath}
Therefore
\begin{displaymath}
  \sum_{q=0}^{\infty}H_{m+q}^{\beta}e^{-\Lambda q}
  \leq
  e^{\chi p}\sum_{q=0}^{\infty}e^{-\Lambda q/2}
  \leq
  C e^{\chi p}.
\end{displaymath}
This proves \eqref{eq:weighted:height:sum}.  The stopping-time version
follows by applying the same argument to the shifted process.
\end{proof}

\begin{corollary}[Subexponential height bound]
\label{cor:subexponential:height}
Fix $R\ge1$ and $\eta>0$. There is $K<\infty$, depending only on
$R,\eta$ and the height drift constants, such that, for every finite
integer-valued stopping time $T$ and every $A>0$, on $H_T\le R$,
\[
\mathbb P(\exists i\ge T:H_i>Ae^{\eta\sqrt i}\mid\mathcal F_T)
\le \frac{K}{A}e^{-\eta\sqrt T/2}.
\]
Also, for each $\eta>0$,
$\sup_{i\ge0}H_i e^{-\eta\sqrt i}<\infty$ almost surely.
\end{corollary}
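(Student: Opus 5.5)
The statement to prove is Corollary~\ref{cor:subexponential:height}. The plan is a union bound over times $i\ge T$, controlling each $H_i$ in expectation via the Margulis drift and Markov's inequality. The almost-sure statement then follows by applying the conditional bound at the first return time.

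\emph{Tail bound.} On $H_T\le R$, iterate \eqref{eq:abstract:margulis:height:drift} conditionally. For every $q\ge0$ this gives
\[
\mathbb E(H_{T+q}\mid\mathcal F_T)\le c^qH_T+\frac{b}{1-c}\le R+\frac{b}{1-c}=:K_0.
\]
This is valid after a stopping time because the drift holds after every finite history and the tower property holds pointwise. Markov's inequality then gives
\[
\mathbb P\bigl(H_i>Ae^{\eta\sqrt i}\mid\mathcal F_T\bigr)\le \frac{K_0}{A}e^{-\eta\sqrt i}\qquad (i\ge T).
\]
Summing over $i\ge T$ reduces the claim to the elementary estimate
\[
\sum_{i\ge T}e^{-\eta\sqrt i}\le K_1e^{-\eta\sqrt T/2},
\]
with $K_1$ depending only on $\eta$. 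To verify it, split $e^{-\eta\sqrt i}=e^{-\eta\sqrt i/2}e^{-\eta\sqrt i/2}\le e^{-\eta\sqrt T/2}e^{-\eta\sqrt i/2}$ for $i\ge T$. Then use $\sum_{i\ge0}e^{-\eta\sqrt i/2}<\infty$, which follows by comparison with $\int_0^\infty e^{-\eta\sqrt t/2}\,dt=8/\eta^2$. Taking $K=K_0K_1$ proves the displayed bound.

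\emph{Almost-sure statement.} By Lemma~\ref{lemma:abstract:margulis:return:tails}, applied with $R$ large enough that its hypothesis holds, the return time $\tau=\inf\{n: H_n<R\}$ is almost surely finite. This uses that $H_0$ is finite for the fixed initial history; the lemma is stated for every initial height. Apply the tail bound with $T=\tau$. For every $A>0$ we get $\mathbb P(\exists i\ge\tau: H_i>Ae^{\eta\sqrt i})\le K/A$. Letting $A\to\infty$ shows $\sup_{i\ge\tau}H_ie^{-\eta\sqrt i}<\infty$ almost surely. The finitely many indices $i<\tau$ contribute a finite maximum. Hence $\sup_{i\ge0}H_ie^{-\eta\sqrt i}<\infty$ almost surely.

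\emph{Main obstacle.} The only delicate point is legitimacy at a random time: the uniform moment bound must hold conditionally on $\mathcal F_T$ for a stopping time $T$. This is handled by the standing assumption that the drift holds after every finite history, so one conditions on $\{T=t\}$ and sums over $t$. Everything else is elementary.
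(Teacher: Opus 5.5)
Your proof is correct and is essentially the paper's argument for the tail bound: iterate the drift conditionally from $T$, apply conditional Markov, take a union bound over $i\ge T$, and sum $e^{-\eta\sqrt i}$. For the almost-sure statement the paper takes a shorter route: it applies the bound with $T=0$, lets $A\to\infty$, and takes a union over integer $R\ge1$ so that $H_0\le R$ for some $R$. That avoids both the return-tail lemma and the small technicality that your return time $\tau$ is only almost surely finite, whereas the corollary is stated for finite stopping times.
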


\begin{proof}
Iterating \eqref{eq:abstract:margulis:height:drift} by the tower property,
starting at $T$, gives
$\mathbb E(H_{T+q}\mid\mathcal F_T)\le R+b/(1-c)$ on $H_T\le R$.
Markov's inequality and a union bound bound the probability above by
\[
\frac{R+b/(1-c)}{A}\sum_{q\ge0}e^{-\eta\sqrt{T+q}}
\le \frac{K}{A}e^{-\eta\sqrt T/2}.
\]
Taking $T=0$, letting $A\to\infty$, and then taking the union over
integer $R\ge1$ proves the final assertion.
\end{proof}


\section{Euclidean stable manifolds}
\label{app:euclidean:stable:manifold}

\begin{theorem}[Euclidean stable manifolds for a sequence of maps]
	\label{theorem:euclidean:stable:manifold:theorem}
	Fix a splitting $\mathbb R^n=\mathbb R^p\oplus\mathbb R^{n-p}$,
	write $B_j(r)$ for the Euclidean radius-$r$ ball in $\mathbb R^j$,
	and put $Q=B_p(1)\times B_{n-p}(1)$. Fix constants
	\[
	0<\alpha<\theta<1,\qquad 0<\kappa<1,\qquad 0<\ell<1.
	\]
	There is $\delta>0$, depending only on these constants and the
	dimensions, with the following property. Let $F_i$, $i\ge0$, be
	smooth maps from neighborhoods of $\overline Q$ to $\mathbb R^n$,
	with $F_i(0)=0$ and invertible block-diagonal derivatives
	\[
	L_i=D_0F_i=\operatorname{diag}(B_i,D_i),\qquad
	\|B_i\|\le\alpha,\qquad \|D_i^{-1}\|\|B_i\|\le\kappa.
	\]
	Assume that the normalized nonlinear parts
	\[
	R_i:=L_i^{-1}F_i-\Id
	\]
	satisfy
	\[
	\sup_i\|R_i\|_{C^2(Q)}\le\delta,\qquad
	\sup_i\|R_i\|_{C^r(Q)}\le M_r<\infty\quad(r\ge2).
	\]
	In particular $R_i(0)=D_0R_i=0$. No expansion of $D_i$ or uniform
	bound on $\|D_i\|$ or $\|D_i^{-1}\|$ separately is required.
	Then there is a unique family of graphs
	\[
	\Psi_i(s)=(s,w_i(s)),\qquad s\in B_p(1),
	\]
	with $w_i(0)=0$, $\operatorname{Lip}(w_i)\le\ell$, and
	$F_i\Psi_i(B_p(1))\subset\Psi_{i+1}(B_p(1))$.
	The maps $w_i$ are smooth, $D_0w_i=0$, and
	$\sup_i\|w_i\|_{C^r(B_p(1/2))}\le K_r$ for every $r$.
	More precisely, setting
	\[
	k_i(s)=s+R_i^s(s,w_i(s)),\qquad \beta_i=B_i k_i,
	\]
	one has $F_i\Psi_i=\Psi_{i+1}\beta_i$, $Dk_i$ uniformly close to
	the identity, and $\|D\beta_i\|\le\theta$. If
	$S_0=\Id$ and $S_m=\beta_{m-1}\cdots\beta_0$, then on $B_p(1/2)$
	\[
	|S_m(s)|\le\theta^m|s|,\qquad
	\|DS_m\|\le\theta^m,\qquad
	\|D^rS_m\|\le K_r\theta^m\quad(r\ge2).
	\]
	Consequently the iterates along the graphs are well-defined and
	contract exponentially, with
	\[
	\bigl\|D^r(F_{m-1}\cdots F_0\circ\Psi_0)\bigr\|
	\le K_r\theta^m\qquad(r\ge1).
	\]
	These constants depend only on the stated parameters and finitely
	many of the bounds $M_j$ for each fixed $r$.
	The theorem also controls Jacobians for changing volume densities.
	Let $v_i(s)\,ds$ be smooth positive densities in the graph
	parameters, defined on $B_p(1)$, and suppose that
	\[
	\|D\log v_i\|_{C^0(B_p(1/2))}\le H e^{\tau i},
	\qquad \tau\ge0,\qquad e^\tau\theta<1.
	\]
	Assume also that the $C^r$ norms of $\log(v_0/v_0(0))$ on
	$B_p(1/2)$ are bounded by $V_r<\infty$. Define the Jacobian from
	the density $v_0(s)\,ds$ to $v_m(s)\,ds$ by
	\[
	\mathcal J_m(s)=
	\frac{v_m(S_m(s))}{v_0(s)}\,|\det DS_m(s)|.
	\]
	Then $\mathcal J_m(s)/\mathcal J_m(0)$ converges uniformly on
	$B_p(1/2)$ to a positive smooth function $J_v(s)$, with uniform
	upper and lower bounds and bounds for all its derivatives.
	The bounds depend additionally on $H,\tau$ and the appropriate
	$V_r$. For Euclidean leaf volumes, take
	\[
	v_i(s)=\sqrt{\det\bigl(\Id_p+(Dw_i(s))^{\mathsf T}Dw_i(s)\bigr)}.
	\]
	The graph estimates give the density hypotheses with $\tau=0$,
	so this includes the usual Euclidean stable-Jacobian ratio.
\end{theorem}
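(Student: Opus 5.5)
The plan is the standard Hadamard--Perron graph transform in the non-autonomous setting, carried out in three layers: the Lipschitz invariant family, higher regularity with uniform bounds, and the Jacobian limit.

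\emph{Invariant graphs.} Work on the space $\mathcal X$ of sequences $(w_i)_{i\ge0}$ of maps $w_i\colon B_p(1)\to B_{n-p}(1)$ with $w_i(0)=0$ and $\operatorname{Lip}(w_i)\le\ell$, equipped with the sup metric $\sup_i\|w_i\|_{C^0}$. Since the complementary block $D_i$ need not expand, I will not push graphs forward. Instead I use the backward (stable) graph transform: $(\Gamma w)_i$ is the graph of those points of $Q$ whose $F_i$-image lies on the graph of $w_{i+1}$. Writing $F_i=L_i(\Id+R_i)$, the condition $F_i(s,u)\in\operatorname{graph}(w_{i+1})$ reads
\[
u+R_i^u(s,u)=D_i^{-1}w_{i+1}\bigl(B_i(s+R_i^s(s,u))\bigr).
\]
The right-hand side has Lipschitz constant at most $\|D_i^{-1}\|\ell\|B_i\|(1+\delta)\le\kappa\ell(1+\delta)$, while $u\mapsto u+R_i^u(s,u)$ is a $\delta$-perturbation of the identity. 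The implicit function theorem then solves uniquely for $u=(\Gamma w)_i(s)$, with $\operatorname{Lip}\le\ell$ once $\delta$ is small, and with $|u|<1$ because $|B_i s|\le\alpha<1$. The same estimate shows $\Gamma$ is a contraction in the sup metric with factor of order $\kappa+\delta<1$, which gives existence and uniqueness of the family. Setting $k_i$ and $\beta_i$ as in the statement gives $F_i\Psi_i=\Psi_{i+1}\beta_i$ and $\|D\beta_i\|\le\alpha(1+C\delta)\le\theta$. From this, $|S_m s|\le\theta^m|s|$ follows, together with the claims $D_0w_i=0$ and $\|DS_m\|\le\theta^m$.

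\emph{Smoothness with uniform bounds.} This is the step I expect to be the main obstacle, because the bounds must be uniform in $i$ without any control on $\|D_i\|$. I will use the fiber-contraction argument of Hirsch--Pugh--Shub, run on jets. The formal derivative $Dw_i$ satisfies a linear fixed-point equation whose contraction factor is $\|D_i^{-1}\|\|B_i\|\le\kappa<1$, and the $r$-th derivative satisfies one with factor $\kappa\alpha^{r-1}\le\kappa$. These factors are uniform, so induction on $r$ gives $\sup_i\|w_i\|_{C^r}\le K_r$, with $K_r$ depending only on the parameters and $M_2,\dots,M_{r+1}$. Only the products $D_i^{-1}B_i$ ever enter, which is exactly why no separate bound on $D_i$ is needed. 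Restricting to $B_p(1/2)$ leaves room for the losses in the implicit-function step.

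\emph{Derivatives of $S_m$ and the Jacobian limit.} Apply the Fa\`a di Bruno formula to $S_m=\beta_{m-1}\cdots\beta_0$. The map $\beta_i$ has $C^r$ norm bounded by $C_r\alpha$, and every term in the expansion of $D^rS_m$ carries at least one full product of first derivatives. A standard induction therefore gives $\|D^rS_m\|\le K_r\theta^m$, possibly after slightly enlarging $\theta$ within $(\alpha,1)$, which the hypotheses allow. Composing with the uniformly smooth $\Psi_m$ then yields the bound on $D^r(F_{m-1}\cdots F_0\circ\Psi_0)$. For the Jacobian, write
\[
\log\frac{\mathcal J_m(s)}{\mathcal J_m(0)}=\sum_{i<m}\bigl[\log|\det D\beta_i(S_is)|-\log|\det D\beta_i(0)|\bigr]+\bigl[\log v_m(S_ms)-\log v_m(0)\bigr]-\log\frac{v_0(s)}{v_0(0)}.
\]
The $i$-th summand is at most $C\,|S_is|\le C\theta^i$, since $\log\det D\beta_i=\log\det B_i+\log\det Dk_i$ and the first term cancels in the difference. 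The $v_m$ term is at most $He^{\tau m}\theta^m\to0$. Both series therefore converge uniformly, giving the limit $J_v$ with two-sided bounds. The derivative bounds come from differentiating the series term by term and using $\|D^rS_i\|\le K_r\theta^i$ together with $e^\tau\theta<1$. Finally, for the Euclidean leaf volume $v_i$, the $C^2$ bound on $w_i$ gives $\|D\log v_i\|\le H$, so the hypotheses hold with $\tau=0$.
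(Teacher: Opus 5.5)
Your architecture matches the paper's. It has the backward graph transform given by $u+R_i^u(s,u)=D_i^{-1}w_{i+1}\bigl(B_i(s+R_i^s(s,u))\bigr)$, and a jet-level induction with top-order coefficient $\|D_i^{-1}\|\|B_i\|^r\le\kappa\alpha^{r-1}$. It also has a chain-rule recursion for $D^rS_m$, and the telescoping formula for $\log(\mathcal J_m(s)/\mathcal J_m(0))$ in which the factors $|\det B_i|$ cancel.

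However, your existence/uniqueness step fails in the metric you chose. It fails exactly where the hypothesis that $\|D_i^{-1}\|$ is not bounded separately bites. When you compare $(\Gamma w)_i$ with $(\Gamma\widetilde w)_i$, the right-hand side contains the term $D_i^{-1}(w_{i+1}-\widetilde w_{i+1})(x)$. In the metric $\sup_i\|w_i-\widetilde w_i\|_{C^0}$ you can only bound this by $\|D_i^{-1}\|$ times the distance. The Lipschitz factor $\kappa\ell(1+\delta)$ you quote controls the dependence on $u$, not the dependence on the graph.

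Here is a concrete failure. Take $R_i=0$ and $F_i=\operatorname{diag}(\epsilon,\epsilon/\kappa)$ on $\mathbb R\oplus\mathbb R$, so that $(\Gamma w)_i(s)=(\kappa/\epsilon)\,w_{i+1}(\epsilon s)$. Let $w\equiv0$, and let $\widetilde w_i(y)$ equal $\ell y$ for $|y|\le\eta/\ell$ and $\pm\eta$ beyond, with $\eta<\ell\epsilon$. Then the sup-distance goes from $\eta$ to $\kappa\eta/\epsilon$. This is an expansion by $\kappa/\epsilon$, which is unbounded as $\epsilon\to0$. Your justification of $|u|<1$ has the same defect: $|B_is|\le\alpha$ says nothing about $|D_i^{-1}w_{i+1}(\cdot)|$.

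The missing idea is to weight by $|s|$, using $w_i(0)=0$. The paper uses
\[
d(w,\widetilde w)=\sup_{i\ge0,\ 0<|s|<1}\frac{|w_i(s)-\widetilde w_i(s)|}{|s|},
\]
which is finite (at most $2\ell$) and complete on your space $\mathcal X$. With it, $|(w_{i+1}-\widetilde w_{i+1})(x)|\le d\,|x|\le d\,\|B_i\|(1+O(\delta))|s|$. So $\|D_i^{-1}\|$ always appears paired with $\|B_i\|$, and $\Gamma$ contracts by $\kappa(1+O(\delta))<1$. Likewise, $|w_{i+1}(x)|\le\ell|x|$ shows that the right-hand side maps $\{|u|\le\ell|s|\}$ into itself. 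With this metric, the rest of your plan goes through essentially as in the paper: iterating from zero graphs, uniform $C^r$ bounds plus $C^0$ convergence give $C^r$ convergence.

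One smaller point concerns the Jacobian step. The condition $e^\tau\theta<1$ only serves to make the terminal term $\log v_m(S_ms)-\log v_m(0)$ tend to zero uniformly. You have no control on higher derivatives of $\log v_m$, so that term must not be differentiated. The limit is $\sum_i h_i\circ S_i-\log(v_0/v_0(0))$, with $h_i=\log|\det Dk_i|-\log|\det Dk_i(0)|$. Its $C^r$ bounds come from $\|h_i\circ S_i\|_{C^r}\le K_r\theta^i$ and from the $V_r$.
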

\begin{proof}
	We follow Pugh--Shub--Wilkinson \cite{PughShubWilkinson1997}.
	The construction of invariant sections is in \S3 (p.~526), and
	the weighted norm and graph transform are in \S4 (pp.~528--529).
	We use the stable version of this transform.
	The higher smoothness of individual stable leaves is stated in
	\S6 (p.~545).
	Here we give the uniform estimates needed to apply this method to
	a sequence of maps with bounded normalized remainders: the linear
	blocks themselves need not be bounded. Thus the argument below
	adapts their construction; it does not invoke their transverse
	H\"older-regularity theorem. The Jacobian assertion is proved
	separately at the end.
	For a graph $w_{i+1}$ through zero with Lipschitz constant at most
	$\ell$, the backward graph transform is the solution $u=w_i(s)$ of
	\[
	u=D_i^{-1}w_{i+1}\bigl(B_i(s+R_i^s(s,u))\bigr)-R_i^u(s,u).
	\]
	For sufficiently small $\delta$, the right side maps the ball
	$|u|\le\ell|s|$ to itself and is a contraction in $u$.
	The same estimates preserve the Lipschitz bound $\ell$.
	Here the factor involving the next graph is controlled by
	$\|D_i^{-1}\|\|B_i\|\le\kappa$, rather than by
	$\|D_i^{-1}\|$ alone. The transform on graph families contracts
	the complete metric
	\[
	d(w,\widetilde w)=
	\sup_{i\ge0,\ 0<|s|<1}
	\frac{|w_i(s)-\widetilde w_i(s)|}{|s|}
	\]
	by a factor at most $\kappa(1+O(\delta))<1$.
	This is the stable counterpart of the fiber-contraction estimate
	\cite[\S4, equation~(8), p.~529]{PughShubWilkinson1997}:
	for the linear maps the contraction factor is
	$\|D_i^{-1}\|\|B_i\|$, and the normalized remainders change
	it by a uniform $O(\delta)$ term. Taking the supremum over the
	time index gives the invariant-section contraction from their
	\S3 directly on the space of graph families.
	Its unique fixed point is therefore the asserted invariant family.
	For its base maps,
	$\|D\beta_i\|\le\alpha(1+O(\delta))\le\theta$.
	To prove smoothness, iterate the transform starting with zero
	graphs. All these iterates are smooth and tangent to the stable
	coordinate plane at zero. Differentiate the implicit graph
	equation $r$ times and move the terms containing the highest
	derivative of the new graph to the left. The resulting linear
	operator is uniformly close to the identity. The coefficient
	of the highest derivative of the old graph is bounded by
	\[
	q_r\le\kappa_*\theta^{r-1}<1,\qquad
	\kappa_*:=\frac{1+\kappa}{2}.
	\]
	Indeed, that coefficient is bounded by
	$(1+O(\delta))\|D_i^{-1}\|\|B_i\|^r(1+O(\delta))^r$.
	One factor $\|B_i\|$ combines with $\|D_i^{-1}\|$ to give
	$\kappa_*<1$, while each of the remaining $r-1$ factors is at
	most $\theta$. A single choice of $\delta$ works for every $r$.
	The relevant higher-order inequality is therefore
	$\|D_i^{-1}\|\|B_i\|^r\le\kappa\alpha^{r-1}<1$.
	It concerns differentiation within an individual plaque; no
	higher-order bunching for transverse regularity is needed.
	All other terms contain only lower-order derivatives of the
	graphs and derivatives of $R_i$ of order at most $r$.
	Thus induction gives
	\[
	\|D^r w_i^{\mathrm{new}}\|
	\le q_r\|D^r w_{i+1}^{\mathrm{old}}\|+K_r.
	\]
	The iterated graphs have bounds of every order, uniformly in time
	and iteration number. The contraction and the bounds one order
	higher imply $C^r$ convergence on compact subballs for every $r$.
	This proves smoothness of the same fixed-point graphs, with
	uniform bounds on $B_p(1/2)$.
	The maps $\beta_i$ consequently have uniform bounds of every
	order. The chain rule gives the bounds on $S_m$: at order $r\ge2$,
	the induction hypothesis bounds every inhomogeneous term by
	$K_r\theta^{2m}$, because each contains at least two derivatives
	of $S_m$. Hence the highest derivative $X_m:=\|D^rS_m\|_{C^0(B_p(1/2))}$ satisfies $X_{m+1}\le\theta X_m+K_r\theta^{2m}$, giving
	$X_m\le K_r\theta^m$. Applying the chain rule once more to
	$F_{m-1}\cdots F_0\circ\Psi_0=\Psi_m\circ S_m$ proves the
	claimed derivative bounds along the plaques.
	For the Jacobian assertion, put
	\[
	h_i(s)=\log|\det Dk_i(s)|-\log|\det Dk_i(0)|.
	\]
	Since $Dk_i$ is uniformly close to the identity, the functions
	$h_i$ have uniform bounds of every order and vanish at zero.
	The constant factors $|\det B_i|$ cancel, and the chain rule gives
	\[
	\begin{aligned}
		\log\frac{\mathcal J_m(s)}{\mathcal J_m(0)}
		={}&\sum_{i=0}^{m-1}h_i(S_i(s))\\
		&+\log\frac{v_m(S_m(s))}{v_m(0)}
		-\log\frac{v_0(s)}{v_0(0)}.
	\end{aligned}
	\]
	The middle term is bounded in absolute value by
	$H(e^\tau\theta)^m|s|$ and tends uniformly to zero.
	For every $r\ge0$,
	\[
	\|h_i\circ S_i\|_{C^r(B_p(1/2))}\le K_r\theta^i.
	\]
	At order zero use $h_i(0)=0$; at positive orders every chain-rule
	term contains at least one derivative of $S_i$.
	It follows that
	$$\log J_v(s)=\sum_{i=0}^{\infty}h_i(S_i(s))-\log\frac{v_0(s)}{v_0(0)}$$
	is smooth, with bounds in every $C^r$ norm. Only uniform
	convergence of the terminal volume term was needed to identify
	the limit; no bounds on its higher derivatives are required.
	Exponentiating proves all the assertions about $J_v$.
\end{proof}


\bibliographystyle{sfilip_bibstyle}
\bibliography{ac_from_uniform_gap}

\end{document}